\newcommand{\ifundef}[1]{\expandafter\ifx\csname#1\endcsname\relax}
\DeclareMathAlphabet{\mathbbe}{U}{bbold}{m}{n}
\def\re@DeclareMathSymbol#1#2#3#4{%
    \let#1=\undefined
    \DeclareMathSymbol{#1}{#2}{#3}{#4}}
  \DeclareSymbolFont{tcSyC}{U}{txsyc}{m}{n}
  \re@DeclareMathSymbol{\Top}{\mathord}{tcSyC}{120}
  \re@DeclareMathSymbol{\Bot}{\mathord}{tcSyC}{121}
  \DeclareFontFamily{U}{MnSymbolC}{}
  \DeclareSymbolFont{mnSyC}{U}{MnSymbolC}{m}{n}
  \DeclareFontShape{U}{MnSymbolC}{m}{n}{
      <-6>  MnSymbolC5
     <6-7>  MnSymbolC6
     <7-8>  MnSymbolC7
     <8-9>  MnSymbolC8
     <9-10> MnSymbolC9
    <10-12> MnSymbolC10
    <12->   MnSymbolC12}{}
  \DeclareFontShape{U}{MnSymbolC}{b}{n}{
      <-6>  MnSymbolC-Bold5
     <6-7>  MnSymbolC-Bold6
     <7-8>  MnSymbolC-Bold7
     <8-9>  MnSymbolC-Bold8
     <9-10> MnSymbolC-Bold9
    <10-12> MnSymbolC-Bold10
    <12->   MnSymbolC-Bold12}{}
  \re@DeclareMathSymbol{\righthalfcup}{\mathord}{mnSyC}{184}
  \re@DeclareMathSymbol{\lefthalfcap}{\mathord}{mnSyC}{185}
\DeclareFontFamily{U}{MnSymbolA}{}
\DeclareSymbolFont{mnSyA}{U}{MnSymbolA}{m}{n}
\DeclareFontShape{U}{MnSymbolA}{m}{n}{
    <-6>  MnSymbolA5
   <6-7>  MnSymbolA6
   <7-8>  MnSymbolA7
   <8-9>  MnSymbolA8
   <9-10> MnSymbolA9
  <10-12> MnSymbolA10
  <12->   MnSymbolA12}{}
\DeclareFontShape{U}{MnSymbolA}{b}{n}{
    <-6>  MnSymbolA-Bold5
   <6-7>  MnSymbolA-Bold6
   <7-8>  MnSymbolA-Bold7
   <8-9>  MnSymbolA-Bold8
   <9-10> MnSymbolA-Bold9
  <10-12> MnSymbolA-Bold10
  <12->   MnSymbolA-Bold12}{}
\re@DeclareMathSymbol{\twoheadedswarrow}{\mathord}{mnSyA}{30}
\newcommand{\mlaux}[3]{\setbox0=\hbox{$\mathsurround=0pt #2{#3}$}%
  \dimen0=\dp0\advance\dimen0 by \ht0\lower#1\dimen0\box0}
\newcommand{\makellapm}[2]{\hbox to 0pt{\hss$\mathsurround=0pt #1{#2}$}}
\newcommand{\llapm}{\relax\mathpalette\makellapm}
\newcommand{\makerlapm}[2]{\hbox to 0pt{$\mathsurround=0pt #1{#2}$\hss}}
\newcommand{\makelapm}[2]{\hbox to 0pt{\hss$\mathsurround=0pt #1{#2}$\hss}}
\newcommand{\makeushort}[3]{%
  \setbox0=\hbox{$\mathsurround=0pt #2{#3}$}%
  \hbox to 1\wd0{\hss\underbar{\hbox to #1\wd0{\hss\box0\hss}}\hss}}
\def\makebigger#1#2#3{\scalebox{#1}{$\mathsurround=0pt #2{#3}$}}
\def\bigger#1#2{{\relax\mathpalette{\makebigger{#1}}{#2}}}
\def\scaleuphalf{1.0954}
\def\scaleupone{1.2}
\newcommand{\dual}{^\circ}
\newcommand{\op}{^{\mathord{\text{\rm op}}}}
\newcommand{\co}{^{\mathord{\text{\rm co}}}}
\newcommand{\defeq}{\mathrel{:=}}
\def\newmop{\@ifstar{\@newmop m}{\@newmop o}}
\def\@newmop#1{\@ifnextchar[{\@@newmop #1}{\@@@newmop #1}}
\def\@@newmop#1[#2]{\@declmathop #1#2}
\def\@@@newmop#1#2{\expandafter\@declmathop\expandafter #1\csname #2\endcsname{#2}}
\newcommand{\comma}{\mathbin{\downarrow}}
\newcommand{\rotatemath}[2]{\rotatebox[origin=c]{180}{$\m@th #1{#2}$}}
\newcommand{\yoneda}{\mathscr{Y}\!}
\newcommand{\pocorner}{\hbox to 8pt{{\vrule height8pt depth0pt width0.5pt}%
    \vbox to 8pt{{\hrule height0.5pt width7.5pt depth0pt}\vfill}}}
\newcommand{\poexcursion}{\save[]-<15pt,-15pt>*{\pocorner}\restore}
\newcommand{\pbcorner}{\vbox to 0pt{\kern 4pt\hbox to 0pt{\kern 4pt%
      \vbox{{\hrule height0.5pt width7.5pt depth0pt}}%
      {\vrule height8pt depth0pt width0.5pt}\hss}\vss}}
\newcommand{\pbexcursion}{\save[]+<5pt,-5pt>*{\pbcorner}\restore}
\newcommand{\pwr}{\mathbin\pitchfork}
\newcommand{\leib}[1]{\mathbin{\widehat{#1}}}
\newcommand{\category}[1]{\underline{\smash[b]{\text{\rm{#1}}}}}
\newcommand{\lcat}{\mathcal}
\newcommand{\scat}{\mathbf}
\newcommand{\catthree}{{\bigger{1.12}{\mathbbe{3}}}}
\newcommand{\cattwo}{{\bigger{1.12}{\mathbbe{2}}}}
\newcommand{\catone}{{\bigger{1.16}{\mathbbe{1}}}}
\newcommand{\iso}{{\mathbb{I}}}
\def\Del@Sym{{\bigger\scaleuphalf{\mathbbe{\Delta}}}}
\def\del@fn{\futurelet\del@next}
\def\del@dn{\def\del@next}
\def\parsedel@{%
  \ifx +\del@next \del@dn+{\Del@Sym_{\mathord{+}}}%
  \else \del@dn {\del@fn\parsedel@@}%
  \fi\del@next}
\def\parsedel@@{%
  \ifx\space@\del@next \expandafter\del@dn\space{\del@fn\parsedel@@}%
  \else\ifx [\del@next \del@dn[{\del@fn\parsedel@@@}%
  \else\ifx _\del@next \del@dn{\Delta}%
  \else\ifx ^\del@next \del@dn{\Delta}%
  \else \del@dn{\Del@Sym}%
  \fi\fi\fi\fi\del@next}
\def\parsedel@@@{%
  \ifx\space@\del@next \expandafter\del@dn\space{\del@fn\parsedel@@@}%
  \else\ifx t\del@next \del@dn t{\Del@Sym_\infty\del@fn\parsedel@@@@}%
  \else\ifx b\del@next \del@dn b{\Del@Sym_{-\infty}\del@fn\parsedel@@@@}%
  \else \del@dn{\errmessage{unexpected modifier}}%
  \fi\fi\fi\del@next}
\def\parsedel@@@@{%
  \ifx\space@\del@next \expandafter\del@dn\space{\del@fn\parsedel@@@@}%
  \else\ifx ]\del@next \del@dn]{}%
  \else \del@dn{\errmessage{expecting close of option block}}%
  \fi\fi\del@next}
\def\Del{\del@fn\parsedel@}
\newcommand{\Horn}{\Lambda}
\newcommand{\Set}{\category{Set}}
\newcommand{\Cat}{\category{Cat}}
\newcommand{\sCat}{\sSet\text{-}\Cat}
\newcommand{\sSet}{\category{sSet}}
\newcommand{\qCat}{\category{qCat}}
\newcommand{\Segal}{\category{Segal}}
\newcommand{\CSS}{\category{CSS}}
\newcommand{\coCart}{\category{coCart}}
\newcommand{\dmod}[3]{\xymatrix@=1.25em{{#2} \ar[r]|\mid^{ {#1}} & {#3}}}
\newcommand{\pbshape}{{\mathord{\bigger\scaleupone\righthalfcup}}}
\newcommand{\face}{\delta}
\newcommand{\degen}{\sigma}
\newcommand{\fbv}[1]{\{{#1}\}}
\newcommand{\join}{\mathbin\star}
\newcommand{\slice}{/}
\newcommand{\slicel}[2]{\vphantom{#2}^{{#1}\slice{}}\mkern-2mu{#2}}
\newcommand{\nrv}{N}
\newcommand{\ho}{h}
\newcommand{\nrvhc}{\nrv}
\newcommand{\hN}{\nrv}
\newcommand{\gC}{\mathfrak{C}}
\newcommand{\boundary}{\partial}
\def\reedyfilt#1_#2{#1_{\leq #2}}
\newcommand{\Kan}{\category{Kan}}
\def\makeslashed#1#2#3#4#5{#1{\mathpalette{\sla@{#2}{#3}{#4}}{#5}}}
\def\@mathlower#1#2#3{\setbox0=\hbox{$\m@th#2#3$}\lower#1\ht0\box0}
\def\mathlower#1#2{\mathpalette{\@mathlower{#1}}{#2}}
\newcommand{\inc}{\hookrightarrow}
\newcommand{\tfib}{\twoheadrightarrow}
\newcommand{\longtwoheadrightarrow}{\mathrel{\mathord{-}\mkern-3mu\mathord\twoheadrightarrow}}
\newcommand{\we}{\xrightarrow{\mkern10mu{\smash{\mathlower{0.6}{\sim}}}\mkern10mu}}
\newcommand{\trvfib}{\stackrel{\smash{\mkern-2mu\mathlower{1.5}{\sim}}}\longtwoheadrightarrow}
\newcommand{\trvcof}{\xhookrightarrow{\mkern8mu{\smash{\mathlower{1}{\sim}}}\mkern12mu}}
\newcommand{\To}{\Rightarrow}
\def\tens@fn{\futurelet\tens@next}
\def\tens@dn{\def\tens@nextcont}
\newtoks\tens@toks
\def\addtotens@toks#1{\tens@toks=\expandafter{\the\tens@toks#1}}
\def\parsetens@@{%
    \ifx\space@\tens@next \expandafter\tens@dn\space{\tens@fn\parsetens@@}%
    \else\ifx ^\tens@next \tens@dn ^##1{\parsetens@procsep^\addtotens@toks{##1}%
      \tens@fn\parsetens@@}%
    \else\ifx _\tens@next \tens@dn _##1{\parsetens@procsep_\addtotens@toks{##1}%
      \tens@fn\parsetens@@}%
    \else\tens@dn{\ifx *\tens@last \else\addtotens@toks\egroup\fi\the\tens@toks}%
    \fi\fi\fi\tens@nextcont}
\def\parsetens@procsep#1{%
  \ifx *\tens@last \addtotens@toks{#1}\addtotens@toks\bgroup%
  \else\ifx \tens@last\tens@next \addtotens@toks,%
  \else \addtotens@toks\egroup\addtotens@toks\bgroup%
    \addtotens@toks\egroup\addtotens@toks{#1}\addtotens@toks\bgroup%
  \fi\fi\let\tens@last\tens@next}
\newcommand{\tn}[1]{\let\tens@last=*\tens@toks={#1}\tens@fn\parsetens@@}
\def\adjdisplay#1-|#2:#3->#4.{{%
    \xymatrix@R=0em@!C=2.5em{%
      *+[l]{#3} \ar@/_0.55pc/[rr]_-{#2} & {\bot} &
      *+[r]{#4}\ar@/_0.55pc/[ll]_-{#1}}}}
\def\adjdisplaytwo#1-|#2:#3->#4.{{%
\xymatrix@=1.2em{
      {#3}\ar@/_1.5ex/[rr]_-{#2}^-{}="one"
      & & {#4}
      \ar@/_1.5ex/[ll]_-{#1}^-{}="two"
      \ar@{}"one";"two"|{\bot}
    }}}
\def\tripleadjdisplay#1-|#2-|#3:#4->#5.{{%
\xymatrix@=2.4em{
{#4}\ar[r]|{#2} &
{#5} \ar@/_3ex/[l]_{#1}^{\bot} \ar@/^3ex/[l]_{\bot}^{#3}}
}}
\def\adjinline#1-|#2:#3->#4.{{#1}\dashv{#2}:#3\to #4}
\newcommand{\pent}[1]{
  \xybox{
    \POS (0,-15)*+{\a}="0",
         (-14,-5)*+{\b}="1",
         (-9,12)*+{\c}="2",
         (9,12)*+{\d}="3",
         (14,-5)*+{\e}="4"
    \POS"0" \ar "1"^{\labelstyle \ab}|{}="01"
    \POS"1" \ar "2"^{\labelstyle \bc}|{}="12"
    \POS"2" \ar "3"^{\labelstyle \cd}|{}="23"
    \POS"3" \ar "4"^{\labelstyle \de}|{}="34"
    \POS"0" \ar "4"_{\labelstyle \ae}|{}="04"
    \ifcase #1
    \POS"0" \ar "2"|{\labelstyle \ac}="02"
    \POS"0" \ar "3"|{\labelstyle \ad}="03"
    \POS"02";"1"**{}, ?(0.3) \ar@{=>} ?(0.7)^{\labelstyle \abc}
    \POS"03";"2"**{}, ?(0.25) \ar@{=>} ?(0.5)_{\labelstyle \acd}
    \POS"04";"3"**{}, ?(0.2) \ar@{=>} ?(0.4)_{\labelstyle \ade}
    \or
    \POS"1" \ar "3"|{\labelstyle \bd}="13"
    \POS"1" \ar "4"|{\labelstyle \be}="14"
    \POS"13";"2"**{}, ?(0.3) \ar@{=>} ?(0.7)_{\labelstyle \bcd}
    \POS"14";"3"**{}, ?(0.25) \ar@{=>} ?(0.5)_{\labelstyle \bde}
    \POS"04";"1"**{}, ?(0.25) \ar@{=>} ?(0.5)_{\labelstyle \abe}
    \or
    \POS"2" \ar "4"|{\labelstyle \ce}="24"
    \POS"0" \ar "2"|{\labelstyle \ac}="02"
    \POS"02";"1"**{}, ?(0.3) \ar@{=>} ?(0.7)^{\labelstyle \abc}
    \POS"04";"2"**{}, ?(0.2) \ar@{=>} ?(0.35)_{\labelstyle \ace}
    \POS"24";"3"**{}, ?(0.2) \ar@{=>} ?(0.6)^{\labelstyle \cde}
    \or
    \POS"1" \ar "3"|{\labelstyle \bd}="13"
    \POS"0" \ar "3"|{\labelstyle \ad}="03"
    \POS"04";"3"**{}, ?(0.2) \ar@{=>} ?(0.4)_{\labelstyle \ade}
    \POS"13";"2"**{}, ?(0.3) \ar@{=>} ?(0.7)_{\labelstyle \bcd}
    \POS"03";"1"**{}, ?(0.25) \ar@{=>} ?(0.5)^{\labelstyle \abd}
    \or
    \POS"2" \ar "4"|{\labelstyle \ce}="24"
    \POS"1" \ar "4"|{\labelstyle \be}="14"
    \POS"24";"3"**{}, ?(0.2) \ar@{=>} ?(0.6)^{\labelstyle \cde}
    \POS"04";"1"**{}, ?(0.25) \ar@{=>} ?(0.5)_{\labelstyle \abe}
    \POS"14";"2"**{}, ?(0.25) \ar@{=>} ?(0.5)^{\labelstyle \bce}
    \else\fi
  }
}
\newcommand{\pentofpent}[1]{
  \def\baselen{#1}
  \begin{xy}
    0;<\baselen,0mm>:
    *{\xybox{
        \POS(0,-4)*[o]{\pent 0}="zero"
        \POS(16,40)*[o]{\pent 3}="three"
        \POS(72,40)*[o]{\pent 1}="one"
        \POS(88,-4)*[o]{\pent 4}="four"
        \POS(44,-36)*[o]{\pent 2}="two"
        \ar@<1ex>"zero";"three"^-{\objectstyle\abcd}
        \ar@<1ex>"three";"one"^-{\objectstyle\abde}
        \ar@<1ex>"one";"four"^-{\objectstyle\bcde}
        \ar@<-1ex>"zero";"two"_-{\objectstyle\acde}
        \ar@<-1ex>"two";"four"_-{\objectstyle\abce}
        \ar@{=>}(44,-5);(44,+15)^{\objectstyle\abcde}
     }}
  \end{xy}
}
\newcommand{\qc}[1]{{\mathord{\text{\normalfont{\textsf{#1}}}}}}
\newcommand{\qop}[1]{{\mathord{\qc{#1}}}}
\def\ec@#1#2<.>{\mathcal{#1}\mkern-2mu\text{\normalfont{\textsf{\slshape #2}}}}
\newcommand{\ec}[1]{{\mathord{\ec@#1<.>}}}
\newcommand{\eop}[1]{{\mathord{\ec{#1}}}}
\newcommand{\qA}{\qc{A}}
\newcommand{\qB}{\qc{B}}
\newcommand{\qC}{\qc{C}}
\newcommand{\qE}{\qc{E}}
\newcommand{\qF}{\qc{F}}
\newcommand{\qK}{\qc{K}}
\newcommand{\qL}{\qc{L}}
\newcommand{\qS}{\qc{S}}
\newcommand{\qQ}{\qc{Q}}
\newcommand{\qX}{\qc{X}}
\newcommand{\qU}{\qc{U}}
\newcommand{\eK}{\ec{K}}
\newcommand{\eL}{\ec{L}}
\newcommand{\eM}{\ec{M}}
\newcommand{\eA}{\ec{A}}
\newcommand{\eB}{\ec{B}}
\newcommand{\eC}{\ec{C}}
\newcommand{\Hom}{\qop{Hom}}
\newcommand{\Fun}{\qop{Fun}}
\renewcommand{\coCart}{\qop{coCart}}
\newcommand{\Cart}{\qop{Cart}}
\newcommand{\Graph}{\category{Graph}}
\newcommand{\sCptd}{\sSet\text{-}\category{Cptd}}
\newcommand{\Cptd}{\category{Cptd}}
\newcommand{\SSet}{\eop{SSet}}
\renewcommand{\qCat}{\eop{QCat}}
\newcommand{\qqCat}{\qop{qCat}}
\renewcommand{\Kan}{\eop{Kan}}
\newcommand{\hcSimp}{\gC\Del^\infty}
\newcommand{\oSimp}{\scat{S}}
\newcommand{\ep}{_{\mathrm{epi}}}
\newcommand{\Cube}{\mathord{\sqcap\llapm\sqcup}}
\newcommand{\CHorn}{\mathord{\sqcap\llapm\sqcap}}
\def\hookrightarrowfill@{\arrowfill@{\lhook\mkern-2mu\relbar}\relbar\rightarrow}
\newcommand{\xrightincarrow}[2][]{\ext@arrow 0359\hookrightarrowfill@{#1}{#2}}
\newcommand{\extRef}[3]{%
  {\protect\IfBeginWith{#3}{itm:}{}{#2.}}\ref*{#1:#3}}
\newcommand{\refI}{\extRef{found}{I}}
\newcommand{\refII}{\extRef{cohadj}{II}}
\newcommand{\refIV}{\extRef{yoneda}{IV}}
\newcommand{\refV}{\extRef{equipment}{V}}
\setlist{}
\theoremstyle{plain}
\newtheorem{thm}{Theorem}[subsection]
\newtheorem{lem}[thm]{Lemma}
\newtheorem{cor}[thm]{Corollary}
\newtheorem{prop}[thm]{Proposition}
\theoremstyle{definition}
\newtheorem{defn}[thm]{Definition}
\newtheorem{ex}[thm]{Example}
\newtheorem{ntn}[thm]{Notation}
\theoremstyle{remark}
\newtheorem{obs}[thm]{Observation}
\newtheorem{rec}[thm]{Recall}
\newtheorem{rmk}[thm]{Remark}
\let\c@equation\c@thm
\numberwithin{equation}{subsection}
\title{The comprehension construction}
\author[Riehl]{Emily Riehl}
\address{
  Department of Mathematics \\
Johns Hopkins University \\
Baltimore, MD 21218\\
  USA
}
\email{eriehl@math.jhu.edu}
\author[Verity]{Dominic Verity}
\address{
  Centre of Australian Category Theory \\
  Macquarie University \\
  NSW 2109 \\
  Australia
}
\email{dominic.verity@mq.edu.au}
\date{\today}
\subjclass[2010]{%
  Primary  18G55, 55U35}
\dedicatory{The authors would like to dedicate this work to the memory of Brian Day (1945-2012) on the 5\textsuperscript{th} anniversary of his death. Brian was an excellent mathematician, a valued friend, and a key founding member of the category theory community in Australia.}
\begin{document}

  \ifpdf
  \DeclareGraphicsExtensions{.pdf, .jpg, .tif}
  \else
  \DeclareGraphicsExtensions{.eps, .jpg}
  \fi

  \begin{abstract}
    In this paper we construct an analogue of Lurie's ``unstraightening'' construction that we refer to as the \emph{comprehension construction}. Its input is a cocartesian fibration $p \colon E \tfib B$ between $\infty$-categories together with a third $\infty$-category $A$. The comprehension construction then defines a map from the quasi-category of functors from $A$ to $B$ to the large quasi-category of cocartesian fibrations over $A$ that acts on $f \colon A \to B$ by forming the pullback of $p$ along $f$. To illustrate the versatility of this construction, we define the covariant and contravariant Yoneda embeddings as special cases of the comprehension functor. We then prove that the hom-wise action of the comprehension functor coincides with an ``external action'' of the hom-spaces of $B$ on the fibres of $p$ and use this to prove that the Yoneda embedding is fully faithful, providing an explicit equivalence between a quasi-category and the homotopy coherent nerve of a Kan-complex enriched category.
  \end{abstract}

  \maketitle
  \tableofcontents


\section{Introduction}

This paper is a continuation of previous work \cite{RiehlVerity:2012tt, RiehlVerity:2012hc, RiehlVerity:2013cp,RiehlVerity:2015fy,RiehlVerity:2015ke} to develop the formal theory of $\infty$-\emph{categories}, which model weak higher categories. In contrast with the pioneering work of Joyal \cite{Joyal:2008tq} and Lurie \cite{Lurie:2009fk,Lurie:2012uq}, our approach is ``synthetic'' in the sense that our proofs do not depend on what precisely these $\infty$-categories \emph{are}, but rather rely upon an axiomatisation of the universe in which they \emph{live}. To that end, we define an $\infty$-\emph{cosmos} to be a (large) simplicial category  $\eK$ satisfying certain axioms. The objects of an $\infty$-cosmos are called $\infty$-\emph{categories}. A theorem, e.g., that characterises a cartesian fibration of $\infty$-categories in terms of the presence of an adjunction between certain other $\infty$-categories \cite[4.1.10]{RiehlVerity:2015fy}, is a result about the objects of any $\infty$-cosmos, and thus applies of course to every $\infty$-cosmos. There are $\infty$-cosmoi whose objects are quasi-categories, complete Segal spaces, or Segal categories, each of these being models of $(\infty,1)$-\emph{categories}; $\theta_n$-spaces or iterated complete Segal spaces, or $n$-trivial saturated complicial sets, each modelling $(\infty,n)$-\emph{categories}; and also fibred versions of each of these. Thus each of these objects are $\infty$-categories in our sense and our theorems apply to all of them.\footnote{This may seem like sorcery but in some sense it is really just the Yoneda lemma. To a close approximation, an $\infty$-cosmos is a ``category of fibrant objects enriched over quasi-categories,'' \emph{quasi-categories} being a model of $(\infty,1)$-categories as simplicial sets satisfying the weak Kan condition. When the theory of quasi-categories is expressed in a sufficiently categorical way, it generalises to include analogous results for the corresponding representably defined notions in a general $\infty$-cosmos.}  

One theme of our work to develop the foundations of $\infty$-category theory is that much of it can be done 2-categorically, closely paralleling ordinary or enriched category theory. Any $\infty$-cosmos has an accompanying \emph{homotopy 2-category}, a quotient strict 2-category whose objects are $\infty$-categories, whose morphisms are the $\infty$-functors between them, and whose 2-cells we appropriately refer to as \emph{natural transformations}.  The homotopy 2-category is a categorification of the familiar homotopy category of a model category.   Ordinary 1-categories define an $\infty$-cosmos whose homotopy 2-category $\eop{Cat}_2$ is the usual 2-category of categories, functors, and natural transformations. Much of basic category theory---e.g.,~adjunctions, limits, and cartesian fibrations---can be developed internally to the strict 2-category $\eop{Cat}_2$. In the homotopy 2-category of an $\infty$-cosmos, the standard 2-categorical notion of equivalence  precisely captures the usual  homotopy-theoretic notion of weak equivalence, suggesting that the homotopy 2-category is a reasonable context to develop a homotopically meaningful theory of $\infty$-categories. Indeed, our work shows that much of $\infty$-category theory can also be developed internally to  the homotopy 2-category of each specific model.  

Surprisingly, and non-obviously, Joyal's definition of \emph{limits} (inverse limits) or \emph{colimits} (direct limits) in a quasi-category and Lurie's notion of \emph{adjunctions} between quasi-categories---definitions that refer explicitly to the higher homotopical structure contained in a quasi-category---are precisely captured by internal 2-categorical definitions that we introduce in \cite{RiehlVerity:2012tt}, which instead make use of judiciously chosen universal properties inside the homotopy 2-category. Viewed from this perspective, they become easier to manipulate and proofs, e.g., that right adjoints preserve limits, mirror the standard categorical arguments.

This paper addresses what might be described as the  major objection to this narrative.  Perhaps the main technical challenge in extending classical categorical results to the $\infty$-categorical context is in merely \emph{defining} the  Yoneda embedding. In \S\ref{sec:comprehension}, we construct this functor as an instance of the versatile \emph{comprehension construction}. As we explain below, the comprehension construction  is closely related to both the unstraightening and straightening constructions of \cite{Lurie:2009fk}.

In an elementary topos, ``comprehension'' refers to the process that takes a proposition on an object $X$ and returns the maximal subobject on which that proposition is satisfied. Explicitly, a proposition on $X$ is encoded  by a morphism $X \to \Omega$ to the subobject classifier, and the subobject is constructed as the pullback of the subobject ``true'' $\top \colon 1 \rightarrowtail \Omega$ along this morphism. This construction carries additional higher-dimensional  structure because the subobject classifier has an ordering ``false implies true''\footnote{More precisely, $\Omega$ is an internal Heyting algebra.} which induces a corresponding ordering ``implication'' on the set $\hom(X,\Omega)$ of propositions on $X$. The comprehension construction is then a map
\[
    \xymatrix@R=0em@C=6em{
      {\hom(X,\Omega)}\ar[r]^{c_{\top,X}} & {\mathrm{Sub}(X)}
    }
    \]
 from the ordered set of propositions on $X$ to the set of subobjects of $X$, ordered by inclusion.

In higher categorical contexts, the role of the subobject classifier is replaced by a particular cocartesian fibration \cite{Weber:2007ys}. Here we will not require the use of any particular classifying fibration and instead describe an $\infty$-categorical comprehension construction that works for any cocartesian or cartesian fibration $p \colon E \tfib B$. Roughly, a functor $p \colon E \tfib B$ between $\infty$-categories is a \emph{cocartesian fibration} if its fibres depend covariantly functorially on its base; the precise definition is reviewed in \S\ref{sec:cocartesian}. The associated \emph{comprehension functor} provides one expression of this covariantly functorial dependence: each object $a \colon 1 \to B$ is mapped to the fibre $E_a$ of $p$ over $a$, each 1-simplex $f$ from $a$ to $b$ is mapped to a functor $E_a \to E_b$, and this construction extends to the higher simplices in the underlying quasi-category $\qB \defeq \Fun_{\eK}(1,B)$ of $B$.

{
\renewcommand{\thethm}{\ref{thm:general-comprehension}}
\begin{thm} For any cocartesian fibration $p \colon E \tfib B$ in an $\infty$-cosmos $\eK$ and any $\infty$-category $A$, there is a functor
\[
    \xymatrix@R=0em@C=6em{
      {\Fun_{\eK}(A,B)}\ar[r]^{c_{p,A}} & {\coCart(\qK)_{/A}}
    }
    \]
defined on 0-arrows by mapping a functor $a \colon A \to B$ to the pullback:
\[  \xymatrix{ E_a \ar@{->>}[d]_{p_a} \ar[r]^-{\ell_a} \pbexcursion & E \ar@{->>}[d]^p \\ A \ar[r]_a & B}\]
  Its action on 1-arrows $f\colon a\to b$ is defined by lifting $f$ to a $p$-cocartesian 1-arrow as displayed in the diagram
  \begin{equation*}
    \begin{xy}
      0;<1.4cm,0cm>:<0cm,0.75cm>::
      *{\xybox{
          \POS(1,0)*+{A}="one"
          \POS(0,1)*+{A}="two"
          \POS(3,0.5)*+{B}="three"
          \ar@{=} "one";"two"
          \ar@/_5pt/ "one";"three"_{b}^(0.1){}="otm"
          \ar@/^10pt/ "two";"three"^{a}_(0.5){}="ttm"|(0.325){\hole}
          \ar@{=>} "ttm"-<0pt,7pt> ; "otm"+<0pt,10pt> ^(0.3){f}
          \POS(1,2.5)*+{E_{b}}="one'"
          \POS(1,2.5)*{\pbcorner}
          \POS(0,3.5)*+{E_{a}}="two'"
          \POS(0,3.6)*{\pbcorner}
          \POS(3,3)*+{E}="three'"
          \ar@/_5pt/ "one'";"three'"_{\ell_{{b}}}^(0.1){}="otm'"
          \ar@/^10pt/ "two'";"three'"^{\ell_{{a}}}_(0.55){}="ttm'"
          \ar@{->>} "one'";"one"_(.3){p_b}
          \ar@{->>} "two'";"two"_{p_a}
          \ar@{->>} "three'";"three"^{p}
          \ar@{..>} "two'";"one'"_*!/^2pt/{\scriptstyle E_f}
          \ar@{=>} "ttm'"-<0pt,4pt> ; "otm'"+<0pt,4pt> ^(0.3){\ell_{{f}}}
        }}
    \end{xy}
  \end{equation*}
  and then factoring its codomain to obtain the requisite cartesian functor $E_f \colon E_a \to E_b$ between the fibres over $a$ and $b$. 
\end{thm}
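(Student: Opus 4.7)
The plan is to construct the map $c_{p,A}$ simplex-by-simplex, using the pullback construction on 0-arrows, the cocartesian lifting property on 1-arrows, and iterated cocartesian lifting for higher-dimensional data, then verify that the resulting assignment respects face and degeneracy maps so as to define a genuine map of quasi-categories.

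First I would unpack the target: an $n$-simplex in $\coCart(\qK)_{/A}$ consists of a cocartesian fibration over $A$ together with cartesian functors between such fibrations, packaged into a simplicial structure that records composable sequences of such functors. An $n$-simplex of $\Fun_{\eK}(A,B)$ transposes to a functor $A\times\Delta^n\to B$, and pulling $p$ back along this produces a cocartesian fibration $E^{(n)}\tfib A\times\Delta^n$. The plan on 0- and 1-simplices is exactly as depicted in the statement: for a 0-arrow $a$ we form the pullback $p_a\colon E_a\tfib A$; for a 1-arrow $f\colon a\to b$ we lift $f$ (viewed as a natural transformation $A\to B^{\Delta^1}$) to a $p$-cocartesian natural transformation with domain $\ell_a$, and then factor the target of that lift through the pullback $E_b$ to obtain a cartesian functor $E_f\colon E_a\to E_b$ over $A$.

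For higher simplices, I would exploit the fact that the cocartesian fibration $E^{(n)}\tfib A\times\Delta^n$ restricts, over each vertex $i\in\Delta^n$, to the pullback $E_{a_i}$ of $p$ along the $i$-th vertex functor, and that each edge $i\to j$ of $\Delta^n$ provides a cocartesian 1-arrow whose codomain factors through $E_{a_j}$ to yield the cartesian functor $E_{a_i}\to E_{a_j}$. Coherence between these edge-wise factorisations is governed by the higher cocartesian cells in $E^{(n)}$, which transport the various $\ell_{a_i}$ along the simplices of $\Delta^n$; the universal property of cocartesian lifts provides the factorisations through the pullbacks $E_{a_i}$ and makes them compatible up to specified higher simplices.

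The main obstacle is the coherence problem created by the fact that cocartesian lifts, and their factorisations through fibres, are only determined up to contractible choice. The route around this is to package the construction so that, at each dimension, the required $n$-simplex in $\coCart(\qK)_{/A}$ is exhibited as an \emph{essentially unique} solution to a lifting problem whose uniqueness is guaranteed by the defining universal property of cocartesian arrows and of cartesian functors between cocartesian fibrations. The simplicial identities then reduce to the statements that pullback and cocartesian lifting are stable under composition and restriction along face and degeneracy maps; these are consequences of the compositional and pullback stability of cocartesian arrows established earlier in the paper. Once the construction has been shown to assemble into a map of simplicial sets landing in the quasi-category $\coCart(\qK)_{/A}$, the theorem is complete.
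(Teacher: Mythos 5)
Your proposal correctly identifies the action on 0- and 1-arrows and correctly names the central difficulty---that cocartesian lifts and their factorisations through fibres are determined only up to contractible choice---but the proposed resolution of that difficulty is not a proof, and this is where the real content of the theorem lies. The codomain $\coCart(\qK)_{/A}$ is a homotopy coherent nerve, so an $n$-simplex there is not merely ``a composable sequence of cartesian functors packaged into a simplicial structure'': it is a simplicial functor $\gC\Del^n \to g_*\coCart(\eK)_{/A}$, i.e.\ a cube's worth $(\Del^1)^{\times(j-i-1)} \to \Fun(\,\cdot\,,\,\cdot\,)$ of explicit higher homotopies for each pair $i<j$, subject to \emph{strict} simplicial functoriality. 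You cannot produce such data by choosing each simplex as an ``essentially unique solution to a lifting problem'' and then ``verifying the simplicial identities'': essentially unique choices made independently at each cell will satisfy those identities only up to homotopy. What is needed---and what the paper supplies---is the fact that $\gC\Fun_{\eK}(A,B)$ is a simplicial computad, freely generated by its atomic arrows (beads), so that a strictly functorial diagram can be built cell by cell along the skeletal filtration; the inductive step is the outer horn extension of Proposition \ref{prop:outer-horn-extension}, whose proof requires identifying $\Fun_{\gC\Horn^{n,n}}(0,n)\inc\Fun_{\gC\Del^n}(0,n)$ with a cubical horn inclusion and invoking the pointwise cocartesian cylinder lifting of Lemma \ref{lem:cocart-cylinder-extensions} together with the pullback condition on the final face. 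The contractibility of the space of lifts (Theorem \ref{thm:cocart-lifts-lax}) is then what makes the construction well defined up to isomorphism, not what makes it exist. None of this machinery appears in your outline, so the ``iterated cocartesian lifting for higher-dimensional data'' step does not go through as stated.

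Two further points. First, the paper reduces the general-$A$ statement to the $A=1$ case by applying the basic comprehension construction to $p\times A \colon E\times A \tfib B\times A$ in the sliced $\infty$-cosmos $\eK_{/A}$, using that $-\times A \colon \eK \to \eK_{/A}$ is a functor of $\infty$-cosmoi; your direct approach via pullbacks $E^{(n)}\tfib A\times\Del^n$ is a plausible alternative intuition but is not developed into an argument. Second, the claim that the induced maps $E_f\colon E_a\to E_b$ are \emph{cartesian} functors---so that the construction lands in $\coCart(\qK)_{/A}$ rather than merely in $\qK_{/A}$---requires proof; the paper deduces it from the interchange argument of Lemma \ref{lem:cocartesian-interchange} together with the fact that pullback squares create cocartesian 1-arrows (Proposition \ref{prop:cart-fib-pullback}), whereas your proposal simply asserts it.
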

\addtocounter{thm}{-1}
}
The \emph{comprehension functor} $c_{p,A}$ defines a functor from the quasi-category $\Fun_\eK(A,B)$ of functors from $A$ to $B$ to the large quasi-category of cocartesian fibrations over $A$, or equivalently by adjunction a simplicial functor between simplicially-enriched categories described in more detail below.  The action on objects of Lurie's ``straightening'' construction, which carries a cocartesian fibration between quasi-categories to a simplicial functor, is modeled by the comprehension construction given here, which converts a cocartesian fibration $p$ into the comprehension functor $c_{p,1}$; see Remark \ref{rmk:straightening}.

The comprehension \emph{functor} itself can be understood as an analogue of Lurie's ``unstraightening'' construction, though presented as a functor between (large) quasi-categories rather than as a right Quillen functor between suitable model categories. Modulo this contextual difference, Lurie's ``unstraightening'' construction is then the special case of the comprehension functor where $p$ is taken to be the classifying cocartesian fibration for quasi-categories. It is important to note that at the level of generality of Theorem \ref{thm:general-comprehension}, the comprehension functor is \emph{not} typically an equivalence: one would only expect the comprehension functor $c_{p,A}$ to define an equivalence when $p$ is taken to be a particular classifying cocartesian fibration. In the $\infty$-cosmos of quasi-categories, an appropriate classifying cocartesian fibration $u\colon\qQ_*\tfib \qQ$ is constructed in Remark \ref{rmk:unstraightening}, though we defer the proof that for any quasi-category $\qB$ the associated comprehension functor 
\[
    \xymatrix@R=0em@C=6em{
      {\qQ^\qB = \Fun_{\qqCat}(\qB,\qQ)}\ar[r]^-{c_{u,\qB}} &
      {\coCart(\qqCat)_{/\qB}}}\] defines an equivalence  to a future paper. In that paper, we show that any small cocartesian fibration of quasi-categories $p \colon \qE \tfib \qB$ fits into a canonical homotopy pullback diagram   
      \[
  \xymatrix{ \qE \ar@{->>}[d]_p \ar[r]  \pbexcursion & \qQ_* \ar@{->>}[d]^u \\ \qB \ar[r]_{c_{p,1}} & \qQ}
  \]
and we apply the $\infty$-categorical Beck monadicity theorem of \cite{RiehlVerity:2012hc} to prove that $c_{u,\qB}$ defines an equivalence, which then carries the ``straightened'' functor $c_{p,1}$ to the cocartesian fibration $p$. This result is the closest quasi-category level analog of Lurie's right Quillen equivalence \cite[3.2.0.1]{Lurie:2009fk}.
  
There is a particular reason for our interest in the general form of the comprehension construction given here, which permits an arbitrary cocartesian fibration as input and allows us the freedom to work in any ambient $\infty$-cosmos. Namely, a specific instance of this construction yields a quasi-categorical version of the \emph{Yoneda embedding}, which is notoriously difficult to construct in the $\infty$-categorical context. We define the covariant Yoneda embedding associated to an $\infty$-category $A$ as a restriction of  the comprehension functor associated to the cocartesian fibration $(p_1,p_0) \colon A^\cattwo \tfib A \times A$ in the sliced $\infty$-cosmos $\eK_{/A}$ whose domain is the  \emph{arrow $\infty$-category} of $A$. The contravariant Yoneda embedding is defined dually; see Definitions \ref{defn:yoneda-embedding} and \ref{defn:contra-yoneda-embedding}.

Our constructive proof of Theorem \ref{thm:general-comprehension} exploits the ``freeness'' of simplicial categories defined by \emph{homotopy coherent realisation}, our term for the left adjoint of the familiar \emph{homotopy coherent nerve} functor from simplicial categories to simplicial sets.\footnote{The codomain of the comprehension functor is a large quasi-category defined as a homotopy coherent nerve, so in fact we define the comprehension functor by constructing its transpose, a simplicial functor $c_{p,A} \colon \gC\Fun_{\eK}(A,B) \to \coCart(\eK)_{/A}$ defining a homotopy coherent diagram of shape $\Fun_{\eK}(A,B)$ valued in the Kan complex enriched category of cocartesian fibrations over $A$ and cartesian functors between them.} Specifically, we show in \S\ref{sec:coherent-nerve} that every simplicial category indexing a homotopy coherent diagram is a \emph{simplicial computad}, a ``cofibrant'' simplicial category in a suitable sense. This allows us to precisely enumerate the data required to define functors whose domains are simplicial computads, the comprehension functor being one important instance. Because the homotopy coherent nerve and homotopy coherent realisation functors are ubiquitous in higher category theory, we indulge ourselves in a comprehensive exposition of the combinatorics of these constructions in anticipation that the technical results proven here will be broadly utilised.

In \S\ref{sec:cosmos}, we introduce the reader to our basic universes for formal category theory --- an $\infty$-cosmos and its homotopy 2-category --- and review the construction of the comma $\infty$-category associated to a cospan of functors. In \S\ref{sec:cocartesian}, we develop the theory of cocartesian fibrations between $\infty$-categories, emphasising the quasi-categorically enriched weak universal properties in contrast with the 2-categorical ones studied in the original account \cite{RiehlVerity:2015fy}. In \S\ref{sec:coherent-nerve}, we include a self-contained development of the theory of simplicial computads and apply it to the analysis of the homotopy coherent realisation and homotopy coherent nerve functors. 

In \S\ref{sec:cocones}, we combine the work of the previous two sections to prove the technical results that describe the essential mechanics of the comprehension construction. Namely, we show that a \emph{cocartesian cocone}, a simplicial natural transformation between a pair of \emph{lax cocones} whose shape is  (the homotopy coherent realisation of) a simplicial set $X$ satisfying certain properties, can be extended along any inclusion $X \inc Y$ to define a cocartesian cocone of shape $Y$. We prove also that the space of such extensions in a contractible Kan complex. Finally, the advertised comprehension functor is defined in \S\ref{sec:comprehension} as the domain component of a cocartesian cocone over a canonically-defined lax cocone. The contractibility condition then implies that the comprehension functor is homotopically unique.

In the concluding \S\ref{sec:computing}, we analyse the action of the comprehension functor on the hom-space $\Hom_{\qB}(a,b)$ between two objects $a,b \colon 1 \to B$ in the underlying quasi-category of $B$. We prove that the induced map $\Hom_{\qB}(a,b) \to \Hom_{\qK}(E_a,E_b)$ is equivalent to a morphism that we refer to as the \emph{external action} of the hom-space $\Hom_{\qB}(a,b)$ on the fibres of $p \colon E \tfib B$. As a corollary we deduce:

{
\renewcommand{\thethm}{\ref{thm:yoneda-ff}}
\begin{thm}$\quad$
\begin{enumerate}[label=(\roman*)]
  \item The Yoneda embedding  is a fully  faithful functor of quasi-categories. 
  \item Every quasi-category is
  equivalent to the homotopy coherent nerve of some Kan complex enriched
  category.
  \end{enumerate} 
\end{thm}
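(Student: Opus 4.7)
The plan is to deduce both statements from the hom-space computation advertised in Section~\ref{sec:computing}, which identifies the action of the comprehension functor on hom-spaces with a particular \emph{external action} on fibres. Recall that the Yoneda embedding arises as the restriction along $A \to A \times A$ of the comprehension functor associated to the arrow cocartesian fibration $(p_1,p_0) \colon A^\cattwo \tfib A \times A$, interpreted as a cocartesian fibration in a suitable sliced $\infty$-cosmos $\eK_{/A}$. Its fibres over an object $a$ are the representable slice $\infty$-categories, and the comprehension functor $c_{p,A}$ assigns to each object $a$ of $A$ the representable cocartesian fibration $a \backslash A \tfib A$ (or its dual in the contravariant case).

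For part (i), I would unpack the definition of the Yoneda embedding to express the induced map on hom-spaces $\Hom_A(a,b) \to \Hom(\yoneda a, \yoneda b)$ as an instance of the map $\Hom_{\qB}(a,b) \to \Hom_{\qK}(E_a,E_b)$ studied in Section~\ref{sec:computing}. By the identification of this map with the external action of $\Hom_{\qB}(a,b)$ on the fibres, the question reduces to showing that this external action, in the specific case of the arrow fibration, is an equivalence of Kan complexes. Here I would exploit the universal property of the arrow $\infty$-category: a point of $\Hom(\yoneda a, \yoneda b)$ is a cartesian functor between representable slices, which by a terminal-object style argument (the identity on $a$ lying in $a \backslash A$) is classified by its image on that terminal object, yielding an inverse to the external action. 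This is the key obstacle — the calculation is essentially a ``synthetic'' incarnation of the classical Yoneda lemma, executed at the level of hom-spaces, and it must be carried out carefully using the cocartesian universal property developed in Section~\ref{sec:cocartesian}.

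For part (ii), I would derive it formally from (i) together with the fact that the codomain of the Yoneda embedding is, by construction, the homotopy coherent nerve $\nrv(\eC)$ of a Kan-complex enriched category $\eC$ (namely the Kan-enriched category of cocartesian fibrations over $A$ and cartesian functors between them). Given any quasi-category $A$, the fully faithful functor $\yoneda \colon A \to \nrv(\eC)$ identifies $A$ with the full sub-quasi-category of $\nrv(\eC)$ spanned by the essential image of $\yoneda$. Since $\nrv$ preserves full subcategories, this full sub-quasi-category is itself $\nrv(\eC')$ for the full Kan-enriched subcategory $\eC' \subseteq \eC$ on the objects in the essential image, exhibiting the desired equivalence $A \simeq \nrv(\eC')$.

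The principal technical difficulty resides in part (i): translating the abstract description of $c_{p,A}$ on hom-spaces into the concrete external action, and then verifying that this action implements the Yoneda equivalence, requires marrying the simplicial-computad combinatorics of Section~\ref{sec:coherent-nerve} with the cocartesian lifting machinery established earlier. Once this hom-space identification is in hand, the passage to (ii) is a clean categorical consequence of the full faithfulness of $\yoneda$.
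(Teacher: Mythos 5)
Your proposal is correct and follows essentially the same route as the paper: both reduce part (i) via Theorem \ref{thm:comprehension-on-homs} to showing that the external action $\tilde{m}_{a,b}$ is an equivalence, establish that equivalence by the ``evaluate at the identity element $\id_a \colon 1 \to B\comma a$'' form of the synthetic Yoneda lemma (which the paper imports from its prequel as Theorem \refIV{thm:yoneda} rather than reproving), and then deduce (ii) by restricting the codomain of the now fully faithful $\yoneda$ to the homotopy coherent nerve of the full Kan-complex-enriched subcategory spanned by the representables $\qB\comma b \tfib \qB$, using Lemma \ref{lem:equiv-of-qcats}. The only caveat is a minor variance slip (the covariant embedding lands in cartesian fibrations with fibres $A\comma a$, not $a\comma A$), which does not affect the structure of the argument.
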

\addtocounter{thm}{-1}
}

\subsection{Notational conventions}

To concisely cite previous work in this program, we refer to the results of \cite{RiehlVerity:2012tt, RiehlVerity:2012hc, RiehlVerity:2013cp,RiehlVerity:2015fy,RiehlVerity:2015ke}  as I.x.x.x., II.x.x.x, III.x.x.x, IV.x.x.x, or V.x.x.x respectively. However note that most of these citations are to definitions, which we also reproduce here. The results in this paper are only minimally reliant on theorems from our previous work. 

We adopt a distinguished typeface to differentiate quasi-categories $\qA, \qB, \qC$ from generic $\infty$-categories $A, B, C$. We designate the quasi-categories constructed from other varieties of higher categories using the same roman letter: e.g., $\qC$ for the homotopy coherent nerve of a Kan complex enriched category $\eC$ (see \ref{prop:qcat-from-kan-enriched}); $\qK$ for the homotopy coherent nerve of the Kan complex enriched core of an $\infty$-cosmos $\eK$ (see \ref{ntn:qcat-from-cosmos}); and $\qB \defeq \Fun_{\eK}(1,B)$ for the underlying quasi-category of an $\infty$-category $B$ (see \ref{defn:underlying-qcat}). 

Herein, as is typical, the adjective ``small'' is used to distinguish those sets
that are members of a Grothendieck universe defined relative to a fixed
inaccessible cardinal. The categories deployed in our meta-theory will generally
be large and locally small, and if we say that such things possess all limits or
colimits then we shall assume it as given that we are asking only for all small
such. The quasi-categories defined as homotopy coherent nerves are typically
large. All other quasi-categories or simplicial sets, particularly those used to
index homotopy coherent diagrams, are assumed to be small.
 
\subsection{Acknowledgements}

The authors are grateful for support from the National Science Foundation (DMS-1551129) and from the Australian Research Council (DP160101519). This work was commenced when the second-named author was visiting the first at Harvard and then at Johns Hopkins and completed while the first-named author was visiting the second at Macquarie. We thank all three institutions for their assistance in procuring the necessary visas as well as for their hospitality.


\section{\texorpdfstring{$\infty$}{infinity}-cosmoi and their homotopy 2-categories}\label{sec:cosmos}

We begin in \S\ref{ssec:cosmoi-background} by reviewing the context for this
work, describing the ``universe'' in which our $\infty$-categories live as
objects: an $\infty$-\emph{cosmos}. In \S\ref{ssec:htpy-2-cat}, we define the
quotient \emph{homotopy 2-category} of an $\infty$-cosmos, within which one
defines adjunctions between $\infty$-categories. In \S\ref{ssec:commas}, we
review the important constructions of \emph{arrow} and \emph{comma}
$\infty$-categories. These are used to describe an internal ``equational''
characterisation of the (co)cartesian fibrations that are studied in
\S\ref{sec:cocartesian}.

\begin{rec}[simplicial categories]\label{rec:simp-cat}
  We work extensively with \emph{simplicial categories}, that is to say
  categories enriched in the cartesian category of simplicial sets (denoted
  using calligraphic letters $\eA, \eB, \eK,...$). We shall often call the
  enriched homs of a simplicial category its \emph{function complexes}, and use
  $\Fun_{\eC}(A,B)$ for the simplicial set of arrows from an object $A$ to an
  object $B$ in $\eC$. An $n$-simplex in $\Fun_{\eC}(A,B)$ is sometimes said to
  be an \emph{$n$-arrow} from $A$ to $B$.
  
  A simplicial category $\eC$ may also be presented as a simplicial object $\eC
  \colon \Del\op \to \Cat$ which acts identically on objects. In this
  representation $\eC$ is comprised of
  \begin{itemize}
  \item categories $\eC_n$ for $n \geq 0$ with a common set of objects
    $\text{ob}\eC$, whose arrows are the $n$-arrows of $\eC$, and
  \item identity-on-objects functors $-\cdot \alpha\colon \eC_m \to\eC_n$
    indexed by and contravariantly functorial in $\alpha \colon [n] \to [m] \in
    \Del$.
  \end{itemize}
  We shall identify $\sCat$ with the full subcategory of $\Cat^{\Del\op}$ of
  those simplicial objects satisfying these properties. It is clear that $\sCat$
  is closed in $\Cat^{\Del\op}$ under all (small) limits and colimits.
\end{rec}

\begin{rec}[simplicial dual]\label{rec:simp-dual}
  Let $(-)\dual\colon\Del\to\Del$ denote the functor which acts by carrying each
  ordinal $[n]$ to its categorical dual. Precomposing by this duality we obtain
  a functor $(-)\op\colon\sSet\to\sSet$ called the \emph{simplicial dual}.
\end{rec}

\begin{rec}[duals of simplicial categories]\label{rec:simp-cat-duals}
 A simplicial category $\eC$ admits two distinct duals:
 \begin{enumerate}[label=(\roman*)]
 \item The \emph{opposite category} $\eC\op$ is simply the enriched variant of
   the familiar dual category, with the same set of objects but with 
   \[ \Fun_{\eC\op}(A,B) \defeq\Fun_{\eC}(B,A).\]
 \item The simplicial category $\eC\co$ is constructed by applying the product
   preserving simplicial dual functor to the hom-spaces of $\eC$. That is $\eC$
   and $\eC\co$ share the same sets of objects and their hom-spaces are related
   by the equation \[\Fun_{\eC\co}(A,B)=\Fun_{\eC}(A,B)\op.\]
\end{enumerate}
\end{rec}

\subsection{\texorpdfstring{$\infty$}{infinity}-cosmoi}\label{ssec:cosmoi-background}

An $\infty$-cosmos is a category $\eK$ whose objects $A, B$ we call
$\infty$-\emph{categories} and whose function complexes $\Fun_{\eK}(A,B)$ are
quasi-categories of \emph{functors} between them. The handful of axioms imposed
on the ambient quasi-categorically enriched category $\eK$ permit the
development of a general theory of $\infty$-categories ``synthetically,'' i.e.,
only in reference to this axiomatic framework. We work in an $\infty$-cosmos
$\eK$ with all objects cofibrant, in contrast to the more general notion first
introduced in \cite{RiehlVerity:2015fy}.

\begin{defn}[$\infty$-cosmos]\label{defn:cosmos}
  An $\infty$-\emph{cosmos} is a simplicial category $\eK$ whose
  function spaces $\Fun_{\eK}(A,B)$ are quasi-categories and which is equipped
  with a specified subcategory of \emph{isofibrations}, denoted by ``$\tfib$'',
  satisfying the following axioms:
 \begin{enumerate}[label=(\alph*)]
    \item\label{defn:cosmos:a} (completeness) As a simplicial category,  $\eK$ possesses a terminal object $1$, small products, cotensors $A^U$ of  objects $A$ by all small simplicial sets $U$, inverse limits of countable sequences of isofibrations, and pullbacks of isofibrations along any functor.
    \item\label{defn:cosmos:b} (isofibrations) The class of isofibrations contains the isomorphisms and all of the functors $!\colon A \tfib 1$ with codomain $1$; is stable under pullback along all functors; is closed under inverse limit of countable sequences; and if $p\colon E\tfib B$ is an isofibration in $\eK$ and $i\colon U\inc V$ is an inclusion of  simplicial sets then the Leibniz cotensor $i\leib\pwr p\colon E^V\tfib E^U\times_{B^U} B^V$ is an isofibration. Moreover, for any object $X$ and isofibration $p \colon E \tfib B$, $\Fun_{\eK}(X,p) \colon \Fun_{\eK}(X,E) \tfib \Fun_{\eK}(X,B)$ is an isofibration of quasi-categories.
\end{enumerate}
The underlying category of an $\infty$-cosmos $\eK$ has a canonical subcategory of (representably-defined) \emph{equivalences}, denoted by ``$\we$'', satisfying the 2-of-6 property. A functor $f \colon A \to B$ is an \emph{equivalence} just when the induced functor $\Fun_{\eK}(X,f) \colon \Fun_{\eK}(X,A) \to \Fun_{\eK}(X,B)$ is an equivalence of quasi-categories for all objects $X \in \eK$.  The  \emph{trivial fibrations}, denoted by ``$\trvfib$'', are those functors that are both equivalences and isofibrations.
 \begin{enumerate}[label=(\alph*), resume]
    \item\label{defn:cosmos:c} (cofibrancy) All objects are \emph{cofibrant}, in the sense that they enjoy the left lifting property with respect to all trivial fibrations in $\eK$. 
\[ \xymatrix{ & E \ar@{->>}[d]^{\rotatebox{90}{$\displaystyle\sim$}} \\ A \ar[r] \ar@{-->}[ur]^{\exists} & B}\] 
  \end{enumerate}
It follows from \ref{defn:cosmos}\ref{defn:cosmos:a}-\ref{defn:cosmos:c} that:
 \begin{enumerate}[label=(\alph*), resume]
    \item\label{defn:cosmos:d} (trivial fibrations) The trivial fibrations define a subcategory containing  the isomorphisms; are stable under pullback along all functors; the formation of inverse limits of countable sequences; and the Leibniz cotensor $i\leib\pwr p\colon E^V\trvfib E^U\times_{B^U}  B^V$ of an isofibration $p\colon E\tfib B$ in $\eK$ and a monomorphism $i\colon U\inc V$ between presented simplicial sets   is a trivial fibration when $p$ is a trivial fibration in $\eK$ or $i$ is trivial cofibration in the Joyal model structure on $\sSet$ (see \refV{lem:triv.fib.stab}).
\item\label{defn:cosmos:e} (factorisation) Any functor $f \colon A \to B$ may be factored as $f = p j$ 
\[ \xymatrix{ & N_f \ar@{->>}[dr]^p \ar@{->>}@/_3ex/[dl]_{q}^*-{\rotatebox{45}{$\labelstyle\sim$}} \\ A \ar[rr]_f \ar[ur]^*-{\rotatebox{45}{$\labelstyle\sim$}}_j & & B}\] where $p \colon N_f \tfib B$ is an isofibration and $j \colon A \we N_f$ is right inverse to a trivial fibration $q \colon N_f \trvfib A$ (see \refIV{lem:Brown.fact}).
\end{enumerate}
\end{defn}

\begin{ex}[$\infty$-cosmos of quasi-categories]\label{ex:qcat-cosmos}
The prototypical example is the $\infty$-cosmos of quasi-categories, with function complexes  inherited from the usual cartesian closed category of simplicial sets. An \emph{isofibration} is an inner fibration that has the right lifting property with respect to the inclusion $\catone\inc\iso$ of either endpoint of the (nerve of the) free-standing isomorphism. The equivalences are the simplicial homotopy equivalences defined with respect to the interval $\iso$. That is, a map $f \colon A \to B$ of quasi-categories is an \emph{equivalence} just when there exists a map $g \colon B \to A$ together with maps $A \to A^\iso$ and $B \to B^\iso$ that restrict along the vertices of $\iso$ to the maps $\id_A$, $gf$, $fg$, and $\id_B$ respectively: 
\[ f \colon A \we B\quad \mathrm{iff}\quad \exists g \colon B \we A\quad \mathrm{and} \quad \vcenter{\xymatrix{ & A \\ A \ar[r] \ar[ur]^{\id_A} \ar[dr]_{gf} & A^\iso \ar@{->>}[u]_{p_0} \ar@{->>}[d]^{p_1} \\ & A}} \quad\mathrm{and}\quad \vcenter{\xymatrix{ & B \\ B \ar[r]\ar[ur]^{fg} \ar[dr]_{\id_B} & B^{\iso} \ar@{->>}[u]_{p_0} \ar@{->>}[d]^{p_1} \\ & B}}\]
\end{ex}

The $\infty$-cosmos of quasi-categories can also be described using the language of model categories. It is the full subcategory of fibrant objects, with the isofibrations and equivalences respectively taken to be the fibrations and weak equivalences between fibrant objects, in a model category that is enriched over the Joyal model structure on simplicial sets and in which all fibrant objects are cofibrant; in this case, that model category is the Joyal model structure on simplicial sets itself. It is easy to verify that any category of fibrant objects arising in this way defines an $\infty$-cosmos (see Lemma \refIV{lem:model-categories-cosmoi}). This is the source of the majority of our examples, which are described in more detail in \S\refIV{sec:cosmoi}.

\begin{ex}[$\infty$-cosmoi of $(\infty,1)$-categories]\label{ex:infinity-1} There are $\infty$-cosmoi $\CSS$ and $\Segal$ whose $\infty$-categories are the complete Segal spaces or Segal categories respectively, models of $(\infty,1)$-categories introduced by Rezk \cite{Rezk:2001sf} and by Hirschowitz-Simpson.
\end{ex}

\begin{ex}[$\infty$-cosmoi of $(\infty,n)$-categories] There is also an $\infty$-cosmos whose objects are $\theta_n$-\emph{spaces}, a model of $(\infty,n)$-categories introduced by Rezk \cite{Rezk:2010fk}. For any sufficiently nice model category $\eM$, there is an $\infty$-cosmos of \emph{Rezk objects} in $\eM$, the iterated complete Segal spaces of Barwick being one special case.
\end{ex}

For any $\infty$-category $A$ in any $\infty$-cosmos $\eK$, the strict slice $\eK_{/A}$ is again an $\infty$-cosmos. It follows that all of our theorems in this axiomatic framework immediately have fibred analogues.

\begin{defn}\label{defn:sliced-cosmoi} 
If $\eK$ is any $\infty$-cosmos and $A \in \eK$ is any object, then there is an $\infty$-cosmos $\eK_{/A}$, the \emph{sliced $\infty$-cosmos of $\eK$ over $A$}, whose:
\begin{itemize}
\item objects are isofibrations $p \colon E \tfib A$ with codomain $A$;
\item mapping quasi-category from $p \colon E \tfib A$ to $q \colon F \tfib A$ is defined by  taking the pullback
\[
    \xymatrix@=1.5em{
      {\Fun_A(p,q)}\pbexcursion\ar[r]\ar@{->>}[d] &
      {\Fun_{\eK}(E,F)}\ar@{->>}[d]^{\Fun_{\eK}(E,q)} \\
      {1}\ar[r]_-{p} & {\Fun_{\eK}(E,A)}
    }
\]
  in simplicial sets;
\item isofibrations, equivalences, and trivial fibrations are created by the forgetful functor $\eK_{/A} \to \eK$;
\end{itemize}
and in which the simplicial limits are defined in the usual way for sliced simplicial categories (see \refV{defn:sliced-cosmoi}).
\end{defn}

\subsection{The homotopy 2-category of an \texorpdfstring{$\infty$}{infinity}-cosmos}\label{ssec:htpy-2-cat}

In fact most of the development of the basic theory of $\infty$-categories takes
place not in an $\infty$-cosmos, but in a quotient of the $\infty$-cosmos that
we call its \emph{homotopy 2-category}. Each $\infty$-cosmos has an underlying
1-category whose objects are the $\infty$-categories of that $\infty$-cosmos and
whose morphisms, which we call $\infty$-\emph{functors} or more often simply
\emph{functors}, are the vertices of the function complexes.

\begin{defn}[the homotopy 2-category of $\infty$-cosmos] The \emph{homotopy
    2-category} of an $\infty$-cosmos $\eK$ is a strict 2-category $\eK_2$ so
  that
\begin{itemize}
\item the objects of $\eK_2$ are the objects of $\eK$, i.e., the
  $\infty$-categories;
\item the 1-cells $f \colon A \to B$ of $\eK_2$ are the vertices $f \in
  \Fun_{\eK}(A,B)$ in the function complexes of $\eK$, i.e., the
  $\infty$-functors;
\item a 2-cell $\xymatrix@C=3em{ A \ar@/^1.5ex/[]!R(0.5);[r]!L(0.65)^f
    \ar@/_1.5ex/[]!R(0.5);[r]!L(0.65)_g \ar@{}[r]|{\Downarrow\alpha}& B}$ in
  $\eK_2$ is represented by a 1-simplex $\alpha \colon f \to g \in
  \Fun_{\eK}(A,B)$, and a parallel pair of 1-simplices in $\Fun_{\eK}(A,B)$
  represent the same 2-cell if and only if they bound a 2-simplex whose
  remaining outer face is degenerate.
\end{itemize}
Put concisely, the homotopy 2-category is the 2-category $\eK_2 \defeq \ho_*\eK$
defined by applying the homotopy category functor $\ho \colon \qCat \to
\eop{Cat}$ to the function complexes of the $\infty$-cosmos; so the hom-categories
in $\eK_2$ are defined by the formula: \[\Hom_{\eK_2}(A,B)\defeq
  \ho(\Fun_{\eK}(A,B))\]
\end{defn}

\begin{ex}[dual $\infty$-cosmoi]\label{ex:dual-cosmoi} The dual $\eK\co$ of any $\infty$-cosmos
  $\eK$ is again an $\infty$-cosmos. The homotopy 2-category of $\eK\co$ is the
  ``co'' dual of the homotopy 2-category of $\eK$, reversing the 2-cells but not
  the 1-cells.
\end{ex}

Proposition \refIV{prop:equiv.are.weak.equiv} proves that the equivalences between $\infty$-categories admit another important characterisation: they are precisely the equivalences in the homotopy 2-category of the $\infty$-cosmos. The upshot is that equivalence-invariant 2-categorical constructions are appropriately ``homotopical'', characterising $\infty$-categories up to equivalence, and that we may use the term ``equivalence'' unambiguously in both the quasi-categorically enriched and 2-categorical contexts.

Similarly, the reason we have chosen the term ``isofibrations'' for the designated class of $\infty$-functors $A \tfib B$ is because these maps define isofibrations in the homotopy 2-category.  An \emph{isofibration} in a 2-category is a 1-cell $p \colon A \tfib B$ so that any invertible 2-cell whose domain factors through $p$ can be lifted along $p$ to define an invertible 2-cell with codomain $A$; see \refIV{rec:trivial-fibration} and \refIV{lem:isofib.are.representably.so}.

\begin{defn} An \emph{adjunction} between $\infty$-categories $A, B \in \eK$ is
  simply an adjunction in the homotopy 2-category $\eK_2$. Such things comprise
  a pair of functors $f \colon B \to A$ and $u \colon A \to B$, together with a
  pair of 2-cells $\eta \colon \id_B \To uf$ and $\epsilon \colon fu \To \id_A$
  satisfying the triangle identities.
\end{defn}

\begin{defn}[left adjoint right inverses]\label{defn:lari}
  We say that a map $u\colon A\to B$ of $\eK$ has a \emph{left adjoint right
    inverse} (or \emph{lari}) if it admits a left adjoint $f\colon B\to A$ whose
  unit is an isomorphism. In the situation where $u\colon A\tfib B$ is an
  isofibration we may lift the unit isomorphism of $f\dashv u$ to give a another
  left adjoint $f'\colon B\to A$ to $u$ for which the unit is an identity. (The proof is left as an exercise, or see
  Lemma~\refI{lem:isofibration-RARI}.) 
\end{defn}  
  
  The class of functors with left adjoint
  right inverses is closed under composition and contains all equivalences.  Various duals of this notion exist, named by the obvious acronyms \emph{lali}
  (left adjoint left inverse), \emph{rari} (right adjoint right inverse), and
  $\emph{rali}$ (right adjoint left inverse).

\begin{defn} A \emph{functor of $\infty$-cosmoi} is a simplicial functor that preserves the classes of isofibrations and each of the limits specified in \ref{defn:cosmos}\ref{defn:cosmos:a}.
\end{defn}

\begin{ex}
Functors of $\infty$-cosmoi include:
\begin{enumerate}[label=(\roman*)]
\item the representable functor $\Fun_{\eK}(X,-) \colon\eK \to \qCat$ for any object $X \in \eK$;
\item as a special case, the \emph{underlying quasi-category functor} $\Fun_{\eK}(1,-) \colon \eK \to \qCat$; 
\item the simplicial cotensor $(-)^U \colon \eK \to \eK$ with any simplicial set $U$; and 
\item the pullback functor $f^* \colon \eK_{/B} \to \eK_{/A}$ for any functor $f \colon A \to B \in \eK$.
\end{enumerate}
among others.
\end{ex}

A functor of $\infty$-cosmoi induces a 2-functor between their homotopy 2-categories. Any 2-functor preserves the internal 2-categorical notions of adjunction or equivalence. A common theme is that $\infty$-categorical definitions that admit ``internal'' characterisations in the $\infty$-cosmos are also preserved by functors of $\infty$-cosmoi, as we shall see in Lemma \ref{lem:functor-preservation}.  Those internal characterisations make use of the \emph{comma construction}, a subject to which we now turn.

\subsection{Arrow and comma constructions}\label{ssec:commas}

The simplicially-enriched limits in the $\infty$-cosmos provided by the axioms  \ref{defn:cosmos}\ref{defn:cosmos:a} and \ref{defn:cosmos:b} can be used to build new $\infty$-categories from a diagram of $\infty$-categories and $\infty$-functors. Here we are interested in two important cases: \emph{arrow} $\infty$-categories, which are a  special case of a more general \emph{comma construction}.

\begin{defn}[arrow $\infty$-categories] For any $\infty$-category $A$, the simplicial cotensor 
\[ \xymatrix@C=30pt{ A^\cattwo \defeq A^{\Del^1} \ar@{->>}[r]^-{(p_1,p_0)} & {A^{\boundary\Delta^1}} \cong A \times A}\] defines the \emph{arrow $\infty$-category} $A^\cattwo$, equipped with an isofibration $(p_1,p_0)\colon A^\cattwo \tfib A \times A$, where $p_1 \colon A^\cattwo \tfib A$ denotes the codomain projection and $p_0 \colon A^\cattwo \tfib A$ denotes the domain projection.
\end{defn}

Using the notation $\cattwo \defeq \Delta^1$, the defining universal property of the simplicial cotensor asserts that the canonical map defines an isomorphism of quasi-categories
\[ \Fun_{\eK}(X, A^\cattwo) \stackrel{\cong}{\longrightarrow} \Fun_{\eK}(X,A)^{\cattwo}.\]  In particular, taking $X=A^\cattwo$, the identity functor $\id_{A^\cattwo}$ transposes to define a vertex in $\Fun_{\eK}(A^\cattwo,A)^\cattwo$, a 1-arrow in $\Fun_{\eK}(A^\cattwo,A)$ which we display as
\begin{equation}\label{eq:generic-arrow} \xymatrix@C=4em{ A^\cattwo \ar@{->>}@/^2ex/[]!R(0.6);[r]!L(0.6)^{p_0} \ar@{->>}@/_2ex/[]!R(0.6);[r]!L(0.6)_{p_1} \ar@{}[r]|-{\Downarrow\phi} & A}\end{equation} since it represents a  2-cell with this boundary in the homotopy 2-category $\eK_2$.

Using the simplicially enriched pullbacks of isofibrations that exist by virtue of  axioms  \ref{defn:cosmos}\ref{defn:cosmos:a} and \ref{defn:cosmos:b}, arrow $\infty$-categories can be used to define a general \emph{comma $\infty$-category} associated to a cospan of functors.

\begin{defn}[comma $\infty$-categories]\label{defn:comma} Any pair of functors  $f\colon B\to A$ and $g\colon C\to A$ in an $\infty$-cosmos $\eK$ has an associated \emph{comma $\infty$-category}, constructed by the following pullback, formed in $\eK$:
  \begin{equation}\label{eq:comma-as-simp-pullback}
    \xymatrix@=2.5em{
      {f\comma g}\pbexcursion \ar[r]\ar@{->>}[d]_{(p_1,p_0)} &
      {A^\cattwo} \ar@{->>}[d]^{(p_1,p_0)} \\
      {C\times B} \ar[r]_-{g\times f} & {A\times A}
    }
  \end{equation}
    Transposing the data in this diagram, we obtain an associated square
  \begin{equation}\label{eq:comma-1-arrow}
    \xymatrix@R=2em{
      {f\comma g}\ar@{->>}[r]^{p_0}\ar@{->>}[d]_{p_1} & {B}\ar[d]^f
      \ar@{}[dl] \ar@{=>} ?(0.4);?(0.6)_{\phi}  \\
      {C}\ar[r]_g & {A}
    }
  \end{equation}
  in which $\phi$ is a $1$-arrow. Note that, by construction, the map $(p_1,p_0) \colon f \comma g \tfib C \times B$ is an isofibration.
  
  If we let $\pbshape$ denote the three object category $\{a\rightarrow
  c\leftarrow b\}$, then the comma construction extends to a simplicial functor
  \begin{equation*}
    \xymatrix@R=0em@C5em{{\eK^{\pbshape}}\ar[r]^{\comma} & {\eK}}
  \end{equation*}
  Here $\eK^{\pbshape}$ denotes the simplicial functor category, so its objects
  are diagrams of the form
  \begin{equation*}
    \xymatrix@R=0em@C=5em{
      {B}\ar[r]^f & {A} & {C}\ar[l]_g
    }
  \end{equation*}
  and its $0$-arrows are natural transformations
\[
    \xymatrix@R=1.5em@C=5em{
      {B}\ar[r]^{f}\ar[d]_{q} & {A}\ar[d]_{p} & {C}\ar[l]_{g}\ar[d]^{r} \\
      {B'}\ar[r]_{f'} & {A'} & {C'}\ar[l]^{g'}
    }
  \]
  which are composed pointwise.   We shall use the notation $\comma(q, p,r)\colon
  f\comma g\to f'\comma g'$ to denote the image of this $0$-arrow of under this
  comma construction functor $\comma\colon\eK^\pbshape\to\eK$.
\end{defn}

As a simplicially-enriched limit in $\eK$, the $\infty$-category $f \comma g$ has a universal property expressed via a natural isomorphism of quasi-categories
\begin{equation}\label{eq:comma-simp-UP} \Fun_{\eK}(X, f\comma g)\stackrel{\cong}{\longrightarrow} \Fun_{\eK}(X,f) \comma \Fun_{\eK}(X,g)\end{equation} for any $X \in \eK$, where the right-hand side is computed by the analogous pullback to \eqref{eq:comma-as-simp-pullback}, formed in $\qCat$. 

\begin{lem}[homotopical properties of the comma construction]\label{lem:comma}
  The comma functor $\comma\colon\eK^\pbshape\to\eK$ 
  \begin{enumerate}[label=(\roman*)]
  \item carries each pointwise isofibration (respectively pointwise
    equivalence) in $\eK^\pbshape$ to an isofibration (respectively equivalence)
    in $\eK$; and 
  \item carries each pullback of a pointwise isofibration in $\eK^\pbshape$ to
    a pullback of an isofibration in $\eK$.
  \end{enumerate}
  It follows, therefore, that the comma construction maps each pointwise
  homotopy pullback in $\eK^\pbshape$ to a homotopy pullback in $\eK$.
\end{lem}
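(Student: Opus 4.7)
The plan is to leverage the presentation \eqref{eq:comma-as-simp-pullback} of $f\comma g$ as the simplicially-enriched pullback of $C\times B \xrightarrow{g\times f} A\times A \twoheadleftarrow A^\cattwo$, which expresses the comma functor as a composite of products, cotensors by $\cattwo$, and pullbacks. Each of these operations behaves well under isofibrations, equivalences, and pullbacks by the $\infty$-cosmos axioms of \ref{defn:cosmos}, so the lemma should fall out by direct verification against those axioms.

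For the isofibration half of (i), given a pointwise isofibration $(q,p,r)$ of cospans from $(f,g)$ to $(f',g')$, the Leibniz cotensor axiom \ref{defn:cosmos}\ref{defn:cosmos:b} applied to $\boundary\Delta^1\inc\Delta^1$ and to the isofibration $p\colon A\tfib A'$ produces an isofibration
\[ A^\cattwo \tfib (A\times A)\times_{A'\times A'} A'^\cattwo. \]
Combining this with the product isofibration $q\times r\colon B\times C\tfib B'\times C'$ and pulling back along the cospan for $(f',g')$, one exhibits $\comma(q,p,r)$ as an iterated pullback of isofibrations, hence an isofibration by stability of isofibrations under pullback.

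The equivalence half of (i) is handled representably, via the natural isomorphism \eqref{eq:comma-simp-UP} together with the representable characterisation of equivalences (Proposition \refIV{prop:equiv.are.weak.equiv}). The claim reduces to the corresponding assertion for comma quasi-categories, where the right leg $(p_1,p_0)\colon \qA^\cattwo\tfib \qA\times\qA$ is an isofibration; pullback along it therefore models a homotopy pullback in the Joyal model structure, so pointwise equivalences of cospans induce equivalences on comma quasi-categories. For (ii) I would appeal to the general principle that the weighted-limit construction $\comma$ preserves pullbacks in the diagram variable: pullbacks in $\eK^\pbshape$ are computed pointwise, while products, cotensors by $\cattwo$, and iterated pullbacks in $\eK$ each commute with pullbacks, so the pullback of a diagram of commas can be identified with the comma of the pointwise pullback; by (i), the right leg of the resulting square is an isofibration.

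The homotopy-pullback corollary then follows by the standard factorisation trick. Given a pointwise homotopy pullback square in $\eK^\pbshape$, factor the relevant leg pointwise as an equivalence followed by an isofibration using \ref{defn:cosmos}\ref{defn:cosmos:e}; part (ii) identifies the comma of the strict pullback along this isofibration with a pullback in $\eK$, while part (i) carries the intermediating pointwise equivalences to equivalences. The main obstacle is the careful iterated-pullback decomposition of $\comma(q,p,r)$ in (i); once that presentation is set up explicitly, everything else is routine bookkeeping against the $\infty$-cosmos axioms.
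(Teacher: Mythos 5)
Your proposal is correct and is essentially the paper's own argument: the paper's proof of part (i) simply cites "a standard argument originally due to Reedy," which is precisely the iterated-pullback decomposition via the Leibniz cotensor that you spell out (with the representable reduction to the gluing lemma in $\qCat$ handling the equivalence half), and part (ii) is the same "limits commute" observation the paper makes. Your factorisation argument for the homotopy-pullback consequence is likewise the intended one.
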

\begin{proof}
  As discussed in Lemma~\refI{lem:comma-obj-maps}, the first result follows by a
  standard argument originally due to Reedy~\cite{reedy1973htm}. Because limits commute, the second
  result follows.
\end{proof}

\section{Cocartesian fibrations of \texorpdfstring{$\infty$}{infinity}-categories}\label{sec:cocartesian}

\emph{Cartesian fibrations} are isofibrations $p \colon E \tfib B$ in an
$\infty$-cosmos $\eK$ whose fibres depend functorially on the base, in a sense
described by a lifting property for certain 2-cells. To complement prior
treatments, we dualise the exposition, defining \emph{cocartesian fibrations}
instead. Cartesian fibrations are then cocartesian fibrations in the dual
$\infty$-cosmos $\eK\co$ of Example \ref{ex:dual-cosmoi}.

 In \S\refIV{sec:cartesian}, cocartesian fibrations are defined internally to
the homotopy 2-category of an $\infty$-cosmos as functors $p \colon E \tfib B$
for which any 2-cell with codomain $B$ admits a $p$-\emph{cocartesian} lift with
specified domain, satisfying certain properties. The precise definition is a
weakening of the standard definition of a fibration in any 2-category
\cite{Street:1974:FibYoneda}, as is appropriate for the present homotopical
context. However, we won't make use of this internal 2-categorical definition
here. Instead in \S\ref{ssec:cartesian} we introduce the internal definition at
the level of the $\infty$-cosmos. Then in \S\ref{ssec:cocartesian-arrows}, we
circle back after the fact to unravel the definition to reveal the relevant
class of $p$-\emph{cocartesian} 1-arrows, which precisely represent the
$p$-\emph{cocartesian 2-cells} in the homotopy 2-category that were the focus of
\S\refIV{sec:cartesian}.

\subsection{Cocartesian fibrations}\label{ssec:cartesian}
 
Cocartesian fibrations can be characterised internally to the $\infty$-cosmos
via adjoint functors involving comma $\infty$-categories, for which we now
establish notation.
 
\begin{ntn}\label{ntn:comma-adjoints}
  For any isofibration $p \colon E \tfib B$, there is a canonically defined
  functor $k=\comma(\id_E,p, p)\colon E^\cattwo\tfib p\comma B$, which is an
  isofibration by Lemma~\ref{lem:comma}. Write $i \colon E \to p\comma B$ for the
  restriction of $k$ along the diagonal $E \to E^\cattwo$. These functors whisker
  with the canonical 1-arrow of \eqref{eq:comma-1-arrow} to satisfy the following
  pasting identities:
  \begin{equation*}
    \vcenter{\xymatrix@C=0.8em@R=1.2em{
        & {E}\ar[d]^-{i} & \\
        & {p \comma B}\ar@{->>}[dl]_{p_1}\ar@{->>}[dr]^{p_0} & \\
        {B} && {E}\ar[ll]^p
        \ar@{} "2,2";"3,2" |(0.6){\Leftarrow\phi}
      }} = 
    \vcenter{\xymatrix@C=0.8em@R=1.2em{
        & {E}\ar@{->>}[dl]_p\ar@{=}[dr] & \\
        {B} && {E}\ar[ll]^p
        \ar@{} "1,2";"2,2" |(0.6){=}
      }}   
    \mkern50mu
    \vcenter{\xymatrix@C=0.8em@R=1.2em{
        & {E^\cattwo}\ar[d]^-{k} & \\
        & {p \comma B}\ar@{->>}[dl]_{p_1}\ar@{->>}[dr]^{p_0} & \\
        {B} && {E}\ar[ll]^p
        \ar@{} "2,2";"3,2" |(0.6){\Leftarrow\phi}
      }} = 
    \vcenter{\xymatrix@C=0.8em@R=1.2em{
        & {E^\cattwo}\ar@{->>}_{pp_1}[dl]\ar@{->>}[dr]^{p_0} & \\
        {B} && {E}\ar[ll]^p
        \ar@{} "1,2";"2,2" |(0.6){\Leftarrow p\phi}
      }}
  \end{equation*}
\end{ntn}

\begin{defn}[{\refIV{thm:cart.fib.chars}}]\label{defn:cocart-fibration}
  An isofibration $p \colon E \tfib B$ is a \emph{cocartesian fibration} if
  either of the following equivalent conditions hold:
  \begin{enumerate}[label=(\roman*)]
  \item\label{itm:cocart.fib.chars.ii} The functor $i\colon E\to p\comma B$
    admits a left adjoint in the slice $\infty$-cosmos $\eK_{/B}$:
    \begin{equation}\label{eq:cocartesian.fib.adj}
      \xymatrix@R=2em@C=3em{
        {E}\ar@{->>}[dr]_{p} \ar@/_0.6pc/[]!R(0.5);[rr]_{i}^{}="a" & &
        {p \comma B}\ar@{->>}[dl]^{p_1} \ar@{-->}@/_0.6pc/[ll]!R(0.5)_{\ell}^{}="b" 
        \ar@{}"a";"b"|{\bot} \\
        & B &
      }
    \end{equation}
  \item\label{itm:cocart.fib.chars.iii} The functor $k\colon E^\cattwo\to p\comma B$ admits a left adjoint right inverse in $\eK$:
    \begin{equation}\label{eq:cartesian.isosect.adj}
    \xymatrix@C=6em{
      {E^\cattwo}\ar@/_0.8pc/[]!R(0.6);[r]!L(0.45)_{k}^{}="u" &
      {p \comma B}\ar@{-->}@/_0.8pc/[]!L(0.45);[l]!R(0.6)_{\bar{\ell}}^{}="t"
      \ar@{}"u";"t"|(0.6){\bot}
    }
    \end{equation}
  \end{enumerate}
\end{defn}

\begin{defn}[{\refIV{thm:cart.fun.chars}}]\label{defn:cartesian-functor}
  Given two cocartesian fibrations $p\colon E\tfib B$ and
  $q\colon F\tfib A$ in $\eK$, then a pair of functors $(g,f)$ in the
  following commutative square
  \begin{equation}\label{eq:cart.fun}
    \xymatrix{{F}\ar[r]^{g}\ar@{->>}[d]_-{q} & {E} \ar@{->>}[d]^-{p} \\ {A}\ar[r]_{f} & {B}}
  \end{equation}
  comprise a {\em cartesian functor\/} if and only if the mate of either (and
  thus both) of the commutative squares
  \begin{equation*}
    \xymatrix@R=2em@C=3em{{F}\ar[r]^{g}\ar[d]_{i} & {E} \ar[d]^{i} \\
      {q\comma A}\ar[r]_{\comma (g,f,f)} & {p\comma B}}
    \mkern40mu
    \xymatrix@R=2em@C=3em{{F^{\cattwo}}\ar[r]^{g^{\cattwo}}\ar@{->>}[d]_{k} &
      {E^{\cattwo}} \ar@{->>}[d]^{k} \\ {q\comma A}\ar[r]_{\comma (g,f,f)} & {p\comma B}}
  \end{equation*}
  under the adjunctions of Definition \ref{defn:cocart-fibration} is an isomorphism.
\end{defn}

Immediately from these definitions:

\begin{lem}\label{lem:functor-preservation}
Functors of $\infty$-cosmoi preserve cocartesian fibrations and cartesian functors.
\end{lem}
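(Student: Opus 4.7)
The plan is to use characterization (ii) of Definition \ref{defn:cocart-fibration}, which describes a cocartesian fibration in terms of a left adjoint right inverse to the functor $k \colon E^\cattwo \to p \comma B$. A functor $F \colon \eK \to \eL$ of $\infty$-cosmoi is simplicial and preserves isofibrations together with the limits specified in \ref{defn:cosmos}(a), in particular cotensors with $\cattwo$ and pullbacks of isofibrations. Consequently, there are canonical isomorphisms $F(E^\cattwo) \cong F(E)^\cattwo$ and $F(p \comma B) \cong F(p) \comma F(B)$ (the latter because $F(p)$ is an isofibration by assumption, so the pullback defining the comma is preserved). These isomorphisms identify $F(k) = F(\comma(\id_E,p,p))$ with the analogous canonical functor $k' \colon F(E)^\cattwo \to F(p) \comma F(B)$ associated to the isofibration $F(p)$; this identification is checked by applying the universal property of the comma construction and verifying that $F$ sends the generic 1-arrow $\phi$ of \eqref{eq:comma-1-arrow} to its $F(p)$-side analogue.

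Next, since $F$ acts simplicially on function complexes, it descends to a 2-functor $F_2 \colon \eK_2 \to \eL_2$ between homotopy 2-categories by passing to homotopy categories hom-wise. As a 2-functor, $F_2$ preserves all 2-categorical structure: adjunctions, invertibility of 2-cells (and hence adjunctions whose unit is an isomorphism, i.e.\ LARIs), and mates of commutative squares under adjunctions. In particular, the LARI adjunction $\bar\ell \dashv k$ witnessing that $p$ is a cocartesian fibration is carried to a LARI adjunction $F_2(\bar\ell) \dashv F_2(k)$ which, transported along the canonical isomorphisms of the previous paragraph, supplies a LARI for $k'$. Hence $F(p)$ satisfies condition \ref{defn:cocart-fibration}(ii) and is a cocartesian fibration in $\eL$.

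For the cartesian functor claim, suppose $(g,f)$ is a cartesian functor from $q$ to $p$, so that by Definition \ref{defn:cartesian-functor} the mate of one of the commutative squares shown there under the LARI adjunctions is an isomorphism. Applying $F$ yields the corresponding commutative square for $(F(g),F(f))$ between $F(q)$ and $F(p)$. Since $F_2$ preserves the LARI adjunctions (by the argument above) and preserves mates of squares under adjunctions, the resulting mate square for $(F(g),F(f))$ is the $F_2$-image of the original mate square and is therefore invertible, showing that $(F(g),F(f))$ is a cartesian functor.

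The main obstacle is the bookkeeping involved in identifying $F(k)$ with the canonical comparison $k'$ constructed from $F(p)$, and similarly for the analogous identifications on commas: once these canonical isomorphisms are in place and shown compatible with the generic comma 1-arrow, the remainder of the argument reduces to the general 2-categorical fact that 2-functors preserve adjunctions, LARIs, and mates.
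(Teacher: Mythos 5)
Your proposal is correct and is essentially the paper's own argument, spelled out in full: the paper's proof simply observes that cocartesian fibrations and cartesian functors are defined via adjunctions, isofibrations, and comma objects, all of which are preserved by any functor of $\infty$-cosmoi. Your elaboration of the canonical isomorphisms identifying $F(k)$ with $k'$, and of the fact that the induced 2-functor preserves LARI adjunctions and mates, is exactly the detail the paper leaves implicit.
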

\begin{proof}
  Cartesian fibrations and functors are defined using adjunctions, isofibrations
  and comma objects, all of which are preserved by any functor of
  $\infty$-cosmoi.
  \end{proof}
  
  \subsection{Cocartesian 1-arrows}\label{ssec:cocartesian-arrows}

  Our aim now is to introduce the notion of $p$-\emph{cocartesian} 1-arrow. We
  do this first in the case of cocartesian fibrations of quasi-categories and
  then generalise this notion to any $\infty$-cosmos.
  
\begin{defn}[$p$-cocartesian arrows]\label{defn:qcat-cocart-edge}
  In the case where $p\colon \qE\tfib\qB$ is a cocartesian fibration of
  quasi-categories, we say that a arrow ($1$-simplex) $\chi\colon e\to e'$ of
  $\qE$ is \emph{$p$-cocartesian\/} if and only if when it is regarded as an
  object of $\qE^\cattwo$ it is isomorphic to some object in the image of the
  left adjoint right inverse functor $\bar{\ell}\colon p\comma \qB\to
  \qE^\cattwo$ of Definition~\ref{defn:cocart-fibration}\ref{itm:cocart.fib.chars.iii}.
\end{defn}

\begin{lem}[{\refIV{cor:lurie-cartesian}}]\label{lem:qcat-cocart}
  An arrow $\chi\colon e\to e'$ of $\qE$ is $p$-cocartesian if and only if any
  lifting problem
  \begin{equation*}
    \xymatrix@C=3em{
      \Delta^1\ar@/^2ex/[rr]!L+/u 4pt/^\chi \ar[r]_-{\fbv{0,1}} &
      \Horn^{n,0} \ar[d] \ar[r] &      \qE \ar@{->>}[d]^{p} \\ &
      \Delta^n \ar[r] \ar@{-->}[ur] & \qB}
  \end{equation*}
  has a solution. Hence
  $p\colon\qE\tfib\qB$ is a cocartesian fibration of quasi-categories precisely when any arrow
  $\alpha\colon pe \to b$ in $\qB$ admits a lift to an arrow $\chi\colon e\to
  e'$ in $\qE$ which enjoys this lifting property.
\end{lem}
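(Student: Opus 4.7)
My plan is to prove the main (first) claim about individual arrows and then derive the second claim as an immediate consequence of the existence of the left-adjoint-right-inverse $\bar{\ell}$ of Definition~\ref{defn:cocart-fibration}\ref{itm:cocart.fib.chars.iii}: given any $\alpha\colon pe\to b$ in $\qB$, the object $(e,b,\alpha)$ of $p\comma\qB$ has image $\bar{\ell}(e,b,\alpha)$ in $\qE^\cattwo$ which projects to $\alpha$ under $p$ and is automatically $p$-cocartesian by Definition~\ref{defn:qcat-cocart-edge}. The core task is to translate between the internal adjunction-based characterisation of Definition~\ref{defn:qcat-cocart-edge} and Joyal's horn-lifting criterion; as the citation to \refIV{cor:lurie-cartesian} indicates, the substantive argument has been carried out in prior work, so my sketch proposes how that translation can be made explicit.

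First I would observe that the horn-lifting property displayed in the statement is invariant under isomorphism in $\qE^\cattwo$: given an isomorphism $\chi\cong \chi'$ in $\qE^\cattwo$, any lifting problem whose initial edge is $\chi$ can be converted into one for $\chi'$ by absorbing the isomorphism into a suitably extended horn, solved there, and then transported back. It thus suffices to prove that every arrow in the image of $\bar{\ell}$ satisfies the lifting property and, conversely, that every $\chi$ satisfying the lifting property is isomorphic in $\qE^\cattwo$ to its image $\bar{\ell}k(\chi)$ under the endofunctor obtained from the unit of $\bar{\ell}\dashv k$.

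For the ``image implies lifting'' direction I would exploit the simplicially enriched adjunction $\bar{\ell}\dashv k$: a lifting problem $\Lambda^{n,0}\to \qE$ over $\Delta^n\to \qB$ whose initial edge factors through $\bar{\ell}$ transposes, via the universal properties of the cotensor $\qE^\cattwo$ and of the comma $p\comma \qB$, into an extension problem in $p\comma \qB$; because $\bar{\ell}$ is a right inverse to $k$ the transposed problem already carries a tautological solution, and transposing back yields the required filler. For the converse, given $\chi$ with the lifting property, one iteratively fills appropriate horns in $\qE$ using $\chi$ on the initial edge to construct a $1$-arrow in $\qE^\cattwo$ which provides an inverse to the unit component $\eta_\chi\colon \chi\to \bar{\ell}k(\chi)$, exhibiting $\chi$ as isomorphic in $\qE^\cattwo$ to an object in the image of $\bar{\ell}$.

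The main obstacle I expect is the careful bookkeeping required for the simplicial adjoint transposition between $n$-simplex lifting problems for $p$ and extension problems inside $p\comma \qB$: this needs the enriched universal properties of the comma object and cotensor used at each simplicial level, together with the compatibility of these transpositions with the face inclusions constituting $\Lambda^{n,0}\inc \Delta^n$. Once the base case $n=2$ is verified --- which captures the essential 2-categorical content of cocartesianness --- the higher-dimensional cases follow by induction using the closure of the relevant lifting class under the standard horn-filling decompositions.
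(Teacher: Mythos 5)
The paper offers no proof of this lemma at all: it is imported wholesale from the earlier installment via the citation to \refIV{cor:lurie-cartesian}, so there is nothing here to compare line by line. Your overall architecture --- isomorphism-invariance of the lifting property, ``image of $\bar{\ell}$ implies lifting,'' and the converse via a uniqueness-of-lifts horn argument --- does mirror the strategy of that cited proof. But two steps in your sketch have genuine gaps.

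First, in the ``image implies lifting'' direction, the claim that the transposed problem ``already carries a tautological solution because $\bar{\ell}$ is a right inverse to $k$'' misattributes the mechanism. A mere section of $k$ selects arbitrary lifts and its image need not consist of cocartesian arrows; what solves lifting problems is the \emph{counit's universal property}, i.e.\ the equivalence $\Hom_{\qE^\cattwo}(\bar{\ell}\phi,\psi)\simeq\Hom_{p\comma\qB}(\phi,k\psi)$ exhibiting $\bar{\ell}\phi$ as relatively initial among lifts of $\phi$. Moreover the proposed transposition does not go through as described: a horn problem $\Horn^{n,0}\to\qE$ over $\Del^n\to\qB$ is not a map into $\qE^\cattwo$, since $\Horn^{n,0}\inc\Del^n$ is not a Leibniz product against $\Del^1$ in the relevant sense. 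The actual reduction uses the join--slice adjunction to identify such problems with extension problems $\boundary\Del^{n-2}\inc\Del^{n-2}$ against the comparison map from the slice of $\qE$ under $\chi$ to the pullback of the slices under $e$ and under $p\chi$; this combinatorial identification is precisely the content of the cited corollary and cannot be waved through as ``tautological.'' (A small related slip: for $\bar{\ell}\dashv k$ the comparison arrow runs $\epsilon_\chi\colon\bar{\ell}k(\chi)\to\chi$, a counit component, not a unit $\chi\to\bar{\ell}k(\chi)$.) Second, the concluding ``precisely when'' is a biconditional and you only establish the forward implication: if $p$ is a cocartesian fibration then $\bar{\ell}$ supplies lifts with the lifting property. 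The converse --- that pointwise existence of lifts enjoying the lifting property forces $k\colon\qE^\cattwo\to p\comma\qB$ to admit a left adjoint right inverse --- requires assembling the pointwise universal arrows into the simplicial functor $\bar{\ell}$, and it cannot be deduced from the first assertion of the lemma, since Definition \ref{defn:qcat-cocart-edge} only defines ``$p$-cocartesian'' under the standing hypothesis that $p$ is already a cocartesian fibration.
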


We now extend the notion of cocartesian edge from Definition \ref{defn:qcat-cocart-edge} to define a \emph{cocartesian cylinder}. 

\begin{defn}[cocartesian cylinders]\label{defn:cocartesian-cylinder}
  Suppose that $p\colon \qE\tfib\qB$ is a cocartesian fibration of
  quasi-categories and that $X$ is any simplicial set. We say that a cylinder
  $e\colon X\times\Del^1\to \qE$ is \emph{pointwise $p$-cocartesian\/} if and
  only if for each $0$-simplex $x\in X$ it maps the $1$-simplex $(x\cdot
  \degen^0,\id_{[1]})$ to a $p$-cocartesian arrow in $\qE$.
  \end{defn}
  
  \begin{lem}\label{lem:cocart-cyl-are-cocart-arrows}
    Let $p \colon \qE\tfib\qB$ be a cocartesian fibration of quasi-categories. A
    cylinder $e\colon X\times\Del^1\to \qE$ is pointwise $p$-cocartesian if and
    only if its dual $e \colon \Del^1 \to \qE^X$ defines a $p^X$-cocartesian
    arrow for the cocartesian fibration $p^X \colon \qE^X \tfib \qB^X$.
  \end{lem}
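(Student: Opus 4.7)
My plan is to characterize $p^X$-cocartesian arrows via the counit of the adjunction $\bar\ell \dashv k$ of Definition~\ref{defn:cocart-fibration}\ref{itm:cocart.fib.chars.iii} and then apply the resulting criterion pointwise. First I would confirm that $p^X \colon \qE^X \tfib \qB^X$ is indeed a cocartesian fibration of quasi-categories; this follows from Lemma~\ref{lem:functor-preservation}, since cotensoring with the simplicial set $X$ appears in the list of examples of functors of $\infty$-cosmoi immediately following that lemma. By functoriality of the cotensor, the adjunction $\bar\ell^X \dashv k^X$ exhibiting the cocartesian structure on $p^X$ is obtained by applying $(-)^X$ to $\bar\ell \dashv k$, and in particular its counit $\epsilon^X \colon \bar\ell^X k^X \To \id$ is the $X$-cotensor of the counit $\epsilon \colon \bar\ell k \To \id$.

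Since $\bar\ell$ is a lari of $k$ (Definition~\ref{defn:lari}) and therefore fully faithful, a standard adjunction argument shows that an object $Z$ of $\qE^\cattwo$ is isomorphic to an object in the image of $\bar\ell$ if and only if the counit component $\epsilon_Z \colon \bar\ell k Z \to Z$ is an isomorphism. Applying this criterion to both $p^X$ and $p$ reformulates the cocartesian condition as invertibility of a counit: $\tilde e$ is $p^X$-cocartesian iff $\epsilon^X_{\tilde e}$ is an isomorphism in $(\qE^X)^\cattwo \cong \qE^{X \times \cattwo}$, and each $\tilde e_x$ is $p$-cocartesian iff $\epsilon_{\tilde e_x}$ is an isomorphism in $\qE^\cattwo$.

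The argument is then completed by combining this counit characterisation with the classical fact that an arrow in a cotensor quasi-category $\qE^Y$ is an isomorphism if and only if its evaluation at each vertex of $Y$ is an isomorphism in $\qE$. Because $\epsilon^X$ is the cotensor of $\epsilon$ with $X$, evaluation of $\epsilon^X_{\tilde e}$ at a vertex $x$ of $X$ is precisely $\epsilon_{\tilde e_x}$, yielding the chain of equivalences: $\tilde e$ is $p^X$-cocartesian iff $\epsilon^X_{\tilde e}$ is invertible iff each $\epsilon_{\tilde e_x}$ is invertible iff each $\tilde e_x$ is $p$-cocartesian iff $e$ is pointwise $p$-cocartesian. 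The main obstacle would be verifying the counit characterisation of the essential image of a lari and the pointwise detection of isomorphisms in cotensor quasi-categories, both of which follow from standard adjunction theory and Joyal's results on isomorphisms in functor quasi-categories.
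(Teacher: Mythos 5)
Your proof is correct and follows essentially the same route as the paper: both reduce the claim to the observation that a vertex of $\qE^{X\times\cattwo}$ lies in the essential image of $\bar{\ell}^X$ if and only if each of its evaluations at the vertices of $X$ lies in the essential image of $\bar{\ell}$. The paper simply asserts this pointwise-detection step, whereas you justify it via the counit characterisation of the essential image of a fully faithful left adjoint together with Joyal's pointwise criterion for invertibility in functor quasi-categories --- a welcome amplification rather than a genuinely different argument.
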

  \begin{proof}
    First note that Lemma \ref{lem:functor-preservation} implies that $p^X
    \colon \qE^X \tfib \qB^X$ is a cocartesian fibration. To prove the stated
    equivalence, note that a vertex is in the essential image of $\bar{\ell}^X
    \colon p^X \comma \qB^X \to \qE^{X \times \cattwo}$ if and only if it
    transposes to a diagram $X \to \qE^\cattwo$ whose vertices land in the
    essential image of $\bar\ell \colon p \comma \qB \to \qE$.
\end{proof}

  It follows that for all $f\colon
  X\to Y$ the pair of functors $(\qB^f,\qE^f)$ in the commutative square
  \begin{equation*}
    \xymatrix@=2em{
      {\qE^Y}\ar@{->>}[d]_-{p^Y}\ar[r]^{\qE^f} & {\qE^X}\ar@{->>}[d]^-{p^X} \\
      {\qB^Y}\ar[r]_{\qB^f} & {\qB^X}
    }
  \end{equation*}
  comprise a cartesian functor of cocartesian fibrations.

  \begin{lem}\label{lem:cocart-cylinder-extensions}
Let $X \inc Y$ be a simplicial subset of a simplicial set $Y$.
\begin{enumerate}[label=(\roman*)]
  \item Any lifting problem
    \begin{equation*}
      \xymatrix@=2em{
        {X\times\Del^1\cup Y\times\Del^{\{0\}}}\ar[r]^-{e}\ar@{^(->}[d] &
        {\qE}\ar@{->>}[d]^{p} \\
        {Y\times\Del^1}\ar[r]_{b}\ar@{.>}[ru]^{\bar{e}} & {\qB}
      }
    \end{equation*}
    with the property that the cylinder
    ${X\times\Del^1}\subseteq{X\times\Del^1\cup
      Y\times\Del^{\{0\}}}\stackrel{e}\longrightarrow {E}$ is pointwise
    $p$-cocartesian admits a solution $\bar{e}$ which is also pointwise
    $p$-cocartesian.
  \item Any lifting problem ($n>1$)
    \begin{equation*}
      \xymatrix@=2em{
        {X\times\Del^n\cup Y\times\Horn^{n,0}}\ar[r]^-{e}\ar@{^(->}[d] &
        {\qE}\ar@{->>}[d]^{p} \\
        {Y\times\Del^n}\ar[r]_{b}\ar@{.>}[ru]^{\bar{e}} & {\qB}
      }
    \end{equation*}
    in which the cylinder ${Y\times\Del^{\{0,1\}}}\subseteq {X\times\Del^n\cup
      Y\times\Horn^{n,0}}\stackrel{e}\longrightarrow {E}$ is pointwise
    $p$-cocartesian admits a solution $\bar{e}$.
\end{enumerate}
\end{lem}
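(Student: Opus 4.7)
The plan is to prove both parts by induction on the relative skeleta $Y_m = X \cup \mathrm{sk}_m(Y)$, which by a standard cell-attachment argument reduces us, for each non-degenerate $m$-simplex $y : \Del^m \to Y$ not lying in $X$, to a prism-filling problem --- $\Del^m \times \Del^1$ in part (i) and $\Del^m \times \Del^n$ in part (ii) --- relative to the appropriate boundary subcomplex, lifting a specified base map to $\qB$.

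For part~(i), I additionally induct on $m$. The base case $m = 0$ is just the second statement of Lemma~\ref{lem:qcat-cocart}: lift $b \circ (y \times \id_{\Del^1})$ to a $p$-cocartesian $1$-arrow starting at $e(y, 0)$. For $m \geq 1$, decompose the prism $\Del^m \times \Del^1$ into the standard $(m+1)$-simplices $\sigma_0, \ldots, \sigma_m$, where $\sigma_k$ has vertices $(0,0), \ldots, (k,0), (k,1), \ldots, (m,1)$, and attach them in decreasing order of $k$. A direct face-tally shows that each $\sigma_k$ with $k \geq 1$ is attached along an inner horn $\Horn^{m+1, k}$ (filled by the inner-fibration property inherent in any isofibration), while the terminal attachment of $\sigma_0$ is along the outer horn $\Horn^{m+1, 0}$ whose initial $\{0,1\}$-edge is precisely the cylinder arrow above vertex $0 \in \Del^m$. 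Because vertex $0$ lies in $\boundary \Del^m$, this arrow is $p$-cocartesian by inductive hypothesis on $m$, so Lemma~\ref{lem:qcat-cocart} supplies the fill; pointwise $p$-cocartesianness of the resulting cylinder is then immediate by construction.

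For part~(ii), each inductive step reduces to filling $\Del^m \times \Del^n$ relative to $\boundary \Del^m \times \Del^n \cup \Del^m \times \Horn^{n, n}$, under the hypothesis that the sub-cylinder $\Del^m \times \Del^{\{n-1, n\}}$ is pointwise $p$-cocartesian. I decompose the prism into its non-degenerate $(m+n)$-simplices, indexed by the monotone lattice paths from $(0, 0)$ to $(m, n)$ in $[m] \times [n]$, and attach them in an order dictated by a suitable lexicographic filtration of these shuffles. The filtration is engineered so that each attachment is along a horn of $\Del^{m+n}$ that is either inner --- handled by the inner-fibration property of $p$ --- or an outer horn whose distinguished edge inherits $p$-cocartesianness, either directly from the $\{n-1, n\}$-cylinder hypothesis or propagated from a previously attached cell; the latter outer fills are then handled by Lemma~\ref{lem:qcat-cocart}.

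The main obstacle is the combinatorial design of this filtration in part~(ii). Concretely, one must choose an order on the $\binom{m+n}{n}$ shuffle simplices so that (a) each simplex's missing face corresponds to a single horn slot in the attachment, and (b) whenever that horn is outer, the cocartesianness of its distinguished edge can be traced back --- through the lattice path combinatorics --- to the hypothesised cocartesian edges $\{(v, n-1) \to (v, n)\}_{v \in \Del^m}$ at the top of the $\Del^n$ factor. This parallels the combinatorial arguments used to establish extension properties for cocartesian fibrations in Lurie's \cite{Lurie:2009fk}, adapted to the synthetic framework of the present $\infty$-cosmos setup.
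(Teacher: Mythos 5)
Your part (i) is correct and is essentially the standard argument the paper has in mind (its proof simply defers to ``a standard lifting argument'', citing the proof of the earlier corollary for this case): the skeletal induction, the decomposition of $\Del^m\times\Del^1$ into the simplices $\sigma_m,\dots,\sigma_0$ attached in decreasing order, the identification of the attachments as inner horns except for the final $\Horn^{m+1,0}$ whose initial edge is the cylinder arrow over the vertex $0$, and the appeal to Lemma~\ref{lem:qcat-cocart} are all exactly right; pointwise cocartesianness of the solution is indeed automatic since every edge $\{y\}\times\Del^1$ is already determined at the $0$-skeletal stage.

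Part (ii) is where there is a genuine gap, and it is not merely the unfinished bookkeeping you acknowledge. First, the ``filtration engineered so that (a) and (b) hold'' \emph{is} the mathematical content of (ii); asserting its existence without exhibiting the order and verifying the face-tally proves nothing. Second, and more seriously, the plan as described cannot be carried out: the cocartesian edges supplied by the hypothesis are the edges $(v,n-1)\to(v,n)$, and since $n>1$ no such edge can ever occupy the initial position $\{0,1\}$ of a shuffle simplex of $\Del^m\times\Del^n$ (every shuffle starts at $(0,0)$, and $(v,n-1)\to(v,n)$ is initial only when $v=0$ and $n=1$); these edges occur only in terminal positions. Hence the non-inner horns forced on you by any filtration of $\Del^m\times\Del^n$ relative to $\boundary\Del^m\times\Del^n\cup\Del^m\times\Horn^{n,n}$ are right outer horns $\Horn^{N,N}$ whose distinguished edge sits in position $\{N-1,N\}$, and Lemma~\ref{lem:qcat-cocart} does \emph{not} fill these: it fills left outer horns $\Horn^{N,0}$ whose \emph{initial} edge is cocartesian. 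Filling $\Horn^{N,N}$ along a cocartesian final edge is the defining lifting property of a \emph{cartesian} edge; already for $m=0$, $n=2$ the required fill would factor an arbitrary arrow $e_0\to e_2$ through a given cocartesian arrow $e_1\to e_2$ over a prescribed edge of $\qB$, which fails for a generic cocartesian fibration. So either you should work with the variance-correct form of the statement for a cocartesian fibration --- the horn $\Horn^{n,0}$ in the second factor with the cylinder $Y\times\Del^{\{0,1\}}$ pointwise cocartesian, in which case your shuffle strategy does succeed, the outer horns that arise being $\Horn^{N,0}$ with cocartesian initial edge $(0,0)\to(0,1)$, exactly as in the (dual of the) marked-anodyne argument of Lurie --- or you need a filling principle beyond Lemma~\ref{lem:qcat-cocart}; as written, your sketch of (ii) cannot be completed.
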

\begin{proof} The proof is by a standard lifting argument. For (i) this is given in the proof of Corollary \refIV{cor:lurie-cartesian}.
\end{proof}

In an $\infty$-cosmos, adjunctions are representably defined  and of course the same is also true of simplicially enriched limit notions. As these are the two ingredients in the definitions of cocartesian fibrations and cartesian functors appearing in Definition \ref{defn:cocart-fibration} and \ref{defn:cartesian-functor}, it follows that these notions are also representably defined in the following sense.

\begin{obs}[cocartesian fibrations are representably defined]\label{obs:representable-cocart}
  Corollary~\refIV{cor:cart-fib-rep} tells us that an isofibration $p\colon
  E\tfib B$ in our $\infty$-cosmos $\eK$ is a cocartesian fibration if and only
  if
  \begin{itemize}
  \item for all objects $X\in\eK$ the associated isofibration $\Fun_{\eK}(X,p)\colon
    \Fun_{\eK}(X,E)\tfib \Fun_{\eK}(X,B)$ is a cocartesian fibration of
    quasi-categories, and
  \item for all $0$-arrows $f\colon Y\to X$ in $\eK$ the induced functor
  \[ \xymatrix@C=3em{ \Fun_{\eK}(X,E) \ar[r]^-{\Fun_{\eK}(f,E)} \ar@{->>}[d]_{\Fun_{\eK}(X,p)} & \Fun_{\eK}(Y,E) \ar@{->>}[d]^{\Fun_{\eK}(Y,p)} \\ \Fun_{\eK}(X,B) \ar[r]_-{\Fun_{\eK}(f,B)} & \Fun_{\eK}(Y,B)}\] is cartesian.
  \end{itemize}
  Consequently, we may characterise the cocartesian fibrations of $\eK$ in terms
  of lifting properties amongst arrows from each object $X\in\eK$ into the
  objects $B$ and $E$.

  In a similar manner, a commutative square in $\eK$ whose vertical maps are
  cocartesian fibrations
  \begin{equation*}
    \xymatrix@=2em{
      {E}\ar[r]^g\ar@{->>}[d]_{p} & {F}\ar@{->>}[d]^{q} \\
      {B}\ar[r]_f & {A}
    }
  \end{equation*}
  is a cartesian functor if and only if for all objects $X\in\eK$, the image of this data under $\Fun_{\eK}(X,-)$ defines a cartesian functor between cocartesian fibrations of quasi-categories. Consulting Definition \ref{defn:cartesian-functor}, this is the case if and only if 
  $\Fun_{\eK}(X,g)\colon\Fun_{\eK}(X,E)\to\Fun_{\eK}(X,F)$ carries
  $\Fun_{\eK}(X,p)$-cocartesian arrows to $\Fun_{\eK}(X,q)$-cocartesian ones.
\end{obs}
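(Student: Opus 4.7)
The statement is flagged as an unpacking of Corollary \refIV{cor:cart-fib-rep}, so the proof proposal is essentially to apply that corollary and verify that the abstractly-stated conditions there match the concrete lifting statements given here. My plan is to start from the characterisation of Definition \ref{defn:cocart-fibration}\ref{itm:cocart.fib.chars.ii}, namely that $p\colon E\tfib B$ is cocartesian iff the functor $i\colon E\to p\comma B$ admits a left adjoint in $\eK_{/B}$, and then invoke representability of adjunctions together with preservation of the relevant simplicially enriched limits by the functors $\Fun_{\eK}(X,-)$.

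First I would observe that each $\Fun_{\eK}(X,-)\colon\eK\to\qCat$ is a functor of $\infty$-cosmoi (as noted earlier in this section), so by Lemma~\ref{lem:functor-preservation} and the natural isomorphism \eqref{eq:comma-simp-UP} one obtains canonical identifications $\Fun_{\eK}(X,p\comma B)\cong \Fun_{\eK}(X,p)\comma \Fun_{\eK}(X,B)$ and $\Fun_{\eK}(X,E^\cattwo)\cong \Fun_{\eK}(X,E)^\cattwo$ under which the maps $\Fun_{\eK}(X,i)$ and $\Fun_{\eK}(X,k)$ coincide with the analogous $i_X$ and $k_X$ for the isofibration $\Fun_{\eK}(X,p)$. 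Consequently each representable $\Fun_{\eK}(X,-)$ carries the adjunction \eqref{eq:cocartesian.fib.adj} witnessing cocartesianness of $p$ to an adjunction of quasi-categories witnessing cocartesianness of $\Fun_{\eK}(X,p)$, yielding one direction of the first bullet. The converse direction is exactly the content of Corollary~\refIV{cor:cart-fib-rep}: assembling representable left adjoints $\ell_X$ into a single left adjoint $\ell$ living over $B$ requires both the existence of each $\ell_X$ and the naturality of these choices in $X$, which is precisely the condition that the precomposition squares are cartesian functors of cocartesian fibrations of quasi-categories. This is the main (and only non-routine) step, but it is handled by the cited corollary.

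For the second part, about cartesian functors, I would argue in the same spirit: by Definition~\ref{defn:cartesian-functor} the square \eqref{eq:cart.fun} is cartesian iff a certain mate (built from the adjunctions of Definition~\ref{defn:cocart-fibration}) is an isomorphism, and since both the formation of mates and the test for invertibility of a 2-cell are representable, this holds in $\eK$ iff it holds after applying every $\Fun_{\eK}(X,-)$. Combined with the identifications of the previous paragraph this reduces cartesianness of the square to cartesianness of each representable image. Finally, the last characterisation —that this is equivalent to $\Fun_{\eK}(X,g)$ carrying $\Fun_{\eK}(X,p)$-cocartesian arrows to $\Fun_{\eK}(X,q)$-cocartesian ones— follows directly from Definition~\ref{defn:qcat-cocart-edge}: the essential image of the left adjoint right inverse $\bar\ell$ is, by definition, the class of cocartesian arrows, so the mate condition for the $k$-adjunctions translates into the preservation of cocartesian arrows.

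The main conceptual obstacle is the assembly step in the first bullet: verifying that pointwise representable left adjoints glue into a genuine left adjoint over $B$. This is where the cartesian-functor-in-$X$ condition is essential, and it is the substance of Corollary~\refIV{cor:cart-fib-rep}; the rest of the observation is a mechanical translation between the internal adjunction formulation and its external pointwise description via $p$-cocartesian arrows.
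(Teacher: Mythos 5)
Your proposal is correct and takes essentially the same route as the paper, which offers no independent argument here but simply defers to Corollary~\refIV{cor:cart-fib-rep} and unpacks its statement. You correctly locate the only substantive content in the assembly of the representable left adjoints into a single adjoint over $B$ (which is exactly what the cited corollary supplies, with the cartesian-functor condition on precomposition squares providing the requisite naturality in $X$), and your translation of the mate condition of Definition~\ref{defn:cartesian-functor} into preservation of cocartesian arrows via the essential image of $\bar{\ell}$ is the intended reading of the final sentence of the observation.
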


\begin{defn}[cocartesian 1-arrows]\label{defn:cocart-1-arrow}
  Suppose that $p\colon E\tfib B$ is an isofibration in $\eK$ and that
  $\chi\colon e\to e'$ is a $1$-arrow from some object $X$ to $E$. We say 
  that $\chi$ is a \emph{$p$-cocartesian $1$-arrow\/} just when $\chi$ is a cocartesian arrow for the isofibration
  $\Fun_{\eK}(X,p)\colon\Fun_{\eK}(X,E)\tfib \Fun_{\eK}(X,B)$ of
  quasi-categories. 
\end{defn}

Comparing Definition \ref{defn:cocart-1-arrow} with Lemma
\ref{lem:cocart-cyl-are-cocart-arrows}, we see that in the $\infty$-cosmos of
quasi-categories, a 1-arrow $\chi \colon e \to e'$ in $\Fun_{\qCat}(\qX,\qE)
\cong \qE^{\qX}$ is $p$-cocartesian if and only if it defines a pointwise
$p$-cocartesian cylinder, in the sense of Definition
\ref{defn:cocartesian-cylinder}.

\begin{lem}[{\refIV{lem:rep-cart-cells}}]\label{lem:2-categorical-connection}
  Fixing a cocartesian fibration $p \colon E \tfib B$ an $\infty$-cosmos $\eK$,
  a 1-arrow $\chi \colon e \to e'$ in $\Fun_{\eK}(X,E)$ is a $p$-cocartesian
  1-arrow if and only if it represents a $p$-cocartesian 2-cell in the sense of
  Definition \refIV{defn:weak.cart.2-cell}.
\end{lem}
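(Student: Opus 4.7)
The plan is to translate between the quasi-categorical lifting characterisation of cocartesian arrows afforded by Lemma~\ref{lem:qcat-cocart} and the 2-categorical weak universal property that defines $p$-cocartesian 2-cells in Definition~\refIV{defn:weak.cart.2-cell}. Since $p$-cocartesian 1-arrows are defined representably by Definition~\ref{defn:cocart-1-arrow}, and 2-cells in $\eK_2$ are by construction equivalence classes of 1-arrows in the function complexes modulo 2-simplices with a degenerate third face, it suffices to work at the level of the quasi-category $\Fun_{\eK}(X,E)$ and the cocartesian fibration $\Fun_{\eK}(X,p)$.

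Next I would unpack Definition~\refIV{defn:weak.cart.2-cell}: a 2-cell in $\eK_2$ represented by $\chi \colon e \To e'$ is $p$-cocartesian when for every 2-cell $\tau \colon e \To e''$ in $\Fun_{\eK}(X,E)$ together with a factorisation $p\tau = \gamma \cdot p\chi$ in $\Fun_{\eK}(X,B)$, there exists a 2-cell $\bar{\gamma} \colon e' \To e''$ whose composite with $\chi$ is $\tau$ and whose projection is $\gamma$, and this $\bar{\gamma}$ is determined up to an appropriate notion of equivalence involving $p$-invertible 2-cells. I would then observe that this factorisation-existence clause amounts precisely to solving the $n=2$ instance of the lifting problem of Lemma~\ref{lem:qcat-cocart} against the outer horn inclusion $\Horn^{2,0} \inc \Del^2$, with $\chi$ placed on the $\{0,1\}$-edge and $\tau$ on the $\{0,2\}$-edge. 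The weak uniqueness clause, together with its coherent propagation to higher-dimensional data, matches the higher-dimensional cases $n \geq 3$ of that same lifting characterisation.

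The main obstacle lies in matching the ``weakness'' of each universal property: in the 2-categorical setting this is encoded by invariance of factorisations under $p$-invertible 2-cells, while in the quasi-categorical setting it is encoded by unique-up-to-homotopy fillers for progressively higher-dimensional outer horns. Carefully tracking the bijection between 2-cells in $\eK_2$ and 1-simplices in $\Fun_{\eK}(X,E)$ modulo the degeneracy relation then yields the stated equivalence; this translation is worked out in detail in \refIV{lem:rep-cart-cells}, from which the present lemma follows directly.
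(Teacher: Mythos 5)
The paper offers no proof of this lemma at all: the bracketed citation in its header indicates that the statement is imported verbatim from Lemma \refIV{lem:rep-cart-cells} of the previous paper in this series, so your concluding deferral to that reference is precisely what the paper itself does. Your outline of how that external argument runs is broadly right: the reduction to the single cocartesian fibration $\Fun_{\eK}(X,p)$ of quasi-categories is exactly what Definition \ref{defn:cocart-1-arrow} licenses, and your identification of the $n=2$ instance of the $\Horn^{n,0}\inc\Del^n$ lifting problem of Lemma \ref{lem:qcat-cocart} with the 2-categorical induction clause is correct. Two points are thinner than your phrasing suggests. First, the claim that the weak-uniqueness/conservativity clause ``matches'' the cases $n\geq 3$ is not a dimension-by-dimension correspondence: that clause carries only 2-dimensional information, and it cannot directly produce a $\Horn^{3,0}$ filler. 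The way the cited lemma actually closes this gap is by using the standing hypothesis that $p$ is a cocartesian fibration, so that chosen cocartesian lifts exist, and then showing that \emph{both} conditions are equivalent to ``isomorphic in $E^{\cattwo}$ to a chosen cocartesian lift of the underlying arrow'' (the form of Definition \ref{defn:qcat-cocart-edge}), with the composition, cancellation, and conservativity properties of Lemma \ref{lem:cocart-closures} doing the work on the 2-categorical side. Second, for the word ``represents'' in the statement to be meaningful one must check that being a $p$-cocartesian 1-arrow depends only on the 2-cell represented, i.e., that two 1-simplices bounding a 2-simplex with degenerate outer face are isomorphic as objects of $\Fun_{\eK}(X,E)^{\cattwo}$; you assume this implicitly. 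Neither point is a fatal error, since you defer the details to the reference, but as written your sketch would stall at $n=3$ if carried out literally.
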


Under this notational simplification the characterisation of Observation
\ref{obs:representable-cocart} can be restated much more compactly, recovering
the original definition of cocartesian fibration appearing in the dual as
Definition \refIV{defn:cart-fib}.
  
  \begin{cor}\label{cor:representable-cocart} An isofibration
  $p\colon E\tfib B$ in $\eK$ is a cocartesian fibration if and only if
\begin{itemize}
\item  $1$-arrows with codomain $B$ admit $p$-cocartesian lift  with a specified domain and 
\item $p$-cocartesian $1$-arrows are stable under precomposition by $0$-arrows.
\end{itemize}
\end{cor}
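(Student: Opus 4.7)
The plan is to observe that the corollary is a direct translation of Observation \ref{obs:representable-cocart} into the language of $p$-cocartesian 1-arrows, using Definition \ref{defn:cocart-1-arrow} and Lemma \ref{lem:qcat-cocart}. Concretely, the forward direction should follow by reading off each of the two representable conditions in Observation \ref{obs:representable-cocart}, and the converse by reassembling them.

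First I would handle the condition that each isofibration $\Fun_{\eK}(X,p) \colon \Fun_{\eK}(X,E) \tfib \Fun_{\eK}(X,B)$ is a cocartesian fibration of quasi-categories. By Lemma \ref{lem:qcat-cocart} this is equivalent to saying that every $1$-simplex in $\Fun_{\eK}(X,B)$ admits a cocartesian lift with specified domain to $\Fun_{\eK}(X,E)$. Since $1$-simplices in $\Fun_{\eK}(X,B)$ are precisely $1$-arrows from $X$ to $B$, and since by Definition \ref{defn:cocart-1-arrow} an arrow in $\Fun_{\eK}(X,E)$ is $\Fun_{\eK}(X,p)$-cocartesian precisely when it is a $p$-cocartesian $1$-arrow, this condition is exactly the first bullet of the corollary, allowed to vary over all $X \in \eK$.

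Next I would unpack the second condition of Observation \ref{obs:representable-cocart}: that for each $0$-arrow $f \colon Y \to X$ the induced commuting square is a cartesian functor of cocartesian fibrations of quasi-categories. By the final sentence of Observation \ref{obs:representable-cocart}, this amounts to the assertion that the functor $\Fun_{\eK}(f,E) \colon \Fun_{\eK}(X,E) \to \Fun_{\eK}(Y,E)$ preserves cocartesian arrows. Since $\Fun_{\eK}(f,E)$ is precomposition by $f$, and since preservation of cocartesian arrows in these function complexes means, via Definition \ref{defn:cocart-1-arrow}, preservation of $p$-cocartesian $1$-arrows, this is precisely the second bullet of the corollary.

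No real obstacle arises --- the work has already been done in Observation \ref{obs:representable-cocart} and in the quasi-categorical lifting characterisation of Lemma \ref{lem:qcat-cocart}, and the role of the proof is merely to combine these with Definition \ref{defn:cocart-1-arrow} to rephrase the resulting conditions in the more concise form stated. The only point requiring mild care is the observation that for a commuting square between cocartesian fibrations of quasi-categories, the cartesian functor condition reduces to preservation of cocartesian arrows by the top horizontal functor, which is a standard fact recalled at the end of Observation \ref{obs:representable-cocart}.
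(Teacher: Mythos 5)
Your proposal is correct and follows exactly the route the paper intends: the corollary is presented there as a direct restatement of Observation \ref{obs:representable-cocart}, with the two representable conditions translated via Definition \ref{defn:cocart-1-arrow} and Lemma \ref{lem:qcat-cocart} into the lifting and precomposition-stability clauses. Your unpacking of the cartesian-functor condition as preservation of cocartesian arrows under $\Fun_{\eK}(f,E)$ is precisely the reduction recorded at the end of that Observation, so nothing further is needed.
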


Via Lemma \ref{lem:2-categorical-connection}, we have the following stability properties for $p$-cocartesian 1-arrows.

\begin{lem}[{\refIV{lem:cart-arrows-compose}, \refIV{lem:cart-arrows-cancel}, \refIV{obs:weak.cart.conserv}}]\label{lem:cocart-closures}
Let $p \colon E \tfib B$ be a cocartesian fibration in $\eK$ and consider a 2-arrow 
\[ \vcenter{\xymatrix@R=1em{ & e_1 \ar[dr]^{\alpha\vert_{1,2}} \ar@{}[d]|(.6){\alpha} & \\ e_0 \ar[rr]_{\alpha\vert_{0,2}} \ar[ur]^{\alpha\vert_{0,1}} & & e_2}}\qquad \in \qquad \Fun_{\eK}(X,E)
\] from $X$ to $E$.
\begin{enumerate}[label=(\roman*)]
\item\label{itm:cocart-compose} If the edges $\alpha\vert_{0,1}$ and $\alpha\vert_{1,2}$ are $p$-cocartesian 1-arrows, then the edge $\alpha\vert_{0,2}$ is a $p$-cocartesian 1-arrow; that is, $p$-cocartesian 1-arrows compose.
\item\label{itm:cocart-cancel}  If the edges $\alpha\vert_{0,1}$ and $\alpha\vert_{0,2}$ are $p$-cocartesian 1-arrows, then the edge $\alpha\vert_{1,2}$ is a $p$-cocartesian 1-arrow; that is, $p$-cocartesian 1-arrows cancel on the right.
\item\label{itm:cocart-conserv} If the edges  $\alpha\vert_{0,1}$ and $\alpha\vert_{0,2}$ are $p$-cocartesian 1-arrows and $p\alpha\vert_{1,2}$ is an isomorphism in $\Fun_{\eK}(X,B)$, then $\alpha\vert_{1,2}$ is an isomorphism in $\Fun_{\eK}(X,E)$.
\end{enumerate}
\end{lem}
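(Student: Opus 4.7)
The three assertions all concern 1-arrows in the quasi-category $\Fun_{\eK}(X,E)$ lying over the quasi-category $\Fun_{\eK}(X,B)$. By Definition~\ref{defn:cocart-1-arrow} and Observation~\ref{obs:representable-cocart}, the property of being $p$-cocartesian for a 1-arrow in $\Fun_{\eK}(X,E)$ is equivalent to being cocartesian with respect to the cocartesian fibration $\Fun_{\eK}(X,p)\colon\Fun_{\eK}(X,E)\tfib\Fun_{\eK}(X,B)$ of quasi-categories. Hence the plan is to assume throughout that we are working with a cocartesian fibration of quasi-categories $p\colon\qE\tfib\qB$ and a 2-simplex $\alpha$ in $\qE$ with faces $\alpha|_{0,1}$, $\alpha|_{1,2}$, $\alpha|_{0,2}$, and to exploit the horn-filling characterisation of cocartesian arrows supplied by Lemma~\ref{lem:qcat-cocart}.

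For part~\ref{itm:cocart-compose}, I would verify the lifting criterion of Lemma~\ref{lem:qcat-cocart} for the edge $\alpha|_{0,2}$ directly. Given an arbitrary lifting problem on the horn $\Horn^{n,0}\to\qE$ whose initial edge $\{0,1\}$ is $\alpha|_{0,2}$, together with a compatible filler $\Delta^n\to\qB$, I would assemble an enlarged horn $\Horn^{n+1,0}\to\qE$ whose face on $\{0,2,\dots,n+1\}$ is the original horn, whose initial edge $\{0,1\}$ is the cocartesian edge $\alpha|_{0,1}$, and whose $2$-simplex on $\{0,1,2\}$ is $\alpha$ itself; the corresponding base filler in $\qB$ is produced analogously by composing the given $\Delta^n$-filler with the $2$-simplex $p\alpha$. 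Since $\alpha|_{0,1}$ is $p$-cocartesian, this enlarged horn admits a solution by Lemma~\ref{lem:qcat-cocart}, and the face opposite vertex $1$ of that filler delivers the required solution to the original problem.

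Part~\ref{itm:cocart-cancel} follows by a symmetric combinatorial manoeuvre. Given a horn $\Horn^{n,0}\to\qE$ whose first edge is $\alpha|_{1,2}$, I would again graft the $2$-simplex $\alpha$ on to the data so as to produce a $\Horn^{n+1,0}$ with initial edge $\alpha|_{0,1}$ and with the face opposite vertex $2$ encoding the original horn shifted appropriately, together with a matching $(n+1)$-simplex in the base. Solving this using the cocartesian property of $\alpha|_{0,1}$ and then extracting the face opposite vertex $0$ gives an $n$-simplex in $\qE$ whose leading edge is $\alpha|_{0,2}$; now the cocartesian property of $\alpha|_{0,2}$ yields, after one further application of the lifting criterion, the required solution to the original horn. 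The main technical effort in both~\ref{itm:cocart-compose} and~\ref{itm:cocart-cancel} is the careful bookkeeping of faces and degeneracies needed to fit these enlarged horns together compatibly over $\qB$; this is the expected principal obstacle.

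Finally, for part~\ref{itm:cocart-conserv}, by~\ref{itm:cocart-cancel} the 1-arrow $\alpha|_{1,2}$ is itself $p$-cocartesian. It therefore suffices to prove the auxiliary fact that a $p$-cocartesian 1-arrow $\chi\colon e_1\to e_2$ whose image $p\chi$ is an isomorphism in $\qB$ is already an isomorphism in $\qE$. To see this, I would lift the inverse $(p\chi)^{-1}$ to a $p$-cocartesian 1-arrow $\chi'\colon e_2\to e_1'$ using Corollary~\ref{cor:representable-cocart}, and apply~\ref{itm:cocart-compose} to the composite $\chi'\chi$ to produce a $p$-cocartesian 1-arrow $e_1\to e_1'$ whose image is an identity; a standard uniqueness argument, again via the horn-filling criterion, forces this composite to be an isomorphism. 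Running the symmetric argument on the other side then shows that $\chi$ itself is invertible, which completes part~\ref{itm:cocart-conserv}.
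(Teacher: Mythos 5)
The paper does not prove this lemma combinatorially at all: immediately before the statement it invokes Lemma \ref{lem:2-categorical-connection} to identify $p$-cocartesian $1$-arrows with $p$-cocartesian $2$-cells in the homotopy $2$-category, and the bracketed citations defer to the closure properties already established $2$-categorically in the previous installment. Your plan --- reduce to a cocartesian fibration of quasi-categories via Definition \ref{defn:cocart-1-arrow} and Observation \ref{obs:representable-cocart}, then argue by horn filling using Lemma \ref{lem:qcat-cocart} --- is a legitimate alternative route in principle, and the reduction step is fine. The combinatorial core, however, has genuine gaps that go beyond the ``bookkeeping'' you flag.

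First, the ``enlarged horn'' $\Horn^{n+1,0}\to\qE$ in part \ref{itm:cocart-compose} is not well-defined from the given data: $\Horn^{n+1,0}$ contains the \emph{entire} face $\Del^{\{0,2,\ldots,n+1\}}$, whereas you possess only a horn's worth of data there --- the face $\Del^{\{2,\ldots,n+1\}}$ is precisely what is missing. Worse, even granting such a horn, filling $\Horn^{n+1,0}$ produces new information only on the face opposite vertex $0$, which does not contain $e_0$; the ``face opposite vertex $1$'' is part of the input, not the output, so nothing is gained. Second, and decisively, your argument for \ref{itm:cocart-compose} invokes only the cocartesianness of $\alpha\vert_{0,1}$, but the statement is false under that hypothesis alone: taking $\alpha\vert_{0,1}=\id_{e_0}$ (identities are always cocartesian) makes $\alpha\vert_{0,2}$ homotopic to an arbitrary edge $\alpha\vert_{1,2}$, so an argument using only that hypothesis would prove every edge cocartesian. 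A correct direct proof must use both hypotheses, and in fact requires first extending the base simplex in $\qB$ by inner-horn fillings, then a \emph{sequence} of cocartesian lifts (along both $\alpha\vert_{0,1}$ and $\alpha\vert_{1,2}$) followed by an inner-horn filling in $\qE$ --- or, more cleanly, the reformulation of cocartesianness in terms of trivial fibrations between slice quasi-categories, which is how the corresponding results are usually proved in the quasi-categorical literature. Similar objections apply to the grafting in \ref{itm:cocart-cancel}, where it is also unclear what the final appeal to the cocartesianness of $\alpha\vert_{0,2}$ is lifting against once a full simplex has allegedly been produced. Part \ref{itm:cocart-conserv} is essentially sound in outline, but it rests on \ref{itm:cocart-compose} and \ref{itm:cocart-cancel} and so inherits their gaps.
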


\begin{lem}\label{lem:cocartesian-interchange}
  Suppose that we are given a commutative diagram
  \begin{equation*}
    \xymatrix@R=2.5em@C=5em{
      {F}\ar@/^1.5ex/[r]^{g}_{}="1"\ar@/_1.5ex/[r]_{k}^{}="2"\ar@{->>}[d]_-{q} &
      {E}\ar@{->>}[d]^-{p} \\
      {C}\ar@/^1.5ex/[r]^{f}_{}="3"\ar@/_1.5ex/[r]_{h}^{}="4" & {B}
      \ar@{=>} "1";"2" ^{\chi} \ar@{=>} "3";"4" ^{\alpha}
    }
  \end{equation*}
  of $0$-arrows and $1$-arrows in $\eK$ in which $\chi$ is a
  $p$-cocartesian $1$-arrow. If the pair $(g,f)$ is a cocartesian functor,
  then the pair $(k,h)$ is also a cocartesian functor.
\end{lem}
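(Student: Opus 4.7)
The plan is to reduce the statement to the case of quasi-categories via the representable characterisation of cartesian functors given in Observation~\ref{obs:representable-cocart}. Concretely, it suffices to show that for every $\infty$-category $X$ the map $\Fun_{\eK}(X,k) \colon \Fun_{\eK}(X,F) \to \Fun_{\eK}(X,E)$ carries $\Fun_{\eK}(X,q)$-cocartesian $1$-arrows to $\Fun_{\eK}(X,p)$-cocartesian ones. So I fix such an $X$ and a $q$-cocartesian $1$-arrow $\beta\colon c\to c'$ from $X$ to $F$, and the task becomes to prove that $k\cdot\beta$ is a $p$-cocartesian $1$-arrow.

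The key geometric input is the horizontal composition map of the simplicial category $\eK$, which sends the pair $(\chi,\beta)$ to a map $H\colon\Del^1\times\Del^1\to\Fun_{\eK}(X,E)$. The four boundary edges of $H$ are the $1$-arrows $g\cdot\beta$, $k\cdot\beta$, $\chi\cdot c$ and $\chi\cdot c'$, and the two non-degenerate $2$-simplices of $\Del^1\times\Del^1$ exhibit the common diagonal $d\colon g\cdot c\to k\cdot c'$ as both $\chi c'\circ g\beta$ and $k\beta\circ\chi c$.

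To finish, I collect the available $p$-cocartesian edges. The hypothesis that $(g,f)$ is a cartesian functor gives that $g\cdot\beta$ is $p$-cocartesian, while $\chi$ being $p$-cocartesian combined with the stability of $p$-cocartesian $1$-arrows under precomposition by $0$-arrows (Corollary~\ref{cor:representable-cocart}) gives that both $\chi\cdot c$ and $\chi\cdot c'$ are $p$-cocartesian. Applying Lemma~\ref{lem:cocart-closures}\ref{itm:cocart-compose} to the first $2$-simplex shows that $d$ is $p$-cocartesian, since it is the composite of the two cocartesian edges $g\beta$ and $\chi c'$. Then applying Lemma~\ref{lem:cocart-closures}\ref{itm:cocart-cancel} to the second $2$-simplex, with $\chi c$ and $d$ in the roles of $\alpha|_{0,1}$ and $\alpha|_{0,2}$, forces the remaining edge $k\cdot\beta$ to be $p$-cocartesian as required.

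I do not expect a genuine obstacle here; the only point requiring a moment's care is identifying the edges of the horizontal composite $H$ in the simplicial category $\eK$ with the whiskerings $g\beta$, $k\beta$, $\chi c$ and $\chi c'$, which is immediate from the definition of the simplicial composition and the fact that a $0$-arrow in a function complex is a degenerate $1$-arrow. Everything else is a formal application of the composition and right-cancellation properties of cocartesian $1$-arrows.
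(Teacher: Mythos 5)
Your proof is correct and is essentially the paper's own argument: the paper also forms the middle-four interchange square $\Del^1\times\Del^1\to\Fun_{\eK}(X,E)$ from the horizontal composite of $\chi$ with the given $q$-cocartesian $1$-arrow, notes that $g\beta$, $\chi c$ and $\chi c'$ are $p$-cocartesian, and then applies the composition and right-cancellation clauses of Lemma~\ref{lem:cocart-closures} to the diagonal to conclude that $k\beta$ is $p$-cocartesian. You merely spell out more explicitly which parts of Lemma~\ref{lem:cocart-closures} are invoked and how the edges of the interchange square are identified with the whiskerings, which the paper leaves implicit.
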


\begin{proof}
We must show that whiskering with $k$ carries a $q$-cocartesian 1-arrow $\gamma \colon u \To v \colon X \to F$ to a $p$-cocartesian 1-arrow. The horizontal composition of a pair of 1-arrows, in this case $\gamma$ and $\chi$, provides a ``middle-four interchange'' square $\Del^1 \times \Del^1 \to \Fun_{\eK}(X,E)$
\[
\xymatrix{ gu \ar[r]^-{g\gamma} \ar[d]_{\chi u} \ar[dr]^{\chi\gamma} & gv \ar[d]^{\chi v} \\ ku \ar[r]_{k\gamma} & kv}\] If $\gamma$ is $q$-cocartesian, then $g\gamma$ is $p$-cocartesian, since $(g,f)$ is a cartesian functor. Since $\chi$ is $p$-cocartesian, then $\chi u$ and $\chi v$ are also $p$-cocartesian by Corollary \ref{cor:representable-cocart}. Now Lemma \ref{lem:cocart-closures} tells us that $\chi\gamma$ and hence also $k\gamma$ are also $p$-cocartesian, 
 and so $(k,h)$ is a cartesian functor as required.
\end{proof}

\begin{lem}\label{lem:functor-preservation-reprise}
Functors of $\infty$-cosmoi preserve cocartesian 1-arrows.
\end{lem}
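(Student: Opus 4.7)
Proof plan: I exploit the left adjoint right inverse (LARI) characterisation of cocartesian fibrations given in Definition~\ref{defn:cocart-fibration}\ref{itm:cocart.fib.chars.iii}, which encodes the cocartesian structure on $p\colon E\tfib B$ as a LARI $\bar{\ell}\colon p\comma B\to E^\cattwo$ of the canonical isofibration $k\colon E^\cattwo\tfib p\comma B$. A functor of $\infty$-cosmoi $F\colon\eK\to\eL$ preserves cotensors with $\cattwo$, pullbacks of isofibrations (hence the comma construction of Definition~\ref{defn:comma}), isofibrations themselves, and adjunctions in the homotopy 2-category. So the image of this data under $F$ is the LARI adjunction $F\bar{\ell}\colon Fp\comma FB\to(FE)^\cattwo$ that witnesses the cocartesian structure on $Fp$ already provided by Lemma~\ref{lem:functor-preservation}.

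Next, fix a $p$-cocartesian 1-arrow $\chi$ from $X$ to $E$, regarded via the universal property of the cotensor as a 0-arrow $\hat{\chi}\colon X\to E^\cattwo$, and hence as a vertex of $\Fun_{\eK}(X,E^\cattwo)\cong\Fun_{\eK}(X,E)^\cattwo$. Since the simplicial representable $\Fun_{\eK}(X,-)$ induces a 2-functor $\eK_2\to\qCat_2$ that preserves adjunctions, and since the comma isomorphism~\eqref{eq:comma-simp-UP} identifies $\Fun_{\eK}(X,p\comma B)$ with $\Fun_{\eK}(X,p)\comma\Fun_{\eK}(X,B)$, postcomposition with $\bar{\ell}$ provides the LARI associated to the cocartesian fibration of quasi-categories $\Fun_{\eK}(X,p)$. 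Unpacking Definitions~\ref{defn:cocart-1-arrow} and~\ref{defn:qcat-cocart-edge}, the hypothesis that $\chi$ is $p$-cocartesian then amounts to the existence of an isomorphism $\hat{\chi}\cong\bar{\ell}\circ\alpha$ in the quasi-category $\Fun_{\eK}(X,E^\cattwo)$ for some 0-arrow $\alpha\colon X\to p\comma B$.

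The action of $F$ on function complexes defines a map of simplicial sets $\Fun_{\eK}(X,E^\cattwo)\to\Fun_{\eL}(FX,(FE)^\cattwo)$, which automatically preserves quasi-categorical isomorphisms. Applying $F$ to the isomorphism just constructed yields $F\hat{\chi}\cong F\bar{\ell}\circ F\alpha$, exhibiting $F\hat{\chi}$ as isomorphic to an object in the essential image of $\Fun_{\eL}(FX,F\bar{\ell})$. Invoking Definitions~\ref{defn:cocart-1-arrow} and~\ref{defn:qcat-cocart-edge} once more, now applied to $Fp$, this is precisely the assertion that $F\chi$ is an $Fp$-cocartesian 1-arrow from $FX$ to $FE$. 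There is no serious obstacle here: the only delicate bookkeeping is the identification of the LARI associated to the representable image $\Fun_{\eK}(X,p)$ with the postcomposition functor induced by $\bar{\ell}$, which is a routine consequence of the naturality of~\eqref{eq:comma-simp-UP} together with the preservation of 2-categorical adjunctions by any 2-functor.
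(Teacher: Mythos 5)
Your proof is correct and follows essentially the same route as the paper's: both arguments characterise a $p$-cocartesian $1$-arrow as one lying in the essential image of (postcomposition with) the LARI $\bar{\ell}\colon p\comma B\to E^\cattwo$, and then observe that a functor of $\infty$-cosmoi preserves the comma and cotensor constructions and the adjunction, so that the image $1$-arrow lands in the essential image of the corresponding LARI for the image fibration. The paper merely compresses your explicit bookkeeping into a single commutative diagram.
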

\begin{proof}
Consider a cocartesian fibration $p \colon E \tfib B$ in $\eK$, a $p$-cocartesian 1-arrow $\chi \colon e \to e'$ in $\Fun_{\eK}(X,E)$, and a functor of $\infty$-cosmoi $G \colon \eK \to \eL$. By Definition \ref{defn:qcat-cocart-edge} this means that $\chi$ is in the essential image of the top-horizontal functor:
  \[ \xymatrix@C=50pt{ & \Del^0 \ar[d]^\chi \\   \Fun_{\eK}(X,p \comma B) \ar[r]^-{\Fun_{\eK}(X,\bar{\ell})}  \ar[d]_G &\Fun_{\eK}(X,E^\cattwo)\cong \Fun_{\eK}(X,E)^\cattwo \ar[d]^G\\ 
 \Fun_{\eL}(GX,Gp \comma GB)  \ar[r]^-*+{\labelstyle\Fun_{\eL}(GX,G\bar{\ell})}& \Fun_{\eL}(GX,GE^\cattwo)\cong \Fun_{\eL}(GX,GE)^\cattwo}\] The 1-arrow $G\chi$ is picked out by the right-hand vertical composite map. Since the $\infty$-cosmos functor $G$ commutes with the limits involved in the comma and arrow constructions, we see that $G\chi$ must be in the essential image of $\Fun_{\eL}(GX,G\bar{\ell})$ and so defines a $Gp$-cocartesian 1-arrow as claimed.  
  \end{proof}

Finally we note that pullback squares provide an important source of cartesian functors that create cocartesian 1-arrows

\begin{prop}[{\refIV{prop:cart-fib-pullback}}]\label{prop:cart-fib-pullback}
Consider a  pullback
\[ \xymatrix{ F \pbexcursion \ar@{->>}[d]_q \ar[r]^g & E \ar@{->>}[d]^p \\ A \ar[r]_f & B}\] in $\eK$. If $p \colon E \tfib B$ is a cocartesian fibration, then $q \colon F \tfib A$ is a cocartesian fibration, and $g\colon F\to E$
  preserves and reflects cocartesian $1$-arrows, in the sense that a $1$-arrow
  $\chi$ in $\Fun_{\eK}(X,F)$ is $q$-cocartesian if and only if the
  whiskered $1$-arrow $g\chi$ in $\Fun_{\eK}(X,E)$ is $p$-cocartesian. In particular, the pullback square defines a cartesian functor.
\end{prop}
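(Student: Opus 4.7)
My plan is to leverage the representable characterisation of cocartesian fibrations given in Observation \ref{obs:representable-cocart}. Since each simplicial functor $\Fun_{\eK}(X,-) \colon \eK \to \qCat$ preserves pullbacks (being a representable) and preserves isofibrations by axiom \ref{defn:cosmos}\ref{defn:cosmos:b}, applying $\Fun_{\eK}(X,-)$ to the given square produces a pullback of quasi-categories whose right-hand column is the cocartesian fibration $\Fun_{\eK}(X,p)$. Consequently, it suffices to prove the corresponding result for pullbacks of cocartesian fibrations of quasi-categories and then transport the conclusion back through the representable characterisation.

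In the quasi-categorical setting, given an arrow $\alpha \colon qx \to a$ in $\qA$ starting at a vertex $x \in \qF$, I will construct its $q$-cocartesian lift as follows. Using that $p$ is a cocartesian fibration, first lift $f\alpha$ to a $p$-cocartesian arrow $\chi \colon gx \to e'$ in $\qE$; since $fq = pg$ by commutativity of the square, the pullback universal property then pairs $\chi$ with $\alpha$ into a unique arrow $\gamma \colon x \to y$ in $\qF$ with $g\gamma = \chi$ and $q\gamma = \alpha$. To verify that $\gamma$ satisfies the lifting criterion of Lemma \ref{lem:qcat-cocart}, any horn-extension problem for $\gamma$ over $\qA$ transposes through the pullback into a horn-extension problem for $\chi$ over $\qB$ together with compatible simplex data in $\qA$; the former admits a solution by $p$-cocartesianness of $\chi$, and the pullback reassembles these pieces into the required filler in $\qF$. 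This construction immediately yields preservation of cocartesian arrows by $g$, since $g\gamma = \chi$ by design; for reflection, given $\gamma'$ in $\qF$ with $g\gamma'$ being $p$-cocartesian, I compare $\gamma'$ with the $q$-cocartesian lift $\gamma$ of $q\gamma'$ constructed as above. Both $g\gamma$ and $g\gamma'$ are $p$-cocartesian arrows at $gx$ lying over $fq\gamma'$, so Lemma \ref{lem:cocart-closures}\ref{itm:cocart-conserv} supplies a fibred isomorphism $g\gamma \cong g\gamma'$ over $fq\gamma'$, which pulls back to an isomorphism $\gamma \cong \gamma'$ in $\qF$, making $\gamma'$ cocartesian by closure under isomorphism.

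Transporting back to $\eK$, the first bullet of Observation \ref{obs:representable-cocart} for $q$ is immediate from the above. For the second bullet---that precomposition by any $y \colon Y \to X$ induces a cartesian functor between the representables of $q$---note that $\Fun_{\eK}(y,-)$ commutes with the pullback and that the analogous statement holds for $p$ by hypothesis, so cartesianness for $p$ transports to cartesianness for $q$ via the preservation/reflection argument above. Finally, the cartesian functor property of the pullback square $(g,f)$ is precisely the representable form of the preservation statement. The main technical work lies in careful bookkeeping of the pullback universal properties when translating lifting problems between $\qF$ and $\qE$; conceptually, the observation that $q$-cocartesian arrows in the pullback are exactly the pullbacks of $p$-cocartesian arrows makes every step transparent.
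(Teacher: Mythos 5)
The paper does not actually prove this proposition here: it is imported wholesale from the precursor paper (the citation \refIV{prop:cart-fib-pullback}), where the argument runs through the internal characterisation of Definition \ref{defn:cocart-fibration}\ref{itm:cocart.fib.chars.iii} --- since cotensors and comma constructions commute with pullbacks of isofibrations (cf.\ Lemma \ref{lem:comma}), the square $k\colon F^{\cattwo}\to q\comma A$ is a pullback of $k\colon E^{\cattwo}\to p\comma B$, and the left adjoint right inverse $\bar\ell$ pulls back accordingly. Your route --- reduce to quasi-categories via Observation \ref{obs:representable-cocart} and then run the explicit horn-filling criterion of Lemma \ref{lem:qcat-cocart} through the strict pullback of simplicial sets --- is genuinely different and perfectly viable; it is more elementary and makes the slogan ``$q$-cocartesian arrows are exactly the arrows whose $g$-image is $p$-cocartesian'' visibly true at the level of lifting problems. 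What the adjoint-characterisation proof buys is that it never descends to combinatorics and generalises verbatim to any $\infty$-cosmos without the representable detour; what yours buys is transparency of the lifting mechanics, which is the style this paper itself adopts in \S\ref{ssec:cocartesian-arrows}.

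Two steps need tightening. First, ``preservation is immediate since $g\gamma=\chi$ by design'' only covers the lifts you construct; for an arbitrary $q$-cocartesian $\gamma'$ you must additionally invoke uniqueness of cocartesian lifts up to isomorphism (and closure of cocartesian arrows under isomorphism, Definition \ref{defn:qcat-cocart-edge}) to conclude $g\gamma'\cong g\gamma$ is $p$-cocartesian; the naive transposition of lifting problems cannot give preservation directly, because a horn problem for $g\gamma'$ over $\qB$ need not factor through $f$. Second, in the reflection step Lemma \ref{lem:cocart-closures}\ref{itm:cocart-conserv} does not by itself ``supply'' an isomorphism between two cocartesian arrows: you must first manufacture the comparison $2$-simplex, and you should do so in $\qF$ (filling a $\Horn^{2,0}$-horn using the $q$-cocartesianness of your constructed $\gamma$, over the degenerate $2$-simplex in $\qA$), then apply conservativity to its image in $\qE$ and observe that the connecting edge of $\qF$ is invertible because both its projections to $\qE$ and $\qA$ are --- joint detection of isomorphisms in a pullback along an isofibration is the unstated ingredient here. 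Alternatively, reflection follows more cheaply by the same transposition you used for existence: if $g\gamma'$ satisfies the lifting criterion over $\qB$, every lifting problem for $\gamma'$ over $\qA$ transposes, solves, and reassembles. With those repairs the argument is complete.
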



\section{Simplicial computads and homotopy coherent realisation}\label{sec:coherent-nerve}

Many interesting examples of large quasi-categories arise as homotopy coherent
nerves of Kan complex enriched categories. In this section we develop tools that
will be used in \S\ref{sec:cocones} and \S\ref{sec:comprehension} to construct
the comprehension functor of Theorem \ref{thm:general-comprehension}.

We can probe the homotopy coherent nerve of a Kan complex enriched category by
making use of the \emph{homotopy coherent realisation functor}
\begin{equation}\label{eq:hocoh-realization-adj}
  \adjdisplay \gC -| \nrvhc : \sSet\text{-}\Cat -> \sSet.
\end{equation}
which is left adjoint to the homotopy coherent nerve functor. Importantly, this
left adjoint lands in the subcategory of \emph{simplicial computads}, a class of
``freely generated'' simplicial categories that are precisely the cofibrant
objects in the model structure due to Bergner \cite{Bergner:2007fk}. In
practice, this means that a simplicial functor whose domain is of the form
$\gC{X}$ for a simplicial set $X$ can be defined with relatively little data,
which we enumerate in Lemma \ref{lem:computad-extension}.

In \S\ref{sec:computad}, we review the theory of \emph{simplicial computads}
from \S\refII{sec:computads} and define two important classes of functors
between them. The prototypical example of a simplicial computad, introduced in
\S\ref{sec:htpy-coh-simplex}, is the \emph{homotopy coherent $\omega$-simplex},
from which we extract a cosimplicial object of simplicial computads referred to
as the \emph{homotopy coherent $n$-simplices}. This data determines the adjoint
pair of functors \eqref{eq:hocoh-realization-adj}, as explicitly described in
\S\ref{sec:realization-nerve}.

The simplicial category $\gC{X}$ defined as the homotopy coherent realisation of
a simplicial set $X$ is a simplicial computad. In
\S\ref{sec:realization-computads}, we analyse particular examples of this
construction, building towards a general description of the simplicial computad
structure of $\gC{X}$ in Proposition \ref{prop:gothic-C}, which recovers results
of Dugger-Spivak \cite{Dugger:2011ro} by a different route.

Finally, in \S\ref{sec:relative-computad} we develop the relative case,
enumerating the data required to extend a homotopy coherent diagram along the
\emph{simplicial subcomputad inclusion} $\gC{X}\inc\gC{Y}$ arising from an
inclusion of simplicial sets.

\subsection{Simplicial computads}\label{sec:computad}

Before pressing on to simplicial computads, we should first clarify the sense in which we shall use a couple of commonly abused categorical terms:

\begin{defn}[replete subcategories]\label{defn:replete}
  We say that a subcategory $\eA\subseteq\eB$ is \emph{replete\/} if it is
  closed under isomorphisms in $\eB$, in the sense that if $A$ in $\eA$ then any
  isomorphism $\alpha\colon A\cong B$ of $\eB$ is contained in the sub-category
  $\eA$. This is equivalent to postulating that the inclusion functor
  $\eA\inc\eB$ is an isofibration.
\end{defn}
  
\begin{defn}[closed subcategories]\label{defn:closed}
  We shall also say that the subcategory $\eA$ is \emph{closed\/} in $\eB$ under
  some universally defined structure (such as limits or colimits) if it possesses a
  structure of that kind which is preserved by the inclusion $\eA\inc\eB$. This
  is equivalent to saying that $\eB$ possesses \emph{some} structure of that
  kind which is contained in $\eA$ and which possesses the given universal
  property in that subcategory.
\end{defn}

\begin{obs}\label{obs:replete-closed}
  When a replete subcategory $\eA$ is closed in its super-category $\eB$ under
  some universally defined structure then it satisfies the stronger property
  that \emph{every} structure of that kind in $\eB$ is contained in $\eA$ and
  possesses the given universal property in that subcategory. To demonstrate
  this fact we argue as follows: given such a structure in $\eB$ we can pick
  some such structure in $\eA$ which also satisfies the given universal property
  in $\eB$. These two structures are uniquely isomorphic in $\eB$ and one end of
  that isomorphism is in $\eA$, but $\eA$ is replete in $\eB$ so we may infer
  that this isomorphism itself is in $\eA$. Consequently the originally selected
  structure is also in $\eA$ wherein it is isomorphic to a structure that has
  the given universal property in there, so it too must enjoy that property in
  $\eA$.
\end{obs}

\begin{defn}[atomic arrows and computads]
  A $n$-arrow $f$ in a category $\eC$ is \emph{atomic} if it is not an identity
  and it admits no non-trivial factorisations, i.e., if whenever $f=g \circ h$
  then one or other of $g$ and $h$ is an identity. We say that a category is a
  \emph{computad} if and only if each of its arrows may be expressed uniquely as
  a (possibly empty) composite of atomic arrows. Here we adopt the standard
  convention of defining the empty composite of arrows leading from an object to
  itself to be the identity on that object. Equally we could say that every
  non-identity arrow may be uniquely expressed as a (non-empty) composite of
  atomic arrows that identities admit no non-trivial factorisations. A category
  is a computad if and only if it is isomorphic to the free category generated
  by a reflexive directed graph: namely its sub-graph of atomic arrows.
\end{defn}

\begin{defn}[computad morphisms]\label{defn:computad}
  A functor $F\colon\eA\to\eB$ between computads $\eA$ and $\eB$ is a
  \emph{computad morphism\/} if it carries each atomic arrow $f$ of $\eA$ to an
  arrow $Ff$ which is either atomic or an identity in $\eB$.   A computad morphism $\eA\inc\eB$ that is injective on objects and fully  faithful is said to display $\eA$ as a \emph{subcomputad\/} of $\eB$.
\end{defn}

\begin{rmk}\label{rmk:cptd-stuff}
  Let $\Cptd$ denote the subcategory of $\Cat$ whose objects are computads and
  whose arrows are computad morphisms and let $\Graph$ denote the  presheaf
  category of reflexive graphs and graph morphisms. Then the free category
  functor $\free\colon \Graph\to\Cat$ factors through the inclusion
  $\Cptd\inc\Cat$ to give an equivalence $\Graph\simeq\Cptd$.

  Any isomorphism of categories preserves and reflects atomic arrows, so it
  follows that any category isomorphic to a computad in $\Cat$ is itself a
  computad and that any isomorphism in $\Cat$ between computads is a computad
  morphism; consequently $\Cptd$ is a replete subcategory of $\Cat$. The simplicial subcomputad inclusions are precisely the monomorphisms in the category $\Cptd$. 

We also note that the
  inclusion $\Cptd\inc\Cat$ is left adjoint, since it corresponds to the left
  adjoint free category functor under the equivalence $\Graph\simeq\Cptd$; in
  particular $\Cptd$ is closed in $\Cat$ under colimits.
\end{rmk}

\begin{defn}[simplicial computad]\label{defn:simplicial-computad}
  A simplicial category $\eA$ is a \emph{simplicial computad} if and only if:
  \begin{itemize}
  \item each category $\eA_n$ of $n$-arrows is freely generated by the graph of
    atomic $n$-arrows
  \item if $f$ is an atomic $n$-arrow in $\eA_n$ and $\alpha\colon [m]\to[n]$ is
    a degeneracy operator in $\Del$ then the degenerated $m$-arrow
    $f\cdot\alpha$ is atomic in $\eA_m$.
  \end{itemize}
  By the Eilenberg-Zilber lemma, $\eA$ is a simplicial computad if and only if
  all of its non-identity arrows $f$ can be expressed uniquely as a composite
  \begin{equation}\label{eq:computad-arrow-decomp}
    f = (f_1 \cdot \alpha_1) \circ (f_2 \cdot \alpha_2) \circ \cdots \circ
    (f_\ell \cdot \alpha_\ell)
  \end{equation}
  in which each $f_i$ is non-degenerate and atomic and each $\alpha_i\in\Del$ is
  a degeneracy operator.
\end{defn}

The simplicial computads are precisely the cofibrant simplicial categories in
the Bergner model structure \cite[\S 16.2]{Riehl:2014kx}.

\begin{defn}[simplicial computad morphism] Let $\eA$ and $\eB$ be simplicial
  computads.
  \begin{enumerate}[label=(\roman*)]
  \item A simplicial functor $F\colon\eA\to\eB$ is a \emph{simplicial computad
      morphism\/} if it maps every atomic arrow $f$ in $\eA$ to an arrow $Ff$
    which is either atomic or an identity in $\eB$.
  \item A simplicial computad morphism $\eA\inc\eB$ that is injective on objects
    and faithful displays $\eA$ as a \emph{simplicial subcomputad} of $\eB$.
  \end{enumerate}
  When give a simplicial subcomputad $\eA\inc\eB$ we shall usually identify
  $\eA$ with the simplicial subcategory of $\eB$ that is its image under that
  inclusion. We shall write $\sCptd$ for the non-full subcategory of $\sCat$ of
  simplicial computads and their morphisms.
\end{defn}

\begin{rmk}\label{rmk:degen-restr}
  Let $\Del\ep$ denote the subcategory of degeneracy operators in $\Del$, then
  restriction along the inclusion $\Del\ep\subset\Del$ gives rise to a functor
  $\Cat^{\Del\op}\to\Cat^{\Del\ep\op}$ which is faithful, injective on objects,
  and an isofibration. From hereon we shall identify $\Cat^{\Del\op}$ with the
  replete subcategory comprising the image of that functor in
  $\Cat^{\Del\ep\op}$ and note, in particular, that $\Cat^{\Del\op}$ is closed
  in $\Cat^{\Del\ep\op}$ under limits and colimits. Combining this with
  Recollection~\ref{rec:simp-cat} we also obtain a presentation of $\sCat$ as a
  subcategory of $\Cat^{\Del\ep\op}$ which is again closed in there under limits
  and colimits.
\end{rmk}

\begin{lem}\label{lem:computad-colimits}
  The category of simplicial computads $\sCptd$ is canonically isomorphic to,
  and thus may be identified with, the intersection of the sub-categories
  $\sCat$ and $\Cptd^{\Del\ep\op}$ of $\Cat^{\Del\ep\op}$. Both of these
  subcategories are closed under  colimits in $\Cat^{\Del\ep\op}$ and
  $\Cptd^{\Del\ep\op}$ is replete in there, so it follows that $\sCptd$ is closed
  under  colimits in $\sCat$ and in $\Cptd^{\Del\ep\op}$.
\end{lem}

\begin{proof}
  Suppose that $\eC$ is a simplicial category and that $\eC_\bullet\colon\Del\op
  \to \Cat$ denotes its presentation as a corresponding object of
  $\Cat^{\Del\op}$ as described in Recollection~\ref{rec:simp-cat}. It is then
  straightforward to see that the simplicial computad conditions of
  Definition~\ref{defn:simplicial-computad} are equivalent to the postulate that
  there exists a, necessarily unique, dotted functor making the following square
  commute:
  \begin{equation*}
    \xymatrix@=2em{
      {\Del\ep\op}\ar@{^(->}[d]_{\subset}\ar@{.>}[r]^-{\exists!} &
      {\Cptd}\ar@{^(->}[d] \\
      {\Del\op}\ar[r]_{\eC_\bullet} & {\Cat}
    }
  \end{equation*}
  Correspondingly, a simplicial functor $F\colon \eB\to \eC$ is a simplicial
  computad morphism if and only if the corresponding natural transformation
  $F_\bullet\colon \eB_\bullet\to \eC_\bullet$ when restricted to $\Del\ep
  \subset \Del$ also factors, necessarily uniquely, through the inclusion
  $\Cptd\inc\Cat$. These facts clearly verify that $\sCptd$ may indeed be
  presented as the intersection of the subcategories $\sCat$ and
  $\Cptd^{\Del\ep\op}$ in $\Cat^{\Del\ep\op}$ as stated.

  To show that $\sCptd$ has all colimits, suppose that $D$ is a diagram in
  $\sCptd = \sCat\cap\Cptd^{\Del\ep\op}\subset\Cat^{\Del\ep\op}$ and form its
  colimit in the super-category $\sCat$. Remarks~\ref{rmk:degen-restr} and
  Recollection~\ref{rec:simp-cat} serve to demonstrate that the sub-category
  $\sCat$ is closed under colimits in $\Cat^{\Del\ep\op}$, so it follows that
  the given colimit cocone in $\sCat$ also determines a colimit in the
  super-category $\Cat^{\Del\ep\op}$. Remark~\ref{rmk:cptd-stuff} implies that
  the sub-category $\Cptd^{\Del\ep\op}$ is both replete and closed under
  colimits in $\Cat^{\Del\ep\op}$ so, by Observation~\ref{obs:replete-closed},
  it follows that our colimit cocone in $\Cat^{\Del\ep\op}$ is contained in
  $\Cptd^{\Del\op\ep}$ and that it also displays a colimit in there.
  Consequently we have shown that our cocone is in the sub-category $\sCptd =
  \sCat\cap\Cptd^{\Del\ep\op}$ and it is easy to demonstrate that it possesses
  the required universal property in there using the fact that it has that
  property in each of the sub-categories $\sCat$ and $\Cptd^{\Del\ep\op}$.
\end{proof}

\begin{obs}\label{obs:simp-subcomp-char}
  It is clear now that a simplicial computad morphism $F\colon\eA\to\eB$ is
  injective on objects and faithful if and only if its presentation as a natural
  transformation $F_\bullet\colon\eA_\bullet\to\eB_\bullet$ in
  $\Cptd^{\Del\op\ep}$ has components that are subcomputad inclusions. By
  Remark~\ref{rmk:cptd-stuff} this latter condition is equivalent to postulating
  that the components of $F_\bullet$ are monomorphisms in $\Cptd$ or equally
  that $F_\bullet$ is a monomorphism in $\Cptd^{\Del\ep\op}$. This characterises
  the simplicial subcomputad inclusions among the simplicial computad morphisms,
  and leads us directly to the following important result:
\end{obs}
    
\begin{lem}\label{lem:sub-computad-stability} Simplicial subcomputads are
  stable under pushout, coproduct, and colimit of countable sequences in
  $\sCptd$.
\end{lem}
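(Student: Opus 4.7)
The plan is to reduce the stability statement to the elementary fact that injective functions of sets are closed under pushout, coproduct, and colimit of countable sequences, using the pullback characterisation of $\sCptd$ established in Lemma \ref{lem:computad-colimits}.

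First I would observe, using the remarks made immediately before the lemma, that a functor of simplicial computads is a simplicial subcomputad inclusion if and only if its image under the forgetful functor $\sCptd \to \Graph^{\Del\op\ep}$ consists, component by component, of injective graph homomorphisms (a graph homomorphism being injective precisely when it is injective on the sets of vertices and of atomic arrows). Lemma \ref{lem:computad-colimits} tells us that this forgetful functor creates---and therefore preserves---all small colimits in $\sCptd$, and in particular pushouts, coproducts, and colimits of countable sequences.

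Second, colimits in the functor category $\Graph^{\Del\op\ep}$ are computed pointwise in $\Graph$, and colimits in $\Graph$ are themselves computed separately on the underlying sets of vertices and of edges. Hence the problem reduces to showing that the class of injections in $\Set$ is stable under pushout along arbitrary maps, under coproducts, and under colimits of countable sequences, which are three standard and elementary facts.

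I do not expect a genuine obstacle here: the argument is essentially a bookkeeping exercise whose substantive content lies entirely in the pullback square of Lemma \ref{lem:computad-colimits} together with the elementary behaviour of injections in $\Set$. The only mild point worth flagging is that one must invoke the lemma to know that a colimit cone in $\sCptd$ has the same underlying data as the corresponding colimit cone in $\Graph^{\Del\op\ep}$, so that injectivity can indeed be tested on the graph of atomic arrows at each simplicial level.
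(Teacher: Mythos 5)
Your proposal is correct and follows essentially the same route as the paper: the paper's proof likewise observes that the forgetful functor $\sCptd\inc\Graph^{\Del\op\ep}$ creates these colimits, that subcomputad inclusions are exactly the functors sent to pointwise injective graph homomorphisms, and that injections are stable under pushout, coproduct, and countable sequential colimit. The only difference is cosmetic — the paper treats pushouts explicitly and declares the other two cases similar, whereas you spell out the reduction to injections of sets.
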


\begin{proof}
  The subcategory $\sCptd$ is closed in $\Cptd^{\Del\op\ep}$ under colimits, so
  in particular it is closed under the colimit types listed in the statement.
  Furthermore, the last observation tells us that simplicial subcomputad
  inclusions are precisely those morphisms of $\sCptd$ that are monomorphisms
  when regarded as maps of $\Cptd^{\Del\op\ep}$; it follows that the result in
  the statement would follow from the corresponding result for monomorphisms in
  $\Cptd^{\Del\op\ep}$. That latter category is equivalent to the presheaf
  category $\Graph^{\Del\op\ep}$, in which colimits and monomorphisms are
  determined pointwise in $\Set$ so the desired result follows from the fact
  that it clearly holds for monomorphisms in $\Set$.
\end{proof}

\subsection{The homotopy coherent \texorpdfstring{$\omega$}{omega}-simplex}\label{sec:htpy-coh-simplex}

The ur-example of a simplicial computad is the homotopy coherent
$\omega$-simplex, that being the simplicial category defined as the hom-wise
nerve of the poset enriched category that we now introduce:

\begin{defn}[the $\omega$-simplex]
  Let $\oSimp$ denote the locally ordered (partially ordered set enriched)
  category with:
  \begin{itemize}
  \item \textbf{objects} the natural numbers $0,1,2,\ldots$,
  \item \textbf{hom-sets} given by
    \begin{equation*}
      \Hom_{\oSimp}(k,l) \defeq
      \begin{cases}
        \emptyset & \text{if $k > l$,} \\
         \{ T \subseteq [k,l] \mid k,l\in T \} & \text{if $k \leq l$}
      \end{cases}
    \end{equation*}
    where $[k,l]$ denotes the set of integers $\{k,k+1,\ldots,l\}$, ordered by inclusion, 
  \item \textbf{composition} of $T_1\in \Hom_{\oSimp}(k,l)$ and
    $T_2\in\Hom_{\oSimp}(l,m)$ is written in the {\em natural order\/} and is
    given by union $T_1\circ T_2\defeq T_1\cup T_2$, and the
    \item \textbf{identity} arrow on the object $k$ of $\oSimp$ is the
  singleton $\{k\}$. 
  \end{itemize}
  \end{defn}
  
\begin{obs}[atomic arrows in $\oSimp$]
Note that $T\in\Hom_{\oSimp}(k,m)$ can be expressed as
  a composite $T_1\circ T_2$ with $T_1\in\Hom_{\oSimp}(k,l)$ and
  $T_2\in\Hom_{\oSimp}(l,m)$ if and only if $l\in T$, in which case $T_1 =
  T\cap[k,l]$ and $T_2= T\cap[l,m]$. So an arrow $T\in\Hom_{\oSimp}(k,m)$ may
  be decomposed as a composite of non-identity arrows if and only if the set
  $T\setminus\{k,m\}$ is non-empty. It follows that for each pair $l<m$ there is precisely one
  atomic arrow $\{l,m\}$ in the hom-set $\Hom_{\oSimp}(l,m)$. It is
  also clear that any non-identity arrow
  $\{l=k_0<k_1<k_2<\cdots<k_n=m\}\in\Hom_{\oSimp}(l,m)$ may be decomposed
  \textit{uniquely\/} as a composite $\{k_0,k_1\} \circ \{k_1,k_2\} \circ \cdots
  \circ \{k_{n-1},k_n\}$ of atomic arrows.

  Consequently we see that, as a mere category, $\oSimp$ is the freely
  generated category on the reflexive graph whose vertices are the integers and
  which has a unique edge non-identity edge $\{k,l\}$ from $k$ to $l$ whenever
  $k<l$. In other words, it is just the free category on the ordinal $\omega$
  regarded as a reflexive graph. As a locally ordered category it is freely
  generated by the same reflexive graph of atomic arrows and the
  primitive inclusions \[\{k,m\} \subset \{k,l\}\circ\{l,m\}.\]
\end{obs}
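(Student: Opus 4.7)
The plan is to derive all of the claims from one elementary observation about how composition in $\oSimp$ interacts with set-theoretic intersections. Given $T \in \Hom_{\oSimp}(k,m)$, I first verify that $T = T_1 \circ T_2$ for some $T_1 \in \Hom_{\oSimp}(k,l)$ and $T_2 \in \Hom_{\oSimp}(l,m)$ if and only if $l \in T$. The forward direction is immediate, since $T_1 \cup T_2 = T$ and $l \in T_1 \cap T_2$. For the converse, set $T_1 := T \cap [k,l]$ and $T_2 := T \cap [l,m]$: both contain the required endpoints, and their union is $T$ because every integer in $T$ lies in $[k,m]$ and hence in $[k,l]$ or $[l,m]$. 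Moreover these choices are forced by the containments $T_1 \subseteq [k,l]$ and $T_2 \subseteq [l,m]$, which together with $T_1 \cup T_2 = T$ pin down $T_1$ and $T_2$. The second assertion of the observation falls out immediately: $T$ admits a non-trivial decomposition if and only if some $l \in T$ satisfies $k < l < m$, i.e.\ if and only if $T \setminus \{k,m\} \neq \emptyset$.

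Next I would read off the claim about atomic arrows: an arrow in $\Hom_{\oSimp}(l,m)$ with $l<m$ is atomic precisely when it has the form $\{l,m\}$. For the unique factorisation of an arbitrary non-identity arrow $T = \{l = k_0 < k_1 < \cdots < k_n = m\}$ into atoms, I would induct on $n$. The factorisation criterion applied at $k_1$ shows that any factorisation $T = S \circ S'$ with $S$ atomic must have $S = \{k_0, k_1\}$ and $S' = \{k_1, k_2, \dots, k_n\}$; applying the induction hypothesis to $S'$ yields both existence and uniqueness of the atomic decomposition $\{k_0,k_1\} \circ \{k_1,k_2\} \circ \cdots \circ \{k_{n-1},k_n\}$.

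These uniqueness statements amount precisely to the assertion that $\oSimp$, viewed merely as a category, is free on the reflexive graph of atomic arrows $\{k,m\}$, since functoriality then determines any functor out of $\oSimp$ on every arrow via the unique atomic factorisation. I expect the main subtlety to lie in the locally ordered refinement, where one must additionally show that the partial order on each $\Hom_{\oSimp}(k,m)$ is generated, under composition in $\oSimp$, by the primitive inclusions $\{k,m\} \subseteq \{k,l\} \cup \{l,m\}$. Given $T \subseteq T'$ in $\Hom_{\oSimp}(k,m)$, I would induct on $|T' \setminus T|$, pick any $l \in T' \setminus T$ lying between consecutive elements $a < b$ of $T$, and exhibit the inclusion $T \subseteq T \cup \{l\}$ as the whiskering of the primitive inclusion $\{a,b\} \subseteq \{a,l\} \cup \{l,b\}$ by the atomic factorisations of the pieces of $T$ to either side of $a,b$. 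This presents every order relation in $\oSimp$ as a composite of primitive inclusions, completing the identification of $\oSimp$ with the free locally ordered category on the described generating data.
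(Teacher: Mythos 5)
Your proposal is correct and follows essentially the same route as the paper's inline justification: the intersection criterion for factoring $T$ through an intermediate object, the resulting characterisation of atomic arrows as the doubletons $\{l,m\}$, the unique atomic factorisation by peeling off $\{k_0,k_1\}$, and the conclusion that $\oSimp$ is free on the reflexive graph of atoms. The only addition is your induction on $|T'\setminus T|$ expressing every inclusion as a composite of whiskered primitive inclusions, which supplies a detail for the locally ordered freeness claim that the paper merely asserts.
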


\begin{defn}[the homotopy coherent $\omega$-simplex]
The \textit{homotopy coherent $\omega$-simplex} is the simplicial category $\hcSimp$ obtained by  taking the nerve of each hom-set of $\oSimp$.   Explicitly:
  \begin{itemize}
  \item Its \textbf{objects} are the natural numbers $0,1,2,\ldots$.
\item An $r$-\textbf{arrow}, that is to say an $r$-simplex in the function complex
  $\Fun_{\hcSimp}(k,l)$, is an order preserving map
  $T^\bullet\colon[r]\to\Hom_{\oSimp}(k,l)$, in other words an ordered chain
  of subsets \[T^0\subseteq T^1\subseteq\cdots\subseteq T^r\] in
  $\Hom_{\oSimp}(k,l)$ of length $(r+1)$.
  \item \textbf{Composition} of such $r$-arrows is given pointwise, that is $(S^\bullet\circ
  T^\bullet)^i\defeq S^i\circ T^i = S^i\cup T^i$.
  \end{itemize}
  \end{defn}

\begin{ntn}[compact notation for arrows in $\hcSimp$]\label{ntn:compact-arrows}
We will sometimes depict an $r$-arrow $T^\bullet$ in the function complex $\Fun_{\hcSimp}(k,l)$, with $k\leq l$, using 
a more compact notation of the
  form \[\langle I_0\mid I_1\mid \ldots \mid I_r \rangle\] where
  \begin{itemize}
  \item $I_0\defeq T^0$ and
  \item  $I_i \defeq T^i\setminus T^{i-1}$ for $i=1,\ldots,r$.
  \end{itemize}
   So, for example, the
  expression $\langle 0,3,5 \mid 4 \mid {} \mid 1,2 \rangle$ denotes the
  degenerate $3$-arrow
  \begin{equation*}
    \{0,3,5\} \subseteq \{0,3,4,5\} \subseteq \{0,3,4,5\}\subseteq 
    \{0,1,2,3,4,5\}
  \end{equation*}
  in $\Fun_{\hcSimp}(0,5)$.

  An expression of the form $\langle {I_0\mid I_1\mid\ldots\mid I_r} \rangle$
  represents a unique $r$-arrow in $\Fun_{\hcSimp}(k,l)$ if and only if the
  $I_i$ are pairwise disjoint subsets of $[k,l]$ with $k,l\in I_0$. It is atomic
  if and only if $I_0=\{k,l\}$ and it is non-degenerate if and only if
  $I_i\neq\emptyset$ for $i=1,\ldots,r$.

  In this notation, the composite of an $r$-arrow $\langle I_0\mid I_1\mid \ldots \mid I_r \rangle$ 
 in $\Fun_{\hcSimp}(k,l)$ with an $r$-arrow $\langle J_0\mid J_1\mid \ldots\mid J_r
  \rangle$ in $\Fun_{\hcSimp}(l,m)$ is 
  denoted by $\langle I_0\cup J_0\mid I_1\cup J_1\mid \ldots\mid I_r\cup J_r
  \rangle$.   The identity $0$-arrow on an object $n$ is denoted $\langle n
  \rangle$.   This notation also makes it convenient to write whiskered composites, as these are simply given by the
  expressions $\langle {I_0} \rangle\circ\langle J_0\mid \ldots\mid J_r \rangle =
  \langle I_0\cup J_0\mid J_1\mid \ldots\mid J_r \rangle$ and $\langle I_0\mid
  \ldots\mid I_r \rangle\circ \langle {J_0} \rangle = \langle I_0\cup J_0\mid
  I_1\mid \ldots\mid I_r \rangle$.
\end{ntn}

Importantly: 

\begin{lem}\label{lem:simplex-computad} The homotopy coherent simplex $\hcSimp$ is a simplicial computad.
\end{lem}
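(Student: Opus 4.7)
My plan is to verify both conditions of Definition~\ref{defn:simplicial-computad} directly using the compact notation of \ref{ntn:compact-arrows}, exploiting the fact that the first slot $I_0$ of an arrow $\langle I_0\mid I_1\mid \ldots\mid I_r\rangle\in\Fun_{\hcSimp}(k,l)$ already lives in $\oSimp$ and governs all non-trivial factorisations.

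For freeness of $\hcSimp_r$, fix a non-identity $r$-arrow $f=\langle I_0\mid I_1\mid\ldots\mid I_r\rangle$ from $k$ to $l$, and write $I_0=\{k=k_0<k_1<\cdots<k_n=l\}$. Since the $I_i$ are pairwise disjoint subsets of $[k,l]$ and $I_i$ (for $i\geq1$) is disjoint from $I_0$, each $I_i$ is contained in the disjoint union $\bigsqcup_{j=1}^n (k_{j-1},k_j)$. Setting
\[
  \beta_j \;\defeq\; \bigl\langle \{k_{j-1},k_j\}\mid I_1\cap(k_{j-1},k_j)\mid \ldots\mid I_r\cap(k_{j-1},k_j)\bigr\rangle,
\]
each $\beta_j$ is a well-formed $r$-arrow from $k_{j-1}$ to $k_j$ with first slot $\{k_{j-1},k_j\}$, so each $\beta_j$ is atomic. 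Slot-wise union recovers $f=\beta_1\circ\beta_2\circ\cdots\circ\beta_n$, establishing existence of an atomic factorisation.

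For uniqueness, suppose $f=\gamma_1\circ\gamma_2\circ\cdots\circ\gamma_m$ with each $\gamma_s$ atomic, where $\gamma_s$ goes from $l_{s-1}$ to $l_s$ (with $l_0=k$ and $l_m=l$). Reading off the first slot, the atomic arrows in $\oSimp$ given by $\{l_{s-1},l_s\}$ compose to $I_0$, so by the known free generation of $\oSimp$ on atomic arrows (see the observation on atomic arrows in $\oSimp$), we must have $m=n$ and $l_s=k_s$ for all $s$. Moreover, the $i$-th slot of $\gamma_s$ is a subset of $[k_{s-1},k_s]$ disjoint from $\{k_{s-1},k_s\}$, hence contained in $(k_{s-1},k_s)$; since these intervals are pairwise disjoint and their union must equal $I_i$, the $i$-th slot of $\gamma_s$ is forced to be $I_i\cap(k_{s-1},k_s)$. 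Thus $\gamma_s=\beta_s$, as required. Identity arrows $\langle k\rangle$ on an object $k$ clearly admit no non-trivial factorisation since any factoring arrow from $k$ to $l$ would satisfy $k\in\{k,l\}\subseteq I_0$, forcing $l=k$.

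For the degeneracy stability condition, recall that a degeneracy operator $\alpha\colon[m]\to[n]$ in $\Del$ is an order-preserving surjection, and hence necessarily satisfies $\alpha(0)=0$. Given an atomic $n$-arrow $f=\langle\{k,l\}\mid I_1\mid\ldots\mid I_n\rangle$ corresponding to the chain $T^i=\{k,l\}\cup I_1\cup\cdots\cup I_i$, the action $f\cdot\alpha$ is the $m$-arrow with chain $(f\cdot\alpha)^j=T^{\alpha(j)}$. Its first slot is $T^{\alpha(0)}=T^0=\{k,l\}$, so $f\cdot\alpha$ remains atomic. This completes both conditions; the only step with any substance is the uniqueness of atomic factorisation, which reduces cleanly to the analogous fact in $\oSimp$ once one observes that all factorisation data at higher simplicial levels is pinned down by the partition of $I_0$.
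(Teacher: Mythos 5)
Your proof is correct and follows essentially the same route as the paper's: both arguments observe that atomicity of an $r$-arrow is governed entirely by its bottom level $T^0=I_0$, reduce existence and uniqueness of atomic factorisations to the free generation of $\oSimp$ by its atomic arrows (with the higher levels pinned down by intersecting with the subintervals determined by $I_0$), and note that degeneracy operators satisfy $\alpha(0)=0$ so that the bottom level, and hence atomicity, is preserved. The only difference is cosmetic — you work in the slot notation $\langle I_0\mid\ldots\mid I_r\rangle$ and spell out the uniqueness of the higher slots a little more explicitly than the paper does.
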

\begin{proof}
We claim that a non-identity $r$-arrow $T^\bullet$ in $\Fun_{\hcSimp}(k,m)$
  may be decomposed into a composite of two non-identity $r$-arrows if and only
  if $T^0\setminus \{k,m\}$ is non-empty. This follows because if we pick an
  $l\in T^0\setminus \{k,m\}$ then it is also an element of each
  $T^i\setminus\{k,m\}$, so we may decompose each $T^i$ as $T_1^i\circ T_2^i$
  with $T_1^i\defeq T^i\cap[k,l]\in\Fun_{\oSimp}(k,l)$ and $T_2^i\defeq
  T^i\cap[l,m]\in\Fun_{\oSimp}(l,m)$, and in this way we get $r$-simplices
  $T_1^\bullet$ and $T_2^\bullet$ with $T^\bullet = T_1^\bullet\circ
  T_2^\bullet$. Consequently, we see that $T^\bullet$ is atomic in
  $\hcSimp$ if and only if $T^0$ is atomic in $\oSimp$. Furthermore, if
  $T^0 = \{k = l_0 < l_1 < \cdots < l_n = m\}$ then $T^\bullet$ decomposes into a
  unique composite of atomic $r$-arrows \[k=l_0\stackrel{T^\bullet_1}
  \longrightarrow l_1\stackrel{T^\bullet_2} \longrightarrow l_2\longrightarrow \cdots
 \longrightarrow l_{n-1}\stackrel{T^\bullet_n} \longrightarrow l_n\] with $T^i_j =
  T^i\cap[l_{j-1},l_j]$. It follows, therefore, that the category $\hcSimp_r$
  of the $r$-arrows of $\hcSimp$ is freely generated by the graph with objects
  the integers and edges the atomic $r$-arrows.

  A simplicial operator $\alpha\colon[s]\to[r]$ acts on an $r$-arrow $T^\bullet$
  of $\hcSimp$ by re-indexing, that is $(T^\bullet\cdot\alpha)^i\defeq
  T^{\alpha(i)}$. In particular, if $\alpha(0)=0$ and 
$T^\bullet$ is an atomic $r$-arrow, then $T^\bullet\cdot{\alpha}$ is also an atomic $r$-arrow.
  This condition holds whenever $\alpha$ is a degeneracy
  operator, so we have verified the conditions of Definition \ref{defn:simplicial-computad}.
 \end{proof}

The upshot of Lemma \ref{lem:simplex-computad} is that a simplicial functor with domain $\hcSimp$ is defined simply by
  specifying how it acts on non-degenerate atomic arrows at each dimension
  and verifying that these choices are compatible with the face operations on
  those arrows.

\begin{obs}[the function complexes of $\hcSimp$ are cubes]\label{obs:hcsimp-cubes}
  Observe that we have $\Fun_{\hcSimp}(k,k)=\Del^0$ and $\Fun_{\hcSimp}(k,l) =
  \emptyset$ whenever $k > l$. What is more, each function complex
  $\Fun_{\hcSimp}(k,l)$ with $k<l$ is isomorphic to the standard simplicial cube
  $(\Del^1)^{\times (l-k-1)}$, by an isomorphism
  \[ \Fun_{\hcSimp}(k,l) \cong (\Del^1)^{\times (l-k-1)}\]
   which maps a vertex $T \subset [k,l]$ of
  $\Fun_{\hcSimp}(k,l)$ to the vertex $(x_{k+1},\ldots,x_{l-1})$ of
  $(\Del^1)^{\times (l-k-1)}$ given by $x_i = 1$ if $i\in T$ and $x_i= 0$ if
  $i\notin T$. Under these isomorphisms, the composition operation  corresponds to the simplicial map 
  \[
  \xymatrix@R=1em{ \Fun_{\hcSimp}(k,l)\times\Fun_{\hcSimp}(l,m)\ar[r]^-\circ \ar@{}[d]|{\rotatebox{90}{$\cong$}}& \Fun_{\hcSimp}(k,m) \ar@{}[d]|{\rotatebox{90}{$\cong$}} \\
(\Del^1)^{\times
    (l-k-1)}\times(\Del^1)^{\times (m-k-1)}\ar[r] & (\Del^1)^{\times (m-l-1)}}\]
     which
  maps the pair of vertices $(x_{k+1},\ldots,x_{l-1})$ and $(x_{l+1},\ldots,x_{m-1})$
  to the vertex \[(x_{k+1},\ldots,x_{l-1},1,x_{l+1},\ldots,x_{m-1}).\]
\end{obs}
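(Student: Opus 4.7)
The plan is to exhibit the claimed isomorphism at the level of posets and then invoke that the nerve functor preserves finite products. First, I would dispose of the degenerate cases: when $k>l$ the set $\Hom_{\oSimp}(k,l)$ is empty by definition, so its nerve is $\emptyset$; when $k=l$ the only subset $T\subseteq[k,k]$ containing both $k$ and $k$ is $\{k\}$, so $\Hom_{\oSimp}(k,k)$ is a singleton and its nerve is $\Del^0$.

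For the main case $k<l$, the key observation is that the hom-poset $\Hom_{\oSimp}(k,l)$ is isomorphic, as an ordered set, to the Boolean lattice $\{0,1\}^{[k+1,l-1]}$. The isomorphism sends a subset $T\subseteq[k,l]$ with $k,l\in T$ to the tuple $(x_{k+1},\ldots,x_{l-1})$ with $x_i=1$ iff $i\in T$; this is a bijection because the elements $k$ and $l$ are forced to lie in $T$, and it is an order isomorphism because inclusion of subsets corresponds to the coordinatewise order. Now apply the nerve functor, which preserves products; using that $N(\{0,1\})=\Del^1$ and $N$ preserves finite products of ordered sets (as left adjoints to inclusions), I obtain
\[
  \Fun_{\hcSimp}(k,l)\;=\;N(\Hom_{\oSimp}(k,l))\;\cong\;N\bigl(\{0,1\}^{l-k-1}\bigr)\;\cong\;(\Del^1)^{\times(l-k-1)},
\]
which is the stated cube.

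For the composition claim, it suffices to check it at the level of vertices (i.e.\ on the posets $\Hom_\oSimp(k,l)$, $\Hom_\oSimp(l,m)$, $\Hom_\oSimp(k,m)$) and then extend by naturality of the nerve. Given $T_1\in\Hom_{\oSimp}(k,l)$ and $T_2\in\Hom_{\oSimp}(l,m)$ with $\phi(T_1)=(x_{k+1},\ldots,x_{l-1})$ and $\phi(T_2)=(x_{l+1},\ldots,x_{m-1})$, the composite is $T_1\cup T_2$, and I need to verify $\phi(T_1\cup T_2)=(x_{k+1},\ldots,x_{l-1},1,x_{l+1},\ldots,x_{m-1})$. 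The coordinate at position $l$ is $1$ because $l\in T_1\subseteq T_1\cup T_2$; for $i\in[k+1,l-1]$ one has $i<l$, hence $i\notin T_2$ (since $T_2\subseteq[l,m]$), so $i\in T_1\cup T_2$ iff $i\in T_1$; symmetrically for $i\in[l+1,m-1]$. This immediately yields the description of the composition map on the cubes.

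There is essentially no obstacle here—this is a purely combinatorial unpacking of the definitions. The only minor point of hygiene is to note that the poset isomorphism $\Hom_{\oSimp}(k,l)\cong\{0,1\}^{l-k-1}$ is what permits the product decomposition, since the nerve of a product of posets is (canonically) the product of the nerves.
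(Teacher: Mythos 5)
Your argument is correct and is exactly the unpacking the paper intends: the statement appears there as an unproved Observation, and your route---identify the hom-poset $\Hom_{\oSimp}(k,l)$ with the Boolean lattice on $[k+1,l-1]$, apply the (product-preserving) nerve, and check the composition formula on objects of the posets---is the canonical justification. The only blemish is the parenthetical ``as left adjoints to inclusions'': the nerve preserves products because it is a \emph{right} adjoint (or simply because $N(P\times Q)_n = N(P)_n\times N(Q)_n$ levelwise), but this does not affect the proof.
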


\subsection{Homotopy coherent realisation and the homotopy coherent nerve}\label{sec:realization-nerve}

The full subcategory of the homotopy coherent simplex $\hcSimp$ on the objects
$0,\ldots, n$ defines the \emph{homotopy coherent $n$-simplex}. These simplicial
computads assemble into a cosimplicial object in $\sCptd$ and that, in turn,
gives rise to the homotopy coherent nerve and its left adjoint, called homotopy
coherent realisation, by an application of a standard construction due to Kan.

\begin{defn}[the homotopy coherent $n$-simplex]\label{defn:hocoh-n-simplex}
  Let $\oSimp^{n}$ denote the full subcategory of $\oSimp$ on the objects
  $0,1,\ldots,n$ and let $\gC\Del^n$, the \textit{homotopy coherent
    $n$-simplex}, denote the corresponding full simplicial subcategory of
  $\hcSimp$. 
    
  A simplicial operator $\beta\colon[m]\to[n]$ gives rise to a local order
  preserving functor $\oSimp^\beta\colon \oSimp^{m}\to \oSimp^{n}$ which maps
  the object $k\in\oSimp^{m}$ to the object $\beta(k)\in\oSimp^{n}$ and carries
  an arrow $T$ of $\Hom_{\oSimp}(k,l)$ to $\beta(T)=\{\beta(i)\mid i\in T\}$
  which is quite clearly an arrow of $\Hom_{\oSimp^{n}}(\beta(k),\beta(l))$.
  Taking nerves of hom-posets it follows that $\oSimp^\beta\colon\oSimp^m\to
  \oSimp^n$ gives rise to a simplicial functor
  $\gC{\Del^{\beta}}\colon\gC{\Del^{m}}\to\gC{\Del^{n}}$; this acts pointwise
  to carry an $r$-arrow $T^\bullet\in\Fun_{\gC{\Del^{m}}}(k,l)$ to the $r$-arrow
  $\beta(T^\bullet)$ in $\Fun_{\gC{\Del^{n}}}(\beta(k),\beta(l))$ given by
  $\beta(T^\bullet)^i \defeq \beta(T^i)=\{\beta(j)\mid j\in T^i\}$ for
  $i=0,1,\ldots,r$. These constructions are functorial in $\beta$, giving us a
  functor
  \begin{equation*}
    \gC{\Del^\bullet}\colon \Del \longrightarrow \sCat
  \end{equation*}
  into the category of small simplicial categories and simplicial
  functors.
  \end{defn}
  
  \begin{defn}[homotopy coherent realization and the homotopy coherent nerve]
    Applying Kan's construction \cite[1.5.1]{Riehl:2014kx} to the functor
    $\gC{\Del^\bullet}\colon \Del \longrightarrow \sCat$ yields an adjunction
  \begin{equation*}
    \adjdisplay \gC -| \hN : \sCat -> \sSet .
  \end{equation*}
  the right adjoint of which is called the \textit{homotopy coherent nerve} and
  the left adjoint of which, defined by pointwise left Kan extension along the
  Yoneda embedding:
  \begin{equation}\label{eq:coherent-realization-kan-ext}
    \xymatrix@=1.5em{
      {\Del}\ar[rr]^{\Del^{\bullet}}\ar[dr]_{\gC{\Del^\bullet}} &
      \ar@{}[d]|(0.4){\cong} & {\sSet}\ar[dl]^{\gC} \\
      & {\sCat} &
    }
  \end{equation}
 we refer to as \emph{homotopy coherent realisation}. An $n$-simplex of a
  homotopy coherent nerve $\hN\eC$ is simply a simplicial functor
  $c\colon\gC{\Del^{n}}\to \eC$ and the action of an operator
  $\beta\colon[m]\to[n]$ on that simplex is given by precomposition with 
$\gC{\Del^{\beta}}\colon\gC{\Del^{m}}\to\gC{\Del^{n}}$.
\end{defn}

\begin{rmk}[homotopy coherent simplices explicitly]\label{rmk:explicit-hc-simp}
The simplices $c\colon\gC{\Del^n}\to\eC$ of the
  homotopy coherent nerve $\hN\eC$ are called \emph{homotopy coherent $n$-simplices in
    $\eC$}. These are given by specifying the following information:
  \begin{itemize}
  \item a sequence of objects $c_0, c_1, \ldots, c_n$ of $\eC$,
  \item simplicial maps $c_{i,j}\colon(\Del^1)^{j-i-1}\to\Fun_{\eC}(c_i,c_j)$ for
    $0\leq i < j\leq n$, satisfying the
  \item functoriality condition that for all $0\leq i < j < k\leq n$ the
    following square
    \begin{equation*}
      \xymatrix@R=2em@C=5em{
        {(\Del^1)^{j-i-1}\times(\Del^1)^{k-j-1}}\ar@{^(->}[d]
        \ar[r]^-{c_{i,j}\times c_{j,k}} &
        {\Fun_{\eC}(c_i,c_j)\times\Fun_{\eC}(c_j,c_k)}
        \ar[d]^{\circ} \\
        {(\Del^1)^{k-i-1}}\ar[r]_-{c_{i,k}} & {\Fun_{\eC}(c_i,c_k)}
      }
    \end{equation*}
    commutes, wherein the left-hand vertical is the simplicial map described in Observation \ref{obs:hcsimp-cubes}.
  \end{itemize}
\end{rmk}

\begin{obs}
  The opposite category and the simplicial dual constructions of
  Recollections~\ref{rec:simp-cat-duals} and~\ref{rec:simp-dual} are related by
  the homotopy coherent nerve construction, in the sense that there exists a
  natural simplicial isomorphism $\hN(\eC\op)\cong\hN(\eC)\op$. This follows
  directly from the observation that there exists a canonical isomorphism
  $(\gC\Del^n)\op\cong\gC\Del^n$ which maps each object $i$ to $n-i$.\footnote{We
    leave it as a diversion for the reader to work out the relationship between
    the homotopy coherent nerve and the codual construction.}
\end{obs}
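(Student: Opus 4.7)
The plan is to realise the asserted natural isomorphism as arising from an isomorphism of cosimplicial objects $\phi_\bullet \colon (\gC\Del^\bullet)\op \To \gC\Del^{(-)\dual}$ in $\sCat$. The hint in the statement dictates the formula: define $\phi_n \colon (\gC\Del^n)\op \to \gC\Del^n$ on objects by $i \mapsto n-i$. An $r$-arrow of $(\gC\Del^n)\op$ from $i$ to $j$ is the same datum as an $r$-arrow $T^\bullet$ of $\gC\Del^n$ from $j$ to $i$, that is, a chain $T^0 \subseteq \cdots \subseteq T^r \subseteq [j,i]$ with $\{j,i\} \subseteq T^0$ (requiring $j\leq i$); send it to the $r$-arrow $(n-T)^\bullet$ of $\gC\Del^n$ from $n-i$ to $n-j$ given pointwise by $(n-T)^\ell \defeq \{n-k : k \in T^\ell\}$. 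Functoriality reduces to observing that for $T^\bullet \colon i \to j$ and $S^\bullet \colon j \to k$ in $(\gC\Del^n)\op$, the opposite composite is the composite $T^\bullet \circ S^\bullet = (T^\ell \cup S^\ell)^\bullet$ taken in $\gC\Del^n$, and $\phi_n$ sends this to $(n - T)^\bullet \cup (n - S)^\bullet = \phi_n(T^\bullet) \circ \phi_n(S^\bullet)$. Simpliciality of $\phi_n$ is evident since the relabelling $k \mapsto n-k$ commutes with arbitrary re-indexings of the chain. The formula is manifestly involutive, yielding the inverse after identifying $((\gC\Del^n)\op)\op = \gC\Del^n$.

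The key naturality condition to verify is that for every $\alpha \colon [m] \to [n]$ in $\Del$, with $\alpha\dual(i) = n - \alpha(m-i)$, the square
\begin{equation*}
  \xymatrix@R=2em{
    (\gC\Del^m)\op \ar[r]^{\phi_m} \ar[d]_{(\gC\Del^\alpha)\op} & \gC\Del^m \ar[d]^{\gC\Del^{\alpha\dual}} \\
    (\gC\Del^n)\op \ar[r]_{\phi_n} & \gC\Del^n
  }
\end{equation*}
commutes in $\sCat$. On an object $i$, both composites return $n - \alpha(i)$; on an $r$-arrow $T^\bullet$, both routes apply the pointwise relabelling $k \mapsto n - \alpha(k)$ to each set $T^\ell$. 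This packages $\phi_\bullet$ as a natural isomorphism of functors $\Del \to \sCat$ between $(\gC\Del^\bullet)\op$ and $\gC\Del^{(-)\dual}$.

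Finally, to transfer this along the homotopy coherent nerve: an $n$-simplex of $\hN(\eC\op)$ is a simplicial functor $c \colon \gC\Del^n \to \eC\op$, which is the same datum as a simplicial functor $c\op \colon (\gC\Del^n)\op \to \eC$, and hence by precomposition with $\phi_n^{-1}$ yields a simplicial functor $\gC\Del^n \to \eC$, i.e., an $n$-simplex of $\hN\eC$. The precomposition action of $\alpha$ on $\hN(\eC\op)$ is by $\gC\Del^\alpha$, while the action of $\alpha$ on $\hN(\eC)\op$ is by $\gC\Del^{\alpha\dual}$; the square of the previous paragraph is precisely the condition needed to see that the bijection $c \mapsto c\op \circ \phi_n^{-1}$ intertwines these actions. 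Naturality in $\eC$ is automatic, since the construction never refers to any specific feature of $\eC$. The main obstacle throughout is purely bookkeeping: three distinct dualities are simultaneously in play (the enriched opposite $\eC\op$ of simplicial categories, the simplicial opposite $X\op$ of simplicial sets, and the duality $(-)\dual$ on $\Del$), and the whole argument consists of keeping them consistently aligned via the explicit formula $i \mapsto n - i$.
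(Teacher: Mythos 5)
Your proposal is correct and is exactly the argument the paper has in mind: you construct the involutive isomorphism $i\mapsto n-i$, $T^\ell\mapsto\{n-k: k\in T^\ell\}$, verify the ``twisted'' naturality square against $\alpha\dual(i)=n-\alpha(m-i)$, and use it to transport simplicial functors $\gC\Del^n\to\eC\op$ to simplicial functors $\gC\Del^n\to\eC$ compatibly with the reversed simplicial operator actions. This matches the paper's intended justification in both strategy and detail, so there is nothing to add.
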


We now turn our attention to the homotopy coherent realisation functor $\gC\colon\sSet\to\sCat$.

\begin{lem}\label{lem:realization-in-computads} The homotopy coherent realisation functor
  \begin{equation*}
    \xymatrix@R=0em@C=6em{{\sSet}\ar[r]^-{\gC} & \sCptd}
  \end{equation*}
 lands in the subcategory of simplicial computads and morphisms of such.
\end{lem}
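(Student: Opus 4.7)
The strategy is to express the homotopy coherent realisation as a colimit in $\sCat$ of homotopy coherent simplices and to show that this diagram lifts through the forgetful functor $\sCptd \inc \sCat$ of Lemma~\ref{lem:computad-colimits}. Specifically, from the definition of $\gC$ as a pointwise left Kan extension along the Yoneda embedding (diagram \eqref{eq:coherent-realization-kan-ext}), for any simplicial set $X$ we have a canonical isomorphism
\[ \gC X \;\cong\; \colim\bigl( \Del\comma X \longrightarrow \Del \xrightarrow{\gC\Del^\bullet} \sCat \bigr), \]
where $\Del\comma X$ is the category of simplices of $X$. So it suffices to show that (a) each $\gC\Del^n$ is a simplicial computad, and (b) each structure map $\gC\Del^\beta \colon \gC\Del^m \to \gC\Del^n$ is a functor of simplicial computads, since then Lemma~\ref{lem:computad-colimits} says that the colimit may be formed in $\sCptd$ and agrees with the colimit in $\sCat$.

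For (a), observe that $\gC\Del^n$ is a full simplicial subcategory of the homotopy coherent $\omega$-simplex $\hcSimp$. Since being a simplicial computad in the sense of Definition~\ref{defn:simplicial-computad} depends only on the generation of each $\eA_r$ by atomic $r$-arrows and on the degeneracy action preserving atomicity --- both of which are inherited by a full subcategory containing all atomic arrows between its objects --- Lemma~\ref{lem:simplex-computad} immediately yields that $\gC\Del^n$ is a simplicial computad.

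For (b), using the explicit description of $\gC\Del^\beta$ from Definition~\ref{defn:hocoh-n-simplex}, an atomic $r$-arrow $T^\bullet \in \Fun_{\gC\Del^m}(k,l)$ has $T^0 = \{k,l\}$ (cf.~the analysis in the proof of Lemma~\ref{lem:simplex-computad}). Its image $\beta(T^\bullet)$ has zeroth level $\beta(T^0) = \{\beta(k),\beta(l)\}$. If $\beta(k) = \beta(l)$ then $\beta(T^\bullet)$ is an identity $r$-arrow on $\beta(k)$; if $\beta(k) < \beta(l)$ then its zeroth level is a two-element set, so $\beta(T^\bullet)$ is atomic in $\gC\Del^n$. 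Thus $\gC\Del^\beta$ maps atomic arrows to atomic arrows or identities, as required.

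The mildly subtle point --- and the only real ``obstacle'' --- is that a priori the colimit defining $\gC X$ is formed in $\sCat$, not in $\sCptd$. However, Lemma~\ref{lem:computad-colimits} provides exactly the needed compatibility: colimits in $\sCptd$ are created by the forgetful functor to $\sCat$, so the colimit of the lifted diagram $\Del\comma X \to \sCptd$ coincides with $\gC X$ and endows it with the structure of a simplicial computad. This construction is evidently natural in $X$, so $\gC$ factors through $\sCptd \inc \sCat$ as claimed.
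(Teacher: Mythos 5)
Your proof is correct and follows essentially the same route as the paper's: verify that each $\gC\Del^\beta$ carries atomic arrows to atomic arrows or identities via the characterisation $T^0=\{k,l\}$, factor $\gC\Del^\bullet$ through $\sCptd\inc\sCat$, and then use Lemma~\ref{lem:computad-colimits} to push the pointwise left Kan extension (equivalently, the colimit over the category of simplices) through the colimit-creating inclusion. The only cosmetic difference is that you spell out explicitly why each $\gC\Del^n$, as a full subcategory of $\hcSimp$ closed under factorisations, is itself a simplicial computad, a point the paper leaves implicit via Lemma~\ref{lem:simplex-computad}.
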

\begin{proof}
  Observe that the functor $\oSimp^\beta$ associated with a simplicial operator
  $\beta\colon[m]\to[n]$ carries the atomic arrow $\{k,l\}$ to
  the arrow $\{\beta(k),\beta(l)\}$. This is the unique atomic arrow in
  $\Hom_{\oSimp^n}(\beta(k),\beta(l))$ when $\beta(k)<\beta(l)$ and is the identity on
  $\beta(k)=\beta(l)$ otherwise. It follows, from the fact that an $r$-arrow
  $T^\bullet$ is atomic in $\hcSimp$ if and only if $T^0$ is atomic in $\oSimp$,
  that the simplicial functor $\gC{\Del^{\beta}}$ carries an atomic $r$-arrow of
  $\Fun_{\hcSimp}(k,l)$ to an atomic $r$-arrow in
  $\Fun_{\hcSimp}(\beta(k),\beta(l))$ whenever $\beta(k)<\beta(l)$ and to the
  identity $r$-arrow on $\beta(k)=\beta(l)$ otherwise. In other words,
  $\gC{\Del^{\beta}}\colon\gC{\Del^{m}}\to\gC{\Del^{n}}$ is a functor of simplicial
  computads and so we may factor $\gC{\Del^\bullet}$ through $\sCptd\inc\sCat$ to
  give a functor:
  \begin{equation*}
    \xymatrix@R=0em@C=6em{{\Del}\ar[r]^-{\gC{\Del^\bullet}} & \sCptd}
  \end{equation*}
  Now the homotopy coherent realisation functor is defined by pointwise
  left Kan extension along the Yoneda embedding. Since $\gC{\Del^\bullet}$ lands in $\sCptd$ and Lemma \ref{lem:computad-colimits} tells us that colimits are created by the inclusion   $\sCptd\inc\sCat$, 
  we may factor the
  pointwise left Kan extension \eqref{eq:coherent-realization-kan-ext} through the colimit creating inclusion functor
to show that the homotopy coherent realisation functor also
  lands in $\sCptd$ as claimed.
\end{proof}

We may squeeze a little more information out of these observations using some
Reedy category theory, as explicated in~\cite{RiehlVerity:2013kx} for example.
  
\begin{lem}\label{lem:hocoh-simplex-reedy-cofibrant}
  The $n^{\text{th}}$ latching map $\boundary(\gC\Del^n)\to \gC{\Del^n}$of the
  functor
  \[
    \gC{\Del^\bullet}\colon \Del\longrightarrow\sCptd
  \]
  is isomorphic to the functor of simplicial computads
  $\gC\boundary\Del^n\to \gC{\Del^n}$ obtained by applying the homotopy
  coherent realisation functor to the inclusion $\boundary\Del^n\inc\Del^n$.
  What is more, $\gC{\Del^\bullet}$ is Reedy cofibrant, in the sense that all
  of its latching maps are simplicial subcomputad inclusions.
\end{lem}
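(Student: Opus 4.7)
The plan is to address the two assertions in turn. For the first, we exploit the cocontinuity of the homotopy coherent realisation functor $\gC : \sSet \to \sCptd$. As $\gC : \sSet \to \sCat$ is a left adjoint and Lemma~\ref{lem:computad-colimits} shows that colimits in $\sCptd$ are created by the inclusion into $\sCat$, the corestricted functor $\gC : \sSet \to \sCptd$ (well-defined by Lemma~\ref{lem:realization-in-computads}) preserves all colimits. Since the $n^{\mathrm{th}}$ latching object of the Yoneda cosimplicial diagram $\Del^\bullet : \Del \to \sSet$ is precisely $\boundary\Del^n$, computed as the colimit of the proper face inclusions into $\Del^n$ with canonical latching map $\boundary\Del^n \inc \Del^n$, cocontinuity of $\gC$ delivers the isomorphism $L_n\gC\Del^\bullet \cong \gC\boundary\Del^n$ and identifies its latching map with $\gC$ applied to the boundary inclusion.

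For the second assertion, the latching map $\gC\boundary\Del^n \to \gC\Del^n$ is automatically a functor of simplicial computads by Lemma~\ref{lem:realization-in-computads}, so it remains to verify that it is injective on objects and faithful on function complexes. The former holds because the object set of $\gC X$ is canonically identified with $X_0$ (as follows by inspecting the $\gC\Del^k$ and using cocontinuity), and $\boundary\Del^n$ and $\Del^n$ share the same $0$-simplices. For faithfulness we perform an explicit computation of the function complexes of $\gC\boundary\Del^n$, relying on the cubical description of Observation~\ref{obs:hcsimp-cubes} and the compact arrow notation of Notation~\ref{ntn:compact-arrows}.

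The key observation is that the face functor $\gC d^k : \gC\Del^{n-1} \to \gC\Del^n$ sends a chain $T^0 \subseteq \cdots \subseteq T^r$ of subsets of $[i',j']$ containing $\{i',j'\}$ to the corresponding chain of subsets of $[d^k(i'), d^k(j')]$ that all avoid the index $k$. For any pair $(i,j) \neq (0,n)$ one can choose $k \notin [i,j]$, and then $\gC d^k$ contributes the full cube $\Fun_{\gC\Del^n}(i,j) = (\Del^1)^{j-i-1}$ to $\gC\boundary\Del^n$; hence the hom-space map on $(i,j)$ is the identity. For the top hom $(0,n)$ no single face contributes the entire cube, and the main obstacle of the proof is to compute $\Fun_{\gC\boundary\Del^n}(0,n)$ as a subcomplex of $(\Del^1)^{n-1}$. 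Combinatorially, the plan is to verify that an atomic chain $\{0,n\} = T^0 \subsetneq \cdots \subsetneq T^r$ lies in the image of some face $\gC d^k$ with $0<k<n$ if and only if $k \notin T^r$ for some such $k$, i.e., if and only if $T^r \subsetneq [0,n]$, while every non-atomic chain (with $T^0 \supsetneq \{0,n\}$) is automatically present in $\gC\boundary\Del^n$ as a composite of atomic arrows drawn from lower hom-spaces in which the identification with the full cube has already been established. Under the vertex-to-subset correspondence of Observation~\ref{obs:hcsimp-cubes}, these are precisely the chains missing either the min vertex $\{0,n\}$ or the max vertex $[0,n]$, which is to say the simplices of the cube boundary $\boundary\bigl((\Del^1)^{n-1}\bigr)$. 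Once this identification is complete, each hom-space map is either an identity or a boundary inclusion of a cube, hence a monomorphism, establishing faithfulness and completing the proof.
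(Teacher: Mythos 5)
Your treatment of the first assertion is exactly the paper's: $\gC\colon\sSet\to\sCptd$ is cocontinuous (Lemmas \ref{lem:computad-colimits} and \ref{lem:realization-in-computads}), latching objects are colimits, and the $n$th latching object of the Yoneda embedding is $\boundary\Del^n$, so $L_n(\gC\Del^\bullet)\cong\gC\boundary\Del^n$. No issues there.

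For the second assertion your route diverges from the paper's, and as written it has a gap at the decisive step. You compute which arrows of $\gC\Del^n$ lie in the \emph{image} of the latching map and observe that this image is, hom-wise, either the full cube or the cubical boundary $\boundary\Cube^{n-1}\subset\Cube^{n-1}$. But the assertion to be proved is that the latching map is a simplicial subcomputad inclusion, i.e.\ injective on objects and faithful, and for that you must show that the colimit $\gC\boundary\Del^n$ maps \emph{injectively} onto that image --- that the gluing of the computads $\gC\Del^{n-1}$ along the $\gC\Del^{n-2}$'s neither identifies distinct arrows nor produces duplicate copies of the same arrow of $\gC\Del^n$. Your phrase ``the hom-space map on $(i,j)$ is the identity'' because one face ``contributes the full cube'' presupposes exactly this: several faces $\gC\face^k$ with $k\notin[i,j]$ each contribute a full copy of $\Fun_{\gC\Del^n}(i,j)$, and one must check that the colimit identifies these copies with one another and with nothing else. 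The repair is not hard --- by Lemma \ref{lem:computad-colimits} the colimit is computed on the graphs of atomic arrows, hence level-wise on sets, and the diagram of proper faces of $\Del^n$ is a diagram of subobjects closed under intersection, so the colimit of the atomic-arrow sets is the union of their images --- but this is the actual content of the claim and cannot be omitted. The paper avoids the issue entirely by transporting the question along $\sCptd\inc\Graph^{\Del\op\ep}$, where subcomputad inclusions become injections in a presheaf category, and then invoking the unaugmentability criterion of Lemma \ref{lem:unaugmentable}, which gives injectivity of all latching maps of $\Del^\bullet$ (and hence of its composite into $\Graph^{\Del\op\ep}$) by a uniform Eilenberg--Zilber-type argument. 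Your combinatorial identification of $\Fun_{\gC\boundary\Del^n}(0,n)$ with $\boundary\Cube^{n-1}$ is correct and is essentially what the paper records afterwards in Example \ref{ex:subcomputad-of-simplex}\ref{itm:boundary}, but there it is a \emph{consequence} of the lemma rather than its proof; if you want to run the argument in this direction you must supply the injectivity of the colimit comparison explicitly.
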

\begin{proof}
  Latching objects are constructed as certain colimits, so the latching maps of
  the Yoneda embedding $\Del^\bullet\colon\Del\to\sSet$ are preserved by the
  cocontinuous homotopy realisation functor $\gC\colon\sSet\to\sCptd$. The
  2-cell in the left Kan extension~\eqref{eq:coherent-realization-kan-ext} is an
  isomorphism, since the Yoneda embedding is fully-faithful, so it follows in
  particular that $\gC$ carries latching maps of $\Del^\bullet$ to those of
  $\gC{\Del^\bullet}\colon\Del\to\sCptd$, which gives the first result in the
  statement.
  
  The colimits of $\sCptd$, and thus any latching objects constructed using
  them, are also preserved and reflected by the functor
  $J\colon\sCptd\to\Cptd^{\Del\op\ep}$. What is more, by
  Observation~\ref{obs:simp-subcomp-char} a functor in $\sCptd$ is a simplicial
  subcomputad inclusion if and only if it is mapped to a level-wise subcomputad
  inclusion by $J\colon\sCptd\inc\Cptd^{\Del\op\ep}$. It follows that
  $\gC{\Del^\bullet}$ is Reedy cofibrant if and only if the composite functor
  \begin{equation*}
    \xymatrix@R=0em@C=3em{
      {\Del}\ar[rr]^-{\gC{\Del^\bullet}} & & {\sCptd}\ar[r] &
      {\Cptd^{\Del\op\ep}}
    }
  \end{equation*}
  is Reedy cofibrant in the sense that its latching maps are all injective. Note
  here that $\Cptd^{\Del\op\ep}$ is equivalent to a presheaf category, so the
  Reedy cofibrancy of the composite functor follows by a routine application of
  the following lemma.
\end{proof}

\begin{lem}\label{lem:unaugmentable}
  Suppose that $X$ is a cosimplicial object in a presheaf category
  $\Set^{\scat{C}\op}$ that is unaugmentable, in the sense that the
  equaliser of the pair $X^{\face^0}, X^{\face^1}\colon X^0\rightrightarrows
  X^1$ is empty. Then the latching maps of $X$ are all injective.
\end{lem}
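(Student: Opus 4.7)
My plan is to reduce the statement to cosimplicial sets and then proceed by induction on $n$. Since colimits in the presheaf category $\Set^{\scat{C}\op}$ are computed pointwise and monomorphisms are detected pointwise, it suffices to prove the result when $X$ is an ordinary cosimplicial set satisfying $\mathrm{Eq}(X^{\face^0},X^{\face^1}) = \emptyset$. A key preliminary observation is that every face operator $X^\face$ is a split injection: for any face $\face\colon [k]\inc[n]$ one can always find a degeneracy $\degen\colon [n]\epi[k]$ with $\degen\face = \id$, whence $X^\degen\circ X^\face = \id$.

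I will then show by induction on $n$ that the latching map $L^n X \to X^n$ is injective. The base case $n = 0$ is immediate since $L^0 X = \emptyset$. For the inductive step, suppose $X^\face(x) = X^{\face'}(x')$ for two proper face operators $\face,\face'$ into $[n]$ with images $S$ and $S'$. In the easy case where $T := S\cup S' \subsetneq [n]$, I factor $\face = \face_T\circ\gamma$ and $\face' = \face_T\circ\gamma'$ through the face $\face_T\colon [|T|-1]\inc[n]$, use injectivity of $X^{\face_T}$ to deduce $X^\gamma(x) = X^{\gamma'}(x')$, and then the direct colimit identifications $(\face_T\gamma, z) \sim (\face_T, X^\gamma(z))$ give $(\face,x) \sim (\face',x')$ in $L^n X$.

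The main obstacle will be the remaining case $S\cup S' = [n]$, which is precisely where unaugmentability must enter. At $n=1$ this forces $\{S,S'\} = \{\{0\},\{1\}\}$ and hence $X^{\face^1}(x) = X^{\face^0}(x')$; applying $X^{\degen^0}$ forces $x = x'$ so that the equation becomes $X^{\face^0}(x) = X^{\face^1}(x)$, directly contradicting unaugmentability, and no such pair can exist. For $n\geq 2$ I plan to apply carefully chosen degeneracies $X^{\degen^i}$ to both sides, deriving factorisations of $x$ and $x'$ as images of face operators from strictly lower degree; substituting these back and then rewriting by means of the cosimplicial identities $\face^j\face^i = \face^i\face^{j-1}$ (for $i<j$) should allow the two representatives to be exhibited as the same composite operator in $\Del$, yielding the equivalence $(\face,x)\sim(\face',x')$ in $L^n X$. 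The principal combinatorial challenge will be organising this reduction uniformly for arbitrary $S,S' \subsetneq [n]$ with $S\cup S' = [n]$: in general no single $\degen^i$ makes both $\degen^i\face_S$ and $\degen^i\face_{S'}$ injective, so the equation resulting at level $n-1$ may itself be of ``Case B'' type and must be handled by recursive appeal to the inductive hypothesis on $L^{n-1}X\to X^{n-1}$, ultimately propagating the obstruction at level $1$ up through all dimensions.
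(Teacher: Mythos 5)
Your reduction to cosimplicial sets, the split-injectivity of the maps $X^\face$, and your Case A (where $T=S\cup S'\subsetneq[n]$, factoring through $\face_T$ and identifying inside the colimit) are all correct — and note that Case A needs no inductive hypothesis at all. The problem is that everything you have actually proved is the easy half, and the half you defer — $S\cup S'=[n]$ — is where the entire content of the lemma lives, and your plan for it is both underdetermined and partly misdirected. The relevant dichotomy is not on the union but on the intersection. If $S\cap S'\neq\emptyset$ (which is compatible with $S\cup S'=[n]$, e.g.\ $S=\{0,1\}$, $S'=\{1,2\}$ in $[2]$), the correct conclusion is an identification: one must produce $z$ supported on $S\cap S'$ with $x=X^{\gamma}(z)$, $x'=X^{\gamma'}(z)$ and $\face\gamma=\face'\gamma'=\face_{S\cap S'}$; this is the dual Eilenberg--Zilber / absolute-pullback argument, carried out by applying well-chosen retractions $X^{\degen}$ and the cosimplicial identities, and it is exactly the step you wave at with ``should allow the two representatives to be exhibited as the same composite operator.'' If instead $S\cap S'=\emptyset$ (also compatible with $S\cup S'=[n]$, e.g.\ $S=\{0,2\}$, $S'=\{1\}$), the correct conclusion is not an identification but a contradiction: from $X^{\face_S}(x)=X^{\face_{S'}}(x')$ one extracts, by applying suitable degeneracies $[n]\epi[1]$, an element of the equaliser of $X^{\face^0},X^{\face^1}$, so no such pair exists. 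Your sketch conflates these two outcomes, and your fallback — ``recursive appeal to the inductive hypothesis on $L^{n-1}X\to X^{n-1}$'' — is not shown to do any work: injectivity of the lower latching map only tells you that an equation $X^{\alpha}(u)=X^{\beta}(v)$ at level $n-1$ yields a zigzag of elementary identifications in $L^{n-1}X$, and you give no mechanism for transporting such a zigzag back up to the required identification (or nonexistence statement) at level $n$. As it stands the proposal is a programme, not a proof, with the decisive combinatorial lemma missing.

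For comparison, the paper's proof performs the same pointwise reduction and then simply invokes the uniqueness half of the dual Eilenberg--Zilber statement for unaugmentable cosimplicial sets — every element lying in the image of a proper coface operator is \emph{uniquely} expressible as the image under a monomorphism of an element not itself in the image of any proper coface — citing \cite[14.3.8]{Riehl:2014kx} for the details; that uniqueness immediately gives injectivity of the latching maps. If you want a self-contained argument, organise your Case B by the intersection as above: the nonempty-intersection subcase is a finite retraction-and-identity computation independent of unaugmentability, and the empty-intersection subcase is where the hypothesis on the equaliser of $X^{\face^0},X^{\face^1}$ enters, as your own $n=1$ computation already shows.
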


\begin{proof}
  Since latching objects are defined in terms of certain colimits computed
  pointwise in $\Set^{\scat{C}\op}$, we may reduce this result to the
  corresponding one for a cosimplicial set $X\colon\Del\to\Set$. A simplex in a cosimplicial set is ``non-degenerate'' if it is not in the image of a monomorphism from $\Del$. In an unaugmentable cosimplicial set, every degenerate simplex is uniquely expressible as the image of a non-degenerate simplex under a monomorphism. This uniqueness implies that the latching map is a monomorphism; see~\cite[14.3.8]{Riehl:2014kx} for further discussion.
\end{proof}

\begin{lem}\label{lem:subcomputad-inclusion} For any inclusion $i\colon X \inc Y$ of simplicial sets, the induced simplicial functor $\gC{i}\colon\gC{X}\inc\gC{Y}$ is a simplicial subcomputad.
\end{lem}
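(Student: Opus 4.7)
The plan is to build the inclusion $i\colon X \inc Y$ as a transfinite composite of pushouts of boundary inclusions $\boundary\Del^n \inc \Del^n$, apply the cocontinuous functor $\gC$, and then use the stability properties of simplicial subcomputads established in Lemma~\ref{lem:sub-computad-stability} together with the Reedy cofibrancy result of Lemma~\ref{lem:hocoh-simplex-reedy-cofibrant}.

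More precisely, I would first filter $Y$ by its skeleta relative to $X$, setting $Y_{-1} \defeq X$ and $Y_n \defeq X \cup \sk_n Y$. Standard simplicial-set arguments (the skeletal filtration applied to the relative case) express each stage $Y_{n-1} \inc Y_n$ as a pushout of the form
\[
    \xymatrix@=2em{
      \coprod_{\sigma} \boundary\Del^n \ar@{^(->}[d] \ar[r] & Y_{n-1} \ar@{^(->}[d] \\
      \coprod_{\sigma} \Del^n \ar[r] & Y_n
    }
\]
where the coproduct is indexed by the non-degenerate $n$-simplices of $Y$ not lying in $X$. Moreover $Y$ is the colimit of the resulting countable sequence $X = Y_{-1} \inc Y_0 \inc Y_1 \inc \cdots$.

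Applying the cocontinuous homotopy coherent realisation functor $\gC$, each of these pushout squares maps to a pushout square in $\sCat$, and by Lemma~\ref{lem:computad-colimits} this pushout is also created in $\sCptd$. By Lemma~\ref{lem:hocoh-simplex-reedy-cofibrant}, each generating map $\gC\boundary\Del^n \to \gC\Del^n$ is a simplicial subcomputad inclusion, and coproducts of simplicial subcomputad inclusions are again simplicial subcomputad inclusions (Lemma~\ref{lem:sub-computad-stability}). Pushing out along $\gC Y_{n-1}$ and invoking the pushout stability clause of Lemma~\ref{lem:sub-computad-stability} shows that each $\gC Y_{n-1} \inc \gC Y_n$ is a simplicial subcomputad inclusion. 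Finally, the countable-sequence stability clause of Lemma~\ref{lem:sub-computad-stability} applied to the colimit $\gC Y = \colim_n \gC Y_n$ shows that the transfinite composite $\gC X \to \gC Y$ is itself a simplicial subcomputad inclusion, as required.

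The only non-routine point is the first step, namely confirming that the relative skeletal filtration does present $X \inc Y$ as the required transfinite composite of coproducts of boundary-inclusion pushouts; this is the standard Eilenberg--Zilber cell-attachment argument for simplicial sets and presents no real obstacle. Once that decomposition is in hand, the remainder is pure bookkeeping against the two lemmas cited.
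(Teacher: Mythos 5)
Your proposal is correct and follows essentially the same route as the paper's own proof: both express $X \inc Y$ via the relative skeletal filtration as a countable composite of pushouts of coproducts of boundary inclusions, push this through the cocontinuous functor $\gC$, and then combine Lemma~\ref{lem:hocoh-simplex-reedy-cofibrant} (the latching maps $\gC\boundary\Del^n\inc\gC\Del^n$ are subcomputad inclusions) with the closure properties of Lemma~\ref{lem:sub-computad-stability}. No gaps.
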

\begin{proof}
  Any inclusion of simplicial sets $i \colon X \inc Y$ can be built as a colimit
  of a countable sequence of skeleta, each stage of which may be constructed as
  a pushout of a coproduct of simplex boundaries $\boundary\Del^n\inc\Del^n$.
  All colimits are preserved by the left adjoint $\gC$, so it follows that
  $\gC{i}\colon\gC{X}\to\gC{Y}$ may also be expressed as a colimit of a
  countable sequence each step of which is constructed as a pushout of a
  coproduct of latching maps $\gC\boundary\Del^n\inc\gC{\Del^n}$, one for each
  non-degenerate $n$-simplex. Lemma \ref{lem:hocoh-simplex-reedy-cofibrant}
  demonstrates that these latching maps are all simplicial subcomputads and
  Lemma \ref{lem:sub-computad-stability} proves that the class of simplicial
  subcomputads is closed in $\sCptd$ under coproducts, pushouts, and colimits of
  countable sequences. It follows therefore that $\gC{i}\colon\gC{X}\inc\gC{Y}$
  is a simplicial subcomputad.
  \end{proof}
  
  \subsection{Simplicial computads defined by homotopy coherent realisation}\label{sec:realization-computads}
  
  In this section we build towards an explicit presentation of the homotopy coherent realisation $\gC{X}$ as a simplicial computad. To warm up, we recall the simplicial computad structure borne by the homotopy coherent simplices, which we describe geometrically for sake of contrast.
  
  \begin{ex}[homotopy coherent simplices as simplicial computads]\label{ex:homotopy-coherent-simplex} Recall the simplicial category $\mathfrak{C}\Del^n$ whose objects are integers $0,1,\ldots, n$ and whose function complexes are the cubes \[ \Fun_{\mathfrak{C}\Del^n}(i,j) = \begin{cases} (\Del^1)^{j-i-1} & i < j \\ \Del^0 & i = j \\ \emptyset & i > j \end{cases}\] 
  As revealed in the proof of Lemma \ref{lem:realization-in-computads}, $\gC\Del^n$  is a simplicial computad. Via the isomorphism of \ref{obs:hcsimp-cubes}, the atomic arrows in each function complex are precisely those simplices that contain the initial vertex in the poset whose nerve defines the simplicial cube.
\end{ex}

Before we analyse important subcomputads of the homotopy coherent simplices, we introduce notation that suggests that correct geometric intuition.

\begin{ntn}[cubes, boundaries, and cubical horns]\label{ntn:cubes}$\quad$
\begin{itemize}
\item  We shall adopt the notation $\Cube^k$ for the simplicial cube
  $(\Del^1)^{\times k}$.
  \item We write $\boundary\Cube^k\subset\Cube^k$ for the boundary of the cube, whose geometric definition is  clear. Formally, $\boundary\Cube^k$ is the domain of the iterated Leibniz product
  $(\boundary\Del^1\subset\Del^1)^{\leib\times k}$. That is, an $r$-simplex of $\Cube^k$, represented as a $k$-tuple of maps   $(\rho_1,\ldots,\rho_k)$ with each $\rho_i \colon [r] \to [1]$, is a member of $\boundary\Cube^k$ if and
  only if there is some $i$ for which $\rho_i\colon[r]\to[1]$ is constant at either $0$ or $1$ (in which case $\rho_i$ defines an $r$-simplex in $\boundary\Del^1\subset\Del^1$).

\item  We also define the \emph{cubical horn} $\CHorn^{k,j}_{e}\subset\Cube^k$,
  for $1 \leq j \leq k$ and $e \in \{0,1\}$, to be the domain of the following Leibniz product:
  \begin{equation*}
    (\boundary\Del^1\subset\Del^1)^{\leib\times(j-1)}\leib\times
    (\Del^{\fbv{e}}\subset\Del^1)\leib\times
    (\boundary\Del^1\subset\Del^1)^{\leib\times(k-j)}
  \end{equation*}
  So an $r$-simplex $(\rho_1,\ldots,\rho_k)$ of $\Cube^k$ is in $\CHorn^{k,j}_e$ if
  and only if $\rho_i$ is a constant operator for some $i\neq j$
  or $\rho_j$ is the constant operator which maps everything to $e$.
  \end{itemize}
\end{ntn}

\begin{ex}[homotopy coherent nerves of subsimplices]\label{ex:subcomputad-of-simplex}
  Lemma \ref{lem:subcomputad-inclusion} tells us that if $X$ is a simplicial
  subset of $\Del^n$, then its homotopy coherent realisation $\gC{X}$ is a
  simplicial subcomputad of $\gC\Del^n$. What is more, an atomic arrow of
  $\gC{\Del^{n}}$ is in $\gC{X}$ if and only if it is in the image of
  $\gC{\Del^{\alpha}}\colon \gC{\Del^{m}}\inc\gC{\Del^{n}}$ for some
  non-degenerate face $\alpha\colon[m]\inc [n]$ in $X$. Additionally, an
  $r$-arrow $T^\bullet$ of $\gC{\Del^{n}}$ is in the image of
  $\gC{\Del^{\alpha}}$ if and only if the elements of $T^r$ are all vertices of
  $\alpha$.

  It follows that $\gC{X}$ may be presented as the
  simplicial subcomputad of $\gC{\Del^{n}}$ generated by those atomic
  $r$-arrows $T^\bullet$ for which $T^r$ is the set of vertices of some
  non-degenerate $r$-dimensional face in $X\subset\Del^n$. Applying this
  result to some important special cases, we get descriptions of:
 
  \begin{enumerate}[label=(\roman*)]
  \item\label{itm:boundary} \textbf{boundaries:} The only non-degenerate simplex
    in $\Del^n$ that is not present in its boundary $\partial\Del^n$ is the top
    dimensional $n$-simplex. Consequently the subcomputad $\gC[\boundary\Del^n]$
    contains all of the objects of $\gC\Del^n$ and an atomic
    $r$-arrow $T^\bullet$ of $\gC\Del^n$ is not in $\gC[\boundary\Del^n]$ if and
    only if it has $T^r = [0,n]$; in particular each of these missing atomic
    arrows lie in the function complex $\Fun_{\gC\Del^n}(0,n)$ and
    $\Fun_{\gC[\boundary\Del^n]}(i,j)=\Fun_{\gC\Del^n}(i,j)$ for all but that
    particular function complex. Under the isomorphism of
    Observation~\ref{obs:hcsimp-cubes} the inclusion
    \[
      \xymatrix@R=1em{
        \Fun_{\gC[\boundary\Del^n]}(0,n)\ar@{^(->}[r]
        \ar@{}[d]|{\rotatebox{90}{$\cong$}}&
        \Fun_{\gC\Del^n}(0,n)\ar@{}[d]|{\rotatebox{90}{$\cong$}} \\
        \boundary\Cube^{n-1}\ar@{^(->}[r] & \Cube^{n-1}}
    \]
    is isomorphic to the cubical boundary inclusion.
  \item\label{itm:inner-horn} \textbf{inner horns:} For $0 < k < n$, the subset
    $\Horn^{n,k} \subset \Del^n$ contains all of the vertices and all but two of
    the simplices, the top dimensional $n$-simplex and its $k^{\text{th}}$ face.
    Consequently the subcomputad $\gC\Horn^{n,k}$ contains all of the objects of
    $\gC\Del^n$ and the atomic arrows of $\gC{\Del^{n}}$ that are missing from
    $\gC{\Horn^{n,k}}$ are all members of the function complex
    $\Fun_{\gC\Del^n}(0,n)$; so $\Fun_{\gC{\Horn^{n,k}}}(i,j)=
    \Fun_{\gC\Del^n}(i,j)$ for all but that particular function complex. What is
    more, under the isomorphism of Observation~\ref{obs:hcsimp-cubes} the
    inclusion
    \[
      \xymatrix@R=1em{
        \Fun_{\gC{\Horn^{n,k}}}(0,n)\ar@{^(->}[r]
        \ar@{}[d]|{\rotatebox{90}{$\cong$}}&
        \Fun_{\gC\Del^n}(0,n)\ar@{}[d]|{\rotatebox{90}{$\cong$}} \\
        \CHorn^{n-1,k}_1\ar@{^(->}[r] & \Cube^{n-1}}
    \]
    is isomorphic to the cubical horn inclusion.
    
  \item\label{itm:outer-horn}\textbf{outer horns:} Because $\Horn^{n,n}$
    contains all of the vertices of $\Del^n$, the simplicial subcomputad
    $\gC\Horn^{n,n}$ contains all of the objects of $\gC\Del^n$. The only
    non-degenerate simplices in $\Del^n$ that are not present in the horn
    $\Horn^{n,n}$ are the top dimensional $n$-simplex and its $n$th face. In the
    former case, the missing atomic $r$-arrows are elements of
    $\Fun_{\gC\Del^n}(0,n)$, and in the latter case they are elements of
    $\Fun_{\gC\Del^n}(0,n-1)$; so $\Fun_{{\gC\Horn^{n,n}}}(i,j)=
    \Fun_{\gC\Del^n}(i,j)$ for all but those two function complexes. What is
    more, under the isomorphism of Example \ref{ex:homotopy-coherent-simplex}
    the inclusions
    \[
      \mkern30mu
      \xymatrix@R=1em{
        \Fun_{\gC\Horn^{n,n}}(0,n-1)\ar@{^(->}[r]
        \ar@{}[d]|{\rotatebox{90}{$\cong$}}&
        \Fun_{\gC\Del^n}(0,n-1)\ar@{}[d]|{\rotatebox{90}{$\cong$}} \\
        \boundary\Cube^{n-2}\ar@{^(->}[r] &\Cube^{n-2}}
      \mkern10mu
      \xymatrix@R=1em{
        \Fun_{{\gC\Horn^{n,n}}}(0,n)\ar@{^(->}[r]
        \ar@{}[d]|{\rotatebox{90}{$\cong$}}&
        \Fun_{\gC\Del^n}(0,n)\ar@{}[d]|{\rotatebox{90}{$\cong$}} \\
        \CHorn^{n-1,n-1}_0\ar@{^(->}[r] & \Cube^{n-1}}
    \]
    are isomorphic to the cubical boundary and cubical horn inclusions. The
    analysis for $\gC\Horn^{n,0} \inc\gC\Del^n$ is similar, with
    $\Fun_{{\gC\Horn^{n,0}}}(0,n)\inc \Fun_{\gC\Del^n}(0,n)$ isomorphic to the
    cubical horn inclusion $\CHorn^{n-1,1}_0\subset\Cube^{n-1}$.
  \end{enumerate}
\end{ex}

From the explicit description of the homotopy coherent inner horn inclusions given in \ref{ex:subcomputad-of-simplex}\ref{itm:inner-horn} it is straightforward to prove the following well-known result:

\begin{prop}\label{prop:qcat-from-kan-enriched} If $\eC$ is a Kan complex enriched category, then its homotopy coherent nerve $\qC\defeq\nrvhc\eC$ is a quasi-category.
\end{prop}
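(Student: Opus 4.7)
The plan is to reduce the quasi-category condition to a lifting problem against a Kan complex. To show that $\qC = \nrvhc\eC$ admits fillers for all inner horns $\Horn^{n,k} \inc \Del^n$ with $0 < k < n$, I would transpose a lifting problem
\[
\xymatrix@R=2em{\Horn^{n,k}\ar[r]\ar@{^(->}[d] & \qC \\ \Del^n\ar@{-->}[ur] &}
\]
across the adjunction $\gC\dashv\nrvhc$ into the equivalent simplicial extension problem
\[
\xymatrix@R=2em{\gC\Horn^{n,k}\ar[r]\ar@{^(->}[d] & \eC \\ \gC\Del^n\ar@{-->}[ur] &}
\]
against the simplicial subcomputad inclusion $\gC\Horn^{n,k}\inc\gC\Del^n$ supplied by Lemma~\ref{lem:subcomputad-inclusion}.

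Next I would apply the explicit description in Example~\ref{ex:subcomputad-of-simplex}\ref{itm:inner-horn}, which shows that this inclusion is the identity on objects and on every function complex except $\Fun_{\gC\Del^n}(0,n)$, where it is isomorphic to the cubical inner horn inclusion $\CHorn^{n-1,k}_1\inc\Cube^{n-1}$. So the full extension problem reduces to finding a single lift
\[
\xymatrix@R=2em{\CHorn^{n-1,k}_1\ar[r]\ar@{^(->}[d] & \Fun_\eC(c_0,c_n) \\ \Cube^{n-1}\ar@{-->}[ur] &}
\]
of simplicial sets. The step I view as most delicate — though ultimately painless — is checking that no additional compatibility with the composition operation of $\eC$ is imposed. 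For this I would observe that every \emph{non}-atomic $r$-arrow from $0$ to $n$ in $\gC\Del^n$ factors through some intermediate object $j$ with $0<j<n$, and so its image in $\Fun_{\gC\Del^n}(0,n)\cong\Cube^{n-1}$ has its $j^{\text{th}}$ coordinate constantly $1$; such a simplex already lies in $\CHorn^{n-1,k}_1$. Hence the composition-induced constraints on the image of our lift in $\eC$ are already determined by the data on $\gC\Horn^{n,k}$, and any cubical lift assembles into a genuine simplicial functor.

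Finally, I would verify that the cubical inner horn inclusion $\CHorn^{n-1,k}_1\inc\Cube^{n-1}$ is anodyne. By its definition in Notation~\ref{ntn:cubes}, it is the Leibniz product
\[(\boundary\Del^1\inc\Del^1)^{\leib\times(k-1)}\leib\times(\Del^{\fbv{1}}\inc\Del^1)\leib\times(\boundary\Del^1\inc\Del^1)^{\leib\times(n-k-1)},\]
which involves the horn inclusion $\Del^{\fbv{1}}=\Horn^{1,0}\inc\Del^1$, itself an anodyne extension in the Kan--Quillen model structure. Since Leibniz products of cofibrations with an anodyne extension are anodyne, the cubical inner horn inclusion is anodyne, and as $\Fun_\eC(c_0,c_n)$ is by hypothesis a Kan complex, the required lift exists. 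This produces the desired filler of the original inner horn in $\qC$, proving that $\qC$ is a quasi-category.
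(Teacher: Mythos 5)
Your argument is correct and is precisely the proof the paper leaves to the reader: transpose across $\gC\dashv\nrvhc$, use Example~\ref{ex:subcomputad-of-simplex}\ref{itm:inner-horn} to reduce to the single cubical horn lifting problem in $\Fun_{\eC}(c_0,c_n)$ (with your observation that all composites land in the horn handling functoriality, as in Lemma~\ref{lem:subcomputad-extension}), and conclude by anodyne-ness of the Leibniz product. The only quibble is the immaterial indexing slip $\Del^{\fbv{1}}=\Horn^{1,1}$ rather than $\Horn^{1,0}$; either vertex inclusion is anodyne, so nothing changes.
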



\begin{defn}[bead shapes]
  We shall call those atomic arrows $T^\bullet\colon 0 \to n$ of $\gC{\Del^{n}}$
  which are not members of $\gC\boundary\Del^n$ \emph{bead shapes}. An $r$-dimensional bead shape $T^\bullet\colon
  0\to n$ is given by a sequence of subsets
  \[ \{0,n\} = T^0 \subset T^1 \subset \cdots \subset T^r = [0,n].\] 
  \end{defn}
  
  \begin{lem}\label{lem:bead-shapes} An atomic $r$-arrow in $\gC\Del^n$ is a bead shape if and only if it is represented in the notation of \ref{ntn:compact-arrows} by a sequence $\langle {I_0\mid I_1\mid\ldots\mid I_r} \rangle$ with $I_0=\{0,n\}$ and
  $I_0,I_1,\ldots,I_r$ a partition of $[0,n]$ into non-empty subsets. Thus bead shapes of $\gC\Del^n$ stand in bijection with partitions $I_1,\ldots, I_r$ of $[1,n-1]$ into non-empty subsets.
 \end{lem}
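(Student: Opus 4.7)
The proof is really a matter of unpacking the relevant definitions and notation, so I will organize it as a translation exercise between the two representations of an $r$-arrow from $0$ to $n$ in $\gC\Del^n$.

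First I would observe that, since the atomic arrows in $\oSimp(k,l)$ are exactly the two-element sets $\{k,l\}$ (as established in the analysis of atomic arrows in $\oSimp$), and since the proof of Lemma~\ref{lem:simplex-computad} shows that an $r$-arrow $T^\bullet$ of $\gC\Del^n$ is atomic precisely when $T^0$ is atomic in $\oSimp$, any atomic $r$-arrow $T^\bullet \colon 0 \to n$ satisfies $T^0 = \{0,n\}$. Next, I would appeal to Example~\ref{ex:subcomputad-of-simplex}\ref{itm:boundary}, which tells us that such an atomic $r$-arrow fails to lie in $\gC\boundary\Del^n$ if and only if $T^r = [0,n]$. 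Combined with the bead shape definition's requirement that $\{0,n\} = T^0 \subset T^1 \subset \cdots \subset T^r = [0,n]$ with strict inclusions, this characterises bead shapes intrinsically as strictly increasing chains starting at $\{0,n\}$ and ending at $[0,n]$.

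Second, I would translate this data into the compact notation of \ref{ntn:compact-arrows}. By definition $I_0 = T^0 = \{0,n\}$ and $I_i = T^i \setminus T^{i-1}$ for $1 \leq i \leq r$; these are pairwise disjoint subsets of $[0,n]$ whose union equals $T^r = [0,n]$, so they form a partition of $[0,n]$. Strictness of the chain $T^0 \subset T^1 \subset \cdots \subset T^r$ is equivalent to each difference $I_i = T^i \setminus T^{i-1}$ being non-empty for $i \geq 1$; since $I_0 = \{0,n\}$ is automatically non-empty, all blocks of the partition are non-empty. Conversely, given a sequence $\langle I_0 \mid I_1 \mid \cdots \mid I_r \rangle$ with $I_0 = \{0,n\}$ and $\{I_0, I_1, \ldots, I_r\}$ a partition of $[0,n]$ into non-empty subsets, setting $T^i \defeq I_0 \cup I_1 \cup \cdots \cup I_i$ recovers a strictly ascending chain of subsets from $\{0,n\}$ to $[0,n]$, whose $T^0$ is atomic and whose $T^r$ equals $[0,n]$, hence an atomic $r$-arrow in $\gC\Del^n$ lying outside $\gC\boundary\Del^n$ --- that is, a bead shape.

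Finally, for the counting statement, since $I_0$ is forced to equal $\{0,n\}$, giving a partition $\{I_0, I_1, \ldots, I_r\}$ of $[0,n]$ with $I_0 = \{0,n\}$ and the blocks $I_1, \ldots, I_r$ listed in a specified order is exactly the data of an ordered partition of $[1,n-1] = [0,n] \setminus \{0,n\}$ into $r$ non-empty subsets $I_1, \ldots, I_r$. This establishes the advertised bijection. I do not anticipate any real obstacle here; the entire argument is a definitional unpacking, with the only mildly subtle point being the need to recognise that the strict inclusions in the definition of a bead shape correspond precisely to the non-emptiness of the differences $I_i$.
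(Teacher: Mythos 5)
Your proof is correct and follows essentially the same route as the paper's: both arguments combine the fact that atomicity of $T^\bullet$ forces $T^0=\{0,n\}$, the computation of Example \ref{ex:subcomputad-of-simplex}\ref{itm:boundary} identifying membership in $\gC\boundary\Del^n$ with the failure of $T^r=[0,n]$, and the observation from \ref{ntn:compact-arrows} that non-degeneracy (equivalently, strictness of the chain) is exactly non-emptiness of $I_1,\ldots,I_r$. You simply spell out the converse direction and the counting bijection more explicitly than the paper does.
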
 
 \begin{proof} By the calculation of Example \ref{ex:subcomputad-of-simplex}\ref{itm:boundary}, an atomic $r$-arrow $T^\bullet\colon 0 \to n$ of $\gC{\Del^{n}}$ is not in $\gC{\boundary\Del^n}$ if and only if $T^r=[0,n]$; thus the union $I_0 \cup \cdots \cup I_n = [0,n]$. Atomicity demands further that $T^0 = I_0 = \{0,n\}$ and non-degeneracy corresponds to the requirement that each of $I_1,\ldots, I_n$ is non-empty.
  \end{proof}

  Observe
  now that in any pushout
  \begin{equation*}
    \xymatrix@=2em{
      {\gC\boundary\Del^n}\ar@{^(->}[r]\ar[d]_{b} &
      {\gC{\Del^{n}}}\ar[d] \\
      {\eB}\ar@{^(->}[]!R(1.2);[r] & {\eC}\poexcursion
    }
  \end{equation*}
  in $\sCptd$ the graph of atomic $r$-arrows of $\eC$ is obtained from that of
  $\eB$ by adjoining one atomic $r$-arrow to the function complex
  $\Fun_{\eB}(b_0,b_n)$ for each $r$-dimensional bead shape $T^\bullet\colon
  0\to n$.  We leave it to the reader to determine how the face and degeneracy operators act on these new atomic arrows in $\eC$.

\begin{prop}[$\gC X$ as a simplicial computad]\label{prop:gothic-C}
The homotopy coherent realisation $\gC{X}$ of a simplicial set $X$ is a simplicial computad with
  \begin{itemize}
  \item   objects the vertices of $X$ and 
  \item non-degenerate atomic $r$-arrows given by pairs $(x,T^\bullet)$, wherein
  $x$ is a non-degenerate $n$-simplex of $X$ for some $n > r $ and
  $T^\bullet\colon 0\to n$ is an $r$-dimensional bead shape. 
  \end{itemize}
  The domain of $(x,T^\bullet)$ is the initial vertex $x_0$ of $x$ while the codomain is the 
terminal vertex $x_n$.
 \end{prop}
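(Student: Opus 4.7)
The plan is to present $X$ as a colimit of its skeletal filtration and to exploit the cocontinuity of the homotopy coherent realisation functor $\gC$ together with the stability properties of simplicial subcomputad inclusions established above. Any simplicial set $X$ is the colimit of the countable sequence of inclusions $\sk_{-1}X=\emptyset\inc\sk_0X\inc\sk_1X\inc\cdots$ in which each step sits in a pushout
\[
  \xymatrix@=2em{
    \coprod_{x}\boundary\Del^n \ar[r]\ar@{^(->}[d] & \sk_{n-1}X \ar@{^(->}[d] \\
    \coprod_{x}\Del^n \ar[r] & \sk_nX
  }
\]
indexed by the non-degenerate $n$-simplices $x$ of $X$. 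By Lemma~\ref{lem:realization-in-computads}, $\gC$ factors through $\sCptd$, and as a left adjoint it preserves these colimits; Lemma~\ref{lem:computad-colimits} tells us that colimits in $\sCptd$ are created by the forgetful functor to $\sCat$, so we obtain a presentation of $\gC X$ as a colimit of a corresponding countable sequence of pushouts in $\sCptd$, each step attaching a coproduct of copies of $\gC\Del^n$ along $\gC\boundary\Del^n$ by a functor which, on the $x$-indexed summand, sends the object $0$ of $\gC\Del^n$ to $x_0$ and the object $n$ to $x_n$.

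The next step is to read off the atomic arrows added by each such attaching pushout. By the discussion immediately preceding the proposition, when a single copy of $\gC\Del^n$ is attached to a simplicial computad $\eB$ along a functor $b\colon\gC\boundary\Del^n\to\eB$, the graph of atomic $r$-arrows of the resulting pushout is obtained from that of $\eB$ by adjoining one atomic $r$-arrow from $b_0$ to $b_n$ for each $r$-dimensional bead shape $T^\bullet\colon 0\to n$. Combined with the explicit enumeration of bead shapes provided by Lemma~\ref{lem:bead-shapes}, this shows that attaching the $n$-cell associated with a non-degenerate $n$-simplex $x$ of $X$ contributes one atomic $r$-arrow from $x_0$ to $x_n$ for each partition of $[1,n-1]$ into $r$ non-empty subsets; these are exactly the pairs $(x,T^\bullet)$ named in the statement.

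The main obstacle, which justifies the stage-by-stage reading off of non-degenerate atomic arrows, is confirming that arrows introduced at stage $n$ of the filtration remain both atomic and non-degenerate in all later stages and hence in the colimit $\gC X$. This is where Lemma~\ref{lem:sub-computad-stability} becomes essential: since $\gC\boundary\Del^n\inc\gC\Del^n$ is a simplicial subcomputad inclusion by Lemma~\ref{lem:hocoh-simplex-reedy-cofibrant}, stability under coproducts, pushouts, and sequential colimits forces each filtration inclusion $\sk_{n-1}\gC X\inc\sk_n\gC X$, and ultimately each $\sk_n\gC X\inc\gC X$, to be a simplicial subcomputad inclusion. Such inclusions preserve and reflect atomicity and non-degeneracy of arrows, so no merging or new degeneration occurs in passing to subsequent stages or to the colimit. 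The domain and codomain assertions then follow immediately, since the attaching functor carries the atomic bead shape $T^\bullet\colon 0\to n$ in $\gC\Del^n$ to an atomic $r$-arrow from $x_0$ to $x_n$ in $\gC X$.
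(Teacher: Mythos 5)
Your proposal is correct and follows essentially the same route as the paper: decompose $X$ via its skeletal filtration, use cocontinuity of $\gC$ and the creation of colimits in $\sCptd$ to express $\gC X$ as an iterated attachment of copies of $\gC\Del^n$ along $\gC\boundary\Del^n$, read off the new atomic arrows as bead shapes, and invoke Lemma~\ref{lem:sub-computad-stability} to see that these remain atomic and non-degenerate in the colimit. The paper's own proof is just a terser version of this argument, citing the same two lemmas.
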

 
The pairs $(x,T^\bullet)$ are called \emph{beads in $X$}. As a consequence of this result we find that 
  $r$-simplices of $\gC{X}$ correspond to sequences of abutting beads,
  structures which are called \emph{necklaces} in the work of Dugger and
  Spivak~\cite{DuggerSpivak:2011ms} and Riehl~\cite{Riehl:2011ot}. In this terminology, 
   $\gC X$ is a simplicial computad in which the atomic arrows are those necklaces that consist of a single bead with non-degenerate image.
   
 \begin{proof}
 As observed in the proof of Lemma \ref{lem:subcomputad-inclusion}, the
  homotopy coherent realisation $\gC{X}$ is
  constructed by a process which adjoins one copy of $\gC{\Del^{n}}$ along a map
  of its boundary $\gC\boundary\Del^n$ for each non-degenerate $n$-simplex
  $x\in X$. By Lemma \ref{lem:sub-computad-stability}, each atomic $r$-arrow of $\gC{X}$ arises from a unique pushout of this form, as the image of an atomic non-degenerate $r$-arrow of $\gC{\Del^n}$ not in $\gC{\boundary\Del^n}$. 
  \end{proof}


\begin{ntn}[compact notation for beads]\label{ntn:beads}
Recall a \emph{bead} $(x,T^\bullet)$ representing a non-degenerate atomic $r$-arrow in $\gC{X}$ is given by a non-degenerate simplex $x \in X_n$ with $n > r$ together with a $r$-dimensional bead shape $T^\bullet \colon 0 \to n$ in $\gC{\Del^n}$. In the notation introduced in \ref{ntn:compact-arrows}, Lemma \ref{lem:bead-shapes} tells us that the bead shape $T^\bullet$ is encoded by a sequence 
  $\langle {I_0\mid I_1\mid\ldots\mid I_r} \rangle$ with $I_0=\{0,n\}$ and
  $I_0,I_1,\ldots,I_r$ a partition of $[0,n]$ into non-empty subsets. In the case $r=0$, we must have $n=1$; that is, the 0-dimensional beads are given by non-degenerate 1-simplices of $X$. In the case $r=1$, we must have $I_0 = \{0,n\}$ and $I_1 = \{1,\ldots, n-1\}$ for some $n \geq 2$; hence, the 1-dimensional beads are given by non-degenerate $n$-simplices of $X$ for $n \geq 2$.
  
Here $I_0$ provides no information beyond that contained
  the dimension of $x$, so we may therefore write the bead $(x,T^\bullet)$ as
  $\langle x;I_1\mid\ldots\mid I_r \rangle$ where $I_1,\ldots,I_r$ is a partition of $[1,n-1]$ into non-empty subsets. Its domain and codomain
  objects are $x\cdot\face^{\fbv{0}}$ and $x\cdot\face^{\fbv{n}}$ respectively.

\end{ntn}

The utility of the simplicial computad characterisation of the simplicial categories $\gC{X}$ defined by homotopy coherent realisation is that it provides an immediate proof of the following extension lemma, a specialisation of Proposition \refII{prop:simp-computad-maps} to the simplicial computads described in Proposition \ref{prop:gothic-C}.

\begin{lem}\label{lem:computad-extension} To define a simplicial functor $F \colon \gC{X} \to \eK$ is to:
\begin{itemize}
\item Specify an object $Fx \in \eK$ for each vertex $x \in X_0$.
\item Specify a 0-simplex $F\alpha \in \Fun(Fx_0,Fx_1)$ for each non-degenerate 1-simplex $ \alpha\colon x_0 \to x_1 \in X_1$.
\item Specify a 1-simplex $F\beta \in \Fun(Fx_0,Fx_n)$ for each non-degenerate $n$-simplex $\beta \in X_n$ whose initial vertex is $x_0$ and whose final vertex is $x_n$. The source of $F\beta$ must be the image of the diagonal edge of $\beta$, while its target must be the image of its \emph{spine}, the composite of the path of edges from $x_i$ to $x_{i+1}$ for each $0 \leq i < n$.
\item Specify a $r$-simplex $F\sigma \in \Fun(Fx_0,Fx_n)$ for each bead $(\sigma,T^\bullet)$ comprised of a non-degenerate $n$-simplex $\sigma \in X_n$, whose initial vertex is $x_0$ and whose final vertex is $x_n$, and $r$-dimensional bead shape $T^\bullet$ in such a way that the faces of $F\sigma$, which decomposed as a unique composite of degenerated atomic arrows as in \eqref{eq:computad-arrow-decomp}, coincide with the previously specified data.
\end{itemize}
\end{lem}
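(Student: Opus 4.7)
The plan is to apply the general principle for defining a simplicial functor out of a simplicial computad (Proposition \refII{prop:simp-computad-maps}): since every arrow in a simplicial computad admits a unique decomposition as in \eqref{eq:computad-arrow-decomp} into degenerated non-degenerate atomic arrows, such a functor is determined by its action on objects and on non-degenerate atomic arrows, subject only to compatibility with face operators. Compatibility with degeneracies is automatic, because by Definition \ref{defn:simplicial-computad} the atomic arrows are closed under the degeneracies of $\Del$. Proposition \ref{prop:gothic-C} identifies $\gC{X}$ as a simplicial computad whose non-degenerate atomic $r$-arrows are precisely the beads of $X$, so the proof reduces to matching this data against the bullets of the statement.

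First I would enumerate the non-degenerate atomic $r$-arrows by dimension using Notation \ref{ntn:beads}: the $0$-beads correspond bijectively to non-degenerate $1$-simplices of $X$ (the only $0$-dimensional bead shape has $T^0 = \{0,n\} = [0,n]$, forcing $n=1$); the $1$-beads correspond bijectively to non-degenerate $n$-simplices of $X$ for $n \geq 2$, each carrying the unique $1$-dimensional bead shape $\langle\{0,n\} \mid \{1,\ldots,n-1\}\rangle$; and for $r \geq 2$, the $r$-beads are pairs of a non-degenerate $n$-simplex ($n > r$) and an $r$-dimensional bead shape. This matches the first, second, and fourth bullets.

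The main substantive step is the source/target prescription of the third bullet. A $1$-bead $F\beta$ is a $1$-arrow in $\Fun_{\eK}(Fx_0, Fx_n)$ whose two endpoints are determined by the $0$-faces of the bead shape $\langle\{0,n\} \mid \{1,\ldots,n-1\}\rangle$ in $\Fun_{\gC{\Del^n}}(0,n)$, transported along $\gC{\Del^\beta}$. The source vertex $T^0 = \{0,n\}$ is already atomic, so its image under $F$ is $F$ applied to the diagonal edge of $\beta$; the target vertex $T^1 = \{0,1,\ldots,n\}$ decomposes uniquely in $\gC{\Del^n}$ as the composite of atomic arrows $\{n-1,n\} \circ \cdots \circ \{0,1\}$, whose image under $F \circ \gC{\Del^\beta}$ is the spine composite of $\beta$. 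When an edge of $\beta$ happens to be degenerate in $X$, its image under $F$ is the corresponding identity, consistent with the uniqueness of \eqref{eq:computad-arrow-decomp}.

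Finally, the compatibility condition in the fourth bullet is exactly what the computad structure demands: for each bead $(\sigma,T^\bullet)$ of dimension $r$ and each face operator $\face^i$, the face $(F\sigma)\cdot\face^i$ is forced by expressing the image $\gC{\Del^\sigma}(T^\bullet\cdot\face^i)$ in $\gC{X}$ as its unique composite of degenerated non-degenerate atomic arrows (each of dimension less than $r$) and then applying $F$ term-wise. The step requiring most care, though essentially routine combinatorics, is tracking how the face of a bead shape in $\gC{\Del^n}$ decomposes into smaller bead shapes and degenerated atomic arrows in concert with the action of $\sigma$; the necessary bookkeeping is supplied by Lemma \ref{lem:bead-shapes} together with the explicit descriptions in Notations \ref{ntn:compact-arrows} and \ref{ntn:beads}.
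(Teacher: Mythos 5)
Your proposal is correct and matches the paper's intended argument: the paper offers no separate proof, presenting the lemma as an immediate specialisation of the general extension principle for simplicial computads to the explicit computad structure of $\gC{X}$ given in Proposition \ref{prop:gothic-C}, which is precisely the route you take. Your enumeration of the beads by dimension via Notation \ref{ntn:beads} and your identification of the source and target of a $1$-bead with the diagonal and the spine are exactly the details the paper leaves implicit.
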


\subsection{Extensions from simplicial subcomputads}\label{sec:relative-computad}

More generally, Lemma \ref{lem:subcomputad-inclusion} proves that any inclusion of simplicial sets $X\inc Y$ defines a simplicial subcomputad inclusion $\gC X \inc \gC Y$. In this section, we describe the evident relative analogue of Lemma \ref{lem:computad-extension}.


\begin{lem}\label{lem:subcomputad-factorization} If $\eA\inc\eB$ is a simplicial subcomputad inclusion then $\eA$ is closed under factorisations in $\eB$:
i.e., if whenever $f$ and $g$ are composable arrows of $\eB$ and $g \circ f \in \eA$ then $f$ and $g$ are in $\eA$.
\end{lem}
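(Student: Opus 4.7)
The plan is to invoke the essentially unique decomposition \eqref{eq:computad-arrow-decomp} of arrows in a simplicial computad as composites of degenerated non-degenerate atomics, and compare such decompositions in $\eA$ and $\eB$.

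First I would dispose of the trivial case where $g \circ f$ is an identity. In a free category the only way a composite of two arrows can be an identity is for both factors to be identities, for otherwise we could concatenate non-trivial atomic factorisations to produce a non-trivial atomic factorisation of the identity. Hence $f$ and $g$ are identities and lie in $\eA$.

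Next, assuming $g \circ f$ is a non-identity arrow of $\eA$, apply \eqref{eq:computad-arrow-decomp} in $\eA$ to obtain a unique decomposition
\[
  g\circ f = (h_1\cdot\alpha_1)\circ\cdots\circ (h_\ell\cdot\alpha_\ell)
\]
with each $h_i$ a non-degenerate atomic arrow of $\eA$. The inclusion $\eA \inc \eB$, being a faithful functor of simplicial computads that is injective on objects, carries each $h_i$ to a non-identity atomic arrow of $\eB$; moreover the standard fact that a non-degenerate simplex of a sub-simplicial set remains non-degenerate in the ambient simplicial set (provable via Eilenberg--Zilber together with any section of the relevant degeneracy) implies that each $h_i$ remains non-degenerate in $\eB$. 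Thus the display above is a valid $\eB$-decomposition of $g \circ f$ in the sense of \eqref{eq:computad-arrow-decomp}.

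Now apply \eqref{eq:computad-arrow-decomp} in $\eB$ separately to $f$ and to $g$, obtaining
\[
  f = (f_1\cdot\beta_1)\circ\cdots\circ(f_m\cdot\beta_m), \qquad
  g = (g_1\cdot\gamma_1)\circ\cdots\circ(g_k\cdot\gamma_k)
\]
with each $f_i,g_j$ non-degenerate and atomic in $\eB$. Concatenating gives a second $\eB$-decomposition of $g\circ f$ of the form \eqref{eq:computad-arrow-decomp}. By the uniqueness clause of Definition~\ref{defn:simplicial-computad} in $\eB$, the two decompositions coincide term by term; in particular every $f_i$ and every $g_j$ equals one of the $h_p$ and thus lies in the image of $\eA$. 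Since $\eA\inc\eB$ is faithful, $f$ and $g$ themselves are composites of (degenerated) atomic arrows of $\eA$ and so belong to $\eA$, as required.

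The only mild obstacle is the comparison of non-degeneracy in $\eA$ versus $\eB$ used in the third paragraph; once that point is settled by the standard Eilenberg--Zilber argument, the uniqueness of atomic factorisations does all of the work.
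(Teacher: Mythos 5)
Your proposal is correct and follows essentially the same route as the paper: push the atomic factorisation of $g\circ f$ from $\eA$ into $\eB$, note it remains an atomic factorisation there, and compare it by uniqueness with the concatenation of the $\eB$-factorisations of $f$ and $g$. The paper states this in two sentences; your additional care about the identity case and about non-degenerate atomics remaining non-degenerate under the inclusion is sound but not a different argument.
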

\begin{proof}
Since the inclusion $\eA\inc\eB$ is a faithful functor of simplicial computads, the atomic factorisation of $g \circ f$ in $\eA$ is carried to an atomic factorisation in $\eB$, which, by uniqueness, must equal the composite of the atomic factorisations of $g$ and $f$ in $\eB$. This tells us that the maps $f$ and $g$ must themselves lie in $\eA$.
\end{proof}

On account of Lemma \ref{lem:subcomputad-factorization} extending along simplicial subcomputads is particularly simple: as in Lemma \ref{lem:computad-extension}, all that is needed is to specify the images of the new atomic $n$-arrows from the codomain compatibly with lower-dimensional data. Because subcomputads are closed under factorisation, it will never be the case that the extension problem imposes coherence relations on composites of atomic arrows present in its domain computad.

\begin{lem}\label{lem:subcomputad-extension}
To define an extension of a simplicial functor along a simplicial subcomputad inclusion
\[ \xymatrix{ \gC{X} \ar[r]^-F \ar@{^(->}[d] & \eK \\ \gC{Y} \ar@{-->}[ur]_-F}
\] is to:
\begin{itemize}
\item Specify an object $Fy \in \eK$ for each vertex $y$ in $Y$ that is not in $X$.
\item Specify a 0-simplex $F\alpha \in \Fun(Fy_0,Fy_1)$ for each non-degenerate 1-simplex $ \alpha\colon y_0 \to y_1 \in Y$ that is not in $X$.
\item Specify a 1-simplex $F\beta \in \Fun(Fy_0,Fy_n)$ for each non-degenerate $n$-simplex $\beta \in Y$ that is not in $X$ whose initial vertex is $y_0$ and whose final vertex is $y_n$. The source of $F\beta$ must be the image of the diagonal edge of $\beta$, while its target must be the image of its \emph{spine}, the composite of the path of edges from $y_i$ to $y_{i+1}$ for each $0 \leq i < n$.
\item Specify an $r$-simplex $F\sigma \in \Fun(Fy_0,Fy_k)$ for each bead $(\sigma,T^\bullet)$ comprised of a non-degenerate $n$-simplex $\sigma \in Y$ that is not in $X$ whose initial vertex is $y_0$ and whose final vertex is $y_n$, 
and $r$-dimensional bead shape $T^\bullet$ in such a way that the faces of $F\sigma$, which decomposed as a unique composite of degenerated atomic arrows as in \eqref{eq:computad-arrow-decomp}, coincide with the previously specified data.
\end{itemize}
\end{lem}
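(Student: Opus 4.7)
The plan is to reduce this lemma to a direct application of the general principle for extending simplicial functors along subcomputad inclusions (recalled in Proposition \refII{prop:simp-computad-maps}), using the explicit presentation of $\gC Y$ as a simplicial computad already established in Proposition \ref{prop:gothic-C} and Lemma \ref{lem:subcomputad-inclusion}.

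The first step is to observe that the general extension principle for simplicial computads applies: by Lemma \ref{lem:subcomputad-inclusion} the map $\gC X \inc \gC Y$ is a simplicial subcomputad inclusion, and by Lemma \ref{lem:subcomputad-factorization} the image $\gC X$ is closed under factorisation inside $\gC Y$. In particular, the unique decomposition~\eqref{eq:computad-arrow-decomp} of any arrow of $\gC Y$ into degenerated atomic arrows has the property that the atomic factors lying in $\gC X$ decompose no further in $\gC Y$ than they already do in $\gC X$. This reduces the problem of extending $F$ along $\gC X \inc \gC Y$ to that of specifying $F$ on the objects of $\gC Y$ not in $\gC X$, and on the non-degenerate atomic $r$-arrows of $\gC Y$ not in $\gC X$, subject to compatibility with the face actions.

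The second step is to translate this abstract recipe into the concrete data listed in the statement, via Proposition \ref{prop:gothic-C}. That result identifies the objects of $\gC Y$ with vertices of $Y$, and the non-degenerate atomic $r$-arrows with beads $(y, T^\bullet)$ consisting of a non-degenerate simplex $y \in Y_n$ and an $r$-dimensional bead shape $T^\bullet \colon 0 \to n$. Because $\gC X$ is a sub\-computad containing exactly those beads with $y \in X$, the data required to extend $F$ is precisely an image for each new vertex and for each bead $(y, T^\bullet)$ with $y$ non-degenerate in $Y$ but not in $X$, as listed. The low-dimensional cases $r = 0, 1$ are then unpacked using Notation \ref{ntn:beads}: a $0$-dimensional bead corresponds to a non-degenerate $1$-simplex, while a $1$-dimensional bead corresponds to a non-degenerate $n$-simplex for $n \geq 2$, and these give the first three bullet points verbatim.

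The final step is to check the compatibility conditions, which is where the principal bookkeeping obstacle lies. For a bead $(\sigma, T^\bullet)$ with $T^\bullet = \langle \sigma; I_1 \mid \cdots \mid I_r\rangle$, one must verify that the faces $\face^i$ of $F\sigma$ predicted by the simplicial action on bead shapes are exactly what is dictated by the already-specified data: the inner and $\face^0$ faces decompose $T^\bullet$ either by merging adjacent blocks $I_j$ or by splitting the bead along the partition $I_1$, yielding composites of lower-dimensional beads which may or may not cross the subcomputad $\gC X$; the $\face^r$ face restricts along the inclusion of vertices in $T^{r-1}$, possibly replacing degenerated beads by their non-degenerate quotients. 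Since Lemma \ref{lem:subcomputad-factorization} guarantees that any atomic factor of a face lying in $\gC X$ has its value already determined by $F$, and any new atomic factor appears lower in the inductive specification, no further coherence relations arise. The stated matching of source, target and spine for $1$-dimensional beads is the $r = 1$ case of this general compatibility, recorded separately for emphasis. Together these steps realise the extension uniquely from the enumerated data.
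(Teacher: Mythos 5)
Your proposal is correct and follows essentially the same route as the paper, which treats this lemma as an immediate consequence of the absolute case (Lemma \ref{lem:computad-extension}), the closure of subcomputads under factorisation (Lemma \ref{lem:subcomputad-factorization}), and the explicit computad presentation of $\gC{Y}$ in Proposition \ref{prop:gothic-C}; indeed the paper offers no separate proof beyond the paragraph preceding the statement, which makes exactly your point that no new coherence relations arise on arrows already in $\gC{X}$. Your additional unpacking of the face actions on beads is consistent with, and somewhat more detailed than, what the paper records.
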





\section{Extending cocartesian cocones}\label{sec:cocones}

The mechanics of the comprehension construction involve extensions of
\emph{cocartesian cocones} valued in an $\infty$-cosmos. Our first introduction
to this construction takes the form of a special case in
\S\ref{sec:global-cocart}. Cocartesian cocones are particular simplicial natural
transformations between certain functors, which we call \emph{lax cocones};
these are introduced in \S\ref{sec:lax-cocone}. Cocartesian cocones themselves
are defined in \S\ref{sec:cocart-cocone-def}, and the promised extension result
is proven in \S\ref{sec:cocart-cone}. This section concludes with a proof that
the space of lifts of a lax cocone along a fixed cocartesian fibration defines a
contractible Kan complex.

\subsection{A global universal property for cocartesian fibrations}\label{sec:global-cocart}

Using the tools developed in \S\ref{sec:coherent-nerve}, we may extend the
\emph{local\/} lifting property of cocartesian fibrations given in Corollary
\ref{cor:representable-cocart}, positing the existing of cocartesian lifts of
1-arrows with specified domain, to a \emph{global} lifting property of some
significant utility. Suppose that we are given simplicial functors
\begin{equation*}
  \xymatrix@R=0em@C=8em{
    {\gC\Horn^{n,n}}\ar[r]^{E} & {\eK}
  }
\end{equation*}
and
\begin{equation*}
  \xymatrix@R=0em@C=8em{
    {\gC\Del^n}\ar[r]^{B} & {\eK}
  }
\end{equation*}
and a simplicial natural transformation
\begin{equation}\label{eq:domain-transformation}
  \xymatrix@R=0em@C=8em{
    {\gC\Horn^{n,n}}\ar@/_2ex/[]!R(0.5);[r]_{B}\ar@/^2ex/[]!R(0.5);[r]^{E}
    \ar@{}[r]|-{\textstyle \Downarrow p} & {\eK}
  }
\end{equation}
whose final component $p_n \colon E_n \tfib B_n$ is a cocartesian fibration. We
will provide sufficient conditions under which it is possible to extend $p$ and
its domain $E$ from $\gC\Horn^{n,n}$ to give a simplicial natural transformation
whose codomain is the specified simplicial functor $B$ on $\gC\Del^n$, as shown
in the diagram:
\begin{equation}\label{eq:outer-horn-extension}
  \xymatrix@R=3em@C=8em{
    {\gC\Horn^{n,n}}\ar@{^(->}[dd]
    \ar@/_2ex/[]!DR(0.5);[dr]_{B}^{}="two" \ar@/^2ex/[]!DR(0.5);[dr]^{E}_{}="one"
    \ar@{} "one";"two" \ar@{=>}?(0.25);?(0.75)^{p}\\
    & {\eK} \\
    {\gC\Del^n}\ar@/_2ex/[]!UR(0.5);[ru]_{B}^{}="four"
    \ar@{.>}@/^2ex/[]!UR(0.5);[ru]_{}="three"
    \ar@{} "three";"four" \ar@{:>}?(0.25);?(0.75)\\
  }
\end{equation}

To gain an intuition for the construction we are imagining, and  to see that it is sensibly described as a ``lifting property,'' it is instructive
to consider a couple of low dimensional examples:

\begin{obs}[low dimensional outer horn extensions]\label{obs:low-dim-horn-ext} $\quad$
  \begin{enumerate}[label=(\roman*)]
  \item \textbf{dimension $\mathbf{n=1}$:} The information provided by \eqref{eq:outer-horn-extension} comprises
    the two solid arrows and their domain and codomain objects in the following
    diagram:
    \begin{equation*}
      \xymatrix@R=2em@C=4em{
        {E_0}\pbexcursion\ar@{.>}[r]^{E_{\langle 0,1 \rangle}}\ar@{.>}[d]_{p_0} &
        {E_1}\ar[d]^{p_1} \\
        {B_0}\ar[r]_{B_{\langle 0,1 \rangle}} & {B_1}
      }
    \end{equation*}
    Without placing any condition on this data, we may extend it to the
    requested natural transformation on $\gC\Del^1$ simply by forming the
    displayed pullback.
  \item \textbf{dimension $\mathbf{n=2}$:}\label{item:dim2} Here again we depict the information
    provided by \eqref{eq:outer-horn-extension} as the solid arrows in the following diagram:
    \begin{equation*}
      \begin{xy}
        0;<1.25cm,0cm>:<0cm,1cm>::
        *{\xybox{
            \POS(1.5,0)*+{B_1}="one"
            \POS(0,1)*+{B_0}="two"
            \POS(3.5,0.5)*+{B_2}="three"
            \ar "two";"one"_{B_{\langle {0,1} \rangle}}
            \ar@/_5pt/ "one";"three"_{B_{\langle {1,2} \rangle}}^(0.1){}="otm"
            \ar@/^10pt/ "two";"three"^(0.6){B_{\langle {0,2} \rangle}}_(0.5){}="ttm"|(0.42){\hole}
            \ar@{=>} "ttm"-<0pt,7pt> ; "otm"+<0pt,10pt> ^(0.3){B_{\langle {0,2 \mid 1} \rangle}}
            \POS(1.5,2.5)*+{E_1}="one'"
            \POS(1.5,2.5)*{\pbcorner}
            \POS(0,3.5)*+{E_0}="two'"
            \POS(3.5,3)*+{E_2}="three'"
            \ar@/_5pt/ "one'";"three'"_{E_{\langle {1,2} \rangle}}
            \ar@/^10pt/ "two'";"three'"^{E_{\langle {0,2} \rangle}}_(0.55){}="ttm'"
            \ar@{->>} "one'";"one"_(0.325){p_1}
            \ar "two'";"two"_{p_0}
            \ar@{->>} "three'";"three"^{p_2}
            \ar@{..>} "two'";"one'"_*!/^2pt/{\scriptstyle E_{\langle {0,1} \rangle}}
            \ar@{..>}@/_10pt/ "two'";"three'"^(0.44){}="otm'"
            \ar@{:>} "ttm'"-<0pt,4pt> ; "otm'"+<0pt,4pt> ^(0.3){E_{\langle {0,2 \mid 1} \rangle}}
          }}
      \end{xy}
    \end{equation*}
    The desired extension comprises the dotted arrows, and in order to construct
    those we shall make two assumptions regarding $p$:
    \begin{enumerate}[label=(\roman*)]
    \item its component at the object $2\in\gC\Horn^{2,2}$, located at the right of the
      diagram, is a cocartesian fibration, and
    \item its naturality square associated with the $0$-arrow
      $\langle {1,2} \rangle\in\Fun_{\gC\Horn^{2,2}}(1,2)$, located at the front of the
      diagram, is a pullback.
    \end{enumerate}
    Under these conditions, our first step is to lift the
    whiskered $1$-arrow $B_{\langle {0,2 \mid 1} \rangle}\circ p_0$ in
    $\Fun_{\eK}(E_0,B_2)$ to give a $p_2$-cocartesian arrow $E_{\langle {0,2
        \mid 1} \rangle}$ in $\Fun_{\eK}(E_0,E_2)$. Observe now that by
    construction the dotted arrow from $E_0$ to $E_2$, comprising the target of
    that lift, composes with $p_2$ to give an $0$-arrow equal to the composite
    $B_{\langle {1,2} \rangle}\circ B_{\langle {0,1} \rangle}\circ p_0$. It
    follows, therefore, that we may apply the pullback property of the
    naturality square at the front to induce the $0$-arrow $E_{\langle {0,1}
      \rangle}$. This makes the square on the left commute and thus completes
    $E$ and $p$ to structures on $\gC\Del^2$.
  \end{enumerate}
  The following proposition furnishes us with corresponding extension results
  for right outer horns of higher dimension.
\end{obs}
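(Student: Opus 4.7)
For the $n=1$ case, the claim is essentially a direct application of the pullback axiom \ref{defn:cosmos}\ref{defn:cosmos:b}. Given only the datum $p_1\colon E_1\tfib B_1$ (since $\gC\Horn^{1,1}$ is just the single object $1$) and the extended functor $B$ on $\gC\Del^1$, which supplies $B_{\langle 0,1\rangle}\colon B_0\to B_1$, I would define $E_0$ to be the simplicial pullback of $p_1$ along $B_{\langle 0,1\rangle}$, which then produces both the component $p_0\colon E_0\tfib B_0$ (an isofibration by pullback stability) and the transition map $E_{\langle 0,1\rangle}\colon E_0\to E_1$ rendering the square commutative. The only non-trivial function complex of $\gC\Del^1$ is $\Fun(0,1)=\Del^0$, so the single naturality square just constructed exhausts the naturality conditions needed.

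For the $n=2$ case, the plan is to execute the two-step construction sketched in the observation and then verify that all required naturality conditions on $\gC\Del^2$ hold. Step one: invoke the cocartesian fibration hypothesis on $p_2$ (equivalently, Corollary \ref{cor:representable-cocart} applied representably) to lift the whiskered $1$-arrow $B_{\langle 0,2\mid 1\rangle}\circ p_0$ in $\Fun_{\eK}(E_0,B_2)$ to a $p_2$-cocartesian $1$-arrow with specified domain $E_{\langle 0,2\rangle}$ in $\Fun_{\eK}(E_0,E_2)$. The domain of this lift is legitimate because naturality of $p$ at the $0$-arrow $\langle 0,2\rangle\in\gC\Horn^{2,2}$ gives $p_2\circ E_{\langle 0,2\rangle}=B_{\langle 0,2\rangle}\circ p_0$, matching the source of the edge to be lifted. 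This produces both the $1$-arrow $E_{\langle 0,2\mid 1\rangle}$ and its codomain, a $0$-arrow $F\colon E_0\to E_2$ over $B_{\langle 1,2\rangle}\circ B_{\langle 0,1\rangle}\circ p_0$.

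Step two: apply the pullback hypothesis on the front $\langle 1,2\rangle$-naturality square. The pair consisting of $F\colon E_0\to E_2$ and $B_{\langle 0,1\rangle}\circ p_0\colon E_0\to B_1$ agree upon postcomposing with $p_2$ and $B_{\langle 1,2\rangle}$ respectively (both equal the common $0$-arrow $B_{\langle 1,2\rangle}\circ B_{\langle 0,1\rangle}\circ p_0$), so they induce a unique $0$-arrow $E_{\langle 0,1\rangle}\colon E_0\to E_1$ satisfying $p_1\circ E_{\langle 0,1\rangle}=B_{\langle 0,1\rangle}\circ p_0$ and $E_{\langle 1,2\rangle}\circ E_{\langle 0,1\rangle}=F$. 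These two equations encode precisely the $\langle 0,1\rangle$-naturality square and the coherence for the composite $\{0,1,2\}$.

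Finally, I would confirm that the new data satisfies the remaining naturality conditions: the only non-discrete function complex of $\gC\Del^2$ is $\Fun(0,2)\cong\Del^1$, whose vertex $\{0,2\}$-square is inherited from the given $p$ on $\gC\Horn^{2,2}$, whose vertex $\{0,1,2\}$-square follows by composing the two squares already verified, and whose $1$-simplex square is precisely the statement $p_2\circ E_{\langle 0,2\mid 1\rangle}=B_{\langle 0,2\mid 1\rangle}\circ p_0$ obtained in step one. I anticipate no substantive obstacle; the main point to attend to is simply checking that the two hypotheses (cocartesianness at $p_2$, pullback property at $\langle 1,2\rangle$) are invoked for exactly the roles required and that the induced maps cohere, which the construction was engineered to guarantee.
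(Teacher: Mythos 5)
Your proposal is correct and follows essentially the same route as the paper's own construction: pullback for $n=1$, and for $n=2$ the cocartesian lift of $B_{\langle 0,2\mid 1\rangle}\circ p_0$ with specified domain $E_{\langle 0,2\rangle}$ followed by the induced map into the pullback square at $\langle 1,2\rangle$. Your concluding check that the only nontrivial functoriality/naturality constraints live in $\Fun_{\gC\Del^2}(0,2)\cong\Del^1$ and are exactly the two equations produced by the construction is a correct (and welcome) filling-in of details the paper leaves implicit.
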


\begin{prop}\label{prop:outer-horn-extension}
  Fix an $n> 2$ and suppose that we are given simplicial functors
  $E\colon\gC\Horn^{n,n}\to\eK$ and $B\colon\gC\Del^n\to\eK$ and
  a simplicial natural transformation $p$ from $E$ to
  $B$ on $\gC\Horn^{n,n}$ as in \eqref{eq:domain-transformation}. Suppose
  further that:
  \begin{itemize}
  \item the component $p_n\colon E_n\tfib B_n$ is a cocartesian fibration,
  \item the naturality square
    \begin{equation}\label{eq:pullback-condition}
      \xymatrix@R=2em@C=4em{
        {E_{n-1}}\pbexcursion\ar@{->>}[d]_-{p_{n-1}}\ar[r]^{E_{\langle {n-1,n} \rangle}} &
        {E_n}\ar@{->>}[d]^-{p_n} \\
        {B_{n-1}}\ar[r]_{B_{\langle {n-1,n} \rangle}} & {B_n}
      }
    \end{equation}
    is a pullback, and
  \item for each $k=0,\ldots,n-2$ the $1$-arrow $E_{\langle {k,n \mid n-1} \rangle}$ in the diagram
    \begin{equation}\label{eq:cocartesian-arrow-condition}
      \begin{xy}
        0;<1.25cm,0cm>:<0cm,1cm>::
        *{\xybox{
            \POS(1.5,0)*+{B_{n-1}}="one"
            \POS(-0.5,1)*+{B_k}="two"
            \POS(3.5,0.5)*+{B_n}="three"
            \ar "two";"one"_{B_{\langle {k,n-1} \rangle}}
            \ar@/_5pt/ "one";"three"_{B_{\langle {n-1,n} \rangle}}^(0.1){}="otm"
            \ar@/^10pt/ "two";"three"^(0.6){B_{\langle {k,n} \rangle}}_(0.55){}="ttm"|(0.5){\hole}
            \ar@{=>} "ttm"-<0pt,7pt> ; "otm"+<0pt,10pt> ^(0.5){B_{\langle {k,n \mid n-1} \rangle}}
            \POS(1.5,2.5)*+{E_{n-1}}="one'"
            \POS(1.5,2.5)*{\pbcorner}
            \POS(-0.5,3.5)*+{E_k}="two'"
            \POS(3.5,3)*+{E_n}="three'"
            \ar@/_5pt/ "one'";"three'"_{E_{\langle {n-1,n} \rangle}}^(0,1){}="otm'"
            \ar@/^10pt/ "two'";"three'"^{E_{\langle {k,n} \rangle}}_(0.55){}="ttm'"
            \ar@{->>} "one'";"one"_(0.325){p_{n-1}}
            \ar "two'";"two"_{p_k}
            \ar@{->>} "three'";"three"^{p_n}
            \ar "two'";"one'"_*!/^2pt/{\scriptstyle E_{\langle {k,n-1} \rangle}}
            \ar@{=>} "ttm'"-<0pt,7pt> ; "otm'"+<0pt,10pt> ^(0.5){E_{\langle {k,n \mid n-1} \rangle}}
          }}
      \end{xy}
    \end{equation}
    is $p_n$-cocartesian.
  \end{itemize}
  Then we may construct the extension of $E$ and $p$ depicted
  in~\eqref{eq:outer-horn-extension}.
\end{prop}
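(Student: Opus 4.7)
The plan is to invoke Lemma~\ref{lem:subcomputad-extension} as the organizing principle: since $\gC\Horn^{n,n}\inc\gC\Del^n$ is a simplicial subcomputad by Lemma~\ref{lem:subcomputad-inclusion}, it suffices to supply images $E_{T^\bullet}$ for each new atomic arrow of $\gC\Del^n$ in a manner compatible with its face data. By the outer horn analysis of Example~\ref{ex:subcomputad-of-simplex}(iii), these new atomic arrows inhabit precisely two function complexes: $\Fun_{\gC\Del^n}(0,n)\cong\Cube^{n-1}$, with those absent from $\gC\Horn^{n,n}$ corresponding to simplices outside the cubical horn $\CHorn^{n-1,n-1}_0$, and $\Fun_{\gC\Del^n}(0,n-1)\cong\Cube^{n-2}$, with those absent corresponding to simplices outside $\boundary\Cube^{n-2}$. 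Since all objects of $\gC\Del^n$ already appear in $\gC\Horn^{n,n}$, the components $p_k$ are fixed, and only the images $E_{T^\bullet}$ on these new arrows, subject to the naturality identities for $p$, remain to be defined.

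First I would fill in the arrows in $\Fun(0,n)$. By Observation~\ref{obs:representable-cocart} the representable $\Fun_\eK(E_0,p_n)\colon\Fun_\eK(E_0,E_n)\tfib\Fun_\eK(E_0,B_n)$ is a cocartesian fibration of quasi-categories. Under the identification $\Cube^{n-1}\cong\Cube^{n-2}\times\Del^1$ in which the final $\Del^1$-factor records the coordinate ``$n-1$'' direction, the cubical horn factors as $\CHorn^{n-1,n-1}_0\cong\boundary\Cube^{n-2}\times\Del^1\cup\Cube^{n-2}\times\Del^{\fbv{0}}$. The restriction of $E$ to this horn, together with $B(-)\circ p_0$ on the full cube, assembles into a lifting problem of the exact shape treated by Lemma~\ref{lem:cocart-cylinder-extensions}(i). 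Its cylinder hypothesis asks that, for each vertex $T=\{0,i_1,\ldots,i_\ell,n\}$ of $\boundary\Cube^{n-2}$, the edge $T\to T\cup\fbv{n-1}$ map to a cocartesian 1-arrow; but this edge decomposes in $\gC\Del^n$ as the atomic 1-arrow $\langle i_\ell,n\mid n-1\rangle$ precomposed with the 0-arrow $\{0,i_1,\ldots,i_\ell\}\colon 0\to i_\ell$, so cocartesian-ness follows from the third hypothesis combined with the precomposition-stability clause of Corollary~\ref{cor:representable-cocart}. The lemma then supplies the desired extension.

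Next I would fill in the arrows in $\Fun(0,n-1)$. For each atomic $r$-arrow $T^\bullet=\langle 0,n-1\mid J_1\mid\cdots\mid J_r\rangle$ with $J_1,\ldots,J_r$ a partition of $[1,n-2]$, consider the atomic $(r+1)$-arrow $T''^\bullet=\langle 0,n\mid\fbv{n-1}\mid J_1\mid\cdots\mid J_r\rangle$ in $\Fun(0,n)$ just constructed. A short calculation in $\gC\Del^n$ shows $\face^0 T''^\bullet=\{n-1,n\}\circ T^\bullet$. Applying $\Fun_\eK(E_0,-)$ to the pullback~\eqref{eq:pullback-condition} yields a pullback of quasi-categories, and I define $E_{T^\bullet}$ as the unique $r$-simplex induced by the pair $(B_{T^\bullet}\circ p_0,\,\face^0 E_{T''^\bullet})$; the compatibility required to invoke this universal property is exactly the naturality equation for $T''^\bullet$ established in the first step.

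The main obstacle will be coherence: verifying the face-identity hypotheses of Lemma~\ref{lem:subcomputad-extension}. For the arrows constructed in the first step, compatibility on the $\CHorn^{n-1,n-1}_0$-faces is automatic (Lemma~\ref{lem:cocart-cylinder-extensions} merely extends the given data) while compatibility with the newly introduced $\Fun(0,n-1)$-faces follows from the uniqueness clause of the pullback. For the arrows constructed in the second step, one argues face by face, exploiting the simplicial identity $\face^r\face^0=\face^0\face^{r+1}$ together with pullback uniqueness: the top face $\face^r T^\bullet$ lies back in $\gC\Horn^{n,n}$ and is governed by the given data there, intermediate faces return lower-dimensional atomic arrows of the same type defined by the same recipe, and the $0$-face decomposes as a whiskering by $E_{\{n-1,n\}}$ of data supplied by the first step. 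In each case the two candidate $r$-simplices have matching pushforwards under both $\Fun_\eK(E_0,p_{n-1})$ and $\Fun_\eK(E_0,E_{\{n-1,n\}})$ by construction, so the pullback forces the required equality.
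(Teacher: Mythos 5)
Your proposal is correct and follows essentially the same route as the paper's proof: the extension over $\Fun_{\gC\Del^n}(0,n)$ is obtained from Lemma~\ref{lem:cocart-cylinder-extensions}(i) after verifying the pointwise cocartesian cylinder condition by the same whiskering decomposition through the hypothesised arrows $E_{\langle k,n\mid n-1\rangle}$, and the extension over $\Fun_{\gC\Del^n}(0,n-1)$ is induced from the pullback \eqref{eq:pullback-condition} applied under $\Fun_{\eK}(E_0,-)$. The only difference is presentational: the paper induces the second extension by a single appeal to the universal property of that pullback for the entire function complex, which renders your atomic-arrow-by-atomic-arrow face-coherence bookkeeping (via Lemma~\ref{lem:subcomputad-extension} and pullback uniqueness) automatic.
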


\begin{proof}
  We know from Example \ref{ex:subcomputad-of-simplex}\ref{itm:outer-horn} that $\gC\Horn^{n,n}$ may be
  regarded as a simplicial subcomputad of $\gC\Del^n$ and that they differ
  only in their function complexes from the object $0$ to the objects $n-1$ and $n$
  respectively. In particular they share the same set of objects, so $p$
  already has a suitable component for each object of $\gC\Del^n$. Consequently,
  all we are required to do is show that we may extend the action of $E$
  to the function complexes $\Fun_{\gC\Del^n}(0,n)$ and $\Fun_{\gC\Del^n}(0,n-1)$ in
  a way which ensures both the functoriality of the extended $E$ and the
  naturality of the existing components of $p$ with respect to that
  extension.

On contemplating the structure of $\gC\Del^n$ a
  little further, it is clear that we may construct the required extension in two steps:
  \begin{enumerate}[label=(\roman*)]
  \item extend $E$ to the function complex $\Fun_{\gC\Del^n}(0,n)$ in a way
    that ensures naturality with respect to $p_0$ and $p_n$, by solving the
    following lifting problem:
    \begin{equation}\label{eq:2}
      \xymatrix@R=2em@C=3em{
        {\Fun_{\gC\Horn^{n,n}}(0,n)}\ar@{^(->}[d]\ar[rr]^{E} & &
        {\Fun_{\eK}(E_0,E_n)}\ar@{->>}[d]^{p_n\circ{-}} \\
        {\Fun_{\gC\Del^n} (0,n)} \ar[r]_{B}\ar@{.>}[urr]^{E} &
        {\Fun_{\eK}(B_0,B_n)}\ar[r]_{{-}\circ p_0} & {\Fun_{\eK}(E_0,B_n)}
      }
    \end{equation}
  \item extend $E$ to the function complex $\Fun_{\gC\Del^n}(0,n-1)$ in a
    way that ensures functoriality with respect to composition by the $0$-arrow
    $E_{\langle {n-1,n} \rangle}\colon E_{n-1}\to E_{n}$ and naturality with respect to $p_0$
    and $p_{n-1}$, by solving the following problem:
    \begin{equation}\label{eq:lift3}
      \xymatrix@R=2em@C=5em{
        {\Fun_{\eK}(B_0,B_{n-1})}\ar[r]^{{-}\circ p_0} &
        {\Fun_{\eK}(E_0,B_{n-1})} \\
        {\Fun_{\gC\Del^n}(0,n-1)}\ar@{.>}[r]^{E}\ar[u]^{B}\ar[d]_{{-}\circ\langle {n-1,n} \rangle} &
        {\Fun_{\eK}(E_0,E_{n-1})}\ar[u]_{p_{n-1}\circ{-}}\ar[d]^{E_{\langle {n-1,n} \rangle}\circ{-}} \\
        {\Fun_{\gC\Del^n}(0,n)}\ar[r]_{E} & {\Fun_{\eK}(E_0,E_n)}
      }
    \end{equation}
  \end{enumerate}
  On consulting \ref{ex:subcomputad-of-simplex}\ref{itm:outer-horn}  again, we see that the inclusion
  $\Fun_{\gC\Horn^{n,n}}(0,n-1)\inc \Fun_{\gC\Del^n}(0,n-1)$ is isomorphic to
  $\boundary\Cube^{n-2}\subseteq\Cube^{n-2}$ and that the inclusion
  $\Fun_{\gC\Horn^{n,n}}(0,n)\inc \Fun_{\gC\Del^n}(0,n)$ is isomorphic to
  $\CHorn^{n-1,n-1}_0\subseteq \Cube^{n-1}$. The second of these inclusions, as noted in
  \ref{ntn:cubes}, is isomorphic to the Leibniz product
  $(\boundary\Cube^{n-2}\subseteq\Cube^{n-2})\leib\times
  (\Del^{\fbv{0}}\subseteq\Del^1)$, so Lemma \ref{lem:cocart-cylinder-extensions} proves that
  we may solve the lifting problem in~\eqref{eq:2} so long as the cylinder
  \begin{equation*}
    \boundary\Cube^{n-2}\times\Del^1 \subseteq
    (\boundary\Cube^{n-2}\times\Del^1)\cup(\Cube^{n-2}\times\Del^{\fbv{0}})\cong
    \Fun_{\gC\Horn^{n,n}}(0,n)\xrightarrow{\mkern20mu E\mkern20mu}
    \Fun_{\eK}(E_0,E_n)
  \end{equation*}
  is pointwise $\Fun_{\eK}(E_0,p_n)$-cocartesian. 
  
 To verify this, consider a vertex
  $\vec{a}=(a_1,\ldots,a_{n-2})$ of $\Cube^{n-2}$. 
Under the
  isomorphism $\Cube^{n-2}\times\Del^1\cong\Fun_{\gC\Del^n}(0,n)$,
  the $1$-simplex
  $((a_1,\ldots,a_{n-2})\cdot\degen^0,\id_{[1]})$ corresponds to the $1$-arrow $\langle I_{\vec{a}}\mid n-1 \rangle$ where $I_{\vec{a}} =
  \{0,n\}\cup\{i\in[1,n-2]\mid a_i=1\}$. Take $k$ to be the largest integer in
  $J_{\vec{a}}=I_{\vec{a}}\setminus\{n\}$ and then observe that $\langle
  J_{\vec{a}} \rangle$ is a $0$-arrow in $\Fun_{\gC\Del^n}(0,k)$ and that
  $\langle I_{\vec{a}} \mid n-1 \rangle$ may be expressed as the whiskered
  composite $\langle J_{\vec{a}} \rangle\circ\langle k,n \mid n-1 \rangle$. By
  the functoriality of $E$ it follows that the $1$-arrow $E_{\langle
    {I_{\vec{a}} \mid n-1} \rangle}$ can be written as a whiskered composite
  $E_{\langle k,n \mid n-1 \rangle}\circ E_{\langle J_{\vec{a}} \rangle}$. In
  the statement of this proposition we assumed that each $E_{\langle {k,n\mid
      n-1} \rangle}$, depicted in \eqref{eq:cocartesian-arrow-condition}, is $p_n$-cocartesian and we
  know, from Corollary \ref{cor:representable-cocart}, that these are preserved under whisker
  precomposition by any $0$-arrow. It follows therefore that $E_{\langle
    I_{\vec{a}}\mid n-1 \rangle}$ is $p_n$-cocartesian and thus that the
  required lift exists in~\eqref{eq:2}.

  To construct the lift displayed in \eqref{eq:lift3} consider the following diagram:
  \begin{equation}\label{eq:cocartesian-missing-face}
    \xymatrix@R=2em@C=3em{
      & {\Fun_{\eK}(B_0,B_{n-1})}\ar[r]^{{-}\circ p_0} &
      {\Fun_{\eK}(E_0,B_{n-1})}\ar[dr]!L(0.75)^{B_{\langle {n-1,n} \rangle}\circ{-}} \\
      {\Fun_{\gC\Del^n}(0,n-1)}\ar@{.>}[r]\ar[]!R(0.75);[ur]^{B}
      \ar[]!R(0.75);[dr]_{{-}\circ\langle {n-1,n} \rangle} &
      {\Fun_{\eK}(E_0,E_{n-1})}\ar[]!R(0.75);[ur]_{p_{n-1}\circ{-}}
      \ar[]!R(0.75);[dr]^{E_{\langle {n-1,n} \rangle}\circ{-}}{\save[]+<50pt,0pt>*{\rotatebox{45}{$\pbcorner$}}\restore}& &
      {\Fun_{\eK}(E_0,B_n)}\\
      & {\Fun_{\gC\Del^n}(0,n)}\ar[r]_{E} & {\Fun_{\eK}(E_0,E_n)}
      \ar[ur]!L(0.75)_{p_n\circ{-}}
    }
  \end{equation}
  It is straightforward to check that the outer hexagon commutes, this being an
  immediate consequence of the commutativity of the lower triangle
  in~\eqref{eq:2}. What is more, the assumption in the statement that the
  naturality square in \eqref{eq:pullback-condition} is a pullback in $\eK$ implies that the
  right hand diamond above is a pullback, whose universal property induces the
  required (dotted) solution to the problem in \eqref{eq:lift3} as required.
\end{proof}

The outer horn inclusions $\Horn^{n,n}\inc\Del^n$ of Proposition \ref{prop:outer-horn-extension} are isomorphic to the maps obtained by joining a sphere boundary inclusion $\partial\Del^{n-1}\inc\Del^{n-1}$ with a ``cocone vertex'' $\Del^0$ on the right. Proposition \ref{prop:extending-cocart-cones} will generalise this global lifting property, extending natural transformations along an inclusion of homotopy coherent diagrams of shape $X\join\Del^0\inc Y\join\Del^0$ where $X \inc Y$ is an arbitrary inclusion of simplicial sets. First we characterise the class of simplicial functors $\gC[X \join\Del^0] \to \eK$ that will be of interest: namely, those that define a \emph{lax cocone} under a $\gC[X]$-shaped diagram valued in the Kan complex enriched core of $\eK$.

\subsection{Lax cocones}\label{sec:lax-cocone}

  The groupoidal core functor $g\colon\qCat\to\Kan$, which carries each
  quasi-category to the maximal Kan complex spanned by its invertible arrows,
  preserves products. So if $\eC$ is a category enriched in quasi-categories
  then we may take the groupoidal core of its function complexes to construct a
  Kan complex enriched category denoted $g_*\eC$. 
  
  \begin{ntn}\label{ntn:qcat-from-cosmos} For a quasi-categorically enriched category $\eC$, we shall use $\qC$ to denote
  the quasi-category constructed by applying the homotopy coherent nerve
  construction to $g_*\eC$. In particular, we write $\qK$ for the quasi-category constructed in this way from an $\infty$-cosmos $\eK$.
\end{ntn}

The following lemma is used to detect those simplicial functors $\gC\Del^n \to \eC$ that represent $n$-simplices in $\qC$.

\begin{lem}\label{lem:core-condition}
Suppose that we are
  given a homotopy coherent simplex $X\colon\gC\Del^n\to\eC$ for some $n\geq 2$ with the property that all of its $2$-dimensional faces
  $X\cdot\face^{\fbv{i,j,k}}\colon \gC\Del^2\to\eC$ factor through the simplicial
  subcategory inclusion $g_*\eC\subseteq \eC$. Then $X$ itself factors through
  that inclusion.
\end{lem}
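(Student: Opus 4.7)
Since $g_*\eC\subseteq\eC$ is identity on objects, a simplicial functor into $\eC$ factors through $g_*\eC$ if and only if each of its component maps on function complexes lands in the groupoidal core; by the usual characterisation of the core as the maximal Kan subcomplex, this amounts to showing that every $1$-arrow of $\gC\Del^n$ is carried by $X$ to an isomorphism in $\eC$, because higher simplices whose edges are invertible automatically lie in the core. The hypothesis immediately supplies a preliminary special case: for each triple $0\leq i<j<k\leq n$, the unique non-degenerate $1$-arrow $\langle 0,2\mid 1\rangle$ of $\gC\Del^2$ must be sent to an isomorphism under $X\cdot\face^{\fbv{i,j,k}}$, so $X_{\langle i,k\mid j\rangle}$ is invertible in $\Fun_{\eC}(X_i,X_k)$.

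To promote this to all $1$-arrows I would exploit the fact recalled in Observation~\ref{obs:hcsimp-cubes} that each function complex $\Fun_{\gC\Del^n}(i,j)$ is the nerve of a cube-shaped poset. In the nerve of any poset every $1$-arrow is exhibited, via an $r$-simplex built from a maximal chain, as an iterated composite of atomic covering relations, and in the poset at hand these covers are precisely the $1$-arrows $\langle T^0\mid a\rangle$ with $a\in[i,j]\setminus T^0$. Since isomorphisms in a quasi-category are closed under composition witnessed by $2$-simplices, it suffices to verify that $X$ sends every such cover to an isomorphism.

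The last step decomposes a cover $\langle T^0\mid a\rangle$ in the categorical structure of $\gC\Del^n$: writing $T^0=\{i=k_0<k_1<\cdots<k_\ell=j\}$ and picking the unique $m$ with $k_m<a<k_{m+1}$, the pointwise union formula for composition in $\gC\Del^n$ expresses $\langle T^0\mid a\rangle$ as the horizontal composite of the $0$-arrows $\langle k_{s-1},k_s\rangle$ for $s\neq m+1$ with the single atomic $1$-arrow $\langle k_m,k_{m+1}\mid a\rangle$. Under $X$ the latter factor is invertible by the preliminary case applied to the triple $\{k_m,a,k_{m+1}\}$, while pre- and post-whiskering by $0$-arrows are simplicial maps between function complexes and therefore preserve invertibility of $1$-simplices. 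Hence $X_{\langle T^0\mid a\rangle}$ is an isomorphism, which finishes the verification. The principal point requiring care is the coexistence of two different composition operations --- the enriched composition of $\eC$ employed in the whiskering step, and the simplicial composition within a single function complex used to chain covers into arbitrary $1$-arrows via $2$-simplices --- but both preserve invertibility because any simplicial map sends isomorphisms to isomorphisms.
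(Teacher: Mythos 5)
Your proof is correct and follows essentially the same route as the paper's: reduce to showing every $1$-arrow maps to an isomorphism, write an arbitrary $1$-arrow $\langle I_0\mid I_1\rangle$ as a composite (witnessed by the simplex for the maximal chain, which is the paper's $r$-arrow $\langle I_0\mid j_1\mid\cdots\mid j_r\rangle$) of covers adding one vertex, and express each cover as a whiskering of a $2$-face arrow $\langle k_m,k_{m+1}\mid a\rangle$, then use that whiskering and composition preserve invertibility. The only difference is cosmetic: you phrase the first decomposition in the language of maximal chains in the cube poset, where the paper writes down the witnessing $r$-arrow explicitly.
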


\begin{proof}
  The condition given in the statement is equivalent to postulating that for
  each 3-element subset $\{i<j<k\}\subseteq[n]$ the $1$-arrow $\langle i,k\mid j
  \rangle$ in $\Fun_{\gC\Del^n}(i,k)$ is mapped by $X$ to a $1$-arrow
  $X_{\langle i,k \mid j \rangle}$ which is invertible in the function complex
  $\Fun_{\eC}(X_i,X_k)$. So consider an arbitrary $1$-arrow $\langle I_0\mid I_1
  \rangle$ in $\Fun_{\gC\Del^n}(i,k)$ where $I_1=\{j_1<j_2 < \ldots <j_r\}$,
  and consider the $r$-arrow $\langle I_0\mid j_1\mid j_2\mid\ldots\mid j_r\rangle$
  in $\Fun_{\gC\Del^n}(i,k)$. Notice that this derived $r$-arrow has as its spine
  the sequence of $1$-arrows of the form $\langle I_0\cup\{j_1,\ldots,j_{l-1}\}\mid
  j_l \rangle$ for $1 \leq l \leq r$ and it witnesses that their composite in
  $\Fun_{\gC\Del^n}(i,k)$ is the original $1$-arrow $\langle {I_0,I_1} \rangle$.
  What is more, if we let $j_0$ (resp.~$j_{r+1}$) be the largest (resp.~smallest) element of
  $I_0$ which is less than $j_1$ (resp.~greater than $j_r$) then we can write the
  $1$-arrows on the spine of $\langle I_0\mid j_1\mid j_2\mid\ldots\mid j_r\rangle$
  as whiskered composites
  \begin{equation*}
    \langle  I_0\cup\{j_1,\ldots,j_{l-1}\}\mid j_l \rangle =
    \langle (I_0\cap[i,j_0])\cup\{j_1,\ldots,j_{l-1}\} \rangle\circ
    \langle j_{l-1},j_{l+1}\mid j_l \rangle \circ
    \langle I_0\cap[j_{l+1},k] \rangle
  \end{equation*}
  for $l=1,\ldots,r$. Applying the simplicial functor $X$ to this data, we find
  that the $1$-arrow $X_{\langle {I_0\mid I_1} \rangle}$ can be expressed as a
  composite of $1$-arrows in the quasi-category $\Fun_{\eC}(X_i,X_k)$, each one
  of which is constructed by whiskering a $1$-arrow $X_{\langle
    {i_{l-1},i_{l+1}\mid i_l \rangle}}$. By assumption these latter arrows are
  all isomorphisms and whisker composition preserves isomorphisms, so it follows
  that we have expressed $X_{\langle {I_0\mid I_1 \rangle}}$ as a composite of
  isomorphisms in the quasi-category $\Fun_{\eC}(X_i,X_k)$ and thus that it too
  is an isomorphism. In this way we have shown that $X$ maps every $1$-arrow in
  $\Fun_{\gC\Del^n}(i,k)$ to an isomorphism in $\Fun_{\eC}(X_i,X_k)$  and thus that it lands in $g_*\eC$ as postulated.
\end{proof}

\begin{rec}[cocones of simplicial sets and their coherent realizations]\label{rec:cocone-notation}
  Given a simplicial set $X$ we consider its join $X\join\Del^0$ with the point, a simplicial set that describes the shape of a cocone under $X$. In forming the join, we have, as usual,  assumed that $X$ and $\Del^0$ are trivially augmented
  with a single $(-1)$-simplex called $*$. As is customary, we
  identify each simplex $x\in X$ with the simplex $(x,*)$ in $X\join\Del^0$,
  thereby regarding $X$ as a simplicial subset of $X\join\Del^0$.

  We note that each non-degenerate $n$-simplex $x\in X$ gives rise to two
  non-degenerate simplices in the join $X\join\Del^0$, these being $(x,*)$ of
  dimension $n$ (already identified with $x$ itself) and $(x,\id_{[0]})$ of
  dimension $n+1$, and these enumerate all of its non-degenerate simplices. We shall
  adopt the notation $\hat{x}$ for the non-degenerate $(n+1)$-simplex
  $(x,\id_{[0]})$ and $\top$ for the $0$-simplex $(*,\id_{[0]})$, the only
  $0$-simplex of $X\join\Del^0$ which is not a member of $X$.

  We may apply homotopy coherent realisation to the canonical inclusion
  $X\subset X\join\Del^0$ to obtain a simplicial subcomputad $\gC{X} \subset
  \gC[X\join\Del^0]$ as discussed in Lemma \ref{lem:subcomputad-inclusion}. The notational conventions of Notation~\ref{ntn:beads} provide a ``naming of parts'' for $\gC[X\join \Del^0]$. Its objects are
  named after the $0$-simplices of $X$ along with one extra object named $\top$.
  Its $r$-beads are either named
  \begin{itemize}
  \item $\langle {x;I_1\mid\ldots\mid I_r} \rangle$ for some non-degenerate
    $n$-simplex $x\in X$ with $n > r$ and some partition $\{I_k\}_{k=1}^r$ of
    $[1,n-1]$ into non-empty subsets, defining an $r$-arrow in the functor
    space $\Fun_{\gC{X}}( x\cdot\face^{\fbv{0}},x\cdot\face^{\fbv{n}})$, or
  \item $\langle {\hat{x};I_1\mid\ldots\mid I_r} \rangle$ for some non-degenerate
    $n$-simplex $x\in X$ with $n \geq r$ and some partition $\{I_k\}_{k=1}^r$ of
    $[1,n]$ into non-empty subsets, defining an $r$-arrow in
    the function complex $\Fun_{\gC{X}}( x\cdot\face^{\fbv{0}},\top)$.
  \end{itemize}
\end{rec}

\begin{defn}[lax cocones]\label{defn:lax-cocone}
  Suppose that $X$ is a simplicial set and that $\eC$ is a category enriched in
  quasi-categories. Then a \emph{lax cocone of shape $X$\/} in $\eC$ is defined
  to be a simplicial functor $\ell^B\colon\gC[X\join\Del^0]\to\eC$ with the property
  that its composite with the inclusion $\gC{X}\subset\gC[X\join\Del^0]$ factors
  through the inclusion $g_*\eC\subseteq \eC$. 
  \[
    \xymatrix{
      & \gC{X} \ar[r]^-{B_\bullet} \ar@{^(->}[d] & g_*\eC \ar@{^(->}[d] \\
      \gC\Del^0 \ar@{^(->}[r]^-{\top} \ar@/_3ex/[rr]_-{\ell^B_\top} &
      \gC[X\join\Del^0] \ar[r]^-{\ell^B} & \eC}
  \]
  The restriction of a lax cocone $\ell^B\colon\gC[X\join\Del^0]\to\eC$ to a
  functor $B_{\bullet}\colon\gC{X}\to g_*\eC$ is called its \emph{base\/} and
  its transpose under the adjunction between homotopy coherent nerve and
  realisation $\gC\dashv\hN$ is denoted $b\colon X\to\qC$. We say that $\ell^B$
  is a lax cocone \emph{under the diagram\/} $b$; the object $B\in\eC$ obtained
  by evaluating $\ell^B$ at the object $\top$ is called the \emph{nadir} of that
  lax cocone.
\end{defn}

\begin{ntn}[naming the parts of a lax cocone]
  The components of a lax cocone $\ell^{B}\colon\gC[X\join\Del^0]\to\eK$ inherit
  the following nomenclature from the conventions established in
  Recollection~\ref{rec:cocone-notation}. This maps:
  \begin{itemize}
  \item the objects of $\gC[X\join\Del^0]$ to objects denoted $B_x$ (for
    each $0$-simplex $x\in X$) and $B_\top$ (or sometimes just $B$),
  \item an $r$-arrow $\langle x; I_1\mid...\mid I_r \rangle$ in
    $\Fun_{\gC[X\join\Del^0]}(x_0,x_n)$, where $x_0$ and $x_n$ are the initial
    and final vertices of the $n$-simplex $x\in X$, to an $r$-arrow $B_{\langle
      x; I_1\mid...\mid I_r \rangle}$ in $\Fun_{\eK}(B_{x_0},B_{x_n})$, and
  \item an $r$-arrow $\langle \hat{x}; I_1\mid...\mid I_r \rangle$ in
    $\Fun_{\gC[X\join\Del^0]}(x_0,\top)$, where $x_0$ is the initial vertex of
    the $n$-simplex $x\in X$, to an $r$-arrow $\ell^B_{\langle
      {\hat{x};I_1\mid...\mid I_r} \rangle}$ in $\Fun_{\eK}(B_{x_0},B_\top)$.
  \end{itemize}
  To simplify this notation in common cases, especially at low dimension, if
  $x\in X$ is an $n$-simplex we shall write $B_x$ for the $(n-1)$-arrow
  $B_{\langle {x;1\mid 2\mid...\mid n-1} \rangle}$ and $\ell^B_{x}$ for the
  $n$-arrow $\ell^B_{\langle \hat{x};1\mid 2\mid...\mid n \rangle}$. Consequently, if
  $f\in X$ is a $1$-simplex with vertices $x$ and $y$ then the associated
  components of the lax cocone $\ell^B$ are named and related as depicted in the
  following diagram:
  \begin{equation*}
    \xymatrix@C=4em@R=0.75em{
      {B_x}\ar[dd]_{B_f}\ar[dr]^{\ell^B_x} & \\
      {}\ar@{}[r]|(0.35){\Downarrow\,\ell^B_f} & {B} \\
      {B_y}\ar[ur]_{\ell^B_y} & 
    }
  \end{equation*}
\end{ntn}

\begin{obs}[whiskering lax cocones]\label{obs:whiskering-cocone}
Let $\ell^A \colon \gC[X \join\Del^0] \to \eC$ be a lax cocone with nadir $\ell^A_\top = A$ and let $f \colon A \to B$ be any map in $\eC$. Then there is a \emph{whiskered lax cocone} $f \cdot \ell^A \colon \gC[X \join\Del^0] \to \eC$ with the same base diagram $A_\bullet \colon \gC{X} \to g_*\eC$ and with nadir $B$, whose components from $x \in X$ to $\top$ are defined by whiskering with $f$:
\[ \Fun_{\gC[X\join\Del^0]}(x,\top) \xrightarrow{\ell^A} \Fun_{\eC}(A_x,A) \xrightarrow{f \circ -} \Fun_{\eC}(A_x,B)\]
\end{obs}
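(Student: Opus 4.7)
The plan is to construct $f \cdot \ell^A$ directly as a simplicial functor on $\gC[X \join \Del^0]$ and then verify the two conditions of Definition \ref{defn:lax-cocone}. Because the objects of $\gC[X \join \Del^0]$ split as $\mathrm{ob}(\gC{X}) \sqcup \{\top\}$, it suffices to set $(f \cdot \ell^A)_x \defeq A_x$ for each vertex $x$ of $X$ and $(f \cdot \ell^A)_\top \defeq B$. On function complexes, I would define $f \cdot \ell^A$ to agree with $\ell^A$ on $\Fun_{\gC[X\join\Del^0]}(x,y) = \Fun_{\gC{X}}(x,y)$ whenever $x,y$ are both vertices of $X$, and to be the displayed composite $(f\circ -) \circ \ell^A$ on each $\Fun_{\gC[X\join\Del^0]}(x,\top)$. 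Since $\top$ is terminal in $\gC[X \join \Del^0]$ and $\Fun_{\gC[X\join\Del^0]}(\top,\top) = \Del^0$, no further data needs to be specified.

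The substantive check is enriched functoriality, and here I only need to consider the one non-trivial pattern of composition: a pair $\delta \in \Fun_{\gC{X}}(x,y)$ followed by $\gamma \in \Fun_{\gC[X\join\Del^0]}(y,\top)$. Using the simplicial functoriality of $\ell^A$ together with associativity of composition in $\eC$,
\[ (f \cdot \ell^A)(\gamma \circ \delta) = f \circ \ell^A(\gamma \circ \delta) = f \circ \bigl(\ell^A(\gamma) \circ \ell^A(\delta)\bigr) = \bigl(f \circ \ell^A(\gamma)\bigr) \circ \ell^A(\delta) = (f \cdot \ell^A)(\gamma) \circ (f \cdot \ell^A)(\delta), \]
and preservation of identities and of the remaining two patterns of composition (both entirely inside $\gC{X}$, or the trivial composition involving the identity at $\top$) is immediate from the fact that $\ell^A$ is itself a simplicial functor.

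It remains to observe that $f \cdot \ell^A$ is genuinely a lax cocone, i.e., that its restriction to $\gC{X}$ factors through $g_*\eC \subseteq \eC$. But by construction this restriction coincides with the base $A_\bullet \colon \gC{X} \to g_*\eC$ of the original lax cocone $\ell^A$, so the factorisation is inherited verbatim. The nadir is $(f \cdot \ell^A)_\top = B$ as claimed. There is no genuine obstacle here: the content of the observation is that composition with a morphism out of the nadir defines a natural operation on lax cocones, and this reduces to nothing more than associativity in $\eC$ and the fact that nothing in $\gC{X}$ is touched by the construction.
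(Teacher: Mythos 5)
Your construction and verification are correct, and they coincide with what the paper intends: the Observation is stated without proof, the routine checks you carry out (functoriality for the one nontrivial composition pattern $\Fun_{\gC[X\join\Del^0]}(y,\top)\times\Fun_{\gC[X\join\Del^0]}(x,y)\to\Fun_{\gC[X\join\Del^0]}(x,\top)$ via associativity of enriched composition with $f\circ -$, plus the base factoring through $g_*\eC$ verbatim) being exactly what is left implicit. One small terminological slip: $\top$ is not terminal in the enriched sense, since $\Fun_{\gC[X\join\Del^0]}(x,\top)$ is generally not a point; what you actually use, namely that $\Fun_{\gC[X\join\Del^0]}(\top,x)=\emptyset$ and $\Fun_{\gC[X\join\Del^0]}(\top,\top)=\Del^0$, so no data need be specified on arrows out of $\top$, is correct.
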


\begin{defn}\label{defn:lax-cocone-sset}
There is an augmented simplicial set $\qop{cocone}_{\eC}(X)$ of lax cocones of shape $X$
  in $\eC$, which has:
  \begin{itemize}
  \item $n$-\textbf{simplices}, for $n \geq 0$, lax cocones in $\eC$ of shape $X\times\Del^n$,
    and
  \item \textbf{action} of a simplicial operator $\alpha\colon[m]\to[n]$ on a
    the lax cocones of shape $X\times\Del^n$ given by precomposition with the
    simplicial functor \[\gC[(X\times\Del^\alpha)\join \Del^0]
    \colon\gC[(X\times\Del^m)\join\Del^0]\to\gC[(X\times\Del^n)\join \Del^0],\]
    \item \textbf{$(-1)$-simplices}, objects of $\eC$ with action of the unique operator $[-1] \to [n]$ given by taking a lax cocone of shape $X \times \Del^n$ to its nadir.
  \end{itemize}
  \end{defn}

It is worth mentioning the following result, though we shall not make use of it here:

\begin{prop}\label{prop:cocone-quasi-category}
  Suppose that $X$ is a simplicial set and that $\eC$ is a category enriched in
  quasi-categories. The simplicial set $\qop{cocone}_{\eC}(X)$ of lax cocones of
  shape $X$ in $\eC$ is a quasi-category and the diagram functor
  $\mathrm{diag}\colon \cocone_{\eC}(X)\to \qC^{X}$, which projects each lax cocone to the
  diagram under which it lives, is an isofibration.
\end{prop}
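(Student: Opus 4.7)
The plan is to prove the stronger claim that $\mathrm{diag}\colon \cocone_\eC(X)\to\qC^X$ is an isofibration. Since the codomain is a quasi-category (being an exponential of the quasi-category $\qC$ by a simplicial set) and inner fibrations inherit quasi-categoricity from their codomain, this will immediately establish that $\cocone_\eC(X)$ is a quasi-category too.

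To prove that $\mathrm{diag}$ is an inner fibration, I would consider an inner horn lifting problem with $0<k<n$. Using the adjunction $\gC\dashv\hN$ along with Definitions \ref{defn:lax-cocone} and \ref{defn:lax-cocone-sset}, this translates into an extension problem for a simplicial functor along the inclusion
\[
  P \defeq \gC[(X\times\Horn^{n,k})\join\Del^0]\cup_{\gC[X\times\Horn^{n,k}]}\gC[X\times\Del^n]\hookrightarrow \gC[(X\times\Del^n)\join\Del^0].
\]
The base condition of Definition \ref{defn:lax-cocone} is automatic on the relevant piece, since the downward data $\Del^n\to\qC^X$ supplies a base already factoring through $g_*\eC$. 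Lemmas \ref{lem:subcomputad-inclusion} and \ref{lem:sub-computad-stability} confirm that this is a simplicial subcomputad inclusion.

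By Lemma \ref{lem:subcomputad-extension}, the extension is determined by choosing images of the new atomic arrows compatibly with their faces. Combining Proposition \ref{prop:gothic-C} with Recollection \ref{rec:cocone-notation}, the new atomic arrows are the beads $\langle \widehat{(x,\sigma)};I_1|\cdots|I_r\rangle$ with $(x,\sigma)$ a non-degenerate simplex of $X\times\Del^n$ whose $\Del^n$-component $\sigma$ lies outside $\Horn^{n,k}$; mimicking the analysis of Example \ref{ex:subcomputad-of-simplex}\ref{itm:outer-horn}, the only such $\sigma$ are $\mathrm{id}_{[n]}$ and $\face^k$, and either pairs with any simplex of $X$ to yield a non-degenerate product simplex. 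Following the pattern of Example \ref{ex:subcomputad-of-simplex}\ref{itm:inner-horn}, I expect the compatibility conditions on image choices to organise, in each function complex of $\eC$, into an iterated Leibniz product of cubical boundary inclusions with a single cubical inner horn inclusion (in the $k$-th direction). Since the function complexes of $\eC$ are quasi-categories, such cubical inner horn fillers exist.

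For the full isofibration condition, one must additionally lift isomorphisms $\iso\to\qC^X$ with a specified source lax cocone. The analogous translation yields an extension problem whose new atomic data corresponds to cubical horns that may now be of outer type; however, because the base of every lax cocone factors through the Kan complex enriched subcategory $g_*\eC$, the relevant hom-spaces are actually Kan complexes, so both inner and outer cubical horns admit fillers. The main obstacle is the detailed combinatorial analysis in the third paragraph: precisely identifying how the compatibility conditions assemble into cubical horn inclusions of the claimed form, and, in the isomorphism case, verifying that the invertibility of the base diagram supplies the extra information needed to fill outer cubical horns at the level of $g_*\eC$. This parallels Example \ref{ex:subcomputad-of-simplex} and Proposition \ref{prop:outer-horn-extension}, but is more intricate due to the product with $X$ and the generality of the inner horn $\Horn^{n,k}$.
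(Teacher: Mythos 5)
Your overall plan---prove that $\mathrm{diag}$ is an isofibration by transposing horn-lifting problems into simplicial subcomputad extension problems and filling the resulting cubical configurations in the function complexes of $\eC$---is exactly the ``homotopy coherent horn filling'' argument the paper has in mind. But the step that is supposed to close the argument fails as stated. Already in the basic case $X=\Del^0$, the inner lifting problem against $\Horn^{n,k}\subset\Del^n$ transposes (since $\Horn^{n,k}\join\Del^0\cup\Del^n=\Horn^{n+1,k}$) to an extension along $\gC\Horn^{n+1,k}\inc\gC\Del^{n+1}$, and by Example \ref{ex:subcomputad-of-simplex}\ref{itm:inner-horn} the only missing datum is an extension along the cubical horn $\CHorn^{n,k}_1\subset\Cube^n$ in the quasi-category $\Fun_{\eC}(B_0,B)$ of maps to the nadir. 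This is a Leibniz product with the endpoint inclusion $\Del^{\fbv{1}}\subset\Del^1$, hence right anodyne but \emph{not} inner anodyne, and it has no filler in a general quasi-category: already $\CHorn^{2,1}_1\subset\Cube^2$ fails to extend over the nerve of the poset with $a\le b\le c\ge d$ and $a\not\le d$ (the extension would require an edge $a\to d$). So ``the function complexes of $\eC$ are quasi-categories'' does not suffice even for the inner-fibration part. The same misidentification occurs in your isofibration paragraph: the hom-spaces where the new data lives are the $\Fun_{\eC}(B_x,B)$ toward the nadir, which are quasi-categories, not Kan complexes, so the core condition does not make ``the relevant hom-spaces'' Kan.

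The missing idea is that the core condition of Definition \ref{defn:lax-cocone} must be exploited through \emph{invertibility of particular edges}, already for inner horns: the obstructing edges of the cubical horn are whiskered composites of arrows lying in the base, hence (as in the whiskering argument of Lemma \ref{lem:core-condition}) are sent to isomorphisms in $\Fun_{\eC}(B_x,B)$, and the filling then proceeds by Joyal's \emph{special outer} horn lifting rather than inner horn lifting; the same invertibility, now supplemented by that of the edges coming from the isomorphism being lifted, is what handles the isofibration condition. A secondary gap: your enumeration of the new beads is incomplete, since a non-degenerate simplex $(x,\sigma)$ of $X\times\Del^n$ outside $X\times\Horn^{n,k}$ need only have $\sigma$ a possibly degenerate simplex surjecting onto $[n]$ or $[n]\setminus\{k\}$, so the shuffle-type simplices of the product are missing from your list. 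You can sidestep that enumeration entirely: $X\times\Horn^{n,k}\inc X\times\Del^n$ is inner anodyne and $\gC[({-})\join\Del^0]$ preserves colimits, while the solvable extension problems form a saturated class, so it suffices to treat the single-simplex inclusions $\gC\Horn^{m,j}\inc\gC\Del^{m}$ with $0<j<m$ together with the invertibility observation above.
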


\begin{proof}
  This follows by much the same homotopy coherent horn filling argument deployed to prove Proposition \ref{prop:qcat-from-kan-enriched}. We leave the details to the reader.
\end{proof}

\subsection{Cocartesian cocones}\label{sec:cocart-cocone-def}

We now axiomatise the properties of the simplicial natural transformation
  \begin{equation*}
    \xymatrix@R=0em@C=8em{
      {\gC[\boundary\Del^n\join\Del^0]}\ar@/_2ex/[]!R(0.5);[r]_{\ell^B}\ar@/^2ex/[]!R(0.5);[r]^{\ell^E}
      \ar@{}[r]|-{\Downarrow\, p} & {\eK}
    }
  \end{equation*}
 of Proposition \ref{prop:outer-horn-extension}: in terminology we presently introduce, the hypotheses of that result ask that $p$ defines a \emph{cocartesian cocone}.

\begin{defn}[cocartesian cocones]\label{defn:cocart-cocone}
    Suppose we are given a simplicial set $X$ and lax cocones $\ell^E,\ell^B
  \colon\gC[X\join\Del^0]\to \eK$ of shape $X$ in an $\infty$-cosmos $\eK$ 
  with bases $E_\bullet$ and $B_\bullet$ respectively. Suppose also that we are given a
  simplicial natural transformation
  \begin{equation*}
    \xymatrix@R=0em@C=8em{
      {\gC[X\join\Del^0]}\ar@/_2ex/[]!R(0.5);[r]_{\ell^B}\ar@/^2ex/[]!R(0.5);[r]^{\ell^E}
      \ar@{}[r]|-{\Downarrow\, p} & {\eK.}
    }
  \end{equation*}
  Then we say that the triple $(\ell^E,\ell^B,p)$ is a \emph{cocartesian cocone} if
  and only if
  \begin{enumerate}[label=(\roman*)]
  \item\label{itm:cocart-cocone-i} the nadir of the natural transformation $p$, that being its component
    $p\colon E\tfib B$ at the object $\top$, is a cocartesian fibration,
  \item\label{itm:cocart-cocone-ii} for all $0$-simplices $x\in X$ the naturality square
    \begin{equation*}
      \xymatrix@=2em{
        {E_x}\ar@{->>}[d]_{p_x}\ar[r]^{\ell^E_x} \pbexcursion & {E}\ar@{->>}[d]^{p} \\
        {B_x}\ar[r]_{\ell^B_x} & {B}
      }
    \end{equation*}
    is a pullback, and
  \item\label{itm:cocart-cocone-iii} for all non-degenerate $1$-simplices $f\colon x\to y\in X$ the $1$-arrow
    \begin{equation*}
      \xymatrix@C=4em@R=0.75em{
        {E_x}\ar[dd]_{E_{f}}\ar[dr]^{\ell^E_x} & \\
        {}\ar@{}[r]|(0.35){\Downarrow\,\ell^E_f} & {E} \\
        {E_y}\ar[ur]_{\ell^E_y} & 
      }
    \end{equation*}
    is $p$-cocartesian.
  \end{enumerate}
  In this situation we also say that the pair $(\ell^E,p)$ defines a \emph{cocartesian
  cocone over} $\ell^B$.
\end{defn}

\begin{obs}\label{obs:cocart-cone-restriction}
Because the defining conditions for cocartesian cocones are given
  componentwise, the class of cocartesian cocones is stable under reindexing: given a cocartesian cocone $(\ell^E,\ell^B,p)$ of shape $Y$ and a simplicial map $f \colon X \to Y$, the 
  whiskered natural transformation
  \begin{equation*}
    \xymatrix@R=0em@C=8em{
      {\gC[X\join\Del^0]}\ar[r]^-{\gC[f\join\Del^0]} &
      {\gC[Y\join\Del^0]}\ar@/_2ex/[]!R(0.5);[r]_{\ell^B}\ar@/^2ex/[]!R(0.5);[r]^{\ell^E}
      \ar@{}[r]|-{\textstyle \Downarrow p} & {\eK}
    }
  \end{equation*}
  defines a cocartesian cocone of shape $X$.
\end{obs}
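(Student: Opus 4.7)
The plan is to verify the three clauses of Definition~\ref{defn:cocart-cocone} for the whiskered triple $(\ell^E\circ\gC[f\join\Del^0], \ell^B\circ\gC[f\join\Del^0], p\circ\gC[f\join\Del^0])$, after first confirming that $\ell^E\circ\gC[f\join\Del^0]$ and $\ell^B\circ\gC[f\join\Del^0]$ remain lax cocones. The preliminary check should be immediate from the observation that the restriction of $\gC[f\join\Del^0]$ to $\gC X\subseteq\gC[X\join\Del^0]$ equals $\gC f\colon\gC X\to\gC Y$, so the composites restricted to $\gC X$ become $E_\bullet\circ\gC f$ (respectively $B_\bullet\circ\gC f$), which factor through $g_*\eK$ as required.

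Clauses \ref{itm:cocart-cocone-i} and \ref{itm:cocart-cocone-ii} should then follow routinely from the fact that $\gC[f\join\Del^0]$ acts as the identity on the object $\top$ and sends each 0-simplex $x\in X$ to $f(x)\in Y$. In particular, the whiskered nadir is the original cocartesian fibration $p\colon E\tfib B$, and the whiskered naturality square at each 0-simplex $x$ coincides with the original naturality square at $f(x)$, which is a pullback by hypothesis.

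The substantive work is in clause~\ref{itm:cocart-cocone-iii}. For each non-degenerate 1-simplex $g\colon x\to y$ of $X$, one must check that $\ell^E\bigl(\gC[f\join\Del^0](\langle\hat g;1\rangle)\bigr)$ defines a $p$-cocartesian 1-arrow. I would split into cases according to whether $f(g)$ is non-degenerate in $Y$. In the non-degenerate case, $\gC[f\join\Del^0]$ maps the bead $\langle\hat g;1\rangle$ to the bead $\langle\widehat{f(g)};1\rangle$, and so its image under $\ell^E$ equals $\ell^E_{f(g)}$, which is $p$-cocartesian by the cocartesian hypothesis on the original cocone. In the degenerate case, the image of $\langle\hat g;1\rangle$ is a degenerate atomic 1-arrow in $\gC[Y\join\Del^0]$, so $\ell^E$ sends it to a degenerate 1-simplex of $\Fun_\eK(E_{f(x)},E)$, representing an identity 2-cell and hence $p$-cocartesian.

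The main potential obstacle is pinning down the image of the atomic arrow $\langle\hat g;1\rangle$ under $\gC[f\join\Del^0]$ when $f(g)$ is degenerate. This should be handled by invoking Lemma~\ref{lem:realization-in-computads} to conclude that $\gC[f\join\Del^0]$ is a functor of simplicial computads, combined with the closure of atomic arrows under simplicial degeneracies in Definition~\ref{defn:simplicial-computad}, which identifies the image as the expected degenerate atomic 1-arrow on the appropriate 0-arrow.
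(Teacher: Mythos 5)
Your argument is correct, and it is worth noting that the paper supplies no proof at all here: the statement is an Observation whose entire justification is the remark that the defining conditions of Definition~\ref{defn:cocart-cocone} are componentwise. For conditions \ref{itm:cocart-cocone-i} and \ref{itm:cocart-cocone-ii} your verification is exactly that componentwise transfer, since $\gC[f\join\Del^0]$ fixes $\top$ and sends the $0$-arrow named $\hat{x}$ to the one named $\widehat{f(x)}$, so the whiskered nadir and naturality squares are literally components of the given $Y$-shaped cocone. Where your write-up genuinely adds content is condition \ref{itm:cocart-cocone-iii}: the paper's one-liner silently elides the possibility that a non-degenerate $1$-simplex $g$ of $X$ has degenerate image $f(g)=z\cdot\degen^0$ in $Y$, in which case the whiskered $1$-arrow is not among the components covered by \ref{itm:cocart-cocone-iii} for the $Y$-shaped cocone but is instead a degenerate $1$-arrow on the leg $\ell^E_z$, and one must know that such identity $1$-arrows are $p$-cocartesian. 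That fact is standard but not stated in this paper; it follows, e.g., from Lemma~\ref{lem:qcat-cocart} via special outer horn filling when the initial edge is an isomorphism, or from Lemma~\ref{lem:2-categorical-connection} together with the cocartesianness of identity $2$-cells. One small repair: citing Lemma~\ref{lem:realization-in-computads} and the closure of atomic arrows under degeneracies tells you the image of the bead $\langle\hat{g};1\rangle$ is atomic or degenerate, but does not by itself identify \emph{which} arrow it is. The clean way to pin this down is functoriality of $\gC$: the classifying map of $(f\join\Del^0)(\hat{g})$ is $\widehat{f(g)}=\hat{z}\cdot\degen^0$, so $\gC[f\join\Del^0]$ carries $\langle\hat{g};1\rangle$ through $\gC\Del^{\degen^0}$, whose explicit action on bead shapes (Definition~\ref{defn:hocoh-n-simplex}) sends $\langle\{0,2\}\mid\{1\}\rangle$ to the degenerate $1$-arrow on $\{0,1\}$, whence the image is the degenerate $1$-arrow on the $0$-arrow named $\hat{z}$, as you claimed. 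With that adjustment your proof is complete and, if anything, more careful than the paper's.
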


\begin{lem}[pullbacks of cocartesian cocones]\label{lem:cocart-cocone-pb} Suppose given:
\begin{itemize}
\item a pullback diagram
\begin{equation}\label{eq:cocart-cocone-pb} \xymatrix{ F \ar@{->>}[d]_q \ar[r]^g \pbexcursion & E \ar@{->>}[d]^p \\ A \ar[r]_f & B}\end{equation} in which $p$ and $q$ are cocartesian fibrations;
\item a lax cocone $\ell^A \colon \gC[X\join\Del^0] \to \eK$  with nadir $A$; and
\item a cocartesian cocone $(\ell^E,\ell^B,p)$ whose nadir is $p \colon E \tfib B$ and whose codomain cocone $\ell^B = f \cdot \ell^A$ is obtained from the lax cocone $\ell^A$ by whiskering with $f\colon A \to B$ as in Observation \ref{obs:whiskering-cocone}.
\end{itemize}
Then there is a cocartesian cone $(\ell^F,\ell^A,q)$ whose codomain is $\ell^A$, whose nadir component is $q \colon F \tfib A$, and whose domain component is a lax cocone $\ell^F$ that whiskers with $g$ to the lax cone $\ell^E = g \cdot \ell^F$.
\end{lem}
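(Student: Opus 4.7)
The plan is to construct $\ell^F$ and $q$ componentwise using the simplicially-enriched universal property of the pullback $F = A\times_B E$, which for each $W\in\eK$ provides an identification
\[
  \Fun_\eK(W,F) \cong \Fun_\eK(W,A) \times_{\Fun_\eK(W,B)} \Fun_\eK(W,E).
\]

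First I would specify the base of $\ell^F$ and the base components of $q$. Since the whiskered cocone $\ell^B = f\cdot\ell^A$ shares its base $\gC X \to g_*\eK$ with $\ell^A$, I let the base of $\ell^F$ coincide with that of $\ell^E$, setting $F_x := E_x$ for each $0$-simplex $x\in X$ and $q_x := p_x\colon E_x \to A_x$. This base automatically factors through $g_*\eK$ since $\ell^E$ is itself a lax cocone. At the nadir I take $\ell^F(\top) = F$ and let the nadir component of $q$ be the given map $q\colon F\tfib A$.

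Next, for each bead $\langle\hat{x}; I_1\mid\cdots\mid I_r\rangle$ of $\gC[X\join\Del^0]$ with initial vertex $x_0$, I would define the $r$-arrow $\ell^F_{\langle\hat{x};I\rangle}\colon E_{x_0}\to F$ as the unique arrow satisfying
\[
  g\circ \ell^F_{\langle\hat{x};I\rangle} = \ell^E_{\langle\hat{x};I\rangle} \qquad\text{and}\qquad q\circ \ell^F_{\langle\hat{x};I\rangle} = \ell^A_{\langle\hat{x};I\rangle}\circ p_{x_0}.
\]
The compatibility $p\circ \ell^E_{\langle\hat{x};I\rangle} = f\circ \ell^A_{\langle\hat{x};I\rangle}\circ p_{x_0}$ required to invoke the pullback is precisely the naturality square for $p\colon \ell^E\To \ell^B = f\cdot\ell^A$ evaluated at that bead. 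Appealing to Lemma~\ref{lem:subcomputad-extension} to enumerate the atomic generating data of a simplicial functor on $\gC[X\join\Del^0]$, I would then use the uniqueness clause of the pullback to propagate the compositionality and simplicial-operator identities of $\ell^E$, $\ell^A$, and $p$ to the corresponding identities for $\ell^F$ and $q$, establishing that $\ell^F$ is indeed a simplicial functor and that $q\colon \ell^F\To \ell^A$ is a simplicial natural transformation. This routine bookkeeping is the main technical step, but it presents no real obstacle given the uniqueness part of the pullback property.

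Finally, I would verify the three clauses of Definition~\ref{defn:cocart-cocone} for the triple $(\ell^F,\ell^A,q)$. Clause~(i) is given. For clause~(ii), the naturality square of $q$ at $x\in X_0$ is a pullback by a pasting argument: placing it beside the given pullback $F = A\times_B E$ produces a rectangle whose lower cospan is $(p,\,f\circ\ell^A_x) = (p,\,\ell^B_x)$ and whose outer pullback is $E_x$ by clause~(ii) for $(\ell^E,\ell^B,p)$; hence the left square is a pullback. For clause~(iii), each edge $\ell^F_\alpha$ attached to a non-degenerate $1$-simplex $\alpha$ of $X$ satisfies $g\circ \ell^F_\alpha = \ell^E_\alpha$, which is $p$-cocartesian by hypothesis, so Proposition~\ref{prop:cart-fib-pullback} --- asserting that the pullback map $g\colon F\to E$ reflects cocartesian $1$-arrows --- delivers that $\ell^F_\alpha$ is $q$-cocartesian as required.
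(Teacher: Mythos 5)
Your proposal is correct and follows essentially the same route as the paper: both define $\ell^F$ to share the base of $\ell^E$ with nadir $F$, induce the remaining components from the universal property of the pullback (using the naturality of $p\colon\ell^E\To f\cdot\ell^A$ for compatibility), and verify conditions (ii) and (iii) of Definition~\ref{defn:cocart-cocone} via pullback cancellation and Proposition~\ref{prop:cart-fib-pullback} respectively. The only cosmetic difference is that the paper defines the components on each function complex $\Fun_{\gC[X\join\Del^0]}(x,\top)$ all at once via the enriched pullback of function complexes, whereas you build them bead-by-bead through Lemma~\ref{lem:subcomputad-extension} and then use the uniqueness clause of the pullback for functoriality and naturality.
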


Conversely, a cocartesian cocone $(\ell^F,\ell^A,q)$ with nadir component $q
\colon F \tfib A$ can be whiskered with a pullback square
\eqref{eq:cocart-cocone-pb} to define a cocartesian cocone $(g\cdot \ell^F,
f\cdot \ell^A, p)$ with nadir $p \colon E \tfib B$ and whose domain and codomain
are whiskered lax cocones as defined in Observation \ref{obs:whiskering-cocone}.

\begin{proof}
  Define the lax cocone $\ell^F \colon \gC[X\join\Del^0] \to \eK$ to agree with
  $\ell^E$ on the full subcategory $\gC{X} \inc \gC[X\join\Del^0]$, i.e., to
  have the same base diagram $E_\bullet \colon \gC{X} \to g_*\eK$, and to have
  nadir $\ell^F_\top \defeq F$. The remaining components of this simplicial
  functor are induced by the pullback diagram
  \[
    \xymatrix{ \Fun_{\gC[X\join\Del^0]}(x,\top) \ar@/^2ex/[drr]^{\ell^E}
      \ar@/_2ex/[ddr]_{\ell_A} \ar@{-->}[dr]^{\ell^F} \\ & \Fun_{\eK}(E_x,F)
      \pbexcursion \ar[r]^{g \circ -} \ar[d]_{q \circ -} \pbexcursion &
      \Fun_{\eK}(E_x,E) \ar@{->>}[d]^{p \circ -} \\ & \Fun_{\eK}(E_x,A) \ar[r]_{f
        \circ -} & \Fun_{\eK}(E_x,B)}
  \]
  The simplicial natural transformation $q$ is defined by $q_x \defeq p_x$ and
  $q_\top \defeq q$; naturality follows easily from the universal property of
  the above pullback. At low dimension we might otherwise depict this
  construction as follows:
  \begin{equation*}
    \begin{xy}
      0;<1.4cm,0cm>:<0cm,0.75cm>::
      *{\xybox{
          \POS(1,-0.5)*+{A_y}="one"
          \POS(0,1.5)*+{A_x}="two"
          \POS(2.25,0.5)*+{A}="three"
          \POS(4,0.5)*+{B}="four"
          \POS(1,4)*+{E_y}="one'"
          \POS(1,3.75)*{\pbcorner}
          \POS(0,6)*+{E_x}="two'"
          \POS(0,6)*{\pbcorner}
          \ar "two"; "one"
          \ar@/_5pt/ "one";"three"_{\ell^A_y}^(0.15){}="otm"
          \ar@/^10pt/ "two";"three"^{\ell^A_x}_(0.6){}="ttm"|(0.42){\hole}
          \ar "three";"four"_(0.5){f}
          \ar@{=>} "ttm"-<0pt,7pt> ; "otm"+<0pt,10pt> ^(0.3){\ell^A_k}
          \POS(2.25,2.5)*+{F}="three'"
          \POS(4,2.5)*+{E}="four'"
          \POS(2.25,2.5)*{\pbcorner}
          \ar "three'";"four'"_{g}
          \ar@{->>} "four'";"four"^{p}
          \ar@/_5pt/ "one'";"four'"^(0.6){\ell^E_{fx'}}|(0.42){}="otm'"
          \ar@/^15pt/ "two'";"four'"^{\ell^E_{fx}}_(0.58){}="ttm'"
          \ar@{->>} "one'";"one"_(0.325){q_y}
          \ar@{->>} "two'";"two"_{q_x}
          \ar@{->>} "three'";"three"^{q}
          \ar "two'";"one'"
          \ar@{=>} "ttm'"-<0pt,4pt> ; "otm'"+<0pt,4pt> _(0.3){\ell^E_{fk}}
          \ar@{.>}@/_6pt/ "one'";"three'"^(0.4){}="otm''"|(0.6){\ell^F_x}
          \ar@{.>}@/^12pt/ "two'";"three'"_(0.65){}="ttm''"|(0.4){\ell^F_y}
          \ar@{:>} "ttm''";"otm''"_(0.2){\ell^F_k}
        }}
    \end{xy}
  \end{equation*}
It remains to verify that the triple $(\ell^F,\ell^A,q)$ so constructed defines a cocartesian cone. Condition \ref{itm:cocart-cocone-i} of Definition \ref{defn:cocart-cocone} is clear by construction, since the pullback $q$ is a cocartesian fibration. For condition \ref{itm:cocart-cocone-ii}, the naturality squares for the simplicial natural transformation $q$ factor through the corresponding ones for $p$ via the pullback  \eqref{eq:cocart-cocone-pb} and hence are pullback squares by the cancellation property for pullbacks. Finally, property \ref{itm:cocart-cocone-iii} is a consequence of Proposition \ref{prop:cart-fib-pullback} since the relevant 1-arrow components of $\ell^F$ whisker with $g$ to the corresponding 1-arrow components of $p$, which are known to be $p$-cocartesian.
\end{proof}

\subsection{Extending cocartesian cocones}\label{sec:cocart-cone}

The outer horn extension result discussed in
Proposition~\ref{prop:outer-horn-extension} immediately provides the following
extension result for cocartesian cocones along inclusions of any shape:

\begin{prop}[extending cocartesian cocones]\label{prop:extending-cocart-cones}
  Suppose that $X$ is a simplicial subset of $Y$ and that we are given a lax
  cocone $\ell^B\colon \gC[Y\join\Del^0]\to \eK$ of shape $Y$ valued in an $\infty$-cosmos $\eK$. Assume also that
 $\ell^E\colon\gC[X\join\Del^0]\to\eK$ and $p\colon \ell^E\Rightarrow \ell^B$
  comprise a cocartesian cocone over the restriction of $\ell^B$ to a lax cocone of
  shape $X$. 
\[
    \xymatrix@R=3em@C=8em{
      {\gC[X \join \Del^0]}\ar@{^(->}[dd]
      \ar@/_2ex/[]!DR(0.5);[dr]_{\ell^B}^{}="two" \ar@/^2ex/[]!DR(0.5);[dr]^{\ell^E}_{}="one"
      \ar@{} "one";"two" \ar@{=>}?(0.25);?(0.75)^{p}\\
       & {\eK} \\
       {\gC[Y\join\Del^0]}\ar@/_2ex/[]!UR(0.5);[ru]_{\ell^B}^{}="four"
       \ar@{.>}@/^2ex/[]!UR(0.5);[ru]_{}="three"
       \ar@{} "three";"four" \ar@{:>}?(0.25);?(0.75)\\
    }
\]
Then we may extend $(\ell^E,p)$ to a cocartesian cocone of shape $Y$ over the
  unrestricted diagram $\ell^B$.
\end{prop}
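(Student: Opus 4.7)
The plan is to reduce the general extension problem to a sequence of outer horn extensions handled by Proposition~\ref{prop:outer-horn-extension} and its low-dimensional variants in Observation~\ref{obs:low-dim-horn-ext}. First, I would express $Y$ as a sequential colimit $X = X_0 \subseteq X_1 \subseteq X_2 \subseteq \cdots$, where $X_{\alpha+1}$ is built from $X_\alpha$ by attaching a single non-degenerate $m_\alpha$-simplex $y_\alpha \in Y\setminus X$ via a pushout along $\partial\Del^{m_\alpha}\inc\Del^{m_\alpha}$. Applying $\gC[(-)\join\Del^0]$ then produces a filtration of $\gC[Y\join\Del^0]$ in $\sCptd$ whose successive steps fit into pushout squares along $\gC[\partial\Del^{m_\alpha}\join\Del^0]\inc\gC[\Del^{m_\alpha}\join\Del^0]$; that these remain pushouts in $\sCptd$ can be verified directly using the bead description of atomic arrows established in Recollection~\ref{rec:cocone-notation} and Proposition~\ref{prop:gothic-C}. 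Under the canonical isomorphism $\Del^m\join\Del^0\cong\Del^{m+1}$ which identifies the cone vertex $\top$ with $m+1$, the boundary $\partial\Del^m\join\Del^0$ becomes the outer horn $\Horn^{m+1,m+1}$ (immediate from Example~\ref{ex:subcomputad-of-simplex}\ref{itm:outer-horn}), so each inductive step reduces to the outer horn extension setup of Proposition~\ref{prop:outer-horn-extension} when $m_\alpha\geq 2$ and to the setup of Observation~\ref{obs:low-dim-horn-ext} when $m_\alpha\in\{0,1\}$.

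The crux of the argument is to check that the hypotheses of Proposition~\ref{prop:outer-horn-extension} hold at every stage. The nadir $p\colon E\tfib B$ never changes during the induction, so the cocartesian fibration hypothesis follows from condition~\ref{itm:cocart-cocone-i} of Definition~\ref{defn:cocart-cocone}. The pullback requirement on the final naturality square of Proposition~\ref{prop:outer-horn-extension} translates to condition~\ref{itm:cocart-cocone-ii} at the final vertex of $y_\alpha$, which lies in $\partial\Del^{m_\alpha}\subseteq X_\alpha$ and hence is covered by the inductive hypothesis. The cocartesianness requirement on the $1$-arrows $E_{\langle k,m_\alpha+1\mid m_\alpha\rangle}$ translates, via the bead dictionary, to the $p$-cocartesianness of the components $\ell^E_g$ at certain $1$-simplices $g$ in $\partial\Del^{m_\alpha}$ (these being edges of $y_\alpha$ that are proper faces whenever $m_\alpha\geq 2$), which is ensured by condition~\ref{itm:cocart-cocone-iii} for the cocartesian cocone over $X_\alpha$. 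The universal property of the pushout then glues the extended data on $\gC[\Del^{m_\alpha}\join\Del^0]$ with the inductively given data on $\gC[X_\alpha\join\Del^0]$ to produce an extension on $\gC[X_{\alpha+1}\join\Del^0]$.

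Finally, I would verify that the extensions produced at each stage themselves satisfy conditions~\ref{itm:cocart-cocone-ii} and~\ref{itm:cocart-cocone-iii} on $X_{\alpha+1}$. New instances of these conditions only arise when $m_\alpha\in\{0,1\}$, since otherwise all $0$- and $1$-simplices of $X_{\alpha+1}$ already lie in $X_\alpha$. For $m_\alpha=0$ the construction of Observation~\ref{obs:low-dim-horn-ext}(i) defines the new fibre at the attached vertex by forming a pullback, which automatically supplies~\ref{itm:cocart-cocone-ii}. For $m_\alpha=1$ the construction of Observation~\ref{obs:low-dim-horn-ext}(ii) defines $\ell^E_{y_\alpha}$ to be a chosen $p$-cocartesian lift, directly supplying~\ref{itm:cocart-cocone-iii} for the new edge. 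I expect the main obstacle to be the careful bookkeeping required to match the bead-indexed components $\ell^E_{\langle\ldots\rangle}$ of our lax cocones with the simplex-indexed components $E_{\langle\ldots\rangle}$ appearing in Proposition~\ref{prop:outer-horn-extension}; once that translation dictionary is in hand, the inductive step is a direct application of that proposition.
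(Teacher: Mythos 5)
Your reduction of the extension problem to outer-horn extensions --- via the skeletal filtration of $X\inc Y$ and the isomorphism $\partial\Del^m\join\Del^0\subset\Del^m\join\Del^0\cong\Horn^{m+1,m+1}\subset\Del^{m+1}$ --- is exactly the paper's first step, and your verification of the hypotheses of Proposition~\ref{prop:outer-horn-extension} at each stage, together with the check of conditions \ref{itm:cocart-cocone-ii} and \ref{itm:cocart-cocone-iii} of Definition~\ref{defn:cocart-cocone} for newly attached $0$- and $1$-simplices, is sound. However, there is a genuine gap: you never verify that the extended functor $\ell^E\colon\gC[Y\join\Del^0]\to\eK$ is a \emph{lax cocone}, i.e.\ that its restriction to $\gC{Y}$ factors through the groupoidal core $g_*\eK\subseteq\eK$. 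This is a precondition built into Definition~\ref{defn:cocart-cocone} (both $\ell^E$ and $\ell^B$ must be lax cocones), and it is not delivered by Proposition~\ref{prop:outer-horn-extension}: when a non-degenerate $2$-simplex $y\in Y\setminus X$ is attached, the horn-filling step manufactures a new $1$-arrow $E_{\langle 0,2\mid 1\rangle}$ (in cocone notation, $E_y$) in the function complex $\Fun_{\eK}(E_{y_0},E_{y_2})$ between the fibres over the endpoints of $y$, and nothing in your argument shows that this arrow is invertible.

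The paper closes exactly this gap with two further results. Lemma~\ref{lem:core-condition} reduces the core-factorisation of the extended base to its $2$-dimensional faces, which in particular disposes of attachments in dimension $m\geq 3$, whose $2$-faces already lie in the previous stage and hence land in $g_*\eK$ by induction. Lemma~\ref{lem:extension-conservativity} then treats the essential case of an attached $2$-simplex: working in the corresponding $\gC\Del^3\cong\gC[\Del^2\join\Del^0]$, it uses the pullback condition \ref{itm:cocart-cocone-ii}, the composition and conservativity properties of $p$-cocartesian $1$-arrows (Lemma~\ref{lem:cocart-closures}\ref{itm:cocart-compose} and \ref{itm:cocart-conserv}), and the invertibility of the corresponding component $B_{\langle 0,2\mid 1\rangle}$ of the lax cocone $\ell^B$, to conclude that $E_{\langle 0,2\mid 1\rangle}$ is invertible. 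You would need to supply this conservativity argument (or an equivalent one) in your inductive step; with that addition your proof coincides with the paper's.
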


\begin{proof}
  A standard skeleton-wise argument reduces this result to a verification that
  it holds for each boundary inclusion $\boundary\Del^n\inc\Del^n$. Now the joined inclusion $\boundary\Del^n\join\Del^0\subset
  \Del^n\join\Del^0$ is isomorphic to $\Horn^{n+1,n+1}\subset\Del^{n+1}$ so that
  case reduces to an application of Proposition~\ref{prop:outer-horn-extension}.

  We do have one further thing left to check, this being that the restriction of
  the extended cocone $\ell^E\colon\gC[Y\join\Del^0]\to\eK$ to the simplicial
  subcategory $\gC{Y}\subset\gC[Y\join\Del^0]$ factors though the inclusion
  $g_*\eK\subseteq\eK$. By Lemma~\ref{lem:core-condition}, the following
  lemma, which demonstrates the required result in the case $Y=\Del^2$, suffices.
\end{proof}

\begin{lem}\label{lem:extension-conservativity}
  Suppose that we are given homotopy coherent simplices  $E,B\colon\gC\Del^3\to\eK$ and a
  natural transformation $p\colon E\Rightarrow B$ so that 
  \begin{itemize}
  \item the component $p_3\colon E_3\tfib B_3$ of the natural transformation $p$
    at the object $3\in\gC\Del^3$ is a cocartesian fibration,
  \item the naturality square
    \begin{equation*}
      \xymatrix@=2em{
        {E_2}\pbexcursion\ar[r]^{E_{\langle {2,3} \rangle}}\ar@{->>}[d]_{p_2} &
        {E_3}\ar@{->>}[d]^{p_3} \\
        {B_2}\ar[r]_{B_{\langle {2,3} \rangle}} & {B_3}
      }
    \end{equation*}
    is a pullback,
  \item the $1$-arrows $E_{\langle 1,3\mid 2 \rangle}, E_{\langle {0,3\mid 1}
      \rangle}$ and $E_{\langle {0,3\mid 2} \rangle}$ are $p_3$-cocartesian; and
  \item the $1$-arrow $B_{\langle {0,2\mid 1} \rangle}$ is an isomorphism in the
    function complex $\Fun_{\eK}(B_0,B_2)$.
  \end{itemize}
  Then the $1$-arrow $E_{\langle {0,2\mid 1} \rangle}$ is an isomorphism in the
  function complex $\Fun_{\eK}(E_0,E_2)$.
\end{lem}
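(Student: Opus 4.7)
The plan is to use the pullback square at $\langle 2,3\rangle$ to reduce the problem to showing that $E_{\langle 0,2,3\mid 1\rangle}$ is an isomorphism in $\Fun_\eK(E_0,E_3)$, and then to deduce this from the conservativity clause of Lemma~\ref{lem:cocart-closures}\ref{itm:cocart-conserv} applied to a judiciously chosen $2$-arrow.

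Applying the representable functor $\Fun_\eK(E_0,-)$ to the pullback square at $\langle 2,3\rangle$ produces a pullback of quasi-categories. Since the maximal Kan complex functor preserves limits, a $1$-arrow in such a pullback is an isomorphism if and only if both of its projections are. The projection of $E_{\langle 0,2\mid 1\rangle}$ into $\Fun_\eK(E_0,B_2)$ equals $B_{\langle 0,2\mid 1\rangle}\circ p_0$ by naturality, which is an isomorphism since $B_{\langle 0,2\mid 1\rangle}$ is one by hypothesis. The projection into $\Fun_\eK(E_0,E_3)$ equals $E_{\langle 2,3\rangle}\circ E_{\langle 0,2\mid 1\rangle} = E_{\langle 0,2,3\mid 1\rangle}$. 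Hence it suffices to show that $E_{\langle 0,2,3\mid 1\rangle}$ is invertible.

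I would establish this by applying Lemma~\ref{lem:cocart-closures}\ref{itm:cocart-conserv} to the $2$-simplex $E_{\langle 0,3\mid 2\mid 1\rangle}$ in $\Fun_\eK(E_0,E_3)$, whose faces are $\face^2 = E_{\langle 0,3\mid 2\rangle}$ (a $p_3$-cocartesian $1$-arrow by hypothesis), $\face^1 = E_{\langle 0,3\mid 1,2\rangle}$, and $\face^0 = E_{\langle 0,2,3\mid 1\rangle}$. Its image under $p_3$ equals $B_{\langle 2,3\rangle}\circ B_{\langle 0,2\mid 1\rangle}\circ p_0$, an isomorphism because whiskering preserves isomorphisms in a quasi-category. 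To invoke conservativity it then remains to show that the $0,2$-edge $E_{\langle 0,3\mid 1,2\rangle}$ is $p_3$-cocartesian. For this I would use the companion $2$-simplex $E_{\langle 0,3\mid 1\mid 2\rangle}$, whose $\face^2 = E_{\langle 0,3\mid 1\rangle}$ is cocartesian by hypothesis and whose $\face^0 = E_{\langle 0,1,3\mid 2\rangle}$ factors as the whiskered composite $E_{\langle 1,3\mid 2\rangle}\circ E_{\langle 0,1\rangle}$; the latter is cocartesian by the stability of cocartesian $1$-arrows under precomposition by $0$-arrows recorded in Corollary~\ref{cor:representable-cocart}. The composition clause Lemma~\ref{lem:cocart-closures}\ref{itm:cocart-compose} then forces $\face^1 = E_{\langle 0,3\mid 1,2\rangle}$ to be cocartesian, completing the chain.

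The only real subtlety is organising the combinatorial bookkeeping in $\Fun_{\gC\Del^3}(0,3)$ (which is the cube $\Del^1\times\Del^1$) so that the requisite $2$-simplices admit the described face decompositions under the conventions of \ref{ntn:compact-arrows}. Once that is arranged, each verification reduces to an immediate appeal either to the closure properties of cocartesian $1$-arrows or to the fact that post- and pre-composition in a quasi-category preserve isomorphisms.
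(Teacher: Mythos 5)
Your proposal is correct and follows essentially the same route as the paper's proof: reduce via the pullback $\Fun_{\eK}(E_0,E_2)\cong\Fun_{\eK}(E_0,E_3)\times_{\Fun_{\eK}(E_0,B_3)}\Fun_{\eK}(E_0,B_2)$ to the invertibility of $E_{\langle 0,2,3\mid 1\rangle}$, then use the two triangles $E_{\langle 0,3\mid 1\mid 2\rangle}$ and $E_{\langle 0,3\mid 2\mid 1\rangle}$ of the square $\Fun_{\gC\Del^3}(0,3)$ together with precomposition stability, Lemma~\ref{lem:cocart-closures}\ref{itm:cocart-compose}, and \ref{itm:cocart-conserv}. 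The paper compresses the verification that the solid edges of that square are $p_3$-cocartesian into one sentence; your write-up just makes those steps explicit.
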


Lemma \ref{lem:extension-conservativity} asserts that given the data
$(\ell^E,\ell^B,p)$ of a cocartesian cocone of shape $\Del^2$ where only
$\ell^B$ is assumed to define a lax cocone, then $\ell^E$ is also lax cocone.

\begin{proof} 
  On account of the pullback
  \[
    \xymatrix@=2em{
      {\Fun_{\eK}(E_0,E_2)}\pbexcursion\ar[r]^{E_{\langle {2,3} \rangle}\circ-}\ar@{->>}[d]_{p_2\circ-} &
      {\Fun_{\eK}(E_0,E_3)}\ar@{->>}[d]^{p_3\circ -} \\
      {\Fun_{\eK}(E_0,B_2)}\ar[r]_{B_{\langle {2,3} \rangle}\circ-} & {\Fun_{\eK}(E_0,B_3)}
    }
  \]
  to prove that the 1-arrow $E_{\langle{0,2\mid 1}\rangle}$ is invertible, it
  suffices to demonstrate that its two projections are isomorphisms. By simplicial
  naturality, $p_2 \circ E_{\langle{0,2\mid 1}\rangle} = B_{\langle{0,2\mid
      1}\rangle} \circ p_0$, which is an isomorphism by hypothesis. The other
  projection is the displayed dashed 1-arrow in the diagram $\Fun_{\gC\Del^3}(0,3)
  \to \Fun_{\eK}(E_0,E_3)$ of 0-,1-, and 2-arrows:
  \[
    \xymatrix{ E_{\langle 0,3\rangle} \ar[r]^{E_{\langle{0,3\mid 1}\rangle}}
      \ar[d]_{E_{\langle{0,3\mid 2}\rangle}} \ar[dr]^{E_{\langle{0,3\mid
            1,2}\rangle}} &  E_{\langle 0,1,3\rangle} \ar[d]^{E_{\langle{0,1,3\mid
            2}\rangle}} \\  E_{\langle 0,2,3\rangle}
      \ar@{-->}[r]_{E_{\langle{0,2,3\mid 1}\rangle}} &  E_{\langle 0,1,2,3\rangle}}
  \]
  By hypothesis and Lemma \ref{lem:cocart-closures}\ref{itm:cocart-compose} each
  of the solid arrows are $p_3$-cocartesian. Since $p_3 \circ E_{\langle{0,2,3\mid
      1}\rangle} = B_{\langle{0,2,3\mid 1}\rangle} \circ p_0 $, a whiskered
  composite of $B_{\langle {0,2\mid 1}\rangle}$, is an isomorphism, Lemma
  \ref{lem:cocart-closures}\ref{itm:cocart-conserv} implies that
  $E_{\langle{0,2,3\mid 1}\rangle}$ is an isomorphism; hence $E_{\langle{0,2\mid
      1}\rangle}$ is invertible as claimed.
\end{proof}

\begin{defn}[the space of cocartesian lifts of a lax cocone]\label{defn:cocart-lifts-lax}
  Fix a simplicial set $X$ and define an augmented simplicial set
  $\cocone^{\cattwo}_{\eK}(X)$, the \emph{space of cocartesian cocones of shape
    $X$ in $\eK$}, to have
  \begin{itemize}
  \item $n$-\textbf{simplices}, for $n \geq 0$, the cocartesian cocones of shape $X\times\Del^n$ in
    $\eK$, 
  \item \textbf{action} of a simplicial operator $\alpha\colon[m]\to[n]$
    given by precomposition with the simplicial functor
    \[\gC[(X\times\Del^\alpha)\join\Del^0]\colon \gC[(X\times\Del^m)\join\Del^0]
      \to \gC[(X\times\Del^n)\join\Del^0],\]
  \item $(-1)$-\textbf{simplices} cocartesian fibrations $E \tfib B$ in $\eK$ with
    action of the unique operator $[-1] \to [n]$ given by carrying a cocartesian
    cocone to its component at $\top$.
  \end{itemize}
  The augmented simplicial set $\cocone^{\cattwo}_{\eK}(X)$ decomposes as a
  coproduct of terminally augmented simplicial sets $\cocone^\cattwo_{\eK}(X)
  \cong \coprod_p \cocone^\cattwo_{\eK}(X)_p$ indexed by the cocartesian
  fibrations $p \colon E \tfib B$ appearing as the nadir components.
\end{defn}
  
Projection of a cocartesian cocone $(\ell^E,\ell^B,p)$ onto the lax cocone
$\ell^B$ over which it lives defines a simplicial map
$\cod\colon\cocone^{\cattwo}_{\eK}(X)\to\qop{cocone}_{\eK}(X)$, and an inclusion
of simplicial sets $i \colon X \inc Y$ gives rise to a commutative square
\begin{equation*}
  \xymatrix@R=2em@C=4em{
    {\cocone^{\cattwo}_{\eK}(Y)}\ar[r]^{\cocone^{\cattwo}_{\eK}(i)}\ar[d]_{\cod} &
    {\cocone^{\cattwo}_{\eK}(X)}\ar[d]^{\cod} \\
    {\cocone_{\eK}(Y)}\ar[r]_{\cocone_{\eK}(i)} & {\cocone_{\eK}(X)}
  }
\end{equation*}
of simplicial maps, whose horizontals are given by precomposition as in
Observation~\ref{obs:cocart-cone-restriction} and whose verticals are the
codomain projections of Definition \ref{defn:cocart-lifts-lax}.
  
\begin{thm}\label{thm:cocart-lifts-lax}$\quad$
  \begin{enumerate}[label=(\roman*)]
  \item\label{itm:triv-fib-relative} For any inclusion of simplicial sets $i
    \colon X \inc Y$, the induced
    map \begin{equation}\label{eq:uniqueness-trivial-fibration}
      \xymatrix@R=0em@C=5em{ {\cocone^{\cattwo}_{\eK}(Y)}\ar[r] &
        {\cocone^{\cattwo}_{\eK}(X)\times_{\cocone_{\eK}(X)}{\cocone_{\eK}(Y)}}
      }
    \end{equation}
    is a trivial fibration of augmented simplicial sets.
  \item\label{itm:triv-fib} In particular, taking $X=\emptyset$, the fibre
    $\cocone^\cattwo_{\eK}(Y)_{\langle\ell^B,p\rangle}$ of
    \eqref{eq:uniqueness-trivial-fibration} over a fixed lax cocone $\ell^B$ of
    shape $Y$ and fixed cocartesian fibration $p \colon E \tfib B$ over its
    base, is a contractible Kan complex.
  \end{enumerate}
\end{thm}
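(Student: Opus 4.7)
The plan is to deduce (ii) from (i) by general principles, and to prove (i) by translating each lifting problem into an instance of the extension result of Proposition~\ref{prop:extending-cocart-cones}.

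For part (i), I first dispose of the $(-1)$-dimensional lifting condition by observing that at dimension $-1$ the map in \eqref{eq:uniqueness-trivial-fibration} is a bijection: a $(-1)$-simplex on either side amounts to the data of a cocartesian fibration $p\colon E\tfib B$ in $\eK$, the pullback side pairing it with its own base. It remains to verify the right lifting property with respect to the boundary inclusions $\partial\Del^n\inc \Del^n$ for $n\geq 0$. Via Definitions~\ref{defn:lax-cocone-sset} and \ref{defn:cocart-lifts-lax}, together with the observation that $\cocone_{\eK}(-)$ and $\cocone^{\cattwo}_\eK(-)$ convert the relevant colimit gluings of shape simplicial sets into limits of simplex sets, such a commutative square
\[
\xymatrix@=1.8em{
\partial\Del^n \ar[r] \ar@{^(->}[d] & \cocone^\cattwo_\eK(Y) \ar[d] \\
\Del^n \ar[r] & \cocone^\cattwo_\eK(X) \times_{\cocone_\eK(X)} \cocone_\eK(Y)
}
\]
unpacks into the data of a lax cocone $\ell^B$ of shape $Y\times\Del^n$ together with a cocartesian cocone over the restriction of $\ell^B$ to the simplicial subset $(X\times\Del^n)\cup(Y\times\partial\Del^n) \inc Y\times\Del^n$. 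A diagonal filler is then precisely an extension of this cocartesian cocone to one of shape $Y\times\Del^n$ over the full lax cocone $\ell^B$, supplied by Proposition~\ref{prop:extending-cocart-cones} applied to this inclusion.

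For part (ii), specialising (i) to $X = \emptyset$ yields a trivial fibration
\[ \cocone^\cattwo_\eK(Y) \longrightarrow \cocone^\cattwo_\eK(\emptyset) \times_{\cocone_\eK(\emptyset)} \cocone_\eK(Y). \]
Because a cocone of empty shape carries no data beyond its nadir, the augmented simplicial sets $\cocone^\cattwo_\eK(\emptyset)$ and $\cocone_\eK(\emptyset)$ are constant at every dimension, on the cocartesian fibrations and the objects of $\eK$ respectively. A vertex of the codomain of the above map is therefore precisely a pair $(\ell^B, p)$ comprising a lax cocone $\ell^B$ of shape $Y$ together with a cocartesian fibration $p\colon E\tfib B$ over its nadir. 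Restricting the trivial fibration over such a vertex gives a trivial Kan fibration onto a point, exhibiting the fibre $\cocone^\cattwo_\eK(Y)_{\langle \ell^B, p\rangle}$ as a contractible Kan complex.

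The substantive content is Proposition~\ref{prop:extending-cocart-cones}; the main obstacle here is the bookkeeping required to translate the lifting problem into an extension problem of the correct form. Care is needed to verify that the simplicial subset $(X\times\Del^n)\cup(Y\times\partial\Del^n)\subset Y\times\Del^n$ faithfully encodes all of the ``boundary'' data specified by the square---both the pullback compatibility with the cocartesian cocone of shape $X\times\Del^n$ and the codomain compatibility with $\ell^B$ on $Y\times\partial\Del^n$---and that solving the resulting extension problem via Proposition~\ref{prop:extending-cocart-cones} genuinely produces a diagonal filler of the original square.
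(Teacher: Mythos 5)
Your proposal is correct and follows essentially the same route as the paper: bijectivity on $(-1)$-simplices, then reduction of each boundary-filling problem to an application of Proposition~\ref{prop:extending-cocart-cones} along the Leibniz inclusion $(X\times\Del^n)\cup(Y\times\partial\Del^n)\inc Y\times\Del^n$, with (ii) deduced from (i) by taking $X=\emptyset$. The only cosmetic difference is that you treat all $n\geq 0$ uniformly, whereas the paper isolates $n=0$ (where the inclusion is just $X\inc Y$) and handles $n\geq 1$ via maps out of terminally augmented simplicial sets, which is exactly your ``colimit gluings to limits'' observation.
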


We refer to the fibre $\cocone^\cattwo_{\eK}(Y)_{\langle\ell^B,p\rangle}$
described in \ref{itm:triv-fib} as the \emph{space of $p$-cocartesian lifts} of
the lax cocone $\ell_B \colon \gC[Y\join\Del^0] \to \eK$.

\begin{rec}
  Herein a map $p\colon X\to Y$ of augmented simplicial sets is said to be a
  \emph{trivial fibration} if and only if it has the right lifting property with
  respect to all monomorphisms between augmented simplicial sets. Indeed, it is
  enough for it to enjoy the right lifting property with respect to all
  boundary inclusions $\boundary\Del^n\inc\Del^n$ for $n\geq -1$. Equivalently,
  $p$ is a trivial fibration if and only if it acts surjectively on
  $(-1)$-simplices and for each each $(-1)$-simplex $a\in X$ the restricted map of
  components $p_a\colon X_a\to Y_{p(a)}$ is a trivial fibration of simplicial
  sets. 
\end{rec}

\begin{proof}(of Theorem~\ref{itm:triv-fib-relative}) Statement
  \ref{itm:triv-fib} follows from \ref{itm:triv-fib-relative} by specialising to
  the inclusion $\emptyset\inc Y$. To verify
  statement~\ref{itm:triv-fib-relative}, we must show that the map
  in~\eqref{eq:uniqueness-trivial-fibration} acts enjoys the right lifting
  property with respect to the boundary inclusions of the standard simplices at
  dimension $n\geq -1$.

  For any simplicial set $X$, the $(-1)$-simplices of $\cocone_{\eK}(X)$
  correspond bijectively to objects of $\eK$ and those of
  $\cocone^\cattwo_{\eK}(X)$ stand in bijection to the cocartesian fibrations of
  $\eK$. It follows immediately that the map
  in~\eqref{eq:uniqueness-trivial-fibration} acts bijectively on sets of
  $(-1)$-simplices, so it possesses the right lifting property with respect
  to the boundary $\emptyset = \boundary\Del^{-1}\inc\Del^{-1}$.
  
  In the case $n=0$, the lifting property against the inclusion $\Del^{-1}\cong
  \partial\Del^0\inc\Del^0$ is exactly the assertion of
  Proposition~\ref{prop:extending-cocart-cones}: that for any lax cocone $\ell^B
  \colon \gC[Y\join\Del^0] \to \eK$ restricting to define a cocartesian cocone
  of shape $X$ with nadir $p \colon E \tfib B$, this data extends to a
  cocartesian cocone of shape $Y$.

  For $n \geq 1$, we might as well consider a general inclusion of terminally
  augmented simplicial sets $j \colon U\inc V$. Since $U$ and $V$ are terminally
  augmented, maps $U \to \qop{cocone}^{\cattwo}_{\eK}(Y)$ and $V \to
  \qop{cocone}_{\eK}(Y)$ stand in bijection with vertices of
  $\qop{cocone}^{\cattwo}_{\eK}(Y \times U)$ and $\qop{cocone}_{\eK}(Y \times
  V)$ respectively. Applying Proposition~\ref{prop:extending-cocart-cones} to
  the Leibniz product inclusion
  \[i\leib\times j \colon (X\times V)\cup(Y\times U)\inc Y\times V,\]
  we see that the desired extension exists.
\end{proof}

We will repeatedly apply the uniqueness result of Theorem \ref{thm:cocart-lifts-lax} to conclude that functors defined as the bases of lax cocones that define the domain component of an extended cocartesian cocone are unique:

\begin{cor}[uniqueness of cocartesian lifts]\label{cor:unique-cocart-lifts}
Let $i\colon X\inc Y$ be an inclusion of simplicial sets and
  suppose that we are given a lax cocone $\ell^B\colon\gC[Y\join\Del^0]\to\eK$
  of shape $Y$ and a cocartesian cocone $p\colon\ell^E\Rightarrow \ell^B$ of shape
  $X$ over the restriction of $\ell^B$.
  \[
    \xymatrix@R=3em@C=8em{
      {\gC[X \join \Del^0]}\ar@{^(->}[dd]
      \ar@/_2ex/[]!DR(0.5);[dr]_{\ell^B}^{}="two" \ar@/^2ex/[]!DR(0.5);[dr]^{\ell^E}_{}="one"
      \ar@{} "one";"two" \ar@{=>}?(0.25);?(0.75)^{p}\\
      & {\eK} \\
      {\gC[Y\join\Del^0]}\ar@/_2ex/[]!UR(0.5);[ru]_{\ell^B}^{}="four"
      \ar@{.>}@/^2ex/[]!UR(0.5);[ru]_{}="three"
      \ar@{} "three";"four" \ar@{:>}?(0.25);?(0.75)\\
    }
  \]
  Then any pair of functors $\bar{e}, \hat{e}\colon Y\to \qK$ that define the
  bases of cocartesian cocones of shape $Y$ over $\ell^B$ that extend the given
  cocartesian cocone of shape $X$ as in the displayed diagram are isomorphic as
  objects of the functor quasi-category $\qK^Y$. Moreover, this isomorphism lies
  in the fibre of $\qK^Y\tfib\qK^X$ over base $e \colon X \to \qK$ of the domain
  of the given $X$-shaped lax cocone $\ell^E$.
\end{cor}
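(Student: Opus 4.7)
The plan is to apply Theorem \ref{thm:cocart-lifts-lax}\ref{itm:triv-fib-relative} directly to the given inclusion $i \colon X \inc Y$, which guarantees that
\[
    \cocone^{\cattwo}_{\eK}(Y) \longrightarrow \cocone^{\cattwo}_{\eK}(X) \times_{\cocone_{\eK}(X)} \cocone_{\eK}(Y)
\]
is a trivial fibration of augmented simplicial sets. The hypothesised data---the $X$-shaped cocartesian cocone $(\ell^E,p)$ paired with the $Y$-shaped lax cocone $\ell^B$---determines a vertex $v$ of the codomain, and the two stipulated extensions appear as vertices of the fibre $F$ over $v$. Because a trivial fibration has the right lifting property against every boundary inclusion $\partial\Del^n \inc \Del^n$ for $n\geq 0$, this fibre $F$ is automatically a contractible Kan complex, and so there exists a $1$-simplex $\chi$ in $F$ with endpoints the two chosen extensions.

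The next step is to verify that the operation sending an $n$-simplex of $\cocone^{\cattwo}_{\eK}(Y)$---a cocartesian cocone of shape $Y \times \Del^n$---to the transpose under $\gC \dashv \nrvhc$ of its base $\gC(Y \times \Del^n) \to g_*\eK$ defines a simplicial map $F \to \qK^Y$ carrying $\chi$ to a $1$-simplex with endpoints $\bar{e}$ and $\hat{e}$. This is well-defined precisely because the base of a lax cocone factors through $g_*\eK$ by Definition \ref{defn:lax-cocone}, and $\qK = \nrvhc(g_*\eK)$ by Notation \ref{ntn:qcat-from-cosmos}. To promote this $1$-simplex to an isomorphism of $\qK^Y$, I would invoke the standard fact that $\Del^1 \inc \iso$ is a trivial cofibration in the Kan--Quillen model structure on $\sSet$: since $F$ is a Kan complex, $\chi$ extends to a map $\iso \to F$, whose image under the base-transposition map provides the witness $\iso \to \qK^Y$ of $\bar{e} \cong \hat{e}$ via Joyal's criterion for isomorphisms in a quasi-category.

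The fibre condition is then immediate: by construction, the pullback projection carries every simplex of $F$ to a constantly-degenerate simplex on the fixed cocartesian cocone $(\ell^E,p)$ of shape $X$, so after base-transposition---which intertwines with $i^* \colon \qK^Y \tfib \qK^X$---the $\iso$-extension restricts along $X$ to the constant $\iso$-simplex at $e \colon X \to \qK$, placing the constructed isomorphism in the desired fibre of $i^*$. The principal step is the first one, namely recognising the two candidate extensions as vertices of a contractible Kan fibre of the trivial fibration supplied by Theorem \ref{thm:cocart-lifts-lax}\ref{itm:triv-fib-relative}; the remaining arguments are essentially combinatorial translation between cocones, their bases, and the functor quasi-category $\qK^Y$.
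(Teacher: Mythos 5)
Your proposal is correct and follows essentially the same route as the paper's proof: both identify the two extensions as vertices of the contractible Kan complex fibre of the trivial fibration from Theorem \ref{thm:cocart-lifts-lax}\ref{itm:triv-fib-relative}, connect them by an (automatically invertible) edge, and project via the domain and base-transposition maps to obtain an isomorphism $\bar{e}\cong\hat{e}$ in $\qK^Y$ lying over the degenerate edge at $e$ in $\qK^X$. Your explicit use of $\Del^1\inc\iso$ to witness invertibility is merely a formalisation of the paper's remark that edges in a Kan complex are isomorphisms, not a different argument.
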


 In other words, we may regard $\bar{e}$
  and $\hat{e}$ as maps in the simplicial slice category
  $\prescript{X/}{}{\SSet}$ between the objects $i\colon X\inc Y$ and $e\colon
  X\to\qK$, wherein they are related by an isomorphism in the function complex
  between those two objects.

\begin{proof}
Together, the lax cocone $\ell^B\colon\gC[Y\join\Del^0]\to\eK$ of shape $Y$ and cocartesian cocone $p\colon\ell^E\Rightarrow \ell^B$ of shape  $X$ over the restriction of $\ell^B$ determine a vertex of the
  codomain of \eqref{eq:uniqueness-trivial-fibration}. The vertices of the fibre over that point
  are extensions of $p\colon \ell^E\Rightarrow \ell^B$ to a cocartesian cocone
  of shape $Y$ over $\ell^B$. That fibre is a contractible Kan complex, since
  the map in~\eqref{eq:uniqueness-trivial-fibration} is a trivial fibration, so it follows that any two
  such extensions $\bar{p}\colon\bar{\ell}^E\Rightarrow\ell^B$ and
  $\hat{p}\colon\hat{\ell}^E\Rightarrow\ell^B$ are related by an isomorphism
  which restricts to an identity on $X$.

  Projecting these cocartesian cocones to their domain lax cocones and then to their bases, we obtain diagrams $\bar{e},
  \hat{e}\colon Y\to \qK$ which both extend the diagram $e\colon X\to\qK$ and an
  isomorphism $\bar{e}\cong\hat{e}$ in the fibre of the restriction isofibration
  $\qK^Y\tfib\qK^X$ over the vertex $e$ as claimed.
  \end{proof}



\section{The comprehension construction}\label{sec:comprehension}

For any fixed cocartesian fibration $p\colon E\tfib B$ in an $\infty$-cosmos
$\eK$, the \emph{comprehension construction\/} produces a functor $c_p$ from the
underlying quasi-category of $B$ to the quasi-category of $\infty$-categories
and $\infty$-functors in $\eK$.

The comprehension functor $c_p$ extends pullback in the sense that it maps an
element $b\colon 1\to B$, a vertex in the underlying quasi-category of $B$, to
the corresponding fibre $E_b$ of $p\colon E\tfib B$, the $\infty$-category of
$\eK$ formed by taking the pullback:
\begin{equation*}
  \xymatrix@=2em{
    {E_b}\pbexcursion\ar@{->>}[d]\ar[r]^{\ell_{\hat{b}}} &
    {E}\ar@{->>}[d]^{p} \\
    {1}\ar[r]_{b} & {B}
  }
\end{equation*}
Its action on 1-arrows $f\colon a\to b$ is defined by lifting $f$ to a
$p$-cocartesian 1-arrow as displayed in the diagram
\begin{equation*}
  \begin{xy}
    0;<1.4cm,0cm>:<0cm,0.75cm>::
    *{\xybox{
        \POS(1,0)*+{1}="one"
        \POS(0,1)*+{1}="two"
        \POS(3,0.5)*+{B}="three"
        \ar@{=} "one";"two"
        \ar@/_5pt/ "one";"three"_{b}^(0.1){}="otm"
        \ar@/^10pt/ "two";"three"^{a}_(0.5){}="ttm"|(0.325){\hole}
        \ar@{=>} "ttm"-<0pt,7pt> ; "otm"+<0pt,10pt> ^(0.3){f}
        \POS(1,2.5)*+{E_{b}}="one'"
        \POS(1,2.5)*{\pbcorner}
        \POS(0,3.5)*+{E_{a}}="two'"
        \POS(0,3.6)*{\pbcorner}
        \POS(3,3)*+{E}="three'"
        \ar@/_5pt/ "one'";"three'"_{\ell_{b}}
        \ar@/^10pt/ "two'";"three'"^{\ell_{a}}_(0.55){}="ttm'"
        \ar@{->>} "one'";"one"
        \ar@{->>} "two'";"two"
        \ar@{->>} "three'";"three"^{p}
        \ar@{..>} "two'";"one'"_*!/^2pt/{\scriptstyle E_f}
        \ar@{..>}@/_10pt/ "two'";"three'"^(0.44){}="otm'"
        \ar@{=>} "ttm'"-<0pt,4pt> ; "otm'"+<0pt,4pt> ^(0.3){\ell_{f}}
      }}
  \end{xy}
\end{equation*}
and then factoring its codomain to obtain the requisite $\infty$-functor $E_f
\colon E_a \to E_b$ between the fibres over $a$ and $b$.

The mechanics of the comprehension construction involve extensions of
\emph{cocartesian cocones} valued in an $\infty$-cosmos, as discussed in
\S\ref{sec:cocones}. The comprehension construction itself is given in
\S\ref{sec:comprehension-finally} first in the form described above and then,
in Theorem \ref{thm:general-comprehension}, in the more general form of a
functor
\[
  \xymatrix@R=0em@C=6em{
    {\Fun_{\eK}(A,B)}\ar[r]^{c_{p,A}} & {\coCart(\qK)_{/A}}
  }
\]
from a function complex in $\eK$ to the quasi-category of cocartesian fibrations
and cartesian functors over $A$.

As explicated in Remark \ref{rmk:unstraightening}, the comprehension functor is
a quasi-categorical analogue of the unstraightening functor of Lurie
\cite{Lurie:2009fk}, with his universal (co)cartesian fibration of
quasi-categories replaced by a generic (co)cartesian fibration in any
$\infty$-cosmos. Interestingly, our comprehension functor can also be
interpreted as defining the action-on-objects of Lurie's straightening functor,
as we explain in Remark \ref{rmk:straightening}.

In \S\ref{sec:yoneda}, we then  apply the comprehension construction of Theorem \ref{thm:general-comprehension} in order to construct the covariant and contravariant Yoneda embeddings. These functors will be studied further in \S\ref{sec:computing}.

\subsection{The comprehension construction}\label{sec:comprehension-finally}

Assume, for this section, that $\eK$ is a fixed $\infty$-cosmos. Recall from
\ref{ntn:qcat-from-cosmos} that we shall use $\qK$ to denote the homotopy
coherent nerve of the Kan complex enriched core
$g_*\eK$.

\begin{defn}[underlying quasi-category of an
  $\infty$-category]\label{defn:underlying-qcat} For any $\infty$-cosmos $\eK$,
  there is functor of $\infty$-cosmoi $\Fun_{\eK}(1,-) \colon \eK \to \qCat$.
  When $B$ is an object in our $\infty$-cosmos $\eK$, it is natural to think of
  the function complex $\Fun_{\eK}(1,B)$ as being the \emph{underlying
    quasi-category}\footnote{In the $\infty$-cosmoi of complete Segal spaces,
    Segal categories, or naturally marked simplicial sets, the underlying
    quasi-categories defined in this manner are the usual ones; cf.~Examples
    \refIV{ex:CSS-cosmos}, \refIV{ex:segal-cosmos}, \refIV{ex:marked-cosmos}.}
  of $B$, which we shall denote by $\qB$.
\end{defn}

Suppose now that $p\colon E\tfib B$ is a cocartesian fibration in $\eK$. The
\emph{comprehension construction\/} defines a functor $c_p\colon\qB\to\qK$ from
the underlying quasi-category of $\qB$ to the quasi-category $\qK$ associated
with our ambient $\infty$-cosmos, which we call the \emph{comprehension
  functor\/} associated with $p$. It maps a vertex $b\colon 1\to B$ of $\qB$ to
the corresponding fibre $E_b$ of $p\colon E\to B$. Our extension result for
cocartesian cocones, Proposition~\ref{prop:extending-cocart-cones}, provides
precisely the tool we need to extend these constructions to the higher simplices
of $\qB$. We proceed to describe that process formally, beginning by
constructing the lax cocone over which the cocartesian cocone whose base defines
the comprehension functor lives.
  
\begin{defn}\label{defn:cattwo-fun-adjunction}
  Given a simplicial set $U$, let $\cattwo[U]$ denote the simplicial
  category with objects $-$ and $+$ and a single non-trivial function complex
  $\Fun_{\cattwo[U]}(-,+)\defeq U$. This construction is clearly functorial,
  providing us with a functor
  $\cattwo[-]\colon\sSet\to\prescript{\catone+\catone/}{}{\sCat}$ which maps $U$
  to the object $\langle {-,+} \rangle\colon\catone+\catone\to\cattwo[U]$ and
  which preserves all (small) colimits. Indeed, there exists an adjunction
  \begin{equation}\label{eq:cattwo-fun-adjunction}
    \xymatrix@R=0em@C=6em{
      \prescript{\catone+\catone/}{}{\sCat} \ar@/_2ex/[]!R(0.5);[r]_-{\Fun}
      \ar@{}[r]|-{\textstyle\bot} & {\sSet}\ar@/_2ex/[l]!R(0.5)_-{\cattwo[-]}
    }
  \end{equation}
  whose right adjoint carries an object $\langle A,B
  \rangle\colon\catone+\catone\to\eC$ to the function complex $\Fun_{\eC}(A,B)$.
  The component $k_{A,B}\colon\cattwo[\Fun_{\eC}(A,B)]\to\eC$ of its counit at
  the object $\langle A,B \rangle\colon\catone+\catone\to\eC$ is the manifest
  simplicial functor which maps $-\mapsto A$ and $+\mapsto B$ and which acts
  identically on $\Fun_{\cattwo[\Fun_{\eC}(A,B)]}(-,+)=\Fun_{\eC}(A,B)$.
\end{defn}

\begin{obs}\label{obs:lax-to-suspension}
  The functors $\cattwo[-]$ and $\gC[-\join\Del^0]$ may be
  regarded as having codomain $\prescript{\catone/}{}{\sCptd}$ by mapping each
  simplicial set $U$ to the pointed computads $+\colon\catone\to\cattwo[U]$ and
  $\top\colon\catone\to\gC[U\join\Del^0]$ respectively. The functor
  $\gC[-\join\Del^0]\colon\sSet\to\prescript{\catone/}{}{\sCptd}$ preserves all small colimits and is the left
  Kan extension of its restriction to the full subcategory of representables.
  It follows that we may construct a comparison natural transformation
  \begin{equation}\label{eq:t-defn}
    \xymatrix@R=0em@C=10em{
      {\sSet}\ar@/^2ex/[r]!L(0.6)^-{\gC[-\join\Del^0]}
      \ar@/_2ex/[r]!L(0.6)_-{\cattwo[-]}
      \ar@{}[r]|-{\Downarrow t}
      & {\prescript{\catone/}{}{\sCptd}}
    }
  \end{equation}
  by specifying it on representables and left Kan extending to all simplicial sets.
  
  To that end let $\cattwo[n]$ denote the locally ordered category whose objects
  are named $-$ and $+$ and whose only non-trivial hom-space is
  $\Hom_{\cattwo[n]}(-,+) \defeq[n]$. By taking nerves of its hom-spaces, we
  obtain the simplicial category $\cattwo[\Del^n]$, this being the \emph{generic
    $n$-arrow} in the sense that the $n$-arrows in a simplicial category $\eC$
  correspond bijectively to simplicial functors $\cattwo[\Del^n]\to\eC$. This notation is consistent with that just introduced in Definition \ref{defn:cattwo-fun-adjunction}.
  
  Now we
  may define a locally ordered comparison functor $t^n\colon\oSimp^{n+1}\to
  \cattwo[n]$ which:
  \begin{itemize}
  \item maps the objects $0,1,...,n\in\oSimp^{n+1}$ to the object
    $-\in\cattwo[n]$ and the object $n+1$ to the object $+\in\cattwo[n]$,
  \item maps the unique atomic arrow in $\Hom_{\oSimp}(i,j)$ for $0
    \leq i < j \leq n$ to the identity on $-$, and
  \item maps the unique atomic arrow in $\Hom_{\oSimp}(i,n+1)$ for
    $0\leq i \leq n$ to the arrow $i$ in $\Hom_{\cattwo[n]}(-,+)=[n]$.
  \end{itemize}
  This functor maps an arbitrary arrow $T$ in $\Hom_{\oSimp}(i,n+1)$ to the
  arrow $\max(T\setminus\{{n+1}\})$ in $\Hom_{\cattwo[n]}(-,+)=[n]$, so it
  clearly preserves the ordering on arrows. Hence, upon applying the nerve
  construction to hom-spaces we obtain simplicial computad morphisms $t^n \colon
  \gC[\Del^{n+1}] \to \cattwo[\Del^n]$. Moreover, these functors are natural in
  $[n]\in\Del$, so we obtain a natural transformation
  \begin{equation*}
    \xymatrix@R=0em@C=10em{
      {\Del}\ar@/^2ex/[r]!L(0.6)^-{\gC[\Del^{\bullet}\join\Del^0]}
      \ar@/_2ex/[r]!L(0.6)_-{\cattwo[\Del^\bullet]}
      \ar@{}[r]|-{\Downarrow t}
      & {\prescript{\catone/}{}{\sCptd}}
    }
  \end{equation*}
  which we may Kan extend to give the comparison depicted in~\eqref{eq:t-defn}.
\end{obs}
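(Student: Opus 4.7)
The observation packages three assertions: (a) the functor $\gC[-\join\Del^0]\colon\sSet\to\prescript{\catone/}{}{\sCptd}$ is cocontinuous; (b) it is the pointwise left Kan extension along the Yoneda embedding of its restriction to $\Del$; and (c) a natural transformation $t\colon \gC[-\join\Del^0]\To\cattwo[-]$ exists, obtained by left Kan extending a transformation first specified on representables. I would address these in that order.

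For (a), my plan is to express $\gC[-\join\Del^0]$ as the composite of the join functor $-\join\Del^0\colon\sSet\to\sSet_{\Del^0/}$ with homotopy coherent realisation $\gC\colon\sSet_{\Del^0/}\to\sCptd_{\catone/}=\prescript{\catone/}{}{\sCptd}$. The first functor is a left adjoint to the slice-over-a-vertex construction (a standard fact about simplicial joins), hence preserves all small colimits. The second is cocontinuous because $\gC\colon\sSet\to\sCptd$ is (it is defined as a pointwise left Kan extension along the Yoneda embedding in the proof of Lemma \ref{lem:realization-in-computads}), and a cocontinuous functor between categories with terminal objects induces a cocontinuous functor between the corresponding under-categories, since colimits in $\prescript{\catone/}{}{\sCptd}$ are computed in $\sCptd$ with the unique compatible structure map from $\catone$. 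Part (b) is then formal: any cocontinuous functor out of a presheaf category is canonically isomorphic to the pointwise left Kan extension of its restriction to the representables.

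For (c), I would first build the restricted natural transformation $t^\bullet\colon\gC[\Del^{\bullet+1}]\To\cattwo[\Del^\bullet]$ over $\Del$ and then left Kan extend. The components $t^n\colon\oSimp^{n+1}\to\cattwo[n]$ are specified exactly as in the statement; verifying that these are locally ordered functors reduces to the identity $\max((T_1\cup T_2)\setminus\{n+1\})=\max(T_2\setminus\{n+1\})$ whenever $n+1\in T_2$ (for composition) together with monotonicity of the max operation (for the order on hom-sets). Naturality in $\alpha\colon[m]\to[n]\in\Del$ amounts to the commutation $\max(\alpha(T)\setminus\{n+1\})=\alpha(\max(T\setminus\{m+1\}))$ whenever $\alpha(m+1)=n+1$, which follows from monotonicity of $\alpha$. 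Applying hom-wise nerves produces functors of simplicial computads, by precisely the atomic-arrow analysis used in Lemma \ref{lem:realization-in-computads}. Cocontinuity of $\cattwo[-]$ was noted in Definition~\ref{defn:cattwo-fun-adjunction}, so together with (b) the two functors between which $t$ should live are both cocontinuous; left Kan extension of the restricted natural transformation along the Yoneda embedding $\Del\inc\sSet$ then produces the required $t$.

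The main obstacle is bookkeeping rather than substance: one must carefully handle the under-category structure when passing from absolute cocontinuity of $\gC$ to its sliced variant, and confirm that the Kan extended $t$ is uniquely characterised by its restriction to $\Del$. Neither step involves any homotopical subtlety, and the explicit formulas at the representable level make all the naturality and functoriality checks routine calculations with subsets of $[0,n+1]$.
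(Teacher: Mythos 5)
Your construction is essentially the paper's own argument: the explicit locally ordered functors $t^n\colon\oSimp^{n+1}\to\cattwo[n]$ with the $\max(T\setminus\{n+1\})$ description, the hom-wise nerve producing computad functors $\gC\Del^{n+1}\to\cattwo[\Del^n]$, naturality in $[n]\in\Del$, and left Kan extension along the Yoneda embedding are exactly the steps taken there, with your factorisation of $\gC[-\join\Del^0]$ through the join and the under-category supplying a cocontinuity detail the paper merely asserts. One small caveat: colimits in $\prescript{\catone/}{}{\sCptd}$ are not literally computed in $\sCptd$ (the initial object is $\id_{\catone}$, not the empty computad, and similarly for coproducts), but the conclusion you actually need---that a cocontinuous functor induces a cocontinuous functor between under-categories---still holds, since the induced functor preserves the initial object and connected colimits, so your argument goes through.
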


\begin{lem}\label{lem:constant-lax-cones}
  Given objects $A,B$ in a simplicial category $\eC$, a simplicial map $f\colon
  X\to\Fun_{\eC}(A,B)$ gives rise to a lax cocone
  \[
    \gC[X\join\Del^0] \xrightarrow{\ell^{f}} \eC
  \]
  of shape $X$, whose base is the constant functor at $A$ and whose nadir is
  $B$.
\end{lem}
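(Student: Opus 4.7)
The plan is to combine the adjunction $\cattwo[-]\dashv\Fun$ of \eqref{eq:cattwo-fun-adjunction} with the natural transformation $t$ of \eqref{eq:t-defn}. Transposing the given simplicial map $f\colon X\to\Fun_{\eC}(A,B)$ under this adjunction yields a simplicial functor $\hat f\colon\cattwo[X]\to\eC$, uniquely determined by the properties that it sends $-\mapsto A$, $+\mapsto B$, and acts as $f$ on the sole non-trivial function complex $\Fun_{\cattwo[X]}(-,+)=X$. I then set
\[ \ell^f \defeq \hat f\circ t_X\colon\gC[X\join\Del^0]\longrightarrow\cattwo[X]\longrightarrow\eC, \]
so the task reduces to identifying the base and nadir of $\ell^f$ and to verifying the factorisation condition of Definition~\ref{defn:lax-cocone}.

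Unpacking Observation~\ref{obs:lax-to-suspension}, the locally ordered functor $t^n\colon\oSimp^{n+1}\to\cattwo[n]$ collapses the full subposet on the objects $0,1,\ldots,n$ (which correspond to $\gC\Del^n\subset\gC\Del^{n+1}$ under the identification $\Del^{n+1}\cong\Del^n\join\Del^0$) onto the object $-$, and sends $n+1$ to $+$. Since $t$ on a general simplicial set is defined by left Kan extension of its restriction to the representables, this property is inherited componentwise by $t_X$: the subcomputad $\gC{X}\subset\gC[X\join\Del^0]$ is carried entirely to the constant functor at $-\in\cattwo[X]$, while the cocone vertex $\top$ maps to $+$. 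Post-composing with $\hat f$ then identifies the nadir as $\ell^f_\top=B$ and the base $\ell^f|_{\gC{X}}$ as the constant simplicial functor at $A$.

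Being constant, the base trivially factors through any simplicial subcategory of $\eC$ containing $A$, in particular through $g_*\eC\subseteq\eC$, so $\ell^f$ qualifies as a lax cocone of shape $X$. The only mild obstacle is bookkeeping: one needs to confirm that the behaviour of $t$ established on the representable pieces $\Del^n$ really does transfer to an arbitrary $X$, but this is automatic from the construction of $t$ as a Kan-extended natural transformation together with the cocontinuity of both $\gC[-\join\Del^0]$ and $\cattwo[-]$ out of $\sSet$.
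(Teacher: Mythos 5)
Your proof is correct and follows essentially the same route as the paper's: transpose $f$ under the adjunction $\cattwo[-]\dashv\Fun$ to obtain $\hat f\colon\cattwo[X]\to\eC$ and precompose with the comparison $t^X\colon\gC[X\join\Del^0]\to\cattwo[X]$ of Observation~\ref{obs:lax-to-suspension}, noting that the base is constant at $A$ (hence factors through $g_*\eC$) and the nadir is $B$. Your extra bookkeeping about how the collapsing behaviour of $t^n$ on representables transfers to general $X$ via cocontinuity is a detail the paper leaves implicit, but it is accurate.
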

\begin{proof}
  Under the adjunction depicted in~\eqref{eq:cattwo-fun-adjunction}, simplicial
  functors $\hat{f}\colon\cattwo[X]\to\eC$ with $\hat{f}(-)=A$ and
  $\hat{f}(+)=B$ stand in bijective correspondence to simplicial maps $f\colon
  X\to\Fun_{\eC}(A,B)$. Composing with the simplicial natural transformation
  defined in Observation \ref{obs:lax-to-suspension}, we obtain a diagram
  \[
    \gC[X\join\Del^0]\stackrel{t^{X}}\longrightarrow
    \cattwo[X]\stackrel{\hat{f}}\longrightarrow \eC
  \]
  of shape $X$, whose base $\gC{X} \to \eC$ is the constant functor at $A$ and
  whose nadir is $B$. The constant functor clearly factors through the
  subcategory $g_*\eC\subset\eC$ and thus this construction defines a lax
  cocone.
\end{proof}

We are now in a position to provide a fully formal description of the promised
comprehension construction:

\begin{defn}[the comprehension construction]\label{defn:basic-comprehension}
  Suppose that $p\colon E\tfib B$ is a cocartesian fibration in our
  $\infty$-cosmos $\eK$. We may apply the construction of
  Lemma~\ref{lem:constant-lax-cones} to the identity map
  $\id_{\qB}\colon\qB\to\Fun_{\eK}(1,B)$ so as to define a lax cocone
  \begin{equation}\label{eq:comprehension-lax-cocone}
    \ell^B \defeq \gC[\qB\join\Del^0] \xrightarrow{t^{\qB}}
    \cattwo[\qB] \xrightarrow{k^B} \eK
  \end{equation}
  of shape $\qB$, whose base is the constant functor at $1$ and whose nadir is
  $B$. By definition the map $k^B$ here is the adjoint transpose of the identity
  $\id_{\qB}$, that is the counit of the
  adjunction~\eqref{eq:cattwo-fun-adjunction} at the object $\langle {1,B}
  \rangle\colon \catone+\catone\to\eK$.

  Applying Proposition~\ref{prop:extending-cocart-cones} to the inclusion
  $\emptyset\inc\qB$, we may extend this information to construct a lax cocone
  $\ell^E\colon \gC[\qB\join\Del^0]\to\eK$ with nadir $E$ and a simplicial
  natural transformation $p\colon \ell^E\to \ell^B$ whose component at $\top$ is
  the specified cocartesian fibration $p\colon E\tfib B$, these comprising a
  cocartesian cocone over $\ell^B$. The \emph{comprehension functor\/}
  $c_p\colon\qB\to\qK$ associated with $p\colon E\tfib B$ is defined to be the
  adjoint transpose of the base $E_\bullet\colon\gC\qB\to g^*\eK$ of the
  cocartesian cocone $\ell^E$ under the adjunction $\gC\dashv\hN$.
\end{defn}


\begin{obs}[comprehension functors are essentially unique]\label{obs:unique-comprehension}
Definition \ref{defn:basic-comprehension} refers to \emph{the\/} comprehension functor
  $c_p\colon\qB\to\qK$ associated with a cocartesian fibration $p\colon E\tfib
  B$ in $\eK$ without discussing the sense in which this construction is unique.
  
  To rectify this omission, consider a situation in which $(\ell^E,p)$ and
  $(\bar\ell^E,\bar{p})$ both define cocartesian cocones over the lax cocone $\ell^B$
\eqref{eq:comprehension-lax-cocone}. These are vertices in the space of $p$-cocartesian cocones over
  $\ell^B$, which is a contractible Kan complex by Theorem \ref{thm:cocart-lifts-lax}\ref{itm:triv-fib}, 
 and
  so they are connected by an (homotopically unique) isomorphism in there. It
  follows, as in Corollary \ref{cor:unique-cocart-lifts}, that we may project this isomorphism
  of defining cocartesian cocones along the domain and diagram functors
  \[ \qop{cocone}_{\eK}^\cattwo(\qB) \xrightarrow{\dom} \qop{cocone}_{\eK}(\qB) \xrightarrow{\mathrm{diag}} \qK^{\qB}\] 
  to provide an isomorphism between the
  associated comprehension functors $c_p,\bar{c}_p\colon \qB\to\qK$ in
  $\qK^{\qB}$. This result demonstrates the precise sense in which the
  comprehension functor associated to $p$ is essentially unique.
\end{obs}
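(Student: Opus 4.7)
My plan is to derive the uniqueness claim from the contractibility result of Theorem \ref{thm:cocart-lifts-lax}\ref{itm:triv-fib}, essentially specialising Corollary \ref{cor:unique-cocart-lifts} to the inclusion $\emptyset \inc \qB$ with the lax cocone $\ell^B$ of \eqref{eq:comprehension-lax-cocone}.

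First I would verify that $(\ell^E, p)$ and $(\bar\ell^E, \bar p)$ both sit as vertices in the fibre $\cocone^\cattwo_{\eK}(\qB)_{\langle \ell^B, p \rangle}$ of the trivial fibration \eqref{eq:uniqueness-trivial-fibration}. The two cocartesian cocones share the codomain $\ell^B$ by hypothesis, and the component of either natural transformation at the object $\top$ is pinned down to be the fixed cocartesian fibration $p \colon E \tfib B$, since $\ell^B_\top = B$ and the cocartesian fibration $p$ was chosen at the outset. Theorem \ref{thm:cocart-lifts-lax}\ref{itm:triv-fib} then identifies this fibre as a contractible Kan complex, so there exists a 1-simplex joining these two vertices, unique up to a contractible space of higher simplices.

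Next I would convert this 1-simplex into an isomorphism $c_p \cong \bar c_p$ in $\qK^{\qB}$. By Definition \ref{defn:cocart-lifts-lax}, a 1-simplex in $\cocone^\cattwo_{\eK}(\qB)$ is itself a cocartesian cocone of shape $\qB \times \Del^1$; in our situation its codomain lax cocone is the cylinder on $\ell^B$ and its nadir is $p$. Following the recipe of Definition \ref{defn:basic-comprehension}, I would compose the domain projection $\dom \colon \cocone^\cattwo_{\eK}(\qB) \to \cocone_{\eK}(\qB)$ with the base map and then with the adjoint transpose under $\gC \dashv \hN$ to produce a map $\qB \times \Del^1 \to \qK$, equivalently a 1-simplex $c_p \to \bar c_p$ in $\qK^{\qB}$.

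The step warranting genuine care is confirming that this projected 1-simplex is an isomorphism in $\qK^{\qB}$ rather than an arbitrary edge. Here the key input is that we began in a Kan complex: the bases of all the lax cocones involved factor through the groupoidal core $g_*\eK$ by Definition \ref{defn:lax-cocone}, so their transposes land in $\qK$ and the projected 1-simplex in $\qK^{\qB}$ sits in a locus where every edge is invertible. Contractibility of the fibre then upgrades existence to the statement that the space of isomorphisms relating $c_p$ and $\bar c_p$ is itself contractible, which is the precise form of essential uniqueness sought. I anticipate no serious obstacle in this argument, the real content having already been packaged into Theorem \ref{thm:cocart-lifts-lax}.
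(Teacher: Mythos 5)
Your overall strategy coincides with the paper's: both cocartesian cocones are vertices of the fibre $\cocone^{\cattwo}_{\eK}(\qB)_{\langle\ell^B,p\rangle}$ of the trivial fibration \eqref{eq:uniqueness-trivial-fibration}, Theorem \ref{thm:cocart-lifts-lax}\ref{itm:triv-fib} identifies that fibre as a contractible Kan complex, and the connecting edge is pushed forward along the domain and diagram projections to an edge $c_p \to \bar{c}_p$ of $\qK^{\qB}$, exactly as in Corollary \ref{cor:unique-cocart-lifts}.

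However, the justification you give for the one step you yourself flag as delicate --- invertibility of the projected edge --- does not work. The fact that the bases of lax cocones factor through the groupoidal core $g_*\eK$ does not place the projected edge ``in a locus where every edge is invertible'': $\qK = \hN(g_*\eK)$ is a quasi-category, not a Kan complex, its edges being arbitrary $0$-arrows (functors) of $\eK$; and an edge of $\qK^{\qB}$ arising from a general $1$-simplex of $\cocone^{\cattwo}_{\eK}(\qB)$ --- i.e.\ from an arbitrary cocartesian cocone of shape $\qB\times\Del^1$ --- has components $E_b \to \bar{E}_b$ that are arbitrary functors, typically not equivalences. What actually forces invertibility is that your connecting edge lies in the fibre over the single vertex $(\ell^B,p)$, and that fibre is a Kan complex by Theorem \ref{thm:cocart-lifts-lax}\ref{itm:triv-fib}: every edge of a Kan complex is an isomorphism there, and the composite simplicial map from the fibre into the quasi-category $\qK^{\qB}$ (fibre inclusion followed by $\dom$ and $\mathrm{diag}$) transports the $2$-simplices witnessing invertibility, so the image edge is an isomorphism in $\qK^{\qB}$. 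With that substitution --- invertibility coming from Kan-ness of the fibre rather than from the bases landing in $g_*\eK$ --- your argument agrees with the paper's, and the contractibility statement you close with then gives the homotopical uniqueness of the comparison isomorphism.
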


\begin{prop}[comprehension and change of universe]\label{prop:comprehension-cou}
  Suppose that $G\colon\eK\to\eL$ is a functor $\infty$-cosmoi, that $p\colon
  E\tfib B$ is a cocartesian fibration in $\eK$, and that $q\colon F\tfib C$
is the cocartesian fibration in $\eL$ obtained by applying $G$ to $p$.
  Then there exists an invertible 1-arrow
  \begin{equation*}
    \xymatrix@=2em{
      {\qB}\ar[r]^{c_p}\ar[d]_{g}\ar@{}[dr]|{\textstyle\cong} & {\qK}\ar[d]^{G} \\
      {\qC}\ar[r]_{c_{q}} & {\qL}
    }
  \end{equation*}
  in $\qCat$, where the horizontal maps are the comprehension
  functors associated with $p$ and $q$, the left-hand vertical is the map
  \begin{equation*}
    \xymatrix@R=0em@C=6em{
   g \colon    {\qB = \Fun_{\eK}(1,B)}\ar[r]^-{G_{1,B}} &
      {\Fun_{\eL}(G1,GB)\cong\Fun_{\eL}(1,C) = \qC}
    }
  \end{equation*}
  and the right-hand vertical is constructed by applying the homotopy coherent
  nerve functor to $G\colon\eK\to\eL$. 
\end{prop}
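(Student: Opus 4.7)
The plan is to exhibit both $G \circ c_p$ and $c_{q} \circ g$ as (transposes of) the bases of two cocartesian cocones in $\eL$ that live over a common lax cocone with the same nadir cocartesian fibration, and then invoke the essential uniqueness result of Corollary \ref{cor:unique-cocart-lifts}.

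First I would check the following compatibility of lax cocones. The lax cocone $\ell^B \colon \gC[\qB \join \Del^0] \to \eK$ of Definition \ref{defn:basic-comprehension} is defined to be the composite $k^B \circ t^{\qB}$, which under the adjunction $\cattwo[-] \dashv \Fun$ of Definition \ref{defn:cattwo-fun-adjunction} corresponds to $\id_{\qB}$. Postcomposing with the simplicial functor $G \colon \eK \to \eL$ yields a lax cocone $G \circ \ell^B$ in $\eL$ of shape $\qB$ whose base is constant at $G1 \cong 1$ and whose nadir is $GB = C$. By naturality of the counit of $\cattwo[-] \dashv \Fun$ in its codomain, and the naturality of $t$ described in Observation \ref{obs:lax-to-suspension}, this composite $G \circ \ell^B$ coincides with the reindexing of the canonical lax cocone $\ell^C$ (built from $\id_{\qC}$) along the functor $g \colon \qB \to \qC$, in the sense of Observation \ref{obs:cocart-cone-restriction}.

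Second, I would produce two cocartesian cocones over this common lax cocone. Applying $G$ pointwise to the data $(\ell^E, \ell^B, p)$ produced in Definition \ref{defn:basic-comprehension} yields a triple $(G\ell^E, G\ell^B, Gp)$: condition \ref{itm:cocart-cocone-i} of Definition \ref{defn:cocart-cocone} holds because Lemma \ref{lem:functor-preservation} tells us that $Gp = q$ is a cocartesian fibration; condition \ref{itm:cocart-cocone-ii} holds because functors of $\infty$-cosmoi preserve pullbacks; and condition \ref{itm:cocart-cocone-iii} holds by Lemma \ref{lem:functor-preservation-reprise}, which asserts that $G$ preserves cocartesian 1-arrows. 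The transpose of the base of $G\ell^E$ across $\gC \dashv \hN$ is by construction the composite $G \circ c_p \colon \qB \to \qL$. On the other hand, starting from the comprehension cocartesian cocone $(\ell^F, \ell^C, q)$ for $q$ in $\eL$ and reindexing along $g \colon \qB \to \qC$ via Observation \ref{obs:cocart-cone-restriction} produces a cocartesian cocone of shape $\qB$ over $g^*\ell^C$, whose base transposes to $c_{q} \circ g$.

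Third, since by step one these two cocartesian cocones live over the same lax cocone $G\ell^B = g^* \ell^C$ and share the same nadir cocartesian fibration $q = Gp$, Theorem \ref{thm:cocart-lifts-lax}\ref{itm:triv-fib} tells us that they are two vertices in a contractible Kan complex $\cocone^{\cattwo}_{\eL}(\qB)_{\langle G\ell^B, q \rangle}$ of $q$-cocartesian lifts. Projecting an (essentially unique) isomorphism between them along the domain functor and the base functor, as in the proof of Corollary \ref{cor:unique-cocart-lifts}, yields the required invertible 1-arrow $G \circ c_p \cong c_{q} \circ g$ in the function quasi-category $\qL^{\qB}$, and hence in $\qCat$.

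The main obstacle is the bookkeeping in the first step: verifying that $G\ell^B$ agrees on the nose with the reindexing $g^*\ell^C$ of the canonical lax cocone for $q$. This amounts to tracking the two layers of adjunction (the comparison $t$ of Observation \ref{obs:lax-to-suspension} and the counit $k$ of \eqref{eq:cattwo-fun-adjunction}) through the simplicial functor $G$, and using that $G$ commutes with $\Fun_{(-)}(1,-)$ up to the canonical identification $G1 \cong 1$. Once this matching of lax cocones is in place, the remainder of the proof is purely formal, relying only on the preservation properties of functors of $\infty$-cosmoi already established in Lemmas \ref{lem:functor-preservation} and \ref{lem:functor-preservation-reprise} together with the uniqueness theorem for cocartesian lifts.
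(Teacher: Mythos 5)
Your proposal is correct and follows essentially the same route as the paper's proof: both composites are realised as (transposes of bases of) cocartesian cocones over the common lax cocone $G\circ\ell^B = \ell^C\circ\gC[g\join\Del^0]$ with nadir $q = Gp$, the identification of codomain cocones resting on the naturality of $t$ and of the counit of $\cattwo[-]\dashv\Fun$, and the conclusion following from Theorem \ref{thm:cocart-lifts-lax} and Corollary \ref{cor:unique-cocart-lifts}. The only cosmetic difference is that you verify componentwise that $(G\ell^E, G\ell^B, Gp)$ is a cocartesian cocone where the paper cites Lemmas \ref{lem:functor-preservation} and \ref{lem:functor-preservation-reprise} directly; the content is the same.
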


\begin{proof}
  The diagram $c_q \circ g$ is the transpose of the base of the restricted
  cocartesian cocone
  \begin{equation*}
    \xymatrix@R=0em@C=8em{
      {\gC[\qB\join\Del^0]}\ar[r]^-{\gC[g\join\Del^0]} &
      {\gC[\qC\join\Del^0]}\ar@/_2ex/[]!R(0.5);[r]_{\ell^C}\ar@/^2ex/[]!R(0.5);[r]^{\ell^F}
      \ar@{}[r]|-{\textstyle \Downarrow q} & {\eL}
    }
  \end{equation*}
  Similarly, the diagram $G \circ c_p$ is the transpose of the basis of the
  whiskered simplicial natural transformation
  \begin{equation*}
    \xymatrix@R=0em@C=8em{
      {\gC[\qB\join\Del^0]}\ar@/_2ex/[]!R(0.5);[r]_{\ell^B}\ar@/^2ex/[]!R(0.5);[r]^{\ell^E}
      \ar@{}[r]|-{\textstyle \Downarrow p} & {\eK} \ar[r]^-G & \eL
    }
  \end{equation*}
  which is also a cocartesian cocone. This latter fact follows directly from
  Lemmas~\ref{lem:functor-preservation}
  and~\ref{lem:functor-preservation-reprise} which imply that postcomposition
  with a functor of $\infty$-cosmoi preserves cocartesian cocones.
  
  The codomain cocones of these two cartesian cocones are the composites on
  the outside of the following diagram:
  \[
    \xymatrix@R=2em@C=4em{
      \gC[\qB \star \Del^0] \ar[r]_-{t^{\qB}} \ar[d]_{\gC[g \join\Del^0]}
      \ar@/^2.5ex/[]!R(0.75);[rr]^-{\ell^B}&
      \cattwo[\qB] \ar[d]_{\cattwo[g]}\ar[r]_-{k^B} &
      \eK \ar[d]^G \\
      \gC[\qC\join\Del^0] \ar[r]^-{t^{\qC}}\ar@/_2.5ex/_-{\ell^C}[]!R(0.75);[rr] &
      \cattwo[\qC] \ar[r]^-{k^C} & \eL}
  \]
  Here the left-hand square commutes by naturality of the transformation $t$
  defined in Observation \ref{obs:lax-to-suspension} while the right-hand square
  commutes by the naturality of the counit of the adjunction
  in~\eqref{eq:cattwo-fun-adjunction}.

  In this way we see that the cartesian cocones depicted above both have the
  same codomain cocone $G\circ\ell^B = \ell^C\circ\gC[g\join\Del^0]$ and nadir
  $q\colon \qF\tfib \qC$. Consequently, each of these is a vertex in the space
  $\qop{cocone}_{\eL}^{\cattwo}(\qB)_{\langle G\circ\ell^B,q\rangle}$ of
  $q$-cocartesian lifts of $G\circ\ell^B$. By Theorem
  \ref{thm:cocart-lifts-lax}\ref{itm:triv-fib}, it follows that these are
  isomorphic and Corollary \ref{cor:unique-cocart-lifts} explains that this
  isomorphism projects along the domain and diagram functors to define an
  isomorphism from the transpose of the base of the former to the transpose of
  the base of the latter, that is from $c_q \circ g$ to $G \circ c_p$ as
  required.
\end{proof}

\begin{prop}[comprehension and change of base]\label{prop:comprehension-cob}
  Suppose that we are given a pullback
  \begin{equation*}
    \xymatrix@=2em{
      {F}\pbexcursion\ar@{->>}[d]_{q}\ar[r]^g &
      {E}\ar@{->>}[d]^{p} \\
      {A}\ar[r]_{f} & {B}
    }
  \end{equation*}
  in the $\infty$-cosmos $\eK$ in which $p$ and thus $q$ are cocartesian
  fibrations. Then there exists an isomorphic $1$-arrow
  \begin{equation*}
    \xymatrix@C=1em@R=2em{
      {\qA}\ar[rr]^f\ar[dr]_-{c_q} & \ar@{}[d]|(0.4){\textstyle\cong} &
      {\qB}\ar[dl]^-{c_p} \\
      & {\qK} &
    }
  \end{equation*}
  in $\qCat$.
\end{prop}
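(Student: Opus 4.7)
The strategy parallels the proof of Proposition \ref{prop:comprehension-cou}: assemble two cocartesian cocones of shape $\qA$ with the same nadir $p \colon E \tfib B$ and the same codomain lax cocone, then invoke the contractibility result of Theorem \ref{thm:cocart-lifts-lax}\ref{itm:triv-fib} and the projection argument of Corollary \ref{cor:unique-cocart-lifts} to produce the advertised isomorphism. To this end, recall that $c_p$ arises as the transpose of the base of a cocartesian cocone $(\ell^E, \ell^B, p)$ over the canonical lax cocone $\ell^B = k^B \circ t^{\qB}$ defined in Definition \ref{defn:basic-comprehension}, and $c_q$ arises analogously from a cocartesian cocone $(\ell^F, \ell^A, q)$ over $\ell^A = k^A \circ t^{\qA}$. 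Throughout, we use $f$ also for the induced map $f_\ast \colon \qA \to \qB$ on underlying quasi-categories.

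First, the composite $c_p \circ f$ is the transpose of the base of the restricted cocartesian cocone obtained by precomposing $(\ell^E, \ell^B, p)$ with $\gC[f \join \Del^0]$, as in Observation~\ref{obs:cocart-cone-restriction}; its codomain cocone is $\ell^B \circ \gC[f \join \Del^0]$. By the naturality square of $t$ from Observation \ref{obs:lax-to-suspension} and the fact that the counit $k^B$ of the adjunction \eqref{eq:cattwo-fun-adjunction} is natural, this restricted cocone factors as $k^B \circ \cattwo[f] \circ t^{\qA}$, which is precisely the lax cocone produced by Lemma~\ref{lem:constant-lax-cones} applied to $f \colon \qA \to \Fun_{\eK}(1,B)$. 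On the other hand, the whiskered lax cocone $f \cdot \ell^A$ of Observation \ref{obs:whiskering-cocone}, obtained by postcomposing the components of $\ell^A$ from $x$ to $\top$ with $f \colon A \to B$, has the same description, since whiskering $\ell^A$ by $f$ transports its defining transpose $\id_{\qA} \colon \qA \to \Fun_{\eK}(1,A)$ to $f_\ast \colon \qA \to \Fun_{\eK}(1,B)$. Hence $f \cdot \ell^A = \ell^B \circ \gC[f \join \Del^0]$ as lax cocones of shape $\qA$ with nadir $B$.

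Second, Lemma~\ref{lem:cocart-cocone-pb} (in its converse form, described immediately after its statement) applied to the given pullback square whiskers $(\ell^F, \ell^A, q)$ to produce a cocartesian cocone $(g \cdot \ell^F, f \cdot \ell^A, p)$ of shape $\qA$ with nadir $p$, whose codomain is $f \cdot \ell^A$ and whose base diagram $\qA \to g_\ast \eK$ coincides with that of $\ell^F$ and so transposes to $c_q$. We therefore have two cocartesian cocones of shape $\qA$, namely this one and the restriction of $(\ell^E, \ell^B, p)$ along $\gC[f \join \Del^0]$, both with nadir $p$ and both over the common codomain cocone $f \cdot \ell^A$. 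By Theorem \ref{thm:cocart-lifts-lax}\ref{itm:triv-fib}, the space of $p$-cocartesian lifts of $f \cdot \ell^A$ is a contractible Kan complex, so these two cocartesian cocones are connected by a (homotopically unique) isomorphism there. Projecting this isomorphism along the domain and diagram maps, exactly as in Corollary~\ref{cor:unique-cocart-lifts} and Observation \ref{obs:unique-comprehension}, yields the desired invertible $1$-arrow $c_q \cong c_p \circ f$ in $\qK^{\qA}$.

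The main obstacle is the identification $f \cdot \ell^A = \ell^B \circ \gC[f \join \Del^0]$, which demands careful bookkeeping with the adjunction $\cattwo[-] \dashv \Fun$ and the naturality of $t$; once this equality is in hand the rest of the argument is a formal consequence of the uniqueness of cocartesian lifts and the pullback stability recorded in Lemma~\ref{lem:cocart-cocone-pb}.
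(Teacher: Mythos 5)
Your proof is correct, and it follows the same overall strategy as the paper's argument --- identify the two codomain lax cocones and then invoke the uniqueness of cocartesian lifts --- but it makes the comparison on the opposite side of the pullback square. The paper takes the restricted cocone (base transposing to $c_p\circ f$, nadir $p$, codomain $f\cdot\ell^A = \ell^B\circ\gC[f\join\Del^0]$) and \emph{pulls it back} along the square using the main statement of Lemma \ref{lem:cocart-cocone-pb}, producing a second $q$-cocartesian lift $(\bar\ell^F,\ell^A,\bar q)$ of $\ell^A$ with the same base, and then compares it with $(\ell^F,\ell^A,q)$ in the space of $q$-cocartesian lifts of $\ell^A$. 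You instead \emph{whisker} $(\ell^F,\ell^A,q)$ with the pullback square, using the converse assertion recorded immediately after Lemma \ref{lem:cocart-cocone-pb}, and compare the resulting $p$-cocartesian lift $(g\cdot\ell^F,f\cdot\ell^A,p)$ with the restricted cocone in the space of $p$-cocartesian lifts of $f\cdot\ell^A$, via Theorem \ref{thm:cocart-lifts-lax}\ref{itm:triv-fib} and Corollary \ref{cor:unique-cocart-lifts}. Both routes hinge on the identification $f\cdot\ell^A=\ell^B\circ\gC[f\join\Del^0]$, which you establish correctly through the naturality of $t$ and of the counit of $\cattwo[-]\dashv\Fun$. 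The one point of cost in your version is that it leans on the converse direction of Lemma \ref{lem:cocart-cocone-pb}, which the paper asserts but does not prove in print (the displayed proof covers only the pullback direction); it is straightforward --- conditions (i) and (ii) of Definition \ref{defn:cocart-cocone} follow by pasting the componentwise pullbacks with the given square, and condition (iii) from the fact that $g$ preserves cocartesian $1$-arrows by Proposition \ref{prop:cart-fib-pullback} --- but you should include that sentence if you take this route, whereas the paper's variant uses only the explicitly proved direction. (Also a trivial slip: the base diagram of $\ell^F$ is a simplicial functor $\gC\qA\to g_*\eK$, not a map $\qA\to g_*\eK$; it is its transpose under $\gC\dashv\hN$ that is $c_q$.)
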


\begin{proof}
  We argue that $c_q$ and $c_p \circ f$ transpose to define simplicial functors
  $\gC\qA \to \eK$ that each serve as the base for the domain cocones of a pair
  of cocartesian cocones with a common nadir $q \colon F \tfib A$ and common
  codomain cocone. Corollary \ref{cor:unique-cocart-lifts} then implies that the
  functors $c_q$ and $c_p \circ f$ are isomorphic in $\qK^{\qA}$.

  The functor $c_q$ is defined as the transpose of the base domain component of
  a cocartesian cocone $(\ell^F,\ell^A,q)$ with nadir $q \colon F \tfib A$,
  and whose codomain cocone is the composite
  \[
    \ell^A \colon \gC[\qA \star \Del^0]\xrightarrow{t^{\qA}}
    \cattwo[\qA] \xrightarrow{k^A} \eK
  \]
  with base the constant functor at $1$ and nadir $A$. Similarly, the functor
  $c_p$ is defined as the transpose of the base domain component of a
  cocartesian cocone $(\ell^E,\ell^B,p)$ with nadir $p \colon E \tfib B$, and
  whose codomain cocone is the composite
  \[
    \ell^B \colon \gC[\qB \star \Del^0] \xrightarrow{t^{\qB}}
    \cattwo[\qB] \xrightarrow{k^B} \eK
  \]
  with base the constant functor at $1$ and nadir $B$.

  The functor of $\infty$-categories $f \colon A \to B$ induces a functor of
  underlying quasi-categories $f \colon \qA \to \qB$ making the diagram of
  simplicial categories commute
  \[ \xymatrix@R=1em{
      \gC[\qA \star \Del^0] \ar[r]^-{t^{\qA}} \ar[dd]_{\gC[f \join\Del^0]} &
      \cattwo[\qA] \ar[dd]_{\cattwo[f]}\ar[drr]^{k^A} \\
      & \ar@{}[rr]|-{\Downarrow f \circ -} & & \eK \\
      \gC[\qB\join\Del^0] \ar[r]_-{t^{\qB}} & \cattwo[\qB] \ar[urr]_{k^B} }
  \]
  up to a simplicial natural transformation in the right-hand triangle defined
  by ``whiskering with $f$''. From this diagram we see, by Observations
  \ref{obs:whiskering-cocone} and \ref{obs:cocart-cone-restriction}, that the
  cocartesian cocone $(\ell^E,\ell^B, p)$ restricts along $\gC[f\join\Del^0]
  \colon \gC[\qA\join\Del^0] \to \gC[\qB\join\Del^0]$ to define a cocartesian
  cocone of shape $\qA$ whose codomain is the lax cocone whose base is the
  constant functor at $1$ and whose nadir is $B$, and this lax cocone is the
  whiskered composite of the lax cocone whose base is the constant functor at
  $1$ and whose nadir is $A$ with $f \colon A \to B$.

  Now Lemma \ref{lem:cocart-cocone-pb} tells us that the cocartesian cocone
  \begin{equation*}
    \xymatrix@R=0em@C=8em{
      {\gC[\qA\join\Del^0]}\ar[r]^-{\gC[f\join\Del^0]} &
      {\gC[\qB\join\Del^0]}\ar@/_2ex/[]!R(0.5);[r]_{\ell^B}\ar@/^2ex/[]!R(0.5);[r]^{\ell^E}
      \ar@{}[r]|-{\Downarrow\, p} & {\eK}
    }
  \end{equation*}
  pulls back along $f$ to define a cocartesian cocone
  $(\bar{\ell}^F,\ell^A,\bar{q})$ with codomain $\ell^A$ and nadir $q$. The
  cocartesian cocones $(\ell^F,\ell^A,q)$ and $(\bar{\ell}^F,\ell^A,\bar{q})$
  each define vertices in the space of $q$-cocartesian lifts of the lax cocone
  $\ell^A$. By Corollary \ref{cor:unique-cocart-lifts}, there is then an
  isomorphism from the transpose of the base of $\ell^F$ to the transpose of the
  base of $\bar{\ell}^F$, that is from $c_q$ to $c_p \circ f$.
\end{proof}

By interpreting the construction of Definition \ref{defn:basic-comprehension} in
a sliced $\infty$-cosmos $\eK_{/A}$, we immediately obtain a generalised
comprehension functor whose domain is a generic function complex
$\Fun_{\eK}(A,B)$. We introduce the following notation for its codomain:

\begin{ntn}
  Extending the notational conventions established in
  Notation~\ref{ntn:qcat-from-cosmos}, we adopt the following nomenclature for
  homotopy coherent nerves of various slices of the $\infty$-cosmos $\eK$. There
  are quasi-categorically enriched subcategories
  \[
    \coCart(\eK)_{/A}\inc \eK_{/A}\hookleftarrow \Cart(\eK)_{/A}
  \]
  spanned by the cocartesian and cartesian fibrations over $A$, respectively,
  and the cartesian functors between them.\footnote{In a sequel, we will prove
    that $\coCart(\eK)_{/A}$ and $\Cart(\eK)_{/A}$ are in fact themselves
    $\infty$-cosmoi, but we shall not need this fact here.} Let:
  \begin{itemize}
  \item $\qK_{/A}$ denote the quasi-category $\hN(g_*(\eK_{/A}))$,
  \item $\coCart(\qK)_{/A}$ denote the quasi-category $\hN(g_*(\coCart(\eK)_{/A}))$,
    and
  \item $\Cart(\qK)_{/A}$ denote the quasi-category $\hN(g_*(\Cart(\eK)_{/A}))$.
  \end{itemize}
\end{ntn}

Given a cocartesian fibration $p\colon E\tfib B$ in $\eK$ and an arbitrary
$\infty$-category $A$, we may generalise the comprehension construction to
provide a comprehension functor of the following form:

\begin{thm}\label{thm:general-comprehension}For any cocartesian fibration $p
  \colon E \tfib B$ in an $\infty$-cosmos $\eK$ and any $\infty$-category $A$,
  there is a functor
  \[
    \xymatrix@R=0em@C=6em{
      {\Fun_{\eK}(A,B)}\ar[r]^{c_{p,A}} & {\coCart(\qK)_{/A}}
    }
  \]
  defined on 0-arrows by mapping a functor $a \colon A \to B$ to the pullback:
  \[
    \xymatrix{
      E_a \ar@{->>}[d]_{p_a} \ar[r]^-{\ell^E_a} \pbexcursion &
      E \ar@{->>}[d]^p \\
      A \ar[r]_a & B}
  \]
  Its action on 1-arrows $f\colon a\to b$ is defined by lifting $f$ to a $p$-cocartesian 1-arrow as displayed in the diagram
  \begin{equation*}
    \begin{xy}
      0;<1.4cm,0cm>:<0cm,0.75cm>::
      *{\xybox{
          \POS(1,0)*+{A}="one"
          \POS(0,1)*+{A}="two"
          \POS(3,0.5)*+{B}="three"
          \ar@{=} "one";"two"
          \ar@/_5pt/ "one";"three"_{b}^(0.1){}="otm"
          \ar@/^10pt/ "two";"three"^{a}_(0.5){}="ttm"|(0.325){\hole}
          \ar@{=>} "ttm"-<0pt,7pt> ; "otm"+<0pt,10pt> ^(0.3){f}
          \POS(1,2.5)*+{E_{b}}="one'"
          \POS(1,2.5)*{\pbcorner}
          \POS(0,3.5)*+{E_{a}}="two'"
          \POS(0,3.6)*{\pbcorner}
          \POS(3,3)*+{E}="three'"
          \ar@/_5pt/ "one'";"three'"_{\ell^E_{{b}}}^(0.1){}="otm'"
          \ar@/^10pt/ "two'";"three'"^{\ell^E_{{a}}}_(0.55){}="ttm'"
          \ar@{->>} "one'";"one"_(.3){p_b}
          \ar@{->>} "two'";"two"_{p_a}
          \ar@{->>} "three'";"three"^{p}
          \ar@{..>} "two'";"one'"_*!/^2pt/{\scriptstyle E_f}
          \ar@{=>} "ttm'"-<0pt,4pt> ; "otm'"+<0pt,4pt> ^(0.3){\ell^E_{{f}}}
        }}
    \end{xy}
  \end{equation*}
  and then factoring its codomain to obtain the requisite cartesian functor $E_f \colon E_a \to E_b$ between the fibres over $a$ and $b$. 
\end{thm}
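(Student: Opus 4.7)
The theorem should be deduced from the basic comprehension construction of Definition~\ref{defn:basic-comprehension} by running it inside the sliced $\infty$-cosmos $\eK_{/A}$ of Definition~\ref{defn:sliced-cosmoi}, and then showing that the resulting functor factors through the quasi-categorically enriched subcategory $\coCart(\qK)_{/A} \subset \qK_{/A}$. Two preliminary identifications need to be made. First, the domain $\Fun_{\eK}(A,B)$ agrees with the underlying quasi-category of the object $\pi_A \colon A\times B \tfib A$ inside $\eK_{/A}$: evaluating the pullback that defines $\Fun_A(\id_A, \pi_A)$ and using the isomorphism $\Fun_{\eK}(A,A\times B)\cong \Fun_{\eK}(A,A)\times \Fun_{\eK}(A,B)$, the fibre over $\id_A$ is precisely $\Fun_{\eK}(A,B)$. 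Second, the cocartesian fibration in $\eK_{/A}$ to which the comprehension construction is to be applied is the map $p'\colon A\times_B E \to A\times B$ obtained by pulling $p$ back along $\pi_B$; it is a cocartesian fibration in $\eK$ by Proposition~\ref{prop:cart-fib-pullback}, and the representable characterisation of Observation~\ref{obs:representable-cocart}, combined with the fact that the function complexes in $\eK_{/A}$ are themselves built as pullbacks from those of $\eK$, shows that it remains a cocartesian fibration when reinterpreted as an arrow from $\pi_A\circ p'$ to $\pi_A$ in $\eK_{/A}$.

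With these identifications in hand, Definition~\ref{defn:basic-comprehension} applied inside $\eK_{/A}$ produces a functor $c_{p'}\colon \Fun_{\eK}(A,B) \to \qK_{/A}$. A vertex $a\colon A\to B$ of the domain corresponds to a section $(\id_A,a)$ of $\pi_A$, and the action of the comprehension on this vertex is the fibre of $p'$ over that section, which by pullback cancellation is precisely $p_a\colon E_a\tfib A$. Similarly, the $1$-arrow action is obtained by choosing a $p'$-cocartesian lift in $\eK_{/A}$ of the $1$-simplex $f\colon a\to b$; unravelling the definition of cocartesian $1$-arrows in $\eK_{/A}$ via Definition~\ref{defn:cocart-1-arrow} and the pullback description of the function complexes recovers exactly the $p$-cocartesian lift $\ell^E_f$ displayed in the theorem statement, and the induced map on fibres is the required functor $E_f$.

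It remains to verify that $c_{p'}$ takes values in $\coCart(\qK)_{/A}$. For objects this is immediate, since each image is a pullback of $p$ and hence a cocartesian fibration by Proposition~\ref{prop:cart-fib-pullback}. For $1$-arrows one must check that each $E_f$ is a cartesian functor of cocartesian fibrations over $A$; by Observation~\ref{obs:representable-cocart}, this reduces to showing that whiskering with $E_f$ carries $p_a$-cocartesian $1$-arrows to $p_b$-cocartesian ones, and this is exactly the content of Lemma~\ref{lem:cocartesian-interchange} applied to the middle-four-interchange square pairing a $p_a$-cocartesian $1$-arrow with the $p$-cocartesian $\ell^E_f$. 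The bulk of the labour, and the principal obstacle, lies in the bookkeeping across the two layers of slicing and the several interlocking pullbacks, in order to identify cocartesian lifts in $\eK_{/A}$ with the pasted pullback squares and cocartesian lifts described geometrically in the statement. Once this verification is complete, the essential uniqueness furnished by Observation~\ref{obs:unique-comprehension}, together with Corollary~\ref{cor:unique-cocart-lifts}, guarantees that the resulting functor has the action on $0$- and $1$-arrows asserted in the theorem, and we may take $c_{p,A}\defeq c_{p'}$.
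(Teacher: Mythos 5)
Your proposal follows essentially the same route as the paper's proof: interpret the basic comprehension construction of Definition \ref{defn:basic-comprehension} in the sliced $\infty$-cosmos $\eK_{/A}$, applied to the fibration $E\times A\tfib B\times A$ (your pullback of $p$ along the projection is this same map), identify $\Fun_{\eK}(A,B)$ with the appropriate sliced function complex, read off the $0$- and $1$-arrow actions from the composite pullback rectangles and cocartesian lifts, and finally verify that the functor lands in $\coCart(\qK)_{/A}$ using Proposition \ref{prop:cart-fib-pullback} and Lemma \ref{lem:cocartesian-interchange}.

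Two steps are compressed enough to deserve comment. First, your justification that the pulled-back fibration remains cocartesian as an arrow of $\eK_{/A}$ (the representable characterisation plus ``slice function complexes are pullbacks'') can be made to work, but requires spelling out the identification $\Fun_A(X\tfib A,\, E\times A\tfib A)\cong\Fun_{\eK}(X,E)$; the paper's route is a one-liner: $-\times A\colon\eK\to\eK_{/A}$ is a functor of $\infty$-cosmoi, so it preserves cocartesian fibrations by Lemma \ref{lem:functor-preservation}. Second, and more substantively, Lemma \ref{lem:cocartesian-interchange} alone does not give what you claim. Applied with the cartesian functor $(\ell^E_a,p_a)$ (cartesian by Proposition \ref{prop:cart-fib-pullback}) and the $p$-cocartesian $1$-arrow $\ell^E_f$, it yields only that the \emph{composite} $(\ell^E_b,p_b)\circ E_f$ is a cartesian functor; equivalently, running the interchange argument directly on the square formed by a $p_a$-cocartesian $1$-arrow $\gamma$ and $\ell^E_f$ shows only that $\ell^E_b(E_f\gamma)$ is $p$-cocartesian. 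To conclude that $E_f\gamma$ is $p_b$-cocartesian you must additionally use that the pullback square $(\ell^E_b,p_b)$ \emph{reflects} cocartesian $1$-arrows, which is the other half of Proposition \ref{prop:cart-fib-pullback}; this is exactly the paper's closing step, and with that one invocation added your argument coincides with it. (Your final appeal to Observation \ref{obs:unique-comprehension} and Corollary \ref{cor:unique-cocart-lifts} is not needed: the construction in the slice already has the stated action on $0$- and $1$-arrows by inspection of the composite pullbacks.)
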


\begin{proof}
  The product functor $-\times A\colon\eK\to\eK_{/A}$,
  which carries an object $X$ to the projection $\pi\colon X\times A\tfib A$,
  is functor of $\infty$-cosmoi. It carries the cocartesian fibration $p$ in
  $\eK$ to a cocartesian fibration $p\times A\colon E\times A\tfib B\times A$ in
  $\eK_{/A}$, to which we may apply the comprehension construction of
  Definition~\ref{defn:basic-comprehension} to give a functor
  \begin{equation}\label{eq:general-comprehension-functor}
    {\Fun_{\eK}(A,B)}\stackrel{\cong}\longrightarrow
    {\Fun_{\eK_{/A}}\left(
        \vcenter{\xymatrix@1@R=2.5ex{A\ar@{->>}[d]^-{\id_A}\\ A}},
        \vcenter{\xymatrix@1@R=2.5ex{B \times A \ar@{->>}[d]^-{\pi} \\ A}}
      \right)}
    \xrightarrow{\mkern15mu c_{p\times A}\mkern15mu} {\qK_{/A}}
  \end{equation}
  All that remains in order to construct the functor advertised in the statement is to show that this
  latter functor lands in the sub-quasi-category $\coCart(\qK)_{/A}\subseteq
  \qK_{/A}$. 
  
  The functor $c_{p,A}$ maps a $0$-arrow $a\colon A\to B$ to the object in
  $\eK_{/A}$ constructed by forming the left-hand pullback
  \begin{equation*}
    \xymatrix@R=2em@C=3em{
      {E_a}\pbexcursion\ar@{->>}[d]_{p_a}\ar[r]^-{(\ell^E_{{a}}, p_a)} & {E\times A}\ar@{->>}[d]^-{p\times A} \ar[r]^-\pi & E \ar@{->>}[d]^p \\
      {A}\ar[r]_-{(a,\id_A)} & {B\times A} \ar[r]_-\pi & B
    }
  \end{equation*}
  which lies in $\eK_{/A}$. The composite pullback rectangle in $\eK$ reveals that the action on $0$-arrows has the form advertised and produces a cocartesian fibration in $\eK$ with codomain $A$.

  All that remains is to show that
  it also carries each $1$-arrow in $\Fun_{\eK}(A,B)$ to a cartesian functor in
  $\coCart(\eK)_{/A}$. In the construction of \eqref{eq:general-comprehension-functor}, a 1-arrow $f \colon a \To b$ in $\Fun_{\eK}(A,B)$ is identified with the 1-arrow $(f, \id_A) \in \Fun_{\eK_{/A}}(\id_A,\pi)$, which is then lifted, in accordance with the construction of the comprehension functor in Definition \ref{defn:basic-comprehension}, to a $p \times A$-cocartesian 1-arrow as displayed:
  \begin{equation*}
    \begin{xy}
      0;<2cm,0cm>:<0cm,0.75cm>::
      *{\xybox{
          \POS(1,0)*+{A}="one"
          \POS(0,1)*+{A}="two"
          \POS(3,0.5)*+{B \times A}="three"
          \ar@{=} "one";"two"
          \ar@/_5pt/ "one";"three"_{(b,\id_A)}^(0.1){}="otm"
          \ar@/^10pt/ "two";"three"^{(a,\id_A)}_(0.5){}="ttm"|(0.325){\hole}
          \ar@{=>} "ttm"-<0pt,7pt> ; "otm"+<0pt,10pt> ^(0.3){(f, \id_A)}
          \POS(1,2.5)*+{E_{b}}="one'"
          \POS(1,2.5)*{\pbcorner}
          \POS(0,3.5)*+{E_{a}}="two'"
          \POS(0,3.6)*{\pbcorner}
          \POS(3,3)*+{E \times A}="three'"
          \ar@/_5pt/ "one'";"three'"_{(\ell^E_{{b}},p_b)}^(0.1){}="otm'"
          \ar@/^10pt/ "two'";"three'"^{(\ell^E_{{a}},p_a)}_(0.6){}="ttm'"
          \ar@{->>} "one'";"one"_(.3){p_b}
          \ar@{->>} "two'";"two"_{p_a}
          \ar@{->>} "three'";"three"^{p \times A}
          \ar@{..>} "two'";"one'"_*!/^2pt/{\scriptstyle E_f}
          \ar@{=>} "ttm'"-<0pt,4pt> ; "otm'"+<0pt,4pt> ^(0.1){(\ell^E_{{f}},\id_{p_a})}
        }}
    \end{xy}
  \end{equation*}
  Proposition \ref{prop:cart-fib-pullback} tells us that $(\ell^E_{a},p_a)$ is a
  cartesian functor. Since $(\ell^E_{{f}},\id_{p_A})$ is a $p$-cocartesian
  1-arrow, Lemma \ref{lem:cocartesian-interchange} tells us that
  $(\ell^E_{{b}},p_b)\circ E_f$ is also a cartesian functor. Now Proposition
  \ref{prop:cart-fib-pullback} again tells us that $(\ell^E_{{b}},p_b)$ is a
  cartesian functor that creates $p_b$-cocartesian 1-arrows, so it follows that
  $E_f$ is a cartesian functor, as claimed.
\end{proof}

A similar comprehension construction for cartesian fibrations can be obtained as
an instance of the comprehension construction of Theorem
\ref{thm:general-comprehension} implemented in the dual $\infty$-cosmos $\eK\co$
of Example \ref{ex:dual-cosmoi}. This duality interchanges cartesian and
cocartesian fibrations. One way to see this is to recall that the homotopy
2-category of $\eK\co$ is the 2-categorical dual $\eK_2\co$ of the homotopy
2-category $\eK_2$ of $\eK$ and make use of the 2-categorical definition from
\S\refIV{sec:cartesian}. Another way to see this is to observe that given an
object $B$ in $\eK$ the object $B^{\cattwo}$ is the cotensor with $\cattwo$ both
in $\eK$ and in $\eK\co$, but the duality swaps the roles of its domain and
codomain projections $p_0,p_1\colon B^{\cattwo}\tfib B$.
  
\begin{ntn}
Extending the notational conventions established in
Notation~\ref{ntn:qcat-from-cosmos}, for an $\infty$-cosmos $\eK$, let:
\begin{itemize}
\item $\qK\co$ denote the homotopy coherent nerve $\hN(g_*\eK\co)$,
\item $\qK\co_{/A}$ denote the homotopy coherent nerve of the Kan complex
enriched core of the isomorphic sliced $\infty$-cosmoi $\eK\co_{/A} \cong
(\eK_{/A})\co$, and
\item $\Cart(\qK)_{/A}\co$ denote the homotopy coherent nerve of the Kan complex
enriched core of the isomorphic $\infty$-cosmoi $\coCart(\eK\co)_{/A}
\cong\Cart(\eK)_{/A}\co.$
\end{itemize}
\end{ntn}

\begin{cor}[comprehension for cartesian fibrations]\label{cor:non-co}
  For any cartesian fibration $p \colon E \tfib B$ in an $\infty$-cosmos $\eK$
  and any $\infty$-category $A$, there is a functor
  \[
    \xymatrix@R=0em@C=6em{
      {\Fun_{\eK}(A,B)\op}\ar[r]^{c_{p,A}} & {\Cart(\qK)_{/A}\co}
    }
  \]
  defined on 0-arrows by mapping a functor $a \colon A \to B$ to the pullback:
  \[
    \xymatrix{
      E_a \ar@{->>}[d]_{p_a} \ar[r]^-{\ell^E_a} \pbexcursion &
      E \ar@{->>}[d]^p \\
      A \ar[r]_a & B}
  \]
  Its action on 1-arrows $f\colon b\to a$ is defined by lifting $f$ to a
  $p$-cartesian 1-arrow as displayed in the diagram
  \begin{equation*}
    \begin{xy}
      0;<1.4cm,0cm>:<0cm,0.75cm>::
      *{\xybox{
          \POS(1,0)*+{A}="one"
          \POS(0,1)*+{A}="two"
          \POS(3,0.5)*+{B}="three"
          \ar@{=} "one";"two"
          \ar@/_5pt/ "one";"three"_{b}^(0.1){}="otm"
          \ar@/^10pt/ "two";"three"^{a}_(0.5){}="ttm"|(0.325){\hole}
          \ar@{<=} "ttm"-<0pt,7pt> ; "otm"+<0pt,10pt> ^(0.3){f}
          \POS(1,2.5)*+{E_{b}}="one'"
          \POS(1,2.5)*{\pbcorner}
          \POS(0,3.5)*+{E_{a}}="two'"
          \POS(0,3.6)*{\pbcorner}
          \POS(3,3)*+{E}="three'"
          \ar@/_5pt/ "one'";"three'"_{\ell^E_{b}}^(0.1){}="otm'"
          \ar@/^10pt/ "two'";"three'"^{\ell^E_{a}}_(0.6){}="ttm'"
          \ar@{->>} "one'";"one"_(.3){p_b}
          \ar@{->>} "two'";"two"_{p_a}
          \ar@{->>} "three'";"three"^{p}
          \ar@{..>} "two'";"one'"_*!/^2pt/{\scriptstyle E_f}
          \ar@{<=} "ttm'"-<0pt,4pt> ; "otm'"+<0pt,4pt> ^(0.2){\ell^E_{f}}
        }}
    \end{xy}
  \end{equation*}
  and then factoring its domain to obtain the requisite cartesian functor $E_f \colon E_a \to E_b$ between the fibres over $a$ and $b$. 
\end{cor}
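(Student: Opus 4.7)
My plan is to deduce this corollary by a direct application of Theorem \ref{thm:general-comprehension} in the dual $\infty$-cosmos $\eK\co$ of Example \ref{ex:dual-cosmoi}, leveraging the duality that interchanges cartesian and cocartesian fibrations. The homotopy 2-category of $\eK\co$ is the 2-categorical dual $(\eK_2)\co$, so Definition \ref{defn:cocart-fibration} applied in $\eK\co$ identifies the cocartesian fibrations of $\eK\co$ with the cartesian fibrations of $\eK$, and similarly the cartesian functors of Definition \ref{defn:cartesian-functor} between them correspond under the duality. This produces an isomorphism of quasi-categorically enriched categories $\coCart(\eK\co)_{/A} \cong \Cart(\eK)_{/A}\co$, already noted in the paper's setup.

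Applying Theorem \ref{thm:general-comprehension} to the cocartesian fibration $p\colon E\tfib B$ and the object $A$ in $\eK\co$ yields a functor of quasi-categories
\[
  c^{\co}_{p,A}\colon \Fun_{\eK\co}(A,B) \longrightarrow \coCart(\qK\co)_{/A}.
\]
By Recollection \ref{rec:simp-cat-duals}, $\Fun_{\eK\co}(A,B) = \Fun_{\eK}(A,B)\op$; and applying the groupoidal core and homotopy coherent nerve to the preceding isomorphism of simplicial categories gives $\coCart(\qK\co)_{/A} \cong \Cart(\qK)_{/A}\co$ directly from the notational definitions. Via these identifications, the constructed functor is precisely the desired $c_{p,A}$.

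The remaining task is to verify the explicit descriptions of its action on 0- and 1-arrows. A 0-arrow $a\colon A\to B$ is the same datum whether viewed in $\Fun_{\eK}(A,B)$ or in $\Fun_{\eK\co}(A,B)$, and the pullback of $p$ along $a$ is a simplicially enriched limit whose construction is the same in $\eK$ and in $\eK\co$; hence the pullback square displayed in the statement reproduces the one produced by Theorem \ref{thm:general-comprehension}. The main point to check --- and the only one requiring care --- is the translation of the 1-arrow description across the duality. A 1-arrow $f\colon b\to a$ of $\Fun_{\eK}(A,B)$ corresponds under Recollection \ref{rec:simp-dual} to a 1-arrow $f\colon a\to b$ of $\Fun_{\eK\co}(A,B)$, and Theorem \ref{thm:general-comprehension} applied in $\eK\co$ lifts this to a $p$-cocartesian 1-arrow in $\Fun_{\eK\co}(A,E)$ running from $\ell^E_a$ to $\ell^E_b$. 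Unwinding the simplicial duality, this same simplex is a $p$-cartesian 1-arrow in $\Fun_{\eK}(A,E)$ running from $\ell^E_b$ to $\ell^E_a$, and the factorisation of its $\eK\co$-codomain through the pullback $E_b$ becomes a factorisation of its $\eK$-domain through $E_b$, yielding the cartesian functor $E_f\colon E_a\to E_b$ depicted in the statement.

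The main obstacle --- indeed the only real content not already subsumed in Theorem \ref{thm:general-comprehension} --- is this careful bookkeeping of the duality on 1-arrows, ensuring that the direction of cartesian lifts and the factorisation through the relevant pullback match the conventions used in the statement.
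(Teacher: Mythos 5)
Your proposal is correct and follows essentially the same route as the paper: apply Theorem \ref{thm:general-comprehension} to $p$ regarded as a cocartesian fibration in the dual $\infty$-cosmos $\eK\co$, then translate via $\Fun_{\eK\co}(A,B)=\Fun_{\eK}(A,B)\op$ and the isomorphism $\coCart(\eK\co)_{/A}\cong\Cart(\eK)_{/A}\co$ induced by $\eK\co_{/A}\cong(\eK_{/A})\co$. Your additional unwinding of the 0- and 1-arrow descriptions across the duality is a harmless (and slightly more explicit) elaboration of what the paper leaves implicit.
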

\begin{proof}
  A cartesian fibration $p\colon E \tfib B$ in $\eK$ defines a cocartesian fibration in $\eK\co$. Interpreting the comprehension construction of Theorem \ref{thm:general-comprehension} in $\eK\co$ defines a simplicial functor functor
  \[ \Fun_{\eK\co}(A,B) \to \coCart(\qK\co)_{/A} \subset
    \qK\co_{/A}
  \]
  
  Because the duality isomorphism $\eK\co_{/A} \cong (\eK_{/A})\co$  interchanges cocartesian and cartesian fibrations, $\coCart(\eK\co)_{/A}$ is isomorphic to
  $\Cart(\eK)_{/A}\co$, yielding a similar isomorphism upon passing to maximal Kan complex enriched subcategories and homotopy coherent nerves. Consequently the comprehension construction can be rewritten as
  as
  \begin{equation*}
    \Fun_{\eK}(A,B)\op \to \Cart(\qK)_{/A}\co
    \subset \qK_{/A}\co \qedhere
  \end{equation*}
\end{proof}
  
\begin{rmk}[comprehension for groupoidal (co)cartesian
  fibrations]\label{rmk:groupoidal-comprehension}
  In \S\refIV{ssec:groupoidal}, we introduce special classes of cocartesian or
  cartesian fibrations whose fibres are groupoidal $\infty$-categories. An
  $\infty$-category $E$ is \emph{groupoidal} if $\Fun_{\eK}(X,E)$ is a Kan
  complex for all $X$, and a (co)cartesian fibration $p \colon E \tfib B$ is
  \emph{groupoidal} just when it is a groupoidal object in the slice
  $\infty$-cosmos $\eK_{/B}$. In the $\infty$-cosmos of quasi-categories, the
  groupoidal objects are the Kan complexes, and the groupoidal cocartesian
  fibrations and groupoidal cartesian fibrations coincide, respectively, with
  the \emph{left fibrations} and \emph{right fibrations} of Joyal; see Example
  \refIV{ex:quasi-groupoidal-cart}.
  
  Corollary \refIV{cor:groupoidal-pullback} proves that groupoidal fibrations
  are stable under pullback, so it follows immediately that if $p\colon E \tfib B$
  is a groupoidal (co)cartesian fibration, then its comprehension functor
  $c_{p,A}$ lands in the subcategory of groupoidal (co)cartesian fibrations. Since
  all functors between groupoidal (co)cartesian fibrations are cartesian, the
  groupoidal (co)cartesian fibrations over $A$ define a full subcategory of the
  slice $\eK_{/A}$, which by groupoidal-ness is automatically Kan complex
  enriched. We decline to introduce notation for the large quasi-categories of
  groupoidal cocartesian or groupoidal cartesian fibrations here but comment
  whenever the comprehension functors of Theorem \ref{thm:general-comprehension}
  or Corollary \ref{cor:non-co} land in these subcategories.
\end{rmk}

\begin{ntn}
  We shall use the notation $\qqCat$ to denote the (huge) quasi-category
  constructed by taking the homotopy coherent nerve of the local groupoidal core
  $g_*(\qCat)$. We also use $\qQ$ to denote the full sub-quasi-category of
  $\qqCat$ spanned by the small quasi-categories; in particular $\qQ$ is itself
  an object of $\qCat$. In turn, the full sub-quasi-category of $\qQ$ spanned by
  the small Kan complexes is denoted $\qS$; by tradition this is known as the
  \emph{quasi-category of spaces}.
\end{ntn}

\begin{rmk}[unstraightening of a quasi-category valued
  functor]\label{rmk:unstraightening}
  Given a cocartesian fibration $u\colon \qE\tfib\qQ$ of quasi-categories and a
  quasi-category $\qA$, we may apply Theorem~\ref{thm:general-comprehension} in
  the $\infty$-cosmos $\qCat$ to give a corresponding comprehension functor:
  \begin{equation}\label{eq:unstraightening}
    \xymatrix@R=0em@C=6em{
      {\qQ^\qA = \Fun_{\qqCat}(\qA,\qQ)}\ar[r]^-{c_{u,\qA}} &
      {\coCart(\qqCat)_{/\qA}}
    }
  \end{equation}
  Indeed, there exists a particular cocartesian fibration $u\colon\qQ_*\tfib\qQ$
  with the property that for each quasi-category $\qA$ the comprehension functor
  $c_{u,\qA}$ defines an equivalence between the functor quasi-category
  $\qQ^\qA$ and the full sub-quasi-category of $\coCart(\qqCat)_{/\qA}$ spanned
  by the cocartesian fibrations with small fibres. This provides a purely
  quasi-categorical analogue of Lurie's unstraightening construction,
  ``unstraightening'' a quasi-categorically valued functor $\qA \to \qQ$ into a
  cartesian fibration over $\qA$.

  In a forthcoming paper, we shall prove the classification result alluded to in
  the last paragraph, as an application of the $\infty$-categorical Beck
  monadicity theorem \cite{RiehlVerity:2012hc}. In the meantime, we satisfy
  ourselves here by describing the construction of one model of this classifying
  cocartesian fibration $u\colon\qQ_*\tfib \qQ$. To that end, observe that the
  quasi-category $\qqCat$ of quasi-categories sits inside the homotopy coherent
  nerve $N(\qCat)$ of the full quasi-categorically enriched category of
  quasi-categories. We may form the slice $\slicel{*}{\nrvhc(\qCat)}$ of this
  larger simplicial set under the terminal quasi-category $* \in \qCat$ and
  observe that its $0$-simplices are pointed quasi-categories $(\qB,b)$ and its
  $1$-simplices $(f,\bar{f}) \colon (\qB ,b) \to (\qC,c)$ are pairs consisting
  of a simplicial map $f \colon \qB \to \qC$ together with a 1-simplex $\bar{f}
  \colon c \to f(b)$ in $\qC$. Now the pullback
  \[
    \xymatrix{
      {\qQ_*} \ar@{->>}[d]_{u} \ar[r] \pbexcursion &
      {\slicel{*}{\nrvhc(\qCat)}} \ar[d] \\
      {\qQ} \ar@{^(->}[r] & {\nrvhc(\qCat)}}
  \]
  of the canonical projection associated with the slice
  $\slicel{*}{\nrvhc(\qCat)}$ can be shown to define a cocartesian fibration
  with small fibres over $\qQ$. What is more, by an analogue of Proposition
  \ref{prop:fun-to-r-hom} below, if $\qB$ is a small quasi-category then the
  fibre of this $u\colon\qQ_*\tfib\qQ$ over the corresponding point $\qB\colon
  \Del^0\to\qQ$ is equivalent to $\qB$ itself.
    
  Another case of the comprehension functor defines the ``unstraightening'' of a
  functor valued in spaces:
  \begin{equation}\label{eq:groupoidal-unstraightening}
    \xymatrix@R=0em@C=6em{
      {\qS^\qA}\ar[r]^-{c_{u,\qA}} & {\coCart(\qqCat)_{/\qA}}.
    }
  \end{equation}
  The comprehension functor here is that associated with the forgetful
  cocartesian fibration $u \colon \qS_* \tfib \qS$ from pointed spaces to
  spaces. Here the quasi-category $\qS_*$ can simply be taken to be the comma
  quasi-category $*\comma \qS$ and $p$ to be the associated projection to $\qS$; so
  Example~\refIV{ex:groupoidal-representable} tells us that this is a groupoidal
  cocartesian fibration and thus we find that the associated unstraightening
  functor \eqref{eq:groupoidal-unstraightening} actually lands in the
  quasi-category of groupoidal cocartesian fibrations over $\qA$. Indeed, the
  fibre of $u \colon \qS_* \to \qS$ over a point corresponding to a small Kan
  complex $\qU$ is isomorphic to the hom-space $\Hom_{\qS}(*,\qU)$ discussed in
  Definition~\ref{defn:qcat-comma-hom} below, so Corollary~\ref{cor:fun-to-hom}
  tells us that this is canonically equivalent to $\Fun_{\Kan}(*,\qU)\cong\qU$
  itself.
\end{rmk}

\begin{rmk}[straightening of a fibration]\label{rmk:straightening}
  For any cocartesian fibration $p \colon E \tfib B$, the comprehension
  construction defines a functor
  \[
    c_{p,1}\colon \qB=\Fun_\eK(1,B) \to \qK
  \]
  from the underlying quasi-category of $B$ to the quasi-category of
  $\infty$-categories and $\infty$-functors in $\eK$. It follows that any
  cocartesian fibration $p \colon E \tfib B$ can be ``straightened'' into an
  $\infty$-category-valued functor, namely $c_{p,1}\colon \qB \to \qK$.
  If $p$ is groupoidal, then this functor lands in the homotopy coherent nerve
  of the full subcategory spanned by the groupoidal objects in $\eK$.

  When $p \colon \qE \to \qB$ is a cocartesian fibration of quasi-categories
  with small fibres, the domain of the comprehension functor $c_{p,1} \colon
  \Fun_{\qCat}(1,\qB) \to \qqCat$ is isomorphic to the quasi-category $\qB$ and
  it maps into the quasi-category $\qQ$ of small quasi-categories. When $p$ is
  also groupoidal, the ``straightened'' comprehension functor $c_{p,1} \colon
  \qB \to \qCat$ lands in the quasi-category of spaces $\qS$.
\end{rmk}

\subsection{Yoneda embeddings}\label{sec:yoneda}

A special case of the comprehension construction of Theorem
\ref{thm:general-comprehension}, applied to a canonical groupoidal cocartesian
fibration in a sliced $\infty$-cosmos, defines the covariant Yoneda embedding.
The contravariant Yoneda embedding is dual.

\begin{defn}[the covariant Yoneda embedding]\label{defn:yoneda-embedding}
  For any object $A$ in the $\infty$-cosmos $\eK$, the cotensor $(p_1,p_0) \colon
  A^\cattwo \tfib A \times A$ defines a groupoidal cocartesian fibration
  \begin{equation*}
    \xymatrix{ A^\cattwo \ar@{->>}[rr]^-{(p_1,p_0)} \ar@{->>}[dr]_{p_0}
      & & A \times A \ar@{->>}[dl]^{\pi_0} \\ & A}
  \end{equation*}
  in the slice $\infty$-cosmos $\eK_{/A}$; see Lemma
  \refV{lem:disc-cart-on-right}. By applying the comprehension construction of
  Definition~\ref{defn:basic-comprehension} to this cocartesian fibration we
  obtain a comprehension functor:
  \begin{equation}\label{eq:yoneda-special-case}
    \Fun_{\eK_{/A}}\left(
      \vcenter{\xymatrix@1@R=2.5ex{A\ar@{->>}[d]^-{\id_A}\\ A}},
      \vcenter{\xymatrix@1@R=2.5ex{A \times A \ar@{->>}[d]^-{\pi_0} \\ A}}
    \right)
    \xrightarrow{\mkern15mu c_{(p_0,p_1)} \mkern15mu}  \qK_{/A},
  \end{equation}
  Now the domain of this functor receives a map
  \begin{equation*}
    \qA = \Fun_{\eK}(1,A) \longrightarrow \Fun_{\eK/A}\left(
      \vcenter{\xymatrix@1@R=2.5ex{A\ar@{->>}[d]^-{\id_A}\\ A}},
      \vcenter{\xymatrix@1@R=2.5ex{A \times A \ar@{->>}[d]^-{\pi_0} \\ A}}
    \right)
  \end{equation*}
  which is defined on objects by sending $a \colon 1 \to A$ to:
  \begin{equation*}
    A \cong 1 \times A \xrightarrow{a \times \id_A} A \times A.
  \end{equation*}
  We may may compose with~\eqref{eq:yoneda-special-case} to give a functor
  $\yoneda\colon\qA \to \qK_{/A}$; this acts on a vertex $a\colon 1\to A$ to
  return the pullback of $A^\cattwo\tfib A\times A$ along $a\times \id_A\colon
  1\times A\to A\times A$, this being the groupoidal cartesian fibration
  $A\comma a\tfib A$. Consequently we may restrict the codomain of the functor
  $\yoneda$ to give a functor
  \begin{equation*}
    \yoneda\colon \Fun_{\eK}(1,A) \longrightarrow \Cart(\qK)_{/A} \subset \qK_{/A}
  \end{equation*}
This defines the \emph{covariant Yoneda embedding}.
\end{defn}

\begin{defn}[the contravariant Yoneda embedding]\label{defn:contra-yoneda-embedding}
  Applying the covariant Yoneda construction of Definition
  \ref{defn:yoneda-embedding} in the dual $\infty$-cosmos $\eK\co$ we obtain a
  dual functor, which this time maps each $a\colon1\to A$ to the groupoidal
  cocartesian fibration $a\comma A\tfib A$. This gives rise to an embedding
  \begin{equation*}
    \yoneda\colon \qA\op = \Fun_{\eK\co}(1,A) \longrightarrow \Cart(\qK\co)_{/A} \subset
    \qK\co_{/A}
  \end{equation*}
  The isomorphism $\eK\co_{/A}\cong (\eK_{/A})\co$ interchanges cocartesian and
  cartesian fibrations, so it follows that $\Cart(\eK\co)_{/A}$ is isomorphic to
  $(\coCart(\eK)_{/A})\co$. Consequently the embedding above can be rewritten as
  \begin{equation*}
    \yoneda\colon \qA\op \longrightarrow (\coCart(\qK)_{/A})\co
    \subset (\qK_{/A})\co
  \end{equation*}
  This defines the \emph{contravariant Yoneda embedding}.
\end{defn}



\section{Computing comprehension}\label{sec:computing}

For any pair of objects $a$ and $b$ in a quasi-category $\qB$, there is a hom-space $\hom_\qB(a,b)$ between them constructed in Definition \ref{defn:qcat-comma-hom} below, and this construction is functorial in maps of quasi-categories. In particular, the comprehension functor $c_p \colon \qB \to \qK$ associated to a cocartesian fibration $p \colon E \tfib B$ in an $\infty$-cosmos $\eK$ induces a map of Kan complexes
\begin{equation}\label{eq:comprehension-action} \Hom(c_p) \colon \Hom_\qB(a,b) \longrightarrow \Hom_\qK(E_a,E_b).\end{equation} Our aim in this section is to provide an explicit description of this functor up to isomorphism. This will enable us in particular to prove that the Yoneda embeddings of Definitions \ref{defn:yoneda-embedding} and \ref{defn:contra-yoneda-embedding} are fully faithful. As a corollary, we conclude that every quasi-category $\qB$ is equivalent to the homotopy coherent nerve of a Kan-complex enriched category, namely the subcategory of $\qCat_{/\qB}$ spanned by the representable cartesian fibrations $\qB \comma b\tfib \qB$.

Our first task, which occupies \S\ref{ssec:qcat-homs}, is to show that the codomain of \eqref{eq:comprehension-action} is equivalent to $\Fun_{g_*\eK}(E_a,E_b)$, the maximal Kan complex contained in the function complex in $\eK$ between the fibres of $p \colon E \tfib B$ over $a,b \colon 1 \to B$. Then in \S\ref{ssec:computing-homs} we construct an action map
\[ m_{a,b} \colon a \comma b \times E_a \longrightarrow E_b.\] Here, the comma $\infty$-category $a \comma b$ defines a groupoidal object in $\eK$ that is naturally regarded as the \emph{internal hom} of $B$ between the objects $a$ and $b$. This map gives rise to a functor
\[ \tilde{m}_{a,b} \colon \Hom_{\qB}(a,b) \longrightarrow \Fun_{g_*\eK}(E_a,E_b),\] which we refer to as the \emph{external action} of the hom-space $\Hom_{\qB}(a,b)$ on the fibres of $p \colon E \tfib B$. The remainder of the section is devoted to a proof of the main theorem:

\begin{thm}[computing the action of comprehension on hom-spaces]\label{thm:comprehension-on-homs}
The action of the comprehension functor $c_p \colon \qB \to \qK$ on the hom-space from $a$ to $b$ is equivalent to the composite of the the external action of $\Hom_{\qB}(a,b)$ on fibres of
  $p\colon E\tfib B$ with the canonical comparison equivalence $\xymatrix{ \Fun_{g_*\eK}(E_a,E_b)\ar@{^(->}[r]^{\simeq} & \Hom_{\qK}(E_a,E_b)}$. That is, there  exists an essentially commutative triangle
\[
    \xymatrix@=2.5em{
      {\Hom_{\qB}(a,b)}\ar[r]^-{\tilde{m}_{a,b}}\ar[dr]_{\Hom(c_p)} &
      {\Fun_{g_*\eK}(E_a,E_b)}\ar@{^(->}[d]^{\simeq}\ar@{}[dl]|(0.35){\cong} \\ 
      & {\Hom_{\qK}(E_a,E_b)}}
\]
\end{thm}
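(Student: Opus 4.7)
The plan is to exhibit both functors---the direct map $\Hom(c_p)$ and the composite $\Hom_{\qB}(a,b) \xrightarrow{\tilde{m}_{a,b}} \Fun_{g_*\eK}(E_a,E_b) \inc \Hom_{\qK}(E_a,E_b)$---as arising from the base component of essentially the same cocartesian cocone, so that the desired isomorphism follows from the uniqueness statement Theorem~\ref{thm:cocart-lifts-lax}\ref{itm:triv-fib} in the guise of Corollary~\ref{cor:unique-cocart-lifts}.

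First I would unpack the comprehension side. By Definition~\ref{defn:basic-comprehension}, the functor $c_p\colon\qB\to\qK$ is the transposed base of a cocartesian cocone $(\ell^E,\ell^B,p)$ of shape $\qB$ with nadir $p\colon E\tfib B$. Via Observation~\ref{obs:cocart-cone-restriction}, restriction along the canonical map encoding the hom-space $\Hom_{\qB}(a,b)\to \qB^{\Del^1}\to \qB$ (namely, the projection whose 1-arrow components are the edges representing maps from $a$ to $b$) produces a new cocartesian cocone whose transposed base is, up to the canonical equivalence $\Fun_{g_*\eK}(E_a,E_b)\inc\Hom_\qK(E_a,E_b)$ discussed in \S\ref{ssec:qcat-homs}, the map $\Hom(c_p)$.

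Next I would give a parallel description of the external side. The map $m_{a,b}\colon a\comma b\times E_a\to E_b$ of \S\ref{ssec:computing-homs} is produced by applying comprehension, in a suitable sliced $\infty$-cosmos, to a canonical cocartesian lift of the comma $1$-arrow $\phi\colon a\cdot p_1\To b\cdot p_1$ in $a\comma b$. Transposition and restriction along the canonical inclusion $\Hom_{\qB}(a,b)\inc a\comma b$ of the hom-space into the underlying quasi-category of the comma object produces $\tilde{m}_{a,b}$. Crucially, this construction too assembles into a cocartesian cocone of shape $\Hom_{\qB}(a,b)$ whose nadir is $p$ (after applying Proposition~\ref{prop:comprehension-cob} to identify the comprehension functor in the slice with the pullback of the comprehension functor for $p$), and whose codomain lax cocone is determined by the same underlying simplicial map $\Hom_{\qB}(a,b)\to\qB^{\Del^1}\to\Fun_{\eK}(1,B)$ used in the previous paragraph.

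Once both maps have been packaged this way, both cocartesian cocones lie over a common lax cocone of shape $\Hom_{\qB}(a,b)\join\Del^0$ and share the nadir $p$; they are therefore vertices of the contractible Kan complex $\cocone^\cattwo_\eK(\Hom_{\qB}(a,b))_{\langle\ell^B,p\rangle}$ of Theorem~\ref{thm:cocart-lifts-lax}\ref{itm:triv-fib}. Corollary~\ref{cor:unique-cocart-lifts} then provides an isomorphism between their transposed bases in the fibre of $\qK^{\Hom_{\qB}(a,b)}\tfib\qK^{\emptyset}$, which is precisely the asserted essential commutativity of the triangle.

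The main technical obstacle will be the middle paragraph: verifying that $\tilde{m}_{a,b}$ genuinely admits the packaging as a cocartesian cocone over the expected lax cocone. This amounts to tracking how the comprehension construction, when performed in the slice $\infty$-cosmos used to define $m_{a,b}$, interacts with restriction along $\Hom_{\qB}(a,b)\inc a\comma b$ and with the change-of-base functor of Proposition~\ref{prop:comprehension-cob} applied to the pullback square defining $E_b\tfib 1$. A secondary subtlety, to be handled in the first paragraph, is the compatibility of the canonical inclusion $\Fun_{g_*\eK}(E_a,E_b)\inc\Hom_\qK(E_a,E_b)$ with the cocartesian cocone formalism, so that the external action---which a priori factors through the function complex---lines up with the comprehension action, which a priori only lives in the larger hom-space.
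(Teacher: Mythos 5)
Your high-level strategy---realise both legs of the triangle as (transposes of) bases of cocartesian lifts of a single lax cocone and invoke Theorem~\ref{thm:cocart-lifts-lax}\ref{itm:triv-fib} via Corollary~\ref{cor:unique-cocart-lifts}---is the paper's strategy. But there is a genuine gap in how you set the comparison up, and it is not a matter of bookkeeping. A cocartesian cocone of shape $\Hom_{\qB}(a,b)$ has as its transposed base a map $\Hom_{\qB}(a,b)\to\qK$, and no such map can \emph{be} either leg of the triangle: both $\Hom(c_p)$ and $\tilde{m}_{a,b}$ land in a hom-space of $\qK$, not in $\qK$ itself. Concretely, your proposed restriction of the comprehension cocone along a map ``$\Hom_{\qB}(a,b)\to\qB^{\Del^1}\to\qB$'' does not produce $\Hom(c_p)$: composing to $\qB$ forgets one endpoint of each arrow, and the restricted cocone's base is just $c_p$ precomposed with a projection. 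What is needed is to restrict along the \emph{counit of a suspension--hom adjunction}, i.e.\ along a map $\Sigma^r\Hom^r_{\qB}(a,b)\to\qB$ out of the suspension (after first replacing $\Hom$ by the right hom-space $\Hom^r$ via Proposition~\ref{prop:hom-space-comparison}), so that the base is a pointed diagram recording both $E_a$ and $E_b$, and then transpose under the composite adjunction $\gC\circ\Sigma^r\dashv\Hom^r\circ\hN$ of \eqref{eq:composite-quillen-adj} to recover $\Hom^r(c_p)$. Without the suspension formalism your uniqueness argument compares objects that simply are not the two functors in the statement.

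The same defect affects the external side, which is the harder half. The external action does not naturally present itself as a cocone of shape $\Hom_{\qB}(a,b)$ either; the paper packages it as a cocartesian \emph{natural transformation} on the three-object computad $\catthree[\Hom_{\qB}(a,b)]$ (Lemma~\ref{lem:cocart-extending-external}), and only then converts it into a cocartesian cocone of shape $\Sigma^r\Hom^r_{\qB}(a,b)$ by restricting along the comparison functor $v^U\colon\gC[\Sigma^rU\join\Del^0]\to\catthree[U]$ (Lemma~\ref{lem:comparison-extension}, Proposition~\ref{prop:cocart-cone-from-nat}); the compatibility of $\Fun_{g_*\eK}(E_a,E_b)\inc\Hom^r_{\qK}(E_a,E_b)$ with this packaging is exactly the mate of $w^U$ from Proposition~\ref{prop:fun-to-r-hom}, not something that can be waved through afterwards. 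Finally, even once both cocones have the right shape, showing they lie over the \emph{same} lax cocone is a nontrivial verification (the commutativity of the comparison diagram assembled from Lemma~\ref{lem:commuting-comparisons}, the naturality of $t$, $u$, $v$, $w$, $s$ and the counits $h_{a,b}$, $h^r_{a,b}$, $k^B$), and the final isomorphism must still be transposed back under $\Sigma^r\dashv\Hom^r$, using that this adjunction is simplicially enriched and that the codomain is a Kan complex. These are precisely the points your outline defers as ``the main technical obstacle'' and ``a secondary subtlety,'' but they are where the proof actually lives; as written, the proposal does not contain the idea (suspensions, right hom-spaces, and the $\catthree[-]$/$v^U$ comparison) needed to close them.
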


This accomplished, the above-mentioned results about the Yoneda embedding are deduced as easy corollaries.

\subsection{Hom-spaces in quasi-categories}\label{ssec:qcat-homs}

\begin{defn}[hom-spaces in quasi-categories]\label{defn:qcat-comma-hom}
  For any pair of objects $a$ and $b$ in a quasi-category $\qA$, we define
  the \emph{hom-space} between them to be the comma quasi-category displayed by
  the following diagram:
  \begin{equation*}
    \xymatrix@=2em{
      {\Hom_{\qA}(a,b)}\ar@{}[r]|-{\textstyle\defeq} &
      {a\comma b}\ar@{->>}[r]\ar@{->>}[d] & {1}\ar[d]^{a} \\
      {} & {1}\ar[r]_{b} & {\qA}
      \ar@{} [u];[l] \ar@{=>}  ?(0.35);?(0.65)^{\phi}
    }
  \end{equation*}
  These hom-spaces are all Kan complexes (see \refI{obs:pointwise-adjoint-correspondence}) and they may otherwise be thought of as
  being the bi-fibres of the identity module $(p_1,p_0) \colon \qA^\cattwo \tfib \qA \times \qA$
 (see \refV{prop:hom-is-a-module}). Given a functor
  $f\colon\qA\to\qB$, the commutative diagram
  \begin{equation*}
    \xymatrix@=2em{
      {1}\ar[r]^{a}\ar@{=}[d] & {\qA}\ar[d]^f & {1}\ar[l]_{b}\ar@{=}[d] \\
      {1}\ar[r]_{fa} & {\qB} & {1}\ar[l]^{fb}
    }
  \end{equation*}
  gives rise to an induced map $\comma(\id_1,f,\id_1)\colon a\comma b\to
  fa\comma fb$, as described in Definition \ref{defn:comma}, which we shall denote
  $f_{a,b}\colon\Hom_\qA(a,b)\to\Hom_\qB(fa,fb)$.
\end{defn}

  Comprehension functors are defined in a way which tends to preclude direct
  computation of their behaviours. Their actions on hom-spaces, however, are
  much more amenable to direct computation; consequently, properties expressible
  in terms of their actions on hom-spaces will usually be relatively easy to
  verify. As an example of an important property of this kind, we offer up the
  following familiar result:

\begin{lem}\label{lem:equiv-of-qcats}
  A functor $f\colon\qA\to\qB$ of quasi-categories is an equivalence if and only
  if it is 
  \begin{itemize}
  \item \textbf{fully faithful}, in the sense that for all objects $a,b\in\qA$ the
    map $f_{a,b}\colon\Hom_\qA(a,b)\to\Hom_\qB(fa,fb)$ is an equivalence of Kan
    complexes, and
  \item \textbf{essentially surjective}, in the sense that for all objects
    $b\in\qB$ there exists an object $a\in\qA$ and an isomorphism $fa\cong b$ in
    $\qB$.
  \end{itemize}
\end{lem}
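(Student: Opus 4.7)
The proof splits into the two implications. For the forward direction, suppose $f \colon \qA \to \qB$ is an equivalence of quasi-categories with homotopy inverse $g$ and natural isomorphisms $\alpha \colon \id_{\qA} \cong gf$ and $\beta \colon \id_{\qB} \cong fg$. Essential surjectivity is immediate: any $b \in \qB$ satisfies $b \cong f(g b)$ via $\beta_b$. For fully faithfulness, functoriality of the comma construction identifies $g_{fa,fb} \circ f_{a,b}$ with $(gf)_{a,b}$, which by the natural isomorphism $\alpha$ is homotopic to the equivalence of Kan complexes $\Hom_\qA(a,b) \to \Hom_\qA(gfa,gfb)$ induced by conjugation with the isomorphisms $\alpha_a$ and $\alpha_b$. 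The symmetric argument applied to $\beta$ shows that $f_{gfa,gfb} \circ g_{fa,fb}$ is also an equivalence, and the 2-of-6 property for equivalences of Kan complexes applied to the composable triple $(f_{a,b},\ g_{fa,fb},\ f_{gfa,gfb})$ forces each of them, and in particular $f_{a,b}$, to be an equivalence.

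The reverse direction is handled by a factorization argument. Using the Brown factorization of Definition~\ref{defn:cosmos}\ref{defn:cosmos:e}, write $f = pj$ with $j \colon \qA \we N_f$ an equivalence and $p \colon N_f \tfib \qB$ an isofibration. By the forward direction $j$ is fully faithful and essentially surjective, and a 2-of-3-style argument (using essential surjectivity of $j$ to reduce hom-space questions for pairs of objects in $N_f$ to pairs of objects in the image of $j$) transfers these properties to $p$. It therefore suffices to prove that any fully faithful, essentially surjective isofibration $p \colon \qE \tfib \qB$ is a trivial fibration, for then $f = pj$ exhibits $f$ as a composite of equivalences.

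To establish this sharpened claim we verify the right lifting property against each boundary inclusion $\partial\Del^n \inc \Del^n$. The case $n = 0$ combines essential surjectivity with the isofibration lifting property for isomorphisms to produce a vertex of $\qE$ over any given $b \in \qB$. For $n \geq 1$, the key observation is that each hom-space map $p_{e_0,e_1} \colon \Hom_\qE(e_0,e_1) \to \Hom_\qB(pe_0,pe_1)$ is an isofibration between Kan complexes, by Lemma~\ref{lem:comma} applied to the pointwise isofibration $(\id_1, p, \id_1)$ in $\qCat^{\pbshape}$, and is an equivalence by fully faithfulness; hence it is a trivial Kan fibration. A standard simplicial unwinding then reduces the lifting problem $\partial\Del^n \inc \Del^n$ against $p$ to a lifting problem against these hom-space trivial fibrations, which admits a solution by their universal lifting property.

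The principal obstacle is this last simplicial reduction: translating boundary lifting against $p$ into a hom-space lifting problem requires carefully identifying how $\Del^n$ decomposes through its initial and terminal vertex data together with an interior ``hom-space simplex''  encoded by the comma construction. Once that combinatorial bookkeeping is in place the remaining verifications are routine, completing the proof.
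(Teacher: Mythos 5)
Your forward direction is fine, and in fact more explicit than the paper's (which simply remarks that it follows from Lemma \ref{lem:comma}); the two-out-of-six argument with a homotopy inverse works, modulo the small check that the natural isomorphism $\alpha\colon\id_{\qA}\cong gf$ really does identify $(gf)_{a,b}$ up to homotopy with an equivalence of hom-spaces. Your reduction of the converse to the case of an isofibration via the factorisation axiom \ref{defn:cosmos}\ref{defn:cosmos:e} is also exactly the move the paper makes, as is the treatment of $n=0$ via essential surjectivity plus the isofibration property.

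The genuine gap is the step you yourself flag as ``the principal obstacle.'' You assert that for $n\geq 1$ the lifting problem of $\partial\Del^n\inc\Del^n$ against a fully faithful, essentially surjective isofibration $p\colon\qE\tfib\qB$ reduces, by ``standard simplicial unwinding,'' to lifting against the fibrewise maps $\Hom_{\qE}(e_0,e_1)\to\Hom_{\qB}(pe_0,pe_1)$. No such routine reduction exists, and as stated it fails for $n\geq 2$: a $k$-simplex of the comma hom-space $\Hom_{\qE}(e_0,e_n)$ is a map $\Del^k\times\Del^1\to\qE$ constant on the two ends, not an $(k+1)$-simplex of $\qE$ with freely prescribed faces, so a boundary-filling problem over $\Del^n\to\qB$ is not encoded by a single hom-space lifting problem; the compatibility with the remaining faces of $\partial\Del^n$ involves composition data spread across several hom-spaces. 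Passing from the fibrewise hypothesis to a global lifting property is precisely the mathematical content of this implication, and it is the point where the paper invokes a substantive external input: full faithfulness is equivalent to the single map $\comma(\id_{\qA},f,\id_{\qA})\colon\qA^{\cattwo}\to f\comma f$ being an equivalence (the cited result that a fibrewise equivalence between modules is a genuine equivalence). Once $f$ is an isofibration, Lemma \ref{lem:comma} makes that map an isofibration, hence a trivial fibration, and a Leibniz transposition converts its right lifting property against $\partial\Del^n\inc\Del^n$ into the right lifting property of $f$ against the pushout-products $(\partial\Del^n\subset\Del^n)\leib\times(\partial\Del^1\subset\Del^1)$; these together with $\emptyset\subset\Del^0$ generate all monomorphisms, so $f$ is a trivial fibration. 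Your ``combinatorial bookkeeping'' would have to reproduce that fibrewise-to-global theorem (or an equivalent, genuinely nontrivial direct induction), so the proof as proposed is incomplete at its crucial point.
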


\begin{proof}
  The ``only if'' direction is clear from an application of Lemma~\ref{lem:comma}
  and we leave it to the reader. For the ``if'' direction consider the
  commutative diagram
  \begin{equation*}
    \xymatrix@=2em{
      {\qA}\ar[r]^{\id_\qA}\ar@{=}[d] & {\qA}\ar[d]^f & {\qA}\ar[l]_{\id_\qA}\ar@{=}[d] \\
      {\qA}\ar[r]_{f} & {\qB} & {\qA}\ar[l]^{f}
    }
  \end{equation*}
  which induces a module map $\comma(\id_\qA,f,\id_\qA)\colon \qA^{\cattwo}\to
  f\comma f$ whose action on the bi-fibre over the objects $a$ and $b$ is
  $f_{a,b}\colon\Hom_{\qA}(a,b)\to\Hom_{\qB}(fa,fb)$. The assumption that $f$ is
  fully faithful simply postulates that those actions on bi-fibres are all
  equivalences and --- by Corollary \refV{cor:fibrewise-rep}, or better, by a
  result to appear in
  \cite{RiehlVerity:2017ts} 
  which extends the argument used to prove Lemma \refV{lem:qcat-pointwise-rep}
  to the case of a general map between any pair of modules, not necessarily
  representable --- this happens if and only if the module map
  $\comma(\id_\qA,f,\id_\qA)$ itself is an equivalence.

  By factoring $f$ as an isofibration following an equivalence, we may assume
  without loss of generality that it is an isofibration. Our task is now to show
  that $f$ is a trivial fibration. It follows from Lemma~\ref{lem:comma} that
  $\comma(\id_\qA,f,\id_\qA)\colon \qA^{\cattwo}\tfib f\comma f$ is an
  isofibration and is thus a trivial fibration if and only if $f$ is fully
  faithful. Now a routine transposition argument\footnote{The map
    $\comma(\id_\qA,f,\id_\qA)\colon A^{\cattwo}\tfib f\comma f$ is the
    \emph{Leibniz weighted limit} of $f \colon \qA \to \qB$ weighted by
    $\boundary\Del^1\inc\Del^1$; see \cite{RiehlVerity:2013kx}.}
  \[
    \xymatrix{
      \boundary\Del^n \ar@{^(->}[d] \ar[r] &
      \qA^\cattwo \ar@{->>}[d]^{\comma(\id_\qA,f,\id_\qA)}   & &
      \ar@{}[d]|{\displaystyle\leftrightsquigarrow}  &
      \partial\Del^n \times \Del^1 \cup_{\partial\Del^n\times\partial\Del^1}
      \Del^n \times \partial\Del^1 \ar@{^(->}[d] \ar[r] &
      \qA \ar@{->>}[d]^f \\
      \Del^n \ar[r] \ar@{-->}[ur] & f \comma f & &  &
      \Del^n \times \Del^1 \ar[r] \ar@{-->}[ur] & \qB}
  \]
  demonstrates that $\comma(\id_\qA,f,\id_\qA)$ possesses the right lifting
  property with respect to the boundary inclusions
  $\boundary\Del^n\subset\Del^n$ for $n \geq 0$ if and only if $f\colon \qA\tfib
  \qB$ has the right lifting property with respect to the Leibniz product
  inclusions $(\boundary\Del^n\subset\Del^n)\leib\times
  (\boundary\Del^1\subset\Del^1)$ for $n \geq 0$.

  Observe also that when $f\colon \qA\tfib \qB$ is an isofibration then the
  essential surjectivity property implies that $f$ is actually surjective on
  objects; for each object $b\in \qB$ simply lift the isomorphism $fa\cong b$
  using the isofibration property of $f$. In other words, we have shown that $f$
  also possesses the right lifting property with respect to the inclusion
  $\emptyset\subset\Del^0$. As this map together with the Leibniz product
  inclusions mentioned in the previous paragraph generate the class of
  monomorphisms under transfinite composition and pushout, it follows now from
  the fact that $\comma(\id_\qA,f,\id_\qA)$ is a trivial fibration that $f$ is a
  trivial fibration as well, which is what we aimed to show.
\end{proof}
  
We now introduce a pair of alternative models for the hom-space between a pair
of objects in a quasi-category.

\begin{defn}[right and left hom-spaces]\label{defn:l-r-homs}
  Given objects $a$ and $b$ in a quasi-category $\qA$ we define its \emph{right
    hom-space} $\Hom_{\qA}^r(a,b)$ to be the simplicial set with
  \begin{itemize}
  \item $n$-simplices the $(n+1)$-simplices $x\in\qA$ with
    $x\cdot\face^{\fbv{n+1}}=b$ and $x\cdot\face^{n+1}$ the $n$-simplex degenerated
    from $a$, and
  \item action of a simplicial operator $\alpha\colon[m]\to[n]$ on an
    $n$-simplex $x\in \Hom_{\qA}^r(a,b)$ given by $x\cdot(\alpha\join[0])$ in $\qA$.
  \end{itemize}
  It is well-known and easily checked that this is a Kan complex (see e.g., \cite[1.2.2.3]{Lurie:2009fk}).
  
  The \emph{left hom-space}
  $\Hom_{\qA}^l(a,b)$ is defined to be the Kan complex
  $(\Hom_{\qA\op}^r(b,a))\op$.
\end{defn}

Importantly, the hom-Kan complexes of Definitions \ref{defn:qcat-comma-hom} and \ref{defn:l-r-homs} are equivalent:

\begin{prop}[relating hom-spaces]\label{prop:hom-space-comparison}
  Suppose that $a$ and $b$ are objects in a quasi-category $\qA$. Then there
  exists a canonical trivial cofibration\footnote{A map between Kan complexes is a trivial cofibration in the Quillen model structure if and only if it is a trivial cofibration in the Joyal model structure, which is the case just when it is a monomorphism and also an equivalence.}:
\[
    \xymatrix@R=0em@C=5em{
      \Hom^r_{\qA}(a,b)\ar@{^(->}[r]^{\sim} & \Hom_{\qA}(a,b)
    }
\]
\end{prop}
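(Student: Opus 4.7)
My plan is to construct the comparison map as a natural monomorphism induced by a canonical surjection of prisms onto simplices, then deduce the weak equivalence from the classical comparison between Joyal's over-slice and the pullback-based comma.

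The simplicial surjection $c_n \colon \Del^n \times \Del^1 \twoheadrightarrow \Del^{n+1}$, defined on vertices by $c_n(i,0) = i$ and $c_n(i,1) = n+1$, collapses the face $\Del^n \times \{1\}$ onto the terminal vertex. An $n$-simplex of $\Hom^r_{\qA}(a,b)$ is an $(n+1)$-simplex $x\colon\Del^{n+1} \to \qA$ with $x\cdot\face^{\fbv{n+1}} = b$ and $x\cdot\face^{n+1}$ the $n$-simplex degenerated at $a$; precomposition with $c_n$ yields a map $\Del^n \times \Del^1 \to \qA$ that is constant at $a$ on $\Del^n \times \{0\}$ and constant at $b$ on $\Del^n \times \{1\}$, hence, by the pullback in Definition~\ref{defn:qcat-comma-hom}, an $n$-simplex of $\Hom_{\qA}(a,b)$. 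These assignments assemble into a natural simplicial map. It is a monomorphism because each $c_n$ is a levelwise simplicial surjection---given a monotone $\beta\colon [k]\to[n+1]$, a preimage $k$-simplex of $\Del^n \times \Del^1$ sends $i$ to $(\min(\beta(i), n), 0)$ if $\beta(i) \leq n$ and to $(n,1)$ if $\beta(i) = n+1$---so that precomposition with $c_n$ is injective.

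For the weak equivalence, I would enlarge the comparison, realising it as the fibre over $a\colon 1 \to \qA$ of a canonical inclusion $\qA_{/b} \inc \qA \comma b$ of isofibrations over $\qA$, where $\qA_{/b}$ is Joyal's over-slice whose $n$-simplices are the $(n+1)$-simplices of $\qA$ with terminal vertex $b$, and the enlarged comparison is induced by the same collapse maps $c_n$. The classical over-slice-versus-comma comparison of Joyal (see \cite[\S 4.2.1]{Lurie:2009fk}) asserts that this inclusion is a trivial cofibration in the Joyal model structure on $\sSet$; since $\qA_{/b}$ and $\qA \comma b$ are both right fibrations over $\qA$, it is in fact fibrewise a weak equivalence of Kan complexes. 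Pulling back along $a\colon 1 \to \qA$ then yields the required trivial cofibration between the hom Kan complexes. The main technical obstacle is this slice-versus-comma comparison, which I expect the authors to invoke as a known classical result rather than reprove.
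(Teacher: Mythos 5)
Your construction of the comparison map agrees with the paper's: both precompose with the quotient maps $c_n = u^n \colon \Del^n\times\Del^1\to\Del^{n+1}$ (the paper packages this as the mate of a natural transformation $u\colon\Sigma\Rightarrow\Sigma^r$ between left adjoints $\Sigma\dashv\Hom$ and $\Sigma^r\dashv\Hom^r$), and both deduce injectivity from the fact that each $c_n$ is a levelwise epimorphism. Where you genuinely diverge is in proving the map is an equivalence. The paper stays with the cosimplicial objects $\Sigma\Del^\bullet$ and $\Sigma^r\Del^\bullet$: it cites that each $u^n\colon\Sigma\Del^n\to\Sigma^r\Del^n$ is a Joyal weak equivalence, checks Reedy cofibrancy of both cosimplicial objects via the unaugmentability criterion (Lemma \ref{lem:unaugmentable}), and then applies the Reedy/Ken Brown argument of Lemma \ref{lem:reedy-trick} to conclude that the induced map of hom-spaces is an equivalence. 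You instead globalise: you identify the map as the fibre over $a\colon 1\to\qA$ of the canonical monomorphism $\qA_{/b}\inc \qA\comma b$ over $\qA$, invoke Joyal--Lurie's slice-versus-fat-slice comparison (HTT \S 4.2.1) for the categorical equivalence, and use that both projections are right fibrations to pass to fibres. This is correct, and it is arguably the quicker route if one is willing to import that classical theorem; but note two costs. First, the step ``a categorical equivalence between right fibrations over $\qA$, compatible with the projections, is a fibrewise equivalence'' is itself a (standard, but not free) fact that deserves either a citation or the short argument that both projections are fibrant objects in the sliced Joyal model structure, so the map is a fibrewise homotopy equivalence over $\qA$, which pulls back along $a$; you also implicitly use that $\qA\comma b\tfib\qA$ is a right fibration. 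Second, the paper's more hands-on proof is not merely establishing this proposition: the adjunctions $\Sigma\dashv\Hom$, $\Sigma^r\dashv\Hom^r$, the natural transformation $u$, and its naturality are reused verbatim in Proposition \ref{prop:fun-to-r-hom}, Lemma \ref{lem:commuting-comparisons}, and the proof of Theorem \ref{thm:comprehension-on-homs}, so the Reedy-style route buys infrastructure your argument would still have to build separately.
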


\begin{proof}
  This is a standard result, to be found in \refI{lem:slice-equiv-comma} or \cite[\S 15.4]{Riehl:2014kx} for
  example. We recall just enough of the construction given there to support our
  arguments later.

  Suppose that $U$ is a simplicial set and consider the following pushouts:
  \begin{equation}\label{eq:10}
    \xymatrix@=2em{
      {U+U}\ar@{^(->}[d]\ar[r] &
      {\Del^0+\Del^0}\ar@{^(->}[d] \\
      {U\times\Del^1}\ar[r] & {\Sigma U}\poexcursion
    }\mkern50mu
    \xymatrix@=2em{
      {U}\ar@{^(->}[d]\ar[r] & {\Del^0}\ar@{^(->}[d] \\
      {U\join\Del^0}\ar[r] & {\Sigma^rU}\poexcursion
    }
  \end{equation}
  Both of these have exactly two $0$-simplices, which we shall call $-$ and $+$
  respectively, and in each all non-degenerate $1$-simplices have source $-$ and
  target $+$; consequently we may regard them as being objects $\langle {-,+}
  \rangle \colon\Del^0+\Del^0\inc \Sigma U$ and $\langle {-,+} \rangle \colon \Del^0 +
  \Del^0 \inc \Sigma^rU$ in the slice category $\prescript{\Del^0+\Del^0/}{}{\sSet}$.
  These constructions are functorial in $U$ in an obvious way, giving us a pair of
  functors $\Sigma,\Sigma^r\colon\sSet\to\prescript{\Del^0+\Del^0/}{}{\sSet}$ which preserve
  all  colimits.

  Our interest in the functors $\Sigma$ and $\Sigma^r$ lies in their relationship to the
  hom-space and right hom-space constructions. To be specific, there exists a
  pair of adjunctions
  \begin{equation*}
    \xymatrix@R=0em@C=6em{
      \prescript{\Del^0+\Del^0/}{}{\sSet} \ar@/_2ex/[]!R(0.5);[r]_-{\Hom}
      \ar@{}[r]|-{\textstyle\bot} & {\sSet}\ar@/_2ex/[l]!R(0.5)_-{\Sigma}
    }\mkern30mu
    \xymatrix@R=0em@C=6em{
      \prescript{\Del^0+\Del^0/}{}{\sSet} \ar@/_2ex/[]!R(0.5);[r]_-{\Hom^r}
      \ar@{}[r]|-{\textstyle\bot} & {\sSet}\ar@/_2ex/[l]!R(0.5)_-{\Sigma^r}
    }
  \end{equation*}
  whose right adjoints carry an object $\langle {a,b} \rangle\colon
  \Del^0+\Del^0\to\qA$ to the respective hom-spaces $\Hom_{\qA}(a,b)$ and
  $\Hom^r_{\qA}(a,b)$  introduced in Definitions~\ref{defn:qcat-comma-hom}
  and~\ref{defn:l-r-homs}. It follows that we may define the natural comparison
  $\Hom_{\qA}^r(a,b)\to\Hom_{\qA}(a,b)$ sought in the statement by taking the
mate of a natural transformation
  \begin{equation*}
    \xymatrix@R=0em@C=8em{
      {\sSet}\ar@/^2ex/[r]!L(0.5)^{\Sigma}_{}="one"
      \ar@/_2ex/[r]!L(0.5)_{\Sigma^r}^{}="two" &
      {\prescript{\Del^0+\Del^0/}{}{\sSet}}
      \ar@{} "one";"two" \ar@{=>}  ?(0.25);?(0.75)^{u}
    }
  \end{equation*}
  under those adjunctions. Observe, however, that the functors $\Sigma$ and
  $\Sigma^r$ are left Kan extensions of their restrictions along the Yoneda
  embedding $\Del^\bullet \colon \Del \to \sSet$. So it suffices to specify 
  the natural transformation $u$ on those restrictions and to extend it to all
  simplicial sets using the universal property of $\Sigma$ as a left Kan
  extension. With that in mind observe that there exists an order preserving map
  $u^n\colon[n]\times[1]\to[n+1]$ which maps $(i,0)\mapsto i$ and $(i,1)\mapsto
  n+1$, and this is clearly natural in $[n]\in\Del$. Taking nerves we get maps
  $u^n\colon\Del^n\times\Del^1 \to \Del^n\join\Del^0$ which pass to quotients to
  give a family of maps $u^n\colon \Sigma\Del^n\to \Sigma^r\Del^n$ natural in
  $[n]\in\Del$ as displayed below:
  \[
    \xymatrix{
      \Del^n \times \Del^1 \ar@{->>}[d]_{u^n} \ar@{->>}[r] &
      \Sigma\Del^n \ar@{-->}[d]^{u^n} \\
      \Del^n \join\Del^0 \ar@{->>}[r] & \Sigma^r\Del^n}
  \]
  Note that each of the solid-arrow maps in this defining diagram are
  epimorphisms, and thus so is the induced map $u^n \colon\Sigma\Del^n\to
  \Sigma^r\Del^n$.
  
  By definition, $n$-simplices in $\Hom_{\qA}(a,b)$ (resp.\ $\Hom_{\qA}^r(a,b)$)
  correspond to maps from $\Sigma\Del^n$ (resp.\ $\Sigma^r\Del^n$) to the object
  $\langle {a,b} \rangle\colon\Del^0+\Del^0\to\qA$ in
  $\prescript{\Del^0+\Del^0/}{}{\sSet}$. Furthermore, the corresponding
  component $\hat{u}^{\langle {a,b}
    \rangle}\colon\Hom^r_{\qA}(a,b)\to\Hom_{\qA}(a,b)$ of the mate
  $\hat{u}\colon\Hom^r\Rightarrow\Hom$ of $u\colon\Sigma\Rightarrow\Sigma^r$
  acts on $n$-simplices $x\colon\Sigma^r\Del^n\to\qA$ by precomposing with
  $u^n\colon\Sigma\Del^n\to\Sigma^r\Del^n$; since these maps $u^n$ are
  epimorphisms, the comparison $\Hom^r_{\qA}(a,b)\inc\Hom_{\qA}(a,b)$ is
  injective. What is more, \cite[15.4.7]{Riehl:2014kx} proves that each
  $u^n\colon\Sigma\Del^n\to\Sigma^r\Del^n$ for $n \geq 0$ is a weak equivalence
  in the Joyal model structure.
  
  We have argued that the natural transformation $u\colon\Sigma \Rightarrow
  \Sigma^r \colon \Del \to \prescript{\Del^0+\Del^0/}{}{\sSet}$ is a pointwise
  Joyal weak equivalence. Moreover Lemma \ref{lem:unaugmentable} implies that
  both cosimplicial objects are Reedy cofibrant. Consequently, the following
  lemma establishes that $\hat{u}^{\langle {a,b} \rangle} \colon
  \Hom^r_{\qA}(a,b)\inc\Hom_{\qA}(a,b)$ is a trivial cofibration of Kan
  complexes as required.
\end{proof}

\begin{lem}\label{lem:reedy-trick}
  Suppose that $\lcat{M}$ is a model category, that $\alpha^\bullet\colon
  X^\bullet\to Y^\bullet$ is a map of cosimplicial objects
  $X^\bullet,Y^\bullet\colon\Del\to\lcat{M}$, and that $A$ is a fibrant object
  in $\lcat{M}$. On applying the representable functor $\Hom_{\lcat{M}}(-,A)$ we
  obtain a map of simplicial sets \[\Hom_{\lcat{M}}(\alpha^\bullet,A)\colon
    \Hom_{\lcat{M}}(Y^\bullet,A) \to \Hom_{\lcat{M}}(X^\bullet,A)\] which is:
  \begin{enumerate}[label=(\roman*)]
  \item\label{item:lem:reedy-trick:1} A trivial fibration of simplicial sets
    whenever $\alpha^\bullet$ is a Reedy trivial cofibration, and
  \item\label{item:lem:reedy-trick:2} A weak equivalence in the Joyal (or
    Quillen) model structure whenever $\alpha^\bullet$ is a pointwise weak
    equivalence and $X^\bullet$ and $Y^\bullet$ are Reedy cofibrant.
  \end{enumerate}
\end{lem}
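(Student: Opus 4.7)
Both parts will follow from standard Reedy category theory once I identify the matching objects of the simplicial set $\Hom_{\lcat{M}}(X^\bullet, A)$, which has $n$-simplices the maps $X^n \to A$ in $\lcat{M}$ with faces and degeneracies inherited from the cosimplicial structure of $X^\bullet$. Specifically, I would first verify that the canonical comparison into the $n^{\text{th}}$ matching object of $\Hom_{\lcat{M}}(X^\bullet,A)$ is isomorphic to the map
\[ \Hom_{\lcat{M}}(X^n, A) \longrightarrow \Hom_{\lcat{M}}(L^n X, A) \]
induced by the latching map $L^n X \to X^n$ of the cosimplicial object $X^\bullet$. This follows from a direct check: the data of a compatible boundary of an $n$-simplex in $\Hom_{\lcat{M}}(X^\bullet, A)$ is an $(n+1)$-tuple of maps $X^{n-1} \to A$ satisfying precisely the cosimplicial coface identities that present $L^n X$ as a colimit.

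For part \ref{item:lem:reedy-trick:1}, a lifting problem of $\Hom_{\lcat{M}}(\alpha^\bullet, A)$ against a boundary inclusion $\boundary\Del^n \hookrightarrow \Del^n$ then unpacks, via the pushout combining the latching data for $X^\bullet$ and $Y^\bullet$, as an extension problem in $\lcat{M}$ along the relative latching map
\[ X^n \cup_{L^n X} L^n Y \longrightarrow Y^n \]
of $\alpha^\bullet$, with codomain $A$. The hypothesis that $\alpha^\bullet$ is a Reedy trivial cofibration says exactly that all such relative latching maps are trivial cofibrations in $\lcat{M}$; since $A$ is fibrant, the required extension exists. As the boundary inclusions $\boundary\Del^n \hookrightarrow \Del^n$ generate the class of monomorphisms under transfinite composition and pushout, this suffices to conclude that $\Hom_{\lcat{M}}(\alpha^\bullet, A)$ is a trivial fibration of simplicial sets.

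For part \ref{item:lem:reedy-trick:2}, I would apply Ken Brown's lemma in its contravariant form to the functor $\Hom_{\lcat{M}}(-, A) \colon \lcat{M}^\Del \to \sSet$, with $\lcat{M}^\Del$ equipped with its Reedy model structure. In that structure, the weak equivalences are exactly the pointwise weak equivalences, so it suffices to show that $\Hom_{\lcat{M}}(-, A)$ carries Reedy trivial cofibrations between Reedy cofibrant objects to weak equivalences in $\sSet$. Part \ref{item:lem:reedy-trick:1} furnishes this, since trivial fibrations of simplicial sets are weak equivalences in both the Joyal and Quillen model structures. Ken Brown's lemma then extends the conclusion to all Reedy --- equivalently, pointwise --- weak equivalences between Reedy cofibrant objects, as required.

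The main technical obstacle is the matching object identification, which demands a careful matching of the cosimplicial face identities against the defining colimit diagram for $L^n X$; once that is in hand, the remaining arguments are pure model-categorical bookkeeping.
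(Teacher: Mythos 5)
Your proof is correct and follows essentially the same route as the paper: for \ref{item:lem:reedy-trick:1} you dualise the relative latching maps of $\alpha^\bullet$ into the relative matching maps of $\Hom_{\lcat{M}}(\alpha^\bullet,A)$ (your unpacking of the lifting problem against $\boundary\Del^n\inc\Del^n$ into an extension along $X^n\cup_{L^nX}L^nY\to Y^n$ with fibrant codomain $A$ is exactly this), and for \ref{item:lem:reedy-trick:2} you invoke Ken Brown's lemma for the contravariant functor $\Hom_{\lcat{M}}(-,A)$ on the Reedy model structure, just as the paper does. The only difference is that you spell out the matching-object identification explicitly where the paper simply cites the colimit-to-limit duality, which is a harmless elaboration.
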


This result follows by a standard Reedy model category theoretic argument, the
reader may wish to consult~\cite{RiehlVerity:2013kx} for the required background
theory. Also see \cite[\S 15.4]{Riehl:2014kx} for further details.

\begin{proof}
  To prove~\ref{item:lem:reedy-trick:1} we assume that $\alpha^\bullet$ is a
  Reedy trivial cofibration, i.e., that its relative latching maps are all
  trivial cofibrations. The functor $\Hom_{\lcat{M}}(-,A)$ carries colimits to
  limits, so it carries the relative latching maps of $\alpha^\bullet$ to the
  relative matching maps of $\Hom_{\lcat{M}}(\alpha^\bullet,A)$, since the
  former are defined in terms of certain colimits and the latter in terms of
  dual limit constructions. What is more, the assumption that $A$ is fibrant is
  equivalent to postulating that $\Hom_{\lcat{M}}(-,A)$ carries trivial
  cofibrations to surjections of sets; so on combining these observations we
  find that the relative matching maps of $\Hom_{\lcat{M}}(\alpha^\bullet,A)$
  are surjections. That, in turn, is equivalent to saying that
  $\Hom_{\lcat{M}}(\alpha^\bullet,A)$ has the right lifting property with
  respect to each $\boundary\Del^n\inc\Del^n$ and thus that it is a trivial
  fibration as required. Result~\ref{item:lem:reedy-trick:2} now follows
  from~\ref{item:lem:reedy-trick:1} by Ken Brown's lemma \cite{Brown:1973zl}.
\end{proof}

\begin{prop}[relating hom-spaces and function complexes]\label{prop:fun-to-r-hom}
  Suppose that $\eC$ is a Kan-complex enriched category and that the
  quasi-category $\qC$ is its homotopy coherent nerve. For each pair of
  objects $A,B\in\eC$ there exists a canonical trivial cofibration:
\[
    \xymatrix@R=0em@C=5em{
      \Fun_{\eC}(A,B)\ar@{^(->}[r]^{\sim} & \Hom^r_{\qC}(A,B)
    }
  \]
\end{prop}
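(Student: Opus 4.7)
The plan is to mirror the strategy of Proposition~\ref{prop:hom-space-comparison}. The natural transformation $t\colon\gC[-\join\Del^0]\Rightarrow\cattwo[-]$ of Observation~\ref{obs:lax-to-suspension} has the property that each component $t^U$ sends the simplicial subcomputad $\gC U\inc\gC[U\join\Del^0]$ constantly to the object $-$ of $\cattwo[U]$, so it factors through the pushout
\[
\gC\Sigma^r U \defeq \Del^0 \coprod_{\gC U} \gC[U\join\Del^0]
\]
to yield a natural simplicial functor $\bar t^U\colon \gC\Sigma^r U \to \cattwo[U]$ in $\prescript{\Del^0+\Del^0/}{}{\sCat}$. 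The function-complex right adjoint of $\cattwo[-]$ from~\eqref{eq:cattwo-fun-adjunction} evaluated at the pointed simplicial category $\langle A,B\rangle\colon\Del^0+\Del^0\to\eC$ is $\Fun_{\eC}(A,B)$, and an unwinding of the adjunction $\gC\dashv\hN$ identifies the corresponding right adjoint of $\gC\Sigma^r(-)$ at that same object with $\Hom^r_{\hN\eC}(A,B)$. Taking the mate of $\bar t$ produces the desired natural comparison: concretely, an $n$-simplex $f\colon\Del^n\to\Fun_{\eC}(A,B)$, transposed to $\hat f\colon\cattwo[\Del^n]\to\eC$, is carried to the $n$-simplex $\hat f\circ\bar t^{\Del^n}\colon\gC\Sigma^r\Del^n\to\eC$ of $\Hom^r_{\hN\eC}(A,B)$.

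For the monomorphism claim, I would observe that precomposition by $\bar t^{\Del^n}$ is injective iff $\bar t^{\Del^n}$ is epimorphic in $\prescript{\Del^0+\Del^0/}{}{\sCat}$; since both source and target share the object set $\{-,+\}$ and their only non-trivial function complex is $\Fun(-,+)$, this reduces to surjectivity of $\Fun_{\gC\Sigma^r\Del^n}(-,+)\to\Del^n$. Using the explicit formula $t(T)=\max(T\setminus\{n+1\})$ of Observation~\ref{obs:lax-to-suspension}, each $r$-simplex $\alpha\colon[r]\to[n]$ is hit by the $r$-arrow $T^\bullet\in\Fun_{\gC\Del^{n+1}}(0,n+1)$ with $T^j\defeq\{0,\alpha(0),\alpha(1),\ldots,\alpha(j),n+1\}$. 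For the weak-equivalence claim I would then apply Lemma~\ref{lem:reedy-trick} to $\bar t$, regarded as a natural transformation between the cosimplicial objects $\gC\Sigma^r\Del^\bullet$ and $\cattwo[\Del^\bullet]$ in the slice Bergner model structure on $\prescript{\Del^0+\Del^0/}{}{\sCat}$, evaluated at the Bergner-fibrant pointed simplicial category $\eC_{\langle A,B\rangle}$. Reedy cofibrancy of both cosimplicial objects is inherited from the Reedy cofibrancy of the Yoneda embedding $\Del^\bullet$ by cocontinuity of $\cattwo[-]$ and $\gC\Sigma^r(-)$, the latter combined with Lemma~\ref{lem:hocoh-simplex-reedy-cofibrant}.

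The hard part will be verifying the pointwise weak-equivalence hypothesis, that is, that each $\bar t^n\colon\gC\Sigma^r\Del^n\to\cattwo[\Del^n]$ is a Bergner weak equivalence. Because both simplicial categories share the object set $\{-,+\}$ and $\bar t^n$ is the identity on objects, this reduces to showing that the induced map $\Fun_{\gC\Sigma^r\Del^n}(-,+)\to\Del^n$ is a weak homotopy equivalence, equivalently, that its domain is weakly contractible. The cleanest route appears to be a 2-of-3 argument: the left Quillen functor $\gC\colon\sSet_{\mathrm{Joyal}}\to\sCat_{\mathrm{Bergner}}$ sends the Joyal weak equivalence $u^n\colon\Sigma\Del^n\to\Sigma^r\Del^n$ established in the proof of Proposition~\ref{prop:hom-space-comparison} to a Bergner weak equivalence $\gC u^n\colon\gC\Sigma\Del^n\to\gC\Sigma^r\Del^n$, so it suffices to verify that the composite $\gC\Sigma\Del^n\to\cattwo[\Del^n]$ is a Bergner weak equivalence. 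This last check can be carried out by producing an explicit deformation of $\Fun_{\gC\Sigma\Del^n}(-,+)$ onto a point, a calculation tractable from the pushout presentation of $\Sigma\Del^n$ as the two-point quotient of $\Del^n\times\Del^1$.
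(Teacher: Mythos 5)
Your construction of the comparison map and your injectivity argument are the paper's own: your $\bar t^U$ is precisely the functor $w^U$ that the paper induces from $t^U$ of Observation~\ref{obs:lax-to-suspension} via the pushout \eqref{eq:10} defining $\Sigma^r U$ (checking the factorisation on representables and Kan extending), the mate under $\cattwo[-]\dashv\Fun$ and $\gC\circ\Sigma^r\dashv\Hom^r\circ\hN$ is the same transposition, and injectivity via surjectivity of $\Fun_{\gC[\Sigma^r\Del^n]}(-,+)\to\Del^n$ is the paper's ``$w^n$ is an epimorphism'' made explicit. Where you genuinely diverge is the equivalence step. The paper stays in $\sSet$: it applies Lemma~\ref{lem:reedy-trick} to the map of cosimplicial simplicial sets $\Fun_{\gC[\Sigma^r\Del^\bullet]}(-,+)\to\Del^\bullet$ against the Kan complex $\Fun_{\eC}(A,B)$, using Reedy cofibrancy from Lemma~\ref{lem:unaugmentable} and pointwise contractibility of these hom-spaces. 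You instead apply Lemma~\ref{lem:reedy-trick} in the sliced Bergner model structure on $\prescript{\catone+\catone/}{}{\sCat}$; this is legitimate (the latching maps of $\cattwo[\Del^\bullet]$ are the generating cofibrations $\cattwo[\partial\Del^n]\inc\cattwo[\Del^n]$, those of $\gC[\Sigma^r\Del^\bullet]$ are subcomputad inclusions by Lemma~\ref{lem:subcomputad-inclusion} since $\Sigma^r$ preserves monomorphisms, and $\eC$ is Bergner fibrant because it is Kan complex enriched), and your reduction of the pointwise hypothesis to weak contractibility of $\Fun_{\gC[\Sigma^r\Del^n]}(-,+)$ is correct, since $\bar t^n$ is the identity on objects and the only nontrivial hom is the one from $-$ to $+$.

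The step I would push back on is your final detour. Having reduced to contractibility of $\Fun_{\gC[\Sigma^r\Del^n]}(-,+)$, you route through $\gC u^n$ (valid: $u^n$ is a Joyal weak equivalence by the proof of Proposition~\ref{prop:hom-space-comparison}, every simplicial set is cofibrant, and $\gC$ is left Quillen from the Joyal to the Bergner model structure, an external input the paper does not need) in order to replace it by contractibility of $\Fun_{\gC[\Sigma\Del^n]}(-,+)$, which you describe as a tractable explicit deformation. That trade goes in the wrong direction: $\gC[\Sigma\Del^n]$ is built from $\gC[\Del^n\times\Del^1]$, whose function complexes are governed by necklace combinatorics and admit no simple closed form, so the promised deformation is essentially the nontrivial Dugger--Spivak-type comparison rather than a routine calculation. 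By contrast, $\Fun_{\gC[\Sigma^r\Del^n]}(-,+)$ is a quotient of the single cube $\Fun_{\gC\Del^{n+1}}(0,n+1)\cong\Cube^n$ of Observation~\ref{obs:hcsimp-cubes} (collapse the arrows lying in $\gC\Del^n$), and its contractibility is the standard fact the paper invokes. So your outline is sound, but the piece you defer as ``tractable'' is the real content; verify (or cite) contractibility of $\Fun_{\gC[\Sigma^r\Del^n]}(-,+)$ directly, at which point you can also drop the Bergner-level machinery and the left-Quillen input and revert to the simpler application of Lemma~\ref{lem:reedy-trick} in $\sSet$.
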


\begin{proof}
  To construct the comparison of the statement, we compare the adjunction
  $\cattwo[-]\dashv\Fun$ of~\eqref{eq:cattwo-fun-adjunction} to that obtained by
  composing the adjunction $\Sigma^r\dashv \Hom^r$ discussed in the proof of
  Proposition \ref{prop:hom-space-comparison} with the adjunction between
  homotopy coherent nerve and realisation $\gC\dashv\hN$:
  \begin{equation}\label{eq:composite-quillen-adj}
    \xymatrix@R=0em@C=6em{
      \prescript{\catone+\catone/}{}{\sCat} \ar@/_2ex/[]!R(0.5);[r]_-{\Hom^r\circ\hN}
      \ar@{}[r]|-{\textstyle\bot} & {\sSet}\ar@/_2ex/[l]!R(0.5)_-{\gC\circ\Sigma^r}
    }
  \end{equation}
  We shall construct the natural inclusion of the statement as the mate of a
  natural transformation $w\colon \gC\circ\Sigma^r\Rightarrow \cattwo[-]$ under
  these adjunctions. To that latter end, given a simplicial set $U$, consider the
  following diagram
  \begin{equation*}
    \xymatrix@=2em{
      {\gC[U]}\ar@{^(->}[d]\ar[r] &
      {\catone}\ar[d]\ar@/^2.5ex/[ddr]^{-} & \\
      {\gC[U\join\Del^0]}\ar[r]\ar@/_1.5ex/[drr]_{t^U} &
      {\gC[\Sigma^rU]}\poexcursion
      \ar@{-->}[dr]^-{w^U} & \\
      && {\cattwo[U]}
    }
  \end{equation*}
  in which the upper-left square is the pushout of simplicial computads obtained
  by applying the left adjoint coherent realisation functor to the defining
  pushout of $\Sigma^r U$ displayed in~\eqref{eq:10}. The lower-diagonal map
  $t^U$ is an instance of the natural comparison constructed in
  Observation~\ref{obs:lax-to-suspension}. To check that the outer quadrilateral
  commutes it is enough to do so for representables $U=\Del^n$, a trivial task
  that follows directly from the explicit description given in
  Observation~\ref{obs:lax-to-suspension}, and then use the uniqueness property
  of natural transformations induced by Kan extensions to extend to all
  simplicial sets. Now apply the universal property of the upper-left pushout to
  induce the dashed comparison $w^U$, which inherits naturality in $U$ from that
  of $t^U$.
  
  Notice that $n$-simplices of $\Fun_{\eC}(A,B)$ (resp.\ $\Hom^r_{\qC}(A,B)$)
  correspond to simplicial functors from $\cattwo[\Del^n]$ (resp.\
  $\gC[\Sigma^r\Del^n]$) to the object $\langle A,B
  \rangle\colon\catone+\catone\to \eC$ in
  $\prescript{\catone+\catone/}{}{\sCat}$. Furthermore, the corresponding
  component $\Fun_{\eC}(A,B)\to\Hom^r_{\qC}(A,B)$ of the mate of
  $w\colon\gC\circ\Sigma^r \Rightarrow\cattwo[-]$ acts on $n$-simplices
  $x\colon\cattwo[\Del^n]\to\eC$ by precomposition with
  $w^n\colon\gC[\Sigma^r\Del^n]\to \cattwo[\Del^n]$. Since the maps $w^n$ are
  epimorphisms, the comparison $\Fun_{\eC}(A,B)\inc\Hom^r_{\qC}(A,B)$ is
  injective. Finally to see that this is an equivalence, observe that the action
  of the simplicial functors $w^n\colon\gC[\Sigma^r\Del^n]\to\cattwo[\Del^n]$ on
  function complexes from $-$ to $+$ provides a map $w^\bullet$ of cosimplicial
  spaces from $\Fun_{\gC[\Sigma^r\Del^\bullet]}(-,+)$ to $\Del^\bullet$. Both of
  these cosimplicial objects are Reedy cofibrant, by
  Lemma~\ref{lem:unaugmentable} (or see \cite[14.3.9,15.4.6]{Riehl:2014kx}), and
  pointwise contractible, from which it follows that $w^\bullet$ is a pointwise
  weak equivalence. So we may apply Lemma~\ref{lem:reedy-trick} to complete our
  proof.
\end{proof}

Combining Propositions \ref{prop:hom-space-comparison} and
\ref{prop:fun-to-r-hom}, we conclude:

\begin{cor}\label{cor:fun-to-hom} 
  Suppose that $\eC$ is a Kan-complex enriched category and that the
  quasi-category $\qC$ is its homotopy coherent nerve. For each pair of
  objects $A,B\in\eC$ there exists a canonical trivial cofibration in the Joyal model structure:
  \[
    \xymatrix@R=0em@C=5em{
      \Fun_{\eC}(A,B)\ar@{^(->}[r]^{\sim} & \Hom_{\qC}(A,B)
    }
  \]
\end{cor}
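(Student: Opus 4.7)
The plan is to obtain the advertised map simply as the composite of the two natural inclusions established in the preceding propositions. That is, the canonical trivial cofibration of the statement will be defined as
\[
\Fun_{\eC}(A,B) \xhookrightarrow{\;\sim\;} \Hom^r_{\qC}(A,B) \xhookrightarrow{\;\sim\;} \Hom_{\qC}(A,B),
\]
where the first map is the natural comparison of Proposition \ref{prop:fun-to-r-hom} (applied to the objects $A, B$ in the Kan-complex enriched category $\eC$, whose homotopy coherent nerve is $\qC$) and the second is the comparison of Proposition \ref{prop:hom-space-comparison} (applied to the objects $A, B$ of the quasi-category $\qC$).

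Having made this definition, the only thing left to verify is that this composite is indeed a trivial cofibration of Kan complexes in the Joyal model structure. Both constituent maps are monomorphisms, and monomorphisms are stable under composition, so the composite is a monomorphism. Both are weak equivalences in the Joyal model structure between Kan complexes, and weak equivalences compose by the 2-of-3 property, so the composite is a weak equivalence. Since every monomorphism between Kan complexes which is a Joyal weak equivalence is automatically a trivial cofibration (and moreover coincides with a trivial cofibration in the Quillen model structure, as noted in the footnote accompanying Proposition \ref{prop:hom-space-comparison}), we conclude that the displayed composite is a trivial cofibration as required.

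There is essentially no obstacle here: the work has already been carried out in Propositions \ref{prop:hom-space-comparison} and \ref{prop:fun-to-r-hom}, and the corollary is purely a matter of assembling these two results. The only point worth flagging in the writeup is that the composite is genuinely canonical, since each of its factors was constructed naturally in the data $\langle A,B\rangle \colon \catone + \catone \to \eC$ as the mate of a canonical natural transformation under a standard adjunction.
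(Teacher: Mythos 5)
Your proposal is correct and is exactly the paper's argument: the corollary is obtained by composing the trivial cofibrations of Propositions \ref{prop:fun-to-r-hom} and \ref{prop:hom-space-comparison}, and trivial cofibrations (monomorphisms that are Joyal weak equivalences) are closed under composition. The only superfluous point is the appeal to the Kan complex hypothesis in the last step, since in the Joyal model structure every monomorphism is a cofibration, so any monomorphic weak equivalence is automatically a trivial cofibration.
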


\subsection{Computing the action of comprehension functors on hom-spaces}\label{ssec:computing-homs}

\begin{defn}[external action of internal homs on fibres]\label{defn:external-action}
  Returning to our running context, let $p\colon E\tfib B$ be a cocartesian
  fibration in our cosmos $\eK$ and fix two objects $a$ and $b$ in the
  underlying quasi-category $\qB=\Fun_{\eK}(1,B)$. Consider the following
  cubical diagram:
  \begin{equation}\label{eq:internal-external}
    \xymatrix@!=1.1em{
      {a\comma b\times E_{a}} \ar[rr]^-{\pi_0} \ar@{.>}[dr]_{m_{a,b}\mkern-8mu}
      \ar@/_2ex/@{.>}[drrr]!L+U(0.5) \ar@{->>}[dd]_{\pi_1} & &
      {E_{a}} \ar@{}[dddr]|(.15){\displaystyle\lrcorner}  \ar[dr]^{E_{\hat{a}}} \ar@{->>}'[d][dd] \\
      & {E_{b}}  \ar@{}[ddrr]|(.15){\displaystyle\lrcorner} 
      \ar@{->>}[dd] \ar[rr]_(0.4){E_{\hat{b}}} & &
      {E} \ar@{->>}[dd]^{p} \\
      {a\comma b} \ar[dr] \ar'[r][rr] & &
      {1} \ar[dr]^(0.4){a} \\
      & {1} \ar[rr]_{b} & &
      {B}
      \ar@{}"1,3";"2,2"\ar@{=>} ?(0.3);?(0.6)_{\chi}
      \ar@{}"3,3";"4,2"\ar@{=>} ?(0.35);?(0.65)_{\phi}
    }
  \end{equation}
  The square on the bottom of this cube is the universal cone displaying the
  comma object $a\comma b$ and those at the front and right hand sides are the
  pullback squares used to define the action of the comprehension functor
  $c_p\colon \qB\to\qK$ on the objects $a$ and $b$. The arrow $\chi$ is chosen
  to be a $p$-cocartesian lift for $\phi$, whose codomain $0$-arrow factors
  through $E_{b}$ under the universal property of the front pullback square to
  give the $0$-arrow $m_{a,b}$.

  By analogy with Definition \ref{defn:qcat-comma-hom}, it is natural to think
  of $a\comma b$ as being the \emph{internal hom\/} of $B$ between the objects
  $a$ and $b$ and to regard $m_{a,b}\colon a\comma b\times E_{a}\to E_{b}$ as
  its \emph{action on fibres\/} of the cocartesian fibration $p\colon E\tfib B$.
  This action of the internal hom on fibres gives rise to a functor:
  \begin{equation*}
    \tilde{m}_{a,b}\defeq\mkern20mu
    \xymatrix@R=0em@C=4em{
      {\Fun_{\eK}(1,a\comma b)}\ar[r]^-{-\times E_a} &
      {\Fun_{\eK}(E_a,a\comma b \times E_a)}\ar[rr]^-{\Fun_{\eK}(E_a,m_{a,b})} &&
      {\Fun_{\eK}(E_a,E_b)}
    }
  \end{equation*}
  Note that the universal property of $a\comma b$ provides a
  canonical isomorphism \[\Fun_{\eK}(1,a\comma b)\cong\Fun_{\eK}(1,a)\comma
  \Fun_{\eK}(1,b) \cong \Hom_{\qB}(a,b),\] the latter isomorphism by Definition \ref{defn:qcat-comma-hom}. Since the domain of $\tilde{m}_{a,b}$ is a Kan complex, it factors through the inclusion $\Fun_{g_*\eK}(E_a,E_b) \inc \Fun_{\eK}(E_a,E_b)$ of the groupoidal core of the codomain. We refer to the functor 
  \[\tilde{m}_{a,b}\colon\Hom_{\qB}(a,b) \to\Fun_{g_*\eK}(E_a,E_b)\] derived from \eqref{eq:internal-external} as the  \emph{external action} of the hom-space $\Hom_{\qB}(a,b)$ on fibres of
  $p\colon E\tfib B$. 
  \end{defn}
  
  To understand how the  external action relates to the action of the comprehension functor
  $c_p\colon\qB\to\qK$ we require various auxiliary structures:

\begin{defn}\label{defn:5}
  Given a simplicial set $U$, let $\catthree[U]$ denote the
  simplicial computad with three objects $-$, $+$, and $\top$, non-trivial
  function complexes given by
  \begin{equation*}
    \Fun_{\catthree[U]}(-,+)\defeq U
    \mkern40mu \Fun_{\catthree[U]}(+,\top)\defeq \Del^0
  \end{equation*}
  and the pushout
  \begin{equation*}
    \xymatrix@C=4em@R=2em{
      {U\times\Del^{\fbv{0}}}\ar@{^(->}[r]\ar[d]_{!} &
      {U\times\Del^1}\ar[d]^{q^U} \\
      {\Del^0}\ar@{^(->}[r] & {\Fun_{\catthree[U]}(-,\top)}\poexcursion
    }
  \end{equation*}
  and composition between those given by the inclusion:
  \begin{equation*}
    \Fun_{\catthree[U]}(-,+)\times\Fun_{\catthree[U]}(+,\bot)\cong
    U\times\Del^{\fbv{1}}\xrightincarrow{\mkern60mu}
    U\times\Del^1\xrightarrow{\mkern20mu q^U\mkern20mu}
    \Fun_{\catthree[U]}(-,\bot)
  \end{equation*}
  More concretely, $\catthree[U]$ has following arrows:
  \begin{itemize}
  \item  a $0$-arrow $i_{+}\in\Fun_{\catthree[U]}(+,\top)$ along with its degenerated images,
    which we shall also call $i_{+}$,
  \item an $r$-arrow $u\in\Fun_{\catthree[U]}(-,+)$ corresponding to each
    $r$-simplex $u\in U$, and
  \item an $r$-arrow $[u,\tau]\in\Fun_{\catthree[U]}(-,\top)$ corresponding to
    each equivalence class of $r$-simplices $(u,\tau)\in U\times\Del^1$ under
    the equivalence relation given by:
    \begin{equation*}
      (u,\tau)\sim(u',\tau')\mkern10mu\text{if and only if}\mkern10mu
      (u,\tau) = (u',\tau') \text{ or } \tau=\tau'=0.
    \end{equation*}
    All but one of these equivalence classes contains a single $r$-simplex,
    we shall adopt the notation $i_{-}$ for the $r$-arrow given by that unique
    non-singleton class $[u,0]$.
  \end{itemize}
  The simplicial actions on these arrows are induced by those on $U$ and
  $U\times\Del^1$ and the composition operation between these function complexes is
  given by $u\circ i_{+} = [u,1]$.
  
  We may extend this construction to a functor $\catthree[-]\colon\sSet\to
  \sCptd$ in the manifest way. This gives rise to an induced functor
  $\catthree[-]\colon\sSet\cong \prescript{\emptyset/}{}{\sSet}\to
  \prescript{\catthree[\emptyset]/}{}{\sCptd}$ of slices which, as one may
  easily verify, possesses a right adjoint and thus preserves all 
  colimits. We shall identify the simplicial category $\cattwo[U]$ of
Definition \ref{defn:cattwo-fun-adjunction} with the full simplicial subcategory of $\catthree[U]$
  spanned by the objects $-$ and $+$.
\end{defn}

\begin{obs}\label{obs:catthree-data}
  A simplicial functor $\ell^C\colon\catthree[U]\to\eC$ may be specified by giving
  the following data:
  \begin{itemize}
  \item three objects $C_{-}, C_{+},$ and $C$ in $\eC$,
  \item a pair of $0$-arrows $\ell^C_{-}\colon C_{-}\to C$ and $\ell^C_{+}\colon
    C_+\to C$ in $\eC$,
  \item a pair of simplicial maps $C_\bullet\colon U\to\Fun_{\eC}(C_{-},C_{+})$ and
    $\ell_\bullet^C\colon U\times\Del^1\to\Fun_{\eC}(C_{-},C)$ making the following
    squares commute:
    \begin{equation}\label{eq:20}
      \xymatrix@R=2em@C=5em{
        {U\times\Del^{\fbv{0}}}\ar@{^(->}[r]\ar[d]_{!} &
        {U\times\Del^1}\ar[d]_{\ell^C_\bullet} &
        {U\times\Del^{\fbv{1}}}\ar@{_(->}[l]\ar[d]^{C_\bullet} \\
        {\Del^0}\ar[r]_-{\ell^C_{-}} &
        {\Fun_{\eC}(C_{-},C)} &
        {\Fun_{\eC}(C_{-},C_{+})}\ar[l]^-{\Fun_{\eC}(C_{-},\ell^C_{+})}
      }
    \end{equation}
  \end{itemize}

  Given two simplicial functors $\ell^C,\ell^D\colon\catthree[U]\to\eC$
  specified by the collections of data discussed in the last paragraph, a
  simplicial natural transformation $t\colon \ell^C\Rightarrow \ell^D$ may be
  presented as a triple of $0$-arrows $t_{-}\colon C_{-}\to D_{-}$, $t_{+}\colon
  C_{+}\to D_{+}$, and $t\colon C\to D$ satisfying the following naturality
  conditions:
  \begin{itemize}
  \item the square
    \begin{equation}\label{eq:21}
      \xymatrix@=2em{
        {C_{+}}\ar[r]^{\ell^C_{+}}\ar[d]_{t_{+}} &
        {C}\ar[d]^t \\
        {D_{+}}\ar[r]_{\ell^D_{+}} & {D}
      }
    \end{equation}
    commutes in $\eC$, and
  \item the following squares commute in $\sSet$:
    \begin{equation}\label{eq:23}
      \xymatrix@C=4.5em@R=2em{
        {U}\ar[r]^-{C_\bullet}\ar[d]_{D_\bullet} &
        {\Fun_{\eC}(C_{-},C_{+})}\ar[d]^{\Fun_{\eC}(C_{-},t_+)} \\
        {\Fun_{\eC}(D_{-},D_{+})}\ar[r]_-{\Fun_{eD}(t_{-},D_{+})} &
        {\Fun_{\eC}(C_{-},D_{+})}
      }
      \xymatrix@C=4.5em@R=2em{
        {U\times\Del^1}\ar[r]^-{\ell^C_\bullet}\ar[d]_{\ell^D_\bullet} &
        {\Fun_{\eC}(C_{-},C)}\ar[d]^{\Fun_{\eC}(C_{-},t)} \\
        {\Fun_{\eC}(D_{-},D)}\ar[r]_-{\Fun_{eD}(t_{-},D)} &
        {\Fun_{\eC}(C_{-},D)}
      }
    \end{equation}
  \end{itemize}
For later reference, note that last of these naturality conditions
  implies that the square
  \begin{equation}\label{eq:22}
    \xymatrix@=2em{
      {C_{-}}\ar[r]^{\ell^C_{-}}\ar[d]_{t_{-}} &
      {C}\ar[d]^t \\
      {D_{-}}\ar[r]_{\ell^D_{-}} & {D}
    }
  \end{equation}
  also commutes in $\eC$.
\end{obs}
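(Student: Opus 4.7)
The plan is to prove this observation by directly unpacking the explicit description of $\catthree[U]$ given in Definition~\ref{defn:5}, in two steps: first establishing the correspondence for simplicial functors, then for simplicial natural transformations. Both steps reduce to a chase through the universal property of the pushout defining $\Fun_{\catthree[U]}(-,\top)$ together with the single non-trivial composition operation in $\catthree[U]$.

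For the functor claim, I would begin by noting that a simplicial functor $\ell^C : \catthree[U] \to \eC$ is determined by specifying its image on objects (yielding $C_-, C_+, C$) and its action on each of the three non-trivial function complexes, subject to preservation of composition and identities. The actions on $\Fun_{\catthree[U]}(-,+) = U$ and $\Fun_{\catthree[U]}(+,\top) = \Del^0$ yield directly the simplicial map $C_\bullet$ and the $0$-arrow $\ell^C_+$. For the third, Definition~\ref{defn:5} presents $\Fun_{\catthree[U]}(-,\top)$ as a pushout, so the universal property transforms the datum of a single simplicial map into a pair consisting of a $0$-arrow $\ell^C_- \in \Fun_{\eC}(C_-, C)$ together with a simplicial map $\ell^C_\bullet : U \times \Del^1 \to \Fun_{\eC}(C_-, C)$ whose restriction along $U \times \Del^{\fbv{0}}$ is the constant map at $\ell^C_-$; this is exactly the left square of \eqref{eq:20}.

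The only non-trivial composition rule in $\catthree[U]$, namely $u \circ i_+ = [u,1]$, translates under this correspondence into the assertion that $\ell^C_\bullet$ restricts along $U \times \Del^{\fbv{1}}$ to the composite $\Fun_{\eC}(C_-, \ell^C_+) \circ C_\bullet$, giving the right square of \eqref{eq:20}. Identity preservation is automatic, since the only identities involved are the unique $0$-simplices in the hom-complexes at each of the three objects. For the natural transformation claim I would proceed analogously: the components $t_-, t_+, t$ are obtained by evaluation at the three objects, naturality on the atomic arrows in $U$ yields the left square of \eqref{eq:23}, naturality on $i_+$ yields \eqref{eq:21}, and by the pushout universal property, naturality over $\Fun_{\catthree[U]}(-,\top)$ reduces to the right square of \eqref{eq:23} together with \eqref{eq:22}. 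The square \eqref{eq:22} is then extracted as a consequence of the right square of \eqref{eq:23} by restricting along $U \times \Del^{\fbv{0}} \hookrightarrow U \times \Del^1$ and applying the left squares of \eqref{eq:20} for $\ell^C$ and $\ell^D$.

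This is largely a mechanical verification, so no serious obstacle arises. The one subtlety requiring care is the distinction between the two boundary conditions at $U \times \Del^{\fbv{0}}$ and $U \times \Del^{\fbv{1}}$: the former arises from the pushout structure on $\Fun_{\catthree[U]}(-,\top)$ and encodes the agreement with $\ell^C_-$, while the latter arises from composition with $i_+$ and encodes the agreement with $\ell^C_+ \circ C_\bullet$. Keeping these straight, and noting that no coherence beyond the stated two squares is imposed because $\catthree[U]$ has no other non-trivial compositions, completes the proof.
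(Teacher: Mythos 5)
Your proposal is correct and proceeds exactly as the paper intends: the paper states this as an Observation without proof, its content being precisely the routine unpacking of Definition \ref{defn:5} that you carry out --- the pushout presentation of $\Fun_{\catthree[U]}(-,\top)$ accounting for the pair $(\ell^C_-,\ell^C_\bullet)$ and the left square of \eqref{eq:20}, the sole non-trivial composition $u\circ i_+=[u,1]$ accounting for the right square, and the object-wise naturality conditions giving \eqref{eq:21} and \eqref{eq:23}. The only point worth noting is that your derivation of \eqref{eq:22} from the right-hand square of \eqref{eq:23} (like the paper's parallel assertion) implicitly assumes $U$ is nonempty, since it proceeds by restricting along $U\times\Del^{\fbv{0}}\inc U\times\Del^1$.
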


\begin{defn}\label{defn:cocart-nat-trans}
  Suppose that we are given a pair of simplicial functors
  $\ell^B,\ell^E\colon\catthree[U]\to\eK$ landing in our
  $\infty$-cosmos $\eK$. Then we say that simplicial natural transformation
  $p\colon\ell^E\Rightarrow\ell^B$ is \emph{cocartesian} if
  \begin{enumerate}[label=(\roman*)]
  \item its component $p\colon E\tfib B$ at $\top$ is a cocartesian fibration,
  \item the naturality squares
    \begin{equation*}
      \xymatrix@=2em{
        {E_{+}}\pbexcursion\ar[r]^{\ell^E_{+}}\ar@{->>}[d]_{p_{+}} &
        {E}\ar@{->>}[d]^p \\
        {B_{+}}\ar[r]_{\ell^B_{+}} & {B}
      }\mkern60mu
    \xymatrix@=2em{
      {E_{-}}\pbexcursion\ar[r]^{\ell^E_{-}}\ar@{->>}[d]_{p_{-}} &
      {E}\ar@{->>}[d]^p \\
      {B_{-}}\ar[r]_{\ell^B_{-}} & {B}
    }
    \end{equation*}
    are pullbacks, and
  \item for each $0$-simplex $u\in U$ the $1$-arrow
    $\hat{u}=(u\cdot\degen^0,\id_{[1]})$ in $\Fun_{\catthree[U]}(-,\top)$ is
    mapped by $\ell^E$ to a $1$-arrow $\ell^E_{\hat{u}}$ in
    $\Fun_{\eK}(E_{-},E)$ which is $p$-cocartesian.
  \end{enumerate}
Cocartesian natural transformations are stable
  under precomposition by the simplicial functor
  $\catthree[f]\colon\catthree[V]\to\catthree[U]$ associated with each
  simplicial map $f\colon V\to U$.  
\end{defn}

\begin{lem}\label{lem:cocart-extending-external}
  Given a cocartesian fibration $p\colon E\tfib B$ in $\eK$ and a pair of
  objects $a,b\colon 1\to B$ in $\qB$, there exists a cocartesian natural
  transformation 
  \begin{equation*}
    \xymatrix@R=2.5em@C=5em{
      {\cattwo[\Hom_{\qB}(a,b)]}
      \ar@{^(->}[d]\ar[r]^-{\hat{m}_{a,b}} & {\eK} \\
      {{\catthree[\Hom_{\qB}(a,b)]}}
      \ar@/^1.5ex/[]!R(0.5);[ur]^(0.45){\ell^E}_(0.45){}="one"
      \ar@/_1.5ex/[]!R(0.5);[ur]_-{\ell^B}^-{}="two" &
      \ar@{}"one";"two"\ar@{=>} ?(0.3);?(0.7) ^{p}
    }
  \end{equation*}
extending the transpose $\hat{m}_{a,b}$ of the external action
  $\tilde{m}_{a,b}\colon\Hom_{\qB}(a,b)\to \Fun_{g_*\eK}(E_a,E_b)$ of
under the adjunction $\cattwo[-]\dashv\Fun$ of
~\ref{eq:cattwo-fun-adjunction}.
\end{lem}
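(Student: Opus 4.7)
The plan is to construct the triple $(\ell^B,\ell^E,p)$ by directly specifying the data enumerated in Observation~\ref{obs:catthree-data} and then verifying, one at a time, the three conditions of Definition~\ref{defn:cocart-nat-trans}. The essential input is the $p$-cocartesian $2$-cell $\chi$ that was chosen in the cube~\eqref{eq:internal-external} of Definition~\ref{defn:external-action}; the rest of the construction consists of assembling its pre-whiskerings by the universal family of $0$-arrows indexed by $\Hom_{\qB}(a,b)$. Objects are assigned as $-\mapsto 1,+\mapsto 1,\top\mapsto B$ on the $\ell^B$ side and $-\mapsto E_a,+\mapsto E_b,\top\mapsto E$ on the $\ell^E$ side, with components of $p$ defined to be $p_a$, $p_b$ and $p$ respectively; in particular the naturality squares required by Definition~\ref{defn:cocart-nat-trans}(ii) are then literally the defining pullbacks for the fibres $E_a$ and $E_b$.

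For $\ell^B$, I would set $\ell^B_{-}\defeq a$, $\ell^B_{+}\defeq b$, take $B_\bullet\colon\Hom_{\qB}(a,b)\to\Fun_{\eK}(1,1)=\Del^0$ to be the unique such map, and define $\ell^B_\bullet\colon \Hom_{\qB}(a,b)\times\Del^1\to\qB$ by applying $\Fun_{\eK}(1,-)$ to the canonical $2$-cell $\phi\colon ap_0\Rightarrow bp_1$ of Definition~\ref{defn:comma} and transposing along the isomorphism $\Fun_{\eK}(1,a\comma b)\cong \Hom_{\qB}(a,b)$ supplied by the universal property~\eqref{eq:comma-simp-UP}. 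The boundary conditions~\eqref{eq:20} then hold because the projections $p_0,p_1\colon a\comma b\to 1$ are the unique such maps, so that the two ends of this $1$-arrow are constant at $a$ and $b$ respectively.

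For $\ell^E$, I would set $\ell^E_{-}\defeq \ell_a$, $\ell^E_{+}\defeq \ell_b$, $E_\bullet\defeq \tilde{m}_{a,b}$ and define $\ell^E_\bullet\colon \Hom_{\qB}(a,b)\times\Del^1\to \Fun_{\eK}(E_a,E)$ by pre-whiskering $\chi\colon \ell_a\circ\pi_0\Rightarrow \ell_b\circ m_{a,b}$ with the family of $0$-arrows $E_a\cong 1\times E_a\xrightarrow{\,s\times E_a\,} a\comma b\times E_a$ indexed by vertices $s$ of $\Hom_{\qB}(a,b)=\Fun_{\eK}(1,a\comma b)$, extending simplicially to higher simplices by performing the same operation at the level of the cotensor $(a\comma b)^{\Del^n}$ using the isomorphism $(a\comma b)^{\Del^n}\times E_a\cong (a\comma b\times E_a)^{\Del^n}$. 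The boundary conditions of \eqref{eq:20} for $\ell^E$ then reduce to the identities $\ell_a\circ\pi_0\circ(s\times E_a)=\ell_a$ and $\ell_b\circ m_{a,b}\circ(s\times E_a) = \ell_b\circ\tilde{m}_{a,b}(s)$, both immediate from the constructions.

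Finally I would verify the three conditions of Definition~\ref{defn:cocart-nat-trans}: (i) is the hypothesis on $p$, (ii) is the defining pullback for the fibres, and (iii) is the main obstacle in this argument, namely showing that each $\ell^E_{\hat s}$ is $p$-cocartesian. By construction $\ell^E_{\hat s}$ is the pre-whiskering of $\chi$ by the $0$-arrow $s\times E_a$, so this follows immediately from the stability of $p$-cocartesian $1$-arrows under precomposition by $0$-arrows, as recorded in Corollary~\ref{cor:representable-cocart}. The remaining checks are the naturality conditions~\eqref{eq:21} and~\eqref{eq:23} for $p\colon \ell^E\Rightarrow\ell^B$: the square~\eqref{eq:21} at $+$ is exactly the pullback defining $E_b$, and both squares in~\eqref{eq:23} amount to routine diagram chases confirming that $p\circ\chi$ is the pre-whiskering of $\phi\cdot p_1$ by $s\times E_a$, which holds because $\chi$ was chosen precisely as a $p$-cocartesian lift of this $2$-cell in the cube~\eqref{eq:internal-external}.
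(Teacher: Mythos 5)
Your proposal is correct and follows essentially the same route as the paper's proof: it specifies the data of $\ell^B$, $\ell^E$ and $p$ via Observation~\ref{obs:catthree-data}, builds $\ell^B_\bullet$ from the comma $1$-arrow $\phi$ and $\ell^E_\bullet$ by whiskering the chosen $p$-cocartesian lift $\chi$ of~\eqref{eq:internal-external} with the family $-\times E_a$, and then deduces condition (iii) of Definition~\ref{defn:cocart-nat-trans} from the stability of $p$-cocartesian $1$-arrows under precomposition, exactly as in the paper. The only differences are cosmetic (e.g.\ your cotensor phrasing of the simplicial extension versus the paper's single composite formula for $\ell^E_\bullet$).
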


\begin{proof}
  It will be convenient in the remainder of this proof to refer to the functor
  space $\Fun_{\eK}(1,a\comma b)$ rather than to the hom-space $\Hom_{\qB}(a,b)$
  to which it is isomorphic. We utilise the explicit description of
  Observation~\ref{obs:catthree-data} to derive the structures called for in the statement
  from those discussed in Definition \ref{defn:external-action}.

  Define $\ell^B$ to act on objects by mapping $-,+\mapsto 1$ and $\top\mapsto
  B$. This choice uniquely determines its action on the function complex from $-$
  to $+$; after all its target is the function complex $\Fun_{\eK}(1,1)$ which
  is isomorphic to $\Del^0$ because $1$ is terminal. We also define the
  $0$-arrows $\ell^B_{-}\colon 1\to B$ and $\ell^B_{+}\colon 1\to B$ to be $a$
  and $b$ respectively. The $1$-arrow $\phi$ displayed in~\eqref{eq:internal-external} gives
  rise to a simplicial map
  \begin{equation*}
    \ell^B_\bullet\defeq\Fun_{\eK}(1,a\comma b)\times\Del^1
    \xrightarrow{\mkern10mu\Fun_{\eK}(1,a\comma b)\times\phi\mkern10mu}
    \Fun_{\eK}(1,a\comma b)\times\Fun_{\eK}(a\comma b,B)
    \xrightarrow{\mkern10mu\circ\mkern10mu}\Fun_{\eK}(1,B)
  \end{equation*}
that makes the squares in the following diagram commute:
  \begin{equation*}
    \xymatrix@C=2.5em@R=1.5em{
      {\Fun_{\eK}(1,a\comma b)\times\Del^{\fbv{0}}}\ar[d]\ar@{^(->}[r] &
      {\Fun_{\eK}(1,a\comma b)\times\Del^1}\ar[d]^{\ell^B_\bullet} &
      {\Fun_{\eK}(1,a\comma b)\times\Del^{\fbv{1}}}\ar@{_(->}[l]\ar[d] \\
      {\Del^0}\ar[r]_-{a} & \Fun_{\eK}(1,B) & {\Del^0}\ar[l]^-{b}
    }
  \end{equation*}
  This is an appropriate instance of the diagram displayed in~\eqref{eq:20}, so Observation \ref{obs:catthree-data} explains that this data defines a simplicial functor
  $\ell^B\colon\catthree[\Fun_{\eK}(1,a\comma b)]\to\eK$.

  Correspondingly we define $\ell^E$ to act on objects by mapping $-\mapsto
  E_a$, $+\mapsto E_b$, and $\top\mapsto E$ and we define $\ell^E_{-}$ and
  $\ell^E_{+}$ to be the $0$-arrows $E_{\hat{a}}\colon E_a\to E$ and
  $E_{\hat{b}}\colon E_b\to E$, respectively, as named in~\eqref{eq:internal-external}. We then
  take $E_\bullet$ to be the external action $\tilde{m}_{a,b}\colon
  \Fun_{\eK}(1,a\comma b)\to\Fun_{\eK}(E_a,E_b)$ and use the cocartesian
  $1$-arrow $\chi$ of~\eqref{eq:internal-external} to define:
  \begin{equation*}
    \ell^E_\bullet\defeq\Fun_{\eK}(1,a\comma b)\times\Del^1
    \xrightarrow{\mkern8mu(-\times E_a)\times\chi\mkern8mu}
    \Fun_{\eK}(E_a,a\comma b\times E_a)\times\Fun_{\eK}(a\comma b\times E_a,E)
    \xrightarrow{\mkern8mu\circ\mkern8mu}\Fun_{\eK}(E_a,E)
  \end{equation*}
  Given these maps it is routine to check that the following diagram commutes
  \begin{equation*}
    \xymatrix@R=1.5em@C=2.5em{
      {\Fun_{\eK}(1,a\comma b)\times\Del^{\fbv{0}}}\ar[d]\ar@{^(->}[r] &
      {\Fun_{\eK}(1,a\comma b)\times\Del^1}\ar[d]^{\ell^E_\bullet} &
      {\Fun_{\eK}(1,a\comma b)\times\Del^{\fbv{1}}}\ar@{_(->}[l]\ar[d]^{\tilde{m}_{a,b}} \\
      {\Del^0}\ar[r]_-{E_{\hat{a}}} & \Fun_{\eK}(E_a,E) &
      {\Fun_{\eK}(E_a,E_b)}\ar[l]^-{\Fun_{\eK}(E_a,E_{\hat{b}})}
    }
  \end{equation*}
  this being the instance of ~\eqref{eq:20} required to demonstrate that this
  data again assembles into a simplicial functor
  $\ell^E\colon\catthree[\Fun_{\eK}(1,a\comma b)]\to\eK$.
  
  The simplicial natural transformation $p\colon\ell^E\Rightarrow\ell^B$ has
  components $!\colon E_a\tfib 1$, $!\colon E_b\tfib 1$ and $p\colon E\tfib B$
  at the objects $-$, $+$, and $\top$ respectively. For these the naturality
  squares of~\eqref{eq:21} and~\eqref{eq:22} are the defining pullbacks for the
  fibres $E_a$ and $E_b$ as depicted in~\eqref{eq:internal-external}. Furthermore, to verify
  simplicial naturality of this family we must demonstrate the commutativity of
  the following instances of squares shown in~\eqref{eq:23}:
  \begin{equation*}
    \xymatrix@C=4em@R=2em{
      {\Fun_{\eK}(1,a\comma b)}\ar[r]^-{\tilde{m}_{a,b}}\ar[d] &
      {\Fun_{\eK}(E_a,E_b)}\ar[d]^{\Fun_{\eK}(E_a,!)} \\
      {\Fun_{\eK}(1,1)}\ar[r]_{\Fun_{\eK}(!,1)} &
      {\Fun_{\eK}(E_a,1)}
      }
    \xymatrix@R=2em@C=4em{
      {\Fun_{\eK}(1,a\comma b)\times\Del^1}\ar[r]^-{\ell^E_\bullet}\ar[d]_{\ell^B_\bullet} &
      {\Fun_{\eK}(E_a, E)}\ar@{->>}[d]^{\Fun_{\eK}(E_a,p)} \\
      {\Fun_{\eK}(1,B)}\ar[r]_{\Fun_{\eK}(!,B)} & {\Fun_{\eK}(E_a,B)}
    }
  \end{equation*}
  Commutativity of the left-hand square is trivially verified since
  $\Fun_{\eK}(1,1)\cong\Fun_{\eK}(E_a,1)\cong\Del^0$, and that of the right-hand
  square is a matter of routine verification from the definitions given above.
  
  Note also that for each $0$-arrow $f\in\Fun_{\eK}(1,a\comma b)$ the simplicial
  map $\ell^E_\bullet$ carries the $1$-simplex $(f\cdot\degen^0,\id_{[1]})$ in
  $\Fun_{\eK}(1,a\comma b)\times\Del^1$ to the whiskered $1$-arrow
  $\chi\circ(E_a\times f)$. Recall, however, that $\chi$ was chosen to be a
  $p$-cocartesian lift and $p$-cocartesian $1$-arrows are stable under
  precomposition, so it follows that $\chi\circ(E_a\times f)$ is also
  $p$-cocartesian. This completes the verification that the simplicial natural
  transformation $p\colon\ell^E\Rightarrow\ell^B$ is cocartesian as postulated.
\end{proof}


\begin{rmk}\label{rmk:r-hom-instead}
  Restricting along the equivalence $\Hom^r_{\qB}(a,b)\trvcof\Hom_{\qB}(a,b)$,
  we can equally well regard the external action as a functor
  $\tilde{m}_{a,b}\colon\Hom^r_{\qB}(a,b)\to\Fun_{g_*\eK}(E_a,E_b)$. The cocartesian  natural transformation of Lemma \ref{lem:cocart-extending-external}  restricts along  the computad map $\catthree[\Hom^r_{\qB}(a,b)]
  \inc\catthree[\Hom_{\qB}(a,b)]$ to give a cocartesian natural transformation:
  \begin{equation*}
    \xymatrix@R=2.5em@C=5em{
      {\cattwo[\Hom^r_{\qB}(a,b)]}
      \ar@{^(->}[d]\ar[r]^-{\hat{m}_{a,b}} & {\eK} \\
      {{\catthree[\Hom^r_{\qB}(a,b)]}}
      \ar@/^1.5ex/[]!R(0.5);[ur]^(0.45){\ell^E}_(0.45){}="one"
      \ar@/_1.5ex/[]!R(0.5);[ur]_-{\ell^B}^-{}="two" &
      \ar@{}"one";"two"\ar@{=>} ?(0.3);?(0.7) ^{p}
    }
  \end{equation*}
\end{rmk}

The name ``cocartesian natural transformation'' suggests that there should be a connection to the cocartesian cocones of Definition \ref{defn:cocart-cocone}.  In Proposition \ref{prop:cocart-cone-from-nat}, we will show that a cocartesian natural transformation of shape $\catthree[U]$ restricts along a simplicial computad functor defined in the following lemma to a cocartesian cocone of shape $\gC[\Sigma^rU\join\Del^0]$. 

\begin{lem}\label{lem:comparison-extension}
The simplicial functor $w^U\colon\gC[\Sigma^r U]\to\cattwo[U]$ inducing the equivalence of
  Proposition~\ref{prop:fun-to-r-hom} extends along the manifest inclusions to define a simplicial computad functor:
  \begin{equation*}
    \xymatrix@R=2em@C=3em{
      {\gC[\Sigma^rU]}
      \ar@{^(->}[d]\ar[r]^-{w^{U}} &
      {\cattwo[U]}\ar@{^(->}[d] \\
      {\gC[\Sigma^rU\join\Del^0]}\ar@{-->}[r]_-{v^{U}} &
      {{\catthree[U]}}
    }
  \end{equation*}
  and moreover the map  $v^U\colon\gC[\Sigma^r
  U\join\Del^0]\to\catthree[U]$ in $\sCptd$ is natural in $U\in\sSet$.
\end{lem}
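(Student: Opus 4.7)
The plan is to construct $v^U$ by first defining it on representable simplicial sets $U = \Del^n$, establishing naturality in $[n] \in \Del$, and then extending to all $U$ either by left Kan extension (the target $\catthree[-]$ being cocontinuous into the appropriate slice, as noted in Definition \ref{defn:5}) or by a direct cellular argument, exploiting the fact that both simplicial computads involved are built up from representables via cocontinuous operations in appropriate slices of $\sCptd$.

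For the representable case, the key tool is the extension lemma (Lemma \ref{lem:subcomputad-extension}). The inclusion $\Sigma^r \Del^n \subset \Sigma^r \Del^n \join \Del^0$ is a simplicial subset inclusion, so by Lemma \ref{lem:subcomputad-inclusion} the induced map $\gC[\Sigma^r \Del^n] \subset \gC[\Sigma^r \Del^n \join \Del^0]$ is a simplicial subcomputad inclusion. The composite of $w^n$ with $\cattwo[\Del^n] \hookrightarrow \catthree[\Del^n]$ provides the data on the subcomputad, so by Lemma \ref{lem:subcomputad-extension} it remains to specify $v^n$ on the new atomic arrows, checking compatibility with face operators. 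By Proposition \ref{prop:gothic-C}, the new atomic arrows correspond to beads $(\hat{x}, T^\bullet)$ for non-degenerate simplices $\hat{x}$ of $\Sigma^r \Del^n \join \Del^0$ not in $\Sigma^r \Del^n$; by the join description these arise as $\hat{x} = (x, \id_{[0]})$ for non-degenerate simplices $x$ of $\Sigma^r \Del^n$. I would specify the action of $v^n$ on each such bead explicitly: the apex vertex $\top$ maps to the object $\top \in \catthree[\Del^n]$; the $1$-simplex $\hat{+} \colon + \to \top$ maps to the $0$-arrow $i_+$ in $\Fun_{\catthree[\Del^n]}(+, \top) = \Del^0$; the $1$-simplex $\hat{-} \colon - \to \top$ maps to the basepoint $i_-$ in $\Fun_{\catthree[\Del^n]}(-, \top)$; and higher-dimensional beads from $-$ to $\top$ map to the simplices of the pushout $\Del^0 \cup_{\Del^n \times \{0\}} \Del^n \times \Del^1$ determined by the bead shape, in a manner paralleling the construction of $t^n$ in Observation \ref{obs:lax-to-suspension}. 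Naturality in $[n] \in \Del$ then follows by inspection, since each image is defined using simplicial operators that are natural in $[n]$.

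The main obstacle is the combinatorial verification that these assignments on atomic arrows are consistent with the face operations and with composition in $\catthree[\Del^n]$, so that they assemble with $w^n$ into a well-defined simplicial functor. This is analogous to the well-definedness verification for $t^n$ in Observation \ref{obs:lax-to-suspension}, but is complicated by the non-trivial pushout structure defining $\Fun_{\catthree[\Del^n]}(-, \top)$ and by the need to track the quotient collapse in $\Sigma^r \Del^n$ carefully. Once these verifications are in hand, the passage from representables to general $U$ is formal: both sides of the naturality square agree on representables by construction, and compatibility for general $U$ follows from the universal property of the cellular build-up of $\gC[\Sigma^r U \join \Del^0]$ from its representable pieces.
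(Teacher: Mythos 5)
Your global strategy---reducing to representables $U=\Del^n$ by observing that both $\gC[\Sigma^r(-)\join\Del^0]$ and $\catthree[-]$, viewed as functors into the slice under $\catthree[\emptyset]\cong\gC[\Sigma^r\emptyset\join\Del^0]$, preserve colimits and are hence left Kan extensions of their restrictions along the Yoneda embedding---is exactly the paper's, and your idea of producing $v^n$ by extending $w^n$ (composed with $\cattwo[\Del^n]\inc\catthree[\Del^n]$) along the subcomputad inclusion $\gC[\Sigma^r\Del^n]\inc\gC[\Sigma^r\Del^n\join\Del^0]$ via Lemma~\ref{lem:subcomputad-extension} is in principle a legitimate alternative route. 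However, the proposal stops precisely where the content of the lemma begins: for the beads $(\hat{y},T^\bullet)$ with $y$ a non-degenerate simplex of $\Sigma^r\Del^n$ from $-$ to $+$, you say only that the image in $\Fun_{\catthree[\Del^n]}(-,\top)$ is ``determined by the bead shape, paralleling $t^n$,'' without writing down the assignment, and you then explicitly defer the compatibility checks (face operators, composition, agreement with $w^n$ on the boundary) as ``the main obstacle.'' That assignment is an explicit combinatorial formula---in the paper's notation, an $r$-arrow $T^\bullet$ from an object $i\leq n$ to the top object is sent to the class $[\alpha,\rho]$ with $\alpha(k)=\max(T^k\setminus\{n+1,n+2\})$ and $\rho(k)=0$ or $1$ according to whether $n+1\in T^k$---and the deferred verifications are exactly what must be carried out with that formula in hand. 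Without it, your claim that naturality in $[n]$ ``follows by inspection'' is vacuous, since there is nothing yet to inspect.

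It is also worth noting how the paper sidesteps the difficulty you flag. Rather than extending bead-by-bead over the quotient simplicial set $\Sigma^r\Del^n$ (which would first require enumerating its non-degenerate simplices, i.e.\ the images of the simplices $(x,\id_{[0]})$ under the collapse of $\Del^n$), it uses that $\gC[-\join\Del^0]$ preserves the pushout defining $\Sigma^r\Del^n$ in the proof of Proposition~\ref{prop:hom-space-comparison}, so that $\gC[\Sigma^r\Del^n\join\Del^0]$ is itself a pushout (quotient) of $\gC\Del^{n+2}$. One then defines a single computad map $\bar{v}^n\colon\gC\Del^{n+2}\to\catthree[\Del^n]$ by the explicit formulas above, where functoriality and compatibility with simplicial operators are routine because the arrows of $\gC\Del^{n+2}$ are just nested chains of subsets, checks that $\bar{v}^n$ is compatible with the two maps out of $\gC[\Del^n\join\Del^0]$, and invokes the pushout property to induce $v^n$; naturality in $[n]$ and the commutativity of the square involving $w^n$ then follow directly from the formulas. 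If you wish to retain the subcomputad-extension route, you must supply the analogous explicit formula for each bead of the quotient and then discharge the coherence condition of Lemma~\ref{lem:subcomputad-extension}---which is more bookkeeping than the quotient argument, not less. As written, the proposal is a plausible plan in which the decisive construction and its verification are missing.
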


\begin{proof}
We first argue that it suffices to define the natural map $v$ on the standard simplices $U=\Del^n$.  To that end, observe that $\gC[\Sigma^r \emptyset \join\Del^0] \cong \gC[\Horn^{2,2}] \cong \catthree[\emptyset]$ so the domain and codomain of $v$ may be regarded as functors
\[ \gC[\Sigma^r(-)\join\Del^0], \catthree[-] \colon \sSet \cong \prescript{\emptyset/}{}{\sSet}\to
  \prescript{\catthree[\emptyset]/}{}{\sCat}.\] These functors both preserve colimits  and are thus left Kan extensions of their
  restrictions along the Yoneda embedding $\Del^\bullet\colon\Del\inc\sSet$.
  Consequently we may again construct any natural transformation between them by
  specifying it on standard simplices $\Del^n$ and Kan extending. Since $\gC[\Sigma^r(-)]$ is similarly defined 
as a left Kan extension along the Yoneda embedding, 
  to verify the commutativity of the square in the statement for an arbitrary
  $U$ it is enough to check it for standard simplices.

  So consider the case $U=\Del^n$. In \eqref{eq:10}, the simplicial set $\Sigma^r
  \Del^n$ is given as a quotient of $\Del^{n+1}$. Since the functor   $\gC[-\join\Del^0]$ preserves the defining pushout \eqref{eq:10}, the simplicial category $\gC[\Sigma^r \Del^n\join\Del^0]$ may be similarly expressed as a quotient
  of $\gC\Del^{n+2}$. Consequently, we
  proceed to define the required simplicial computad morphism
  $v^n\colon\gC[\Sigma^r\Del^n\join\Del^0]\to \catthree[\Del^n]$ by constructing
  a suitable morphism $\bar{v}^n\colon\gC\Del^{n+2}\to \catthree[\Del^n]$ and passing
  to the quotient, as displayed in the diagram:
    \begin{equation}\label{eq:11}
    \xymatrix@=2em{
      {\gC[\Del^n\join\Del^0]}\ar@{^(->}[d]\ar[r] &
      {\gC[\Del^0\join\Del^0]}\ar@{^(->}[d]\ar@/^2ex/[ddr] \\
      {\gC[(\Del^n\join\Del^0)\join\Del^0]}\ar[r]\ar@/_2ex/[rrd]_{\bar{v}^n} &
      {\gC[\Sigma^r\Del^n\join\Del^0]}\poexcursion\ar@{-->}[dr]^{v^n}\\
      && {\catthree[\Del^n]}
    }
  \end{equation}  This latter functor acts on the objects of $\gC\Del^{n+2}$ by
  mapping $0,1,\ldots,n\mapsto -$; $n+1\mapsto +$; and $n+2\mapsto\top$. Its action
  on a $r$-arrow $T^\bullet$ of $\Fun_{\gC\Del^{n+2}}(i,j)$ for $0\leq i<j\leq
  n+2$ is defined by cases as follows:
  \begin{itemize}
  \item $i,j\leq n$, then $\bar{v}^n(i)=\bar{v}^n(j)=-$ and $T^\bullet$ must be mapped to
    the unique $n$-arrow in $\Fun_{\catthree[\Del^n]}(-,-)=\Del^0$,
  \item $i\leq n$ and $j=n+1$, then $\bar{v}^n(i)=-$ and $\bar{v}^n(j)=+$ and we map
    $T^\bullet$ to the $r$-simplex $\alpha\colon[r]\to[n]$ of
    $\Fun_{\catthree[\Del^n]}(-,+)=\Del^n$ given by the formula
    $\alpha(i)\defeq\max(T^i\setminus\{n+1\})$,
  \item $i\leq n$ and $j=n+2$, then $\bar{v}^n(i)=-$ and $\bar{v}^n(j)=\top$ and we map
    $T^\bullet$ to the equivalence class $[\alpha,\rho]$ in
    $\Fun_{\catthree[\Del^n]}(-,\top)$ where $\alpha\colon[r]\to[n]$ and
    $\rho\colon[r]\to[1]$ are the simplicial operators given by the formulae:
    \begin{equation*}
      \alpha(i)\defeq\max(T^i\setminus\{n+1, n+2\}) \mkern40mu
      \rho(i)\defeq
      \begin{cases}
        0 & \text{if $n+1\notin T^i$, and} \\
        1 & \text{if $n+1\in T^i$.}
      \end{cases}
    \end{equation*}
  \item $i=n+1$ and $j=n+2$, then $\bar{v}^n(i)=+$ and $\bar{v}^n(j)=\top$ and we map
    $T^\bullet$ to the unique $r$-arrow in
    $\Fun_{\catthree[\Del^n]}(+,\top)=\Del^0$.
  \end{itemize}
  It is a simple matter to verify that these actions do indeed define a morphism
  of simplicial computads, and their naturality in $[n]\in\Del$ is clear.

  The simplicial category $\gC[\Del^0\join\Del^0]\cong\gC\Del^1$ in the
  top-right corner of \eqref{eq:11} is isomorphic to the generic $0$-arrow $\cattwo[\Del^0]$ and
  the map from there to the bottom-right nadir is that which corresponds to the
  unique $0$-arrow in $\Fun_{\catthree[\Del^n]}(-,\bot)\cong\Del^0$. To show
  that the outer square commutes, it is enough to observe that both of its
  composites $\gC\Del^{n+1}\to\catthree[\Del^n]$ act on
  objects by mapping $0,\ldots,n\mapsto -$ and $n+1\mapsto \top$ and map all
  $r$-arrows in  the function complexes $\Fun_{\gC\Del^{n+1}}(i,n+1)$, for $i\leq n$, to unique $r$-arrow of
  $\Fun_{\catthree[\Del^n]}(-,\top)$ represented by any simplex in
  $\Del^n\times\Del^1$ whose second component is the constant simplicial
  operator $1\colon[r]\to[1]$. This justifies the existence of the induced
  simplicial computad morphism $v^n\colon
  \gC[\Sigma^r\Del^n\join\Del^0]\to\catthree[\Del^n]$, by an application of the
  pushout property of the upper square, whose naturality in $[n]\in\Del$ follows from that of $\bar{v}$. 
   Finally the commutativity of the instance of the square in
  the statement at $U=\Del^n$ is immediate from the specification given above
  and the description of $w^n$ given in Proposition~\ref{prop:fun-to-r-hom} and
  Observation~\ref{obs:lax-to-suspension}.
\end{proof}

\begin{prop}\label{prop:cocart-cone-from-nat}
  Restriction along the simplicial computad map of Lemma~\ref{lem:comparison-extension} sends each
  cocartesian natural transformation $p\colon\ell^E\Rightarrow\ell^B\colon
  \catthree[U]\to\eK$ to a cocartesian cocone 
  \[
  \xymatrix@C=30pt{ \gC[\Sigma^rU\join\Del^0] \ar[r]^-{v^U}&  \catthree[U] \ar@/^2.5ex/[r]^{\ell^E} \ar@{}[r]|{\Downarrow p} \ar@/_2.5ex/[r]_{\ell^B} & \eK}\]
  of shape $\Sigma^r U$ in $\eK$.
\end{prop}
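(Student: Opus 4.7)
The plan is to verify, clause by clause, that the restriction of $p$ along $v^U$ satisfies the three axioms of Definition~\ref{defn:cocart-cocone} for a cocartesian cocone of shape $\Sigma^r U$, with each following immediately from the corresponding axiom of Definition~\ref{defn:cocart-nat-trans}. The proof amounts to unpacking the action of $v^U$ on the relevant objects and arrows of $\gC[\Sigma^r U \join \Del^0]$ using the explicit formula for this functor given in the proof of Lemma~\ref{lem:comparison-extension}.

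First I would observe that $v^U$ carries the nadir $\top$ of $\gC[\Sigma^r U \join \Del^0]$ to $\top \in \catthree[U]$ and the two vertices $-, +$ of $\Sigma^r U$ to the objects $-, +$ of $\catthree[U]$. Consequently the nadir component of the restricted simplicial natural transformation is the cocartesian fibration $p \colon E \tfib B$ of Definition~\ref{defn:cocart-nat-trans}(i), verifying axiom~(i) of Definition~\ref{defn:cocart-cocone}, and its naturality squares at the vertices $-, +$ of $\Sigma^r U$ agree with the pullback squares at $-, +$ in $\catthree[U]$ provided by Definition~\ref{defn:cocart-nat-trans}(ii), verifying axiom~(ii). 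For the implicit lax cocone requirement, the commutative square of Lemma~\ref{lem:comparison-extension} shows that the restrictions of $\ell^E \circ v^U$ and $\ell^B \circ v^U$ to $\gC[\Sigma^r U]$ factor as $\ell^\bullet|_{\cattwo[U]} \circ w^U$; these land in $g_*\eK$ whenever the bases $\ell^\bullet|_{\cattwo[U]}$ do, which is the case in the principal application (Lemma~\ref{lem:cocart-extending-external}), where $\ell^B|_{\cattwo[U]}$ is constant at $1$ and $\ell^E|_{\cattwo[U]}$ transposes the external action, landing in the groupoidal core by definition.

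The main content lies in verifying axiom~(iii). The non-degenerate 1-simplices of $\Sigma^r U$ correspond bijectively to vertices $u \in U$ via $u \mapsto f_u \defeq (u, 0 \to 1)$, each a 1-simplex from $-$ to $+$; each such $f_u$ yields a bead $\hat{f_u}$ representing a 1-arrow in $\Fun_{\gC[\Sigma^r U \join \Del^0]}(-, \top)$. A direct computation using the explicit formula for $\bar v^n$ in the proof of Lemma~\ref{lem:comparison-extension}---specifically the case $n = 0$, extended to general $U$ by naturality in the classifying map $u \colon \Del^0 \to U$---identifies $v^U(\hat{f_u})$ with the 1-arrow $\hat u = (u \cdot \degen^0, \id_{[1]})$ in $\Fun_{\catthree[U]}(-, \top)$. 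Hence $\ell^E_{f_u}$ under the restricted cocone equals the $p$-cocartesian 1-arrow $\ell^E_{\hat u}$ of Definition~\ref{defn:cocart-nat-trans}(iii), as required. The only real obstacle is carrying out the combinatorial identification of beads in $\gC[\Sigma^r U \join \Del^0]$ with arrows in the presentation of $\catthree[U]$ from Definition~\ref{defn:5}, which is entirely explicit but technical.
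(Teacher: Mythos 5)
Your argument is essentially the paper's own proof: after noting that $v^U$ sends $-,+$ to $-,+$ and the cocone vertex to $\top$, conditions (i) and (ii) of Definition~\ref{defn:cocart-cocone} follow at once from clauses (i) and (ii) of Definition~\ref{defn:cocart-nat-trans}, and condition (iii) reduces to the computation that the $1$-arrow of $\gC[\Sigma^rU\join\Del^0]$ indexed by the bead over a non-degenerate $1$-simplex of $\Sigma^rU$ (these being in bijection with vertices $u\in U$) is carried by $v^U$ to the arrow $\hat{u}=(u\cdot\degen^0,\id_{[1]})$ in $\Fun_{\catthree[U]}(-,\top)$. Your route to that identification, via the explicit formula for $\bar{v}^n$ at $n=0$ and naturality in the classifying map $\Del^0\to U$, is exactly the verification the paper records without detail.

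One point of difference is worth flagging: you correctly observe that the conclusion ``cocartesian cocone'' also presupposes that $\ell^E\circ v^U$ and $\ell^B\circ v^U$ are lax cocones, i.e.\ that their restrictions to $\gC[\Sigma^rU]$ land in $g_*\eK$; since $w^U$ hits every $1$-simplex of $U$, this amounts to asking that $E_\bullet$ and $B_\bullet$ take values in the groupoidal cores, which is \emph{not} part of Definition~\ref{defn:cocart-nat-trans}. The paper's proof passes over this silently, whereas you handle it by appealing to the principal application (Lemma~\ref{lem:cocart-extending-external}, Remark~\ref{rmk:r-hom-instead}), where $\ell^B$ is constant at $1$ and the external action lands in $\Fun_{g_*\eK}(E_a,E_b)$. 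Strictly speaking that verifies the statement only where it is used, not for an arbitrary cocartesian natural transformation; if you wanted the general statement you would either add the groupoid-valuedness of the bases as a hypothesis, or note that invertibility of $E_\bullet$ can be deduced from that of $B_\bullet$ by a conservativity argument in the style of Lemma~\ref{lem:extension-conservativity}. Since the paper's own proof does not address the point at all, your treatment is if anything the more careful one.
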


\begin{proof}
On objects the simplicial computad functor $v^U \colon \gC[\Sigma^rU \join\Del^0] \to \catthree[U]$ sends the objects $-$ and $+$ of $\gC[\Sigma^rU]$ to $-$ and $+$ and sends the cocone vertex to $\top$. Thus, conditions (i) and (ii) of Definition \ref{defn:cocart-nat-trans} immediately establish the corresponding conditions (i) and (ii) of Definition \ref{defn:cocart-cocone}. To establish the final condition (iii) of Definition \ref{defn:cocart-cocone}, observe that the only non-degenerate 1-simplices of $\Sigma^rU$ have source $-$ and target $+$. Each of these is the unique quotient of a 1-simplex in $U\join\Del^0$ from a 0-simplex $u \in U$ to the cocone vertex $*$. In Recollection \ref{rec:cocone-notation}, we denoted such 1-simplices by $\hat{u} \defeq (u, \id_{[0]})$. Each such 1-simplex indexes a 1-arrow in $\gC[\Sigma^rU\join\Del^0]$ from $-$ to $\top$ whose codomain factors through the object $+$. The functor $\bar{v}^U \colon \gC[\Sigma^rU \join\Del^0] \to \catthree[U]$ carries the 1-arrow indexed by $\hat{u}$ to the 1-arrow $\hat{u} = (u \cdot \degen^0, \id_{[1]})$ in $\Fun_{\catthree[U]}(-,\top)$, whence we see that condition (iii) of Definition \ref{defn:cocart-nat-trans} implies condition (iii) of Definition \ref{defn:cocart-cocone}.
\end{proof}


In the following proposition, we shall have use for one further natural family
of simplicial comparison functors:

\begin{defn}\label{defn:last-map}
  Given a simplicial set $U$ we define $s^U\colon\catthree[U]\to \cattwo[\Sigma
  U]$ to be the simplicial functor which acts on objects by $-,+\mapsto
  -$ and $\top\mapsto +$, and whose action
  $\Fun_{\catthree[U]}(-,\top)\to\Fun_{\cattwo[\Sigma U]}(-,+)$ is the induced
  map arising from the observation that these function complexes are certain
  compatible quotients of the simplicial set $U\times\Del^1$. 
\end{defn}

\begin{lem}\label{lem:commuting-comparisons}
  Given a simplicial set $U$ there is a commutative square
  \begin{equation}\label{eq:24}
    \xymatrix@=2em{
      {\gC[\Sigma^r U\join \Del^0]}\ar[r]^-{v^U}\ar[d]_-{t^{\Sigma^r U}} &
      {\catthree[U]}\ar[d]^-{s^U} \\
      {\cattwo[\Sigma^r U]} &
      {\cattwo[\Sigma U]}\ar[l]^-{\cattwo[u^U]}
    }
  \end{equation}
  of the maps given in Definition~\ref{defn:last-map}, Lemma~\ref{lem:comparison-extension}, Proposition \ref{prop:hom-space-comparison}, and Observation~\ref{obs:lax-to-suspension}.
\end{lem}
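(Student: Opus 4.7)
The plan is to reduce the verification to an explicit computation on representables, then to carry out that computation by lifting to the homotopy coherent simplices.

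First I would note that all four constructions $\gC[\Sigma^r(-)\join\Del^0]$, $\catthree[-]$, $\cattwo[\Sigma^r(-)]$, and $\cattwo[\Sigma(-)]$ define functors $\sSet\to\prescript{\catthree[\emptyset]/}{}{\sCptd}$ (with obvious canonical pointings) which preserve all small colimits; indeed each is defined as a composite of colimit-preserving functors, and each is the left Kan extension of its restriction along the Yoneda embedding $\Del^\bullet\colon\Del\inc\sSet$. The four maps $v$, $t$, $s$, and $\cattwo[u]$ are natural in $U$. Consequently, it suffices to verify the commutativity of the square \eqref{eq:24} in the case $U=\Del^n$, and indeed it suffices to verify it after further precomposition with any epimorphism onto $\gC[\Sigma^r\Del^n\join\Del^0]$.

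Following the strategy used in the proof of Lemma~\ref{lem:comparison-extension}, I would use precisely the epimorphism $\gC\Del^{n+2}\epi\gC[\Sigma^r\Del^n\join\Del^0]$ obtained by applying $\gC[-\join\Del^0]$ to the defining quotient $\Del^{n+1}\epi\Sigma^r\Del^n$. This reduces the problem to checking that the two composites $\gC\Del^{n+2}\to\cattwo[\Sigma^r\Del^n]$ agree. On objects both composites send $0,1,\ldots,n,n+1\mapsto -$ and $n+2\mapsto +$, so it remains to compare their actions on the non-trivial function complexes $\Fun_{\gC\Del^{n+2}}(i,j)$ where at least one of the endpoints maps to $+$, i.e., where $j=n+2$.

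For such an arrow, the only non-trivially varying case is $i\leq n$, since when $i=n+1$ the target complex $\Fun_{\cattwo[\Sigma^r\Del^n]}(-,+)$ receives the constant simplex which both sides produce. For $i\leq n$ and $j=n+2$ I would unpack the two routes on an $r$-arrow $T^\bullet$ in $\Fun_{\gC\Del^{n+2}}(i,n+2)$. Going right-then-down, the explicit formula for $\bar{v}^n$ from the proof of Lemma~\ref{lem:comparison-extension} sends $T^\bullet$ to the class $[\alpha,\rho]$ in $\Fun_{\catthree[\Del^n]}(-,\top)$ with $\alpha(k)=\max(T^k\setminus\{n+1,n+2\})$ and $\rho(k)=1$ if and only if $n+1\in T^k$; the map $s^{\Del^n}$ then sends this class to the image of $(\alpha,\rho)\in\Del^n\times\Del^1$ in $\Fun_{\cattwo[\Sigma\Del^n]}(-,+)$, and finally $\cattwo[u^n]$ applies $u^n\colon\Sigma\Del^n\to\Sigma^r\Del^n$, which by its explicit definition in the proof of Proposition~\ref{prop:hom-space-comparison} is induced by $[n]\times[1]\to[n+1]$ sending $(k,0)\mapsto k$ and $(k,1)\mapsto n+1$. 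Going down-then-left, $t^{\Sigma^r\Del^n}$ is induced by the simplicial functor of Observation~\ref{obs:lax-to-suspension} applied to the $(n+1)$-simplex $\Sigma^r\Del^n$, so it maps the $r$-arrow $T^\bullet$ in $\Fun_{\gC\Del^{n+2}}(i,n+2)$ to the $r$-simplex $k\mapsto\max(T^k\setminus\{n+2\})$ in $\Fun_{\cattwo[\Sigma^r\Del^n]}(-,+)$. Matching these two outputs reduces to the identity $u^n(\alpha(k),\rho(k))=\max(T^k\setminus\{n+2\})$, which is immediate from the formula for $u^n$ and the definition of $\Sigma^r\Del^n$ as the quotient of $\Del^{n+1}$ collapsing $\Del^n=\{0,\ldots,n\}$.

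The main obstacle is keeping the various quotients and reindexings straight: one must be careful that the two routes, which use the two different models $\Sigma$ and $\Sigma^r$ of the hom-suspension, agree after $u^n$ is applied, and that the book-keeping in the case analysis above (particularly with respect to whether $n+1\in T^k$) matches the formula for $u^n$. Once this is correctly set up, the equality is a routine verification.
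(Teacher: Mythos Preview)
Your proof is correct and follows essentially the same approach as the paper: reduce to representables via the left Kan extension property, lift along the quotient $\gC\Del^{n+2}\epi\gC[\Sigma^r\Del^n\join\Del^0]$, and verify the equality explicitly using the formulae already established for $\bar v^n$, $t^{n+1}$, $s^n$, and $u^n$. The paper organises the bookkeeping slightly differently --- it first argues that only the function complex $\Fun_{\gC[\Sigma^r\Del^n\join\Del^0]}(-,\top)$ needs checking, then passes to the covering $\Fun_{\gC\Del^{n+2}}(0,n+2)$ and observes that since all the spaces involved are nerves of posets it suffices to verify agreement on $0$-simplices --- whereas you check all the complexes $(i,n+2)$ and compute on general $r$-arrows, which is more than strictly necessary but perfectly valid.
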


\begin{proof}
As argued in the proof of Lemma \ref{lem:comparison-extension}, we need only
  check the desired commutativity result for representables $U=\Del^n$ and then
  use the uniqueness property of left Kan extensions to infer then that it holds for
  arbitrary simplicial sets.

  Specialising now to the case $U=\Del^n$, observe that both legs of the square
  in~\eqref{eq:24} act on objects to map $-,+\mapsto -$ and $\top\mapsto +$. A moment's reflection reveals that it is enough then to verify that their
  actions on the function complex $\Fun_{\gC[\Sigma^r \Del^n\join \Del^0]}(-,\top)$
  coincide: after all they must act identically on $\Fun_{\gC[\Sigma^r \Del^n\join
    \Del^0]}(-,+)$, simply because they both map it to $\Fun_{\cattwo[\Sigma^r
    \Del^n]}(-,-)\cong\Del^0$, from which it follows that their actions on
  $\Fun_{\gC[\Sigma^r \Del^n\join \Del^0]}(+,\top)$ are determined by those on
  $\Fun_{\gC[\Sigma^r \Del^n\join \Del^0]}(-,\top)$. To compute those latter
  actions consider the following diagram
  \begin{equation*}
    \xymatrix@=1.5em{
      {\Fun_{\gC\Del^{n+2}}(0,n+2)}\ar[rd]\ar[rrr]^{v^n}\ar[ddd]_{t^{n+1}} & & &
      {\Del^n\times\Del^1}\ar[ld]\ar@{=}[ddd] \\
      & {\Fun_{\gC[\Sigma^r \Del^n\join \Del^0]}(-,\top)}
      \ar[r]^-{v^n}\ar[d]_-{t^{\Sigma^r \Del^n}} &
      {\Fun_{\catthree[\Del^n]}(-,\top)}\ar[d]^-{s^n} & \\
      & {\Fun_{\cattwo[\Sigma^r \Del^n]}(-,+)} &
      {\Fun_{\cattwo[\Sigma \Del^n]}(-,+)}\ar[l]^-{\cattwo[u^n]} & \\
      {\Del^{n+1}}\ar[ur] & & & {\Del^n\times\Del^1}\ar[ul]\ar[lll]^{u^n}
    }
  \end{equation*}
  which expresses the various function complexes involved as quotients of the spaces
  upon which the actions of various functors between those spaces were defined.
  Since the upper-left diagonal is a quotient map, we may demonstrate that the
  inner square commutes by showing that the outer square does. To do that it is
  enough to check that the actions of each leg of the outer square coincide on
  $0$-simplices, and that is immediate from the explicit descriptions given in
  the proofs of Proposition~\ref{prop:hom-space-comparison} and Lemma \ref{lem:comparison-extension} and in
  Observation~\ref{obs:lax-to-suspension}.
\end{proof}

Returning to Definition \ref{defn:external-action}, we have now marshalled all of the components
necessary to show that the action of a comprehension functor on a hom-space
coincides with the action of that hom-space on fibres, in the sense made precise
in the following theorem:

{
\renewcommand{\thethm}{\ref{thm:comprehension-on-homs}}
\begin{thm}[computing the action of comprehension on hom-spaces]
  There exists an essentially commutative triangle
  \begin{equation}\label{eq:comprehension-on-homs}
    \xymatrix@=2.5em{
      {\Hom_{\qB}(a,b)}\ar[r]^-{\tilde{m}_{a,b}}\ar[dr]_{\Hom(c_p)} &
      {\Fun_{g_*\eK}(E_a,E_b)}\ar@{^(->}[d]^{\simeq}\ar@{}[dl]|(0.35){\cong} \\ 
      & {\Hom_{\qK}(E_a,E_b)}
    }
  \end{equation}
  in which the right-hand vertical is the trivial cofibration of Corollary \ref{cor:fun-to-hom}. This demonstrates that the action
  of the comprehension functor $c_p\colon\qB\to\qK$ on the hom-space from $a$ to
  $b$ is equivalent to the external action of $\Hom_{\qB}(a,b)$ on fibres of
  $p\colon E\tfib B$.
\end{thm}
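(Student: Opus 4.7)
The plan is to exhibit both legs of the triangle \eqref{eq:comprehension-on-homs} as arising from the bases of a pair of cocartesian cocones of shape $\Sigma^r \Hom^r_\qB(a,b)$ over a common lax cocone and with common nadir $p \colon E \tfib B$, and then invoke the uniqueness result Corollary \ref{cor:unique-cocart-lifts}. Proposition \ref{prop:hom-space-comparison} and Corollary \ref{cor:fun-to-hom} let us replace $\Hom_\qB$ by $\Hom^r_\qB$ and recast $\Hom_\qK(E_a,E_b)$ in terms of $\Hom^r_\qK(E_a,E_b)$ and $\Fun_{g_*\eK}(E_a,E_b)$ via the trivial cofibration of Proposition \ref{prop:fun-to-r-hom}; the key bridge between the adjunctions $\Sigma^r \dashv \Hom^r$ and $\cattwo[-] \dashv \Fun$ will then be Lemma \ref{lem:commuting-comparisons}.

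First I would apply Remark \ref{rmk:r-hom-instead} to produce a cocartesian natural transformation $\ell^E \Rightarrow \ell^B$ on $\catthree[\Hom^r_\qB(a,b)]$ whose restriction to $\cattwo[\Hom^r_\qB(a,b)]$ is the transpose $\hat{m}_{a,b}$ of the external action. Proposition \ref{prop:cocart-cone-from-nat} restricts this along $v^U$ to a cocartesian cocone of shape $\Sigma^r \Hom^r_\qB(a,b)$. Using the commuting square of Lemma \ref{lem:commuting-comparisons} together with the identification of $\hat{m}_{a,b}$ as the $\cattwo[-] \dashv \Fun$ transpose of $\tilde{m}_{a,b}$, the base of this first cocone, transposed across $\Sigma^r \dashv \Hom^r$, will be identified with the composite of $\tilde{m}_{a,b}$ and the canonical inclusion $\Fun_{g_*\eK}(E_a,E_b) \hookrightarrow \Hom^r_\qK(E_a,E_b)$.

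Second, observe that the identity map on $\Hom^r_\qB(a,b)$ corresponds under $\Sigma^r \dashv \Hom^r$ to a canonical simplicial map $\Sigma^r \Hom^r_\qB(a,b) \to \qB$ sending $-, +$ to $a, b$. Restricting the comprehension cocartesian cocone of Definition \ref{defn:basic-comprehension} along this map, via Observation \ref{obs:cocart-cone-restriction}, produces a second cocartesian cocone of shape $\Sigma^r \Hom^r_\qB(a,b)$ with nadir $p$, whose base transposes under $\Sigma^r \dashv \Hom^r$ to the restricted action $\Hom^r(c_p) \colon \Hom^r_\qB(a,b) \to \Hom^r_\qK(E_a,E_b)$. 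The hard part will be checking that the codomain lax cocones of these two constructions agree: both have base constant at $1$ and apex $B$, so the comparison reduces to an equality of simplicial maps $\Hom^r_\qB(a,b) \times \Del^1 \to \Fun_\eK(1,B)$; for the restricted comprehension cocone this is tautological from the inclusion $\Hom^r_\qB(a,b) \subset \qB^\cattwo$, while Lemma \ref{lem:cocart-extending-external} constructs its apex map from the universal comma $1$-arrow $\phi$ of~\eqref{eq:internal-external}. Unpacking the defining isomorphism $\Fun_\eK(1,a\comma b) \cong \Hom_\qB(a,b)$ and the comparison $\Hom^r_\qB(a,b) \subset \Hom_\qB(a,b)$ of Proposition \ref{prop:hom-space-comparison} will show that these two maps coincide.

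With this compatibility in hand, Corollary \ref{cor:unique-cocart-lifts} produces an isomorphism of the two bases in $\qK^{\Sigma^r \Hom^r_\qB(a,b)}$, lying in the fibre over the vertex $\langle E_a, E_b \rangle$ of $\qK^{\Del^0+\Del^0}$. Transposing this isomorphism under $\Sigma^r \dashv \Hom^r$ and postcomposing with the trivial cofibrations $\Hom^r_\qB(a,b) \trvcof \Hom_\qB(a,b)$ and $\Hom^r_\qK(E_a,E_b) \trvcof \Hom_\qK(E_a,E_b)$ then delivers the essentially commutative triangle \eqref{eq:comprehension-on-homs}. The real content of the argument is the bookkeeping in the third paragraph: tracking the interacting adjunctions and the quotient construction of $v^U$ from Lemma \ref{lem:comparison-extension} to reduce the comparison of codomain cocones to a manifest identification of two descriptions of the right hom-space.
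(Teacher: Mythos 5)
Your proposal is correct and follows essentially the same route as the paper's proof: the same two cocartesian cocones of shape $\Sigma^r\Hom^r_{\qB}(a,b)$ (one obtained by restricting the comprehension cocone along the counit of $\Sigma^r\dashv\Hom^r$, the other from the external action via Remark \ref{rmk:r-hom-instead} and Proposition \ref{prop:cocart-cone-from-nat}), the same comparison of their common codomain cocone using Lemmas \ref{lem:comparison-extension} and \ref{lem:commuting-comparisons}, and the same appeal to Corollary \ref{cor:unique-cocart-lifts} followed by transposition and the trivial cofibrations relating the various hom-spaces. The bookkeeping you flag in your third paragraph is exactly the content of the paper's large commutative diagram of comparison maps, so no genuinely different ideas are involved.
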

\addtocounter{thm}{-1}
}

\begin{proof}
  We start by replacing hom-spaces by right hom-spaces in~\eqref{eq:comprehension-on-homs}, to
  which end observe that we have pair of commutative squares
  \begin{equation*}
    \xymatrix@R=2em@C=2.5em{
      {\Hom^r_{\qB}(a,b)}\ar[r]^-{\tilde{m}_{a,b}}\ar@{^(->}[d]_{\simeq} &
      {\Fun_{g_*\eK}(E_a,E_b)}\ar@{^(->}[r]^{\simeq}\ar@{=}[d] &
      {\Hom^r_{\qK}(E_a,E_b)}\ar@{^(->}[d]^{\simeq} \\
      {\Hom_{\qB}(a,b)}\ar[r]_-{\tilde{m}_{a,b}} &
      {\Fun_{g_*\eK}(E_a,E_b)}\ar@{^(->}[r]_{\simeq} &
      {\Hom_{\qK}(E_a,E_b)}
    }\mkern10mu
    \xymatrix@R=2em@C=3em{
      {\Hom^r_{\qB}(a,b)}\ar[r]^-{\Hom^r(c_p)}\ar@{^(->}[d]_{\simeq} &
      {\Hom^r_{\qK}(E_a,E_b)}\ar@{^(->}[d]^{\simeq} \\
      {\Hom_{\qB}(a,b)}\ar[r]_-{\Hom(c_p)} &
      {\Hom_{\qK}(E_a,E_b)}
    }
  \end{equation*}
  whose lower-horizontal maps are the two legs of the triangle in~\eqref{eq:comprehension-on-homs}.
  The vertical inclusions in these squares are trivial cofibrations of Kan
  complexes, so it follows that the triangle in~\eqref{eq:comprehension-on-homs} commutes up to
  isomorphism if and only if the following triangle does:
  \begin{equation}\label{eq:19}
    \xymatrix@=2.5em{
      {\Hom^r_{\qB}(a,b)}\ar[r]^-{\tilde{m}_{a,b}}\ar[dr]_{\Hom^r(c_p)} &
      {\Fun_{g_*\eK}(E_a,E_b)}\ar@{^(->}[d]^{\simeq}\ar@{}[dl]|(0.35){\cong} \\ 
      & {\Hom^r_{\qK}(E_a,E_b)}
    }
  \end{equation}
  Consequently, we shall consider what happens when we transpose the two legs of
  this latter triangle under the composite adjunction $\gC \circ \Sigma^r \dashv \Hom^r \circ N$ \eqref{eq:composite-quillen-adj} considered in the proof of Proposition \ref{prop:fun-to-r-hom}.
  
  To compute the transpose of the diagonal map in~\eqref{eq:19} we consider the
  following diagram
  \begin{equation}\label{eq:16}
    \xymatrix@R=2.5em@C=5.5em{
      {\gC[\Sigma^r\Hom^r_{\qB}(a,b)]}
      \ar@{^(->}[d]\ar[r]^-{\gC[h^r_{a,b}]} &
      {\gC\qB} \ar@{^(->}[d]\ar[r]^-{E_\bullet} & {\eK} \\
      {\gC[\Sigma^r\Hom^r_{\qB}(a,b)\join\Del^0]}
      \ar[r]_-{\gC[h^r_{a,b}\join\Del^0]} & {\gC[\qB\join\Del^0]}
      \ar@/^2.5ex/[]!R(0.5);[ur]^(0.45){\ell^E}_(0.45){}="one"
      \ar@/_2.5ex/[]!R(0.5);[ur]_-{\ell^B}^-{}="two" &
      \ar@{}"one";"two"\ar@{=>} ?(0.3);?(0.7) ^{p}
    }
  \end{equation}
  in which the simplicial natural transformation on the right is the cocartesian
  cocone used to define the comprehension functor $c_p\colon \qB\to\qK$ in Definition~\ref{defn:basic-comprehension}. The functor denoted $h^r_{a,b}\colon
  \Sigma^r\Hom^r_{\qB}(a,b) \to\qB$ is the counit of the adjunction
  $\Sigma^r\dashv\Hom^r$ of Proposition~\ref{prop:hom-space-comparison} at the object $\langle {a,b}
  \rangle\colon\Del^0+\Del^0\to\qB$ and the commutativity of the left hand
  square in~\eqref{eq:16} is simply a matter of the naturality of the family of
  inclusions $\gC[-]\inc\gC[-\join\Del^0]$. The simplicial functor
  $E_\bullet\colon \gC\qB\to\eK$ is the adjoint transpose of the comprehension
  functor $c_p\colon\qB\to\qK$ under the adjunction $\gC\dashv\hN$; so it is
  immediate that the transpose of the upper horizontal of~\eqref{eq:16} under
  the adjunction $\gC\circ\Sigma^r\dashv \Hom^r\circ\hN$ is simply the hom-space
  action $\Hom^r(c_p)\colon\Hom^r_{\qB}(a,b) \to \Hom^r_{\qK}(E_a,E_b)$, this being
  the diagonal in~\eqref{eq:comprehension-on-homs}. Furthermore, Proposition \ref{prop:cocart-cone-from-nat} tells us that the whiskered transformation along the  bottom of the diagram is a cocartesian cocone of shape
  $\Sigma^r\Hom^r_{\qB}(a,b)$ with nadir $p\colon E\tfib B$.

  On the other hand, to compute the transpose of the upper composite
  of~\eqref{eq:19} we consider the following diagram
  \begin{equation}\label{eq:15}
    \xymatrix@R=2.5em@C=5em{
      {\gC[\Sigma^r\Hom^r_{\qB}(a,b)]}
      \ar@{^(->}[d]\ar[r]^-{w^{\Hom^r_{\qB}(a,b)}} &
      {\cattwo[\Hom^r_{\qB}(a,b)]}
      \ar@{^(->}[d]\ar[r]^-{\hat{m}_{a,b}} & {\eK} \\
      {\gC[\Sigma^r\Hom^r_{\qB}(a,b)\join\Del^0]}\ar[r]_-{v^{\Hom^r_{\qB}(a,b)}} &
      {{\catthree[\Hom^r_{\qB}(a,b)]}}
      \ar@/^2.5ex/[]!R(0.5);[ur]^(0.45){\bar\ell^E}_(0.45){}="one"
      \ar@/_2.5ex/[]!R(0.5);[ur]_-{\bar\ell^B}^-{}="two" &
      \ar@{}"one";"two"\ar@{=>} ?(0.3);?(0.7) ^{~\bar{p}}
    }
  \end{equation}
  in which the triangle on the right displays the cocartesian natural
  transformation discussed in Remark~\ref{rmk:r-hom-instead} and the commutative square on
  the left is an instance of that discussed in Lemma~\ref{lem:comparison-extension}. Now the
  duality established in Proposition~\ref{prop:fun-to-r-hom} demonstrates that the natural
  inclusion $\Fun_{\eK}(A,B)\inc\Hom_{\qK}(A,B)$ is mate to $w^U\colon
  \gC[\Sigma^r U]\to\cattwo[U]$ under the adjunctions $\gC\circ\Sigma^r\dashv
  \Hom^r\circ\hN$ and $\cattwo[-]\dashv\Fun$. It follows, by a standard mating
  argument, that the upper horizontal in~\eqref{eq:15} is the adjoint transpose
  of the composite $\Hom^r_{\qB}(a,b)\xrightarrow{\tilde{m}_{a,b}} \Fun_{g_*\eK}(
  E_a, E_b) \xrightincarrow{\mkern10mu\simeq\mkern10mu} \Hom^r_{\qK}(E_a,E_b)$
  under the adjunction $\gC\circ\Sigma^r\dashv \Hom^r\circ\hN$. Furthermore,
  applying Proposition~\ref{prop:cocart-cone-from-nat} we see that the whiskered transformation along the
  bottom of the~\eqref{eq:15} is a cocartesian cocone of shape
  $\Sigma^r\Hom^r_{\qB}(a,b)$ with nadir $p\colon E\tfib B$.

  Thus far we have shown that duals of the functors in~\eqref{eq:comprehension-on-homs} may be
  presented in terms of certain cocartesian cocones with nadir $p\colon E\tfib
  B$. To relate these it will suffice, by Observation~\ref{obs:unique-comprehension}, to show that
  the cocones over which they live are the same, those being the composites
  depicted on the outside of the following diagram:
  \begin{equation*}
    \xymatrix@=2em{
      {\gC[\Sigma^r\Hom^r_{\qB}(a,b)\join\Del^0]}
      \ar[rrrr]^{v^{\Hom^r_{\qB}(a,b)}}
      \ar[dr]_{t^{\Sigma^r\Hom^r_{\qB}(a,b)}}
      \ar[ddd]_{\gC[h^r_{a,b}\join\Del^0]} && \ar@{}[d]|(0.4){\text{(A)}}&&
      {\catthree[\Hom^r_{\qB}(a,b)]}
      \ar@{^(->}[d]\ar[dl]^(0.4){s^{\Hom^r_{\qB}(a,b)}} \\
      & {\cattwo[\Sigma^r\Hom^r_{\qB}(a,b)]}
      \ar[d]_{\cattwo[h^r_{a,b}]}\ar@{}[dl]|(0.6){\text{(B)}} &
      \ar@{}[d]|(0.4){\text{(C)}} &
      {\cattwo[\Sigma\Hom^r_{\qB}(a,b)]}
      \ar[ll]_{\cattwo[u^{\Hom^r_{\qB}(a,b)}]}\ar@{^(->}[d]
      \ar@{}[r]|-{\text{(D)}} &
      {\catthree[\Hom_{\qB}(a,b)]}\ar@{}[ddl]+<3em,0em>|(0.6){\text{(F)}}
      \ar[dd]^{\bar\ell^B}\ar[dl]^(0.4){s^{\Hom_{\qB}(a,b)}} \\
      & {\cattwo[\qB]}\ar[drrr]_(0.4){k^B}\ar@{}[d]+<1em,0em>|{\text{(E)}} &&
      {\cattwo[\Sigma\Hom_{\qB}(a,b)]}
      \ar[ll]_-{\cattwo[h_{a,b}]} & \\
      {\gC[\qB\join\Del^0]}
      \ar[rrrr]_{\ell^B}\ar[ur]_(0.6){t^{\qB}} &&&& {\eK}
    }
  \end{equation*}
  Here the square marked (A) is an instance of
  Lemma~\ref{lem:commuting-comparisons} and those marked (B) and (D) are
  naturality squares. The maps $h^r_{a,b}\colon\Sigma^r\Hom^r_{\qB}(a,b)\to\qB$
  and $h_{a,b}\colon\Sigma\Hom_{\qB}(a,b)\to\qB$ are instances of the counits of
  the adjunctions $\Sigma^r\dashv\Hom^r$ and $\Sigma\dashv\Hom$ respectively,
  and the square (C) commutes because $\Hom^r_{\qB}(a,b)\trvcof\Hom_{\qB}(a,b)$
  and $u^U\colon\Sigma U\to\Sigma^r U$ are mates under those adjunctions. The
  map $k^B\colon\cattwo[\qB]\to\eK$ is the instance of the counit of
  $\cattwo[-]\dashv\Fun$ and so the commutativity of the triangle (E) simply
  expresses the definition of the cocone $\ell^B$ given there. Finally, to check
  the commutativity of (F) it is clear that it is sufficient to check the
  agreement of the actions of its legs on the function complex
  $\Fun_{\catthree[\Hom_{\qB}(a,b)]}(-,\top)$, this being a matter of routine
  verification directly from the definitions of the various maps given in
  Definition~\ref{defn:last-map} and~\ref{defn:cattwo-fun-adjunction},
  Lemma~\ref{lem:cocart-extending-external}, and
  Proposition~\ref{prop:hom-space-comparison}.

  Consequently, we have shown that the cocartesian cocones in~\eqref{eq:16}
  and~\eqref{eq:15} lie over the same underlying lax cocone. Indeed, their
  restrictions along the inclusion $\langle -,+ \rangle\colon\Del^0+\Del^0\inc
  \Sigma^r\Hom^r_{\qB}(a,b)$ are identical, since they are constructed using the
  same pullbacks in $\eK$. So we may apply Corollary \ref{cor:unique-cocart-lifts} to show that
  the transposes $e,\bar{e}\colon\Sigma^r\Hom^r_{\qB}(a,b)\to\qK$ of their bases
  are isomorphic as $1$-arrows between the objects $\langle -,+
  \rangle\colon\Del^0+\Del^0\inc \Sigma^r\Hom^r_{\qB}(a,b)$ and $\langle
  {E_a,E_b}\rangle \colon \Del^0+\Del^0 \to\qK$ in the simplicial slice
  $\prescript{\Del^0+\Del^0/}{}{\SSet}$. By construction the transposes of $e$
  and $\bar{e}$ under the adjunction $\Sigma^r\dashv\Hom^r$ of
  Proposition~\ref{prop:hom-space-comparison} are the legs of~\eqref{eq:19}, so all that remains is
  to show that we may also transpose the isomorphism between them to give the
  isomorphism depicted in that triangle. This, however, follows from the easily
  verified fact that the adjunction $\Sigma^r\dashv\Hom^r$ is simplicially enriched. Since the codomain of \eqref{eq:19} is a Kan complex, the transposed 1-simplex inhabiting this triangle is automatically an isomorphism, which is what we wanted to show.
\end{proof}

As an application of Theorem \ref{thm:comprehension-on-homs} we prove that the Yoneda embedding of Definition~\ref{defn:yoneda-embedding} is fully faithful:

\begin{thm}[Yoneda embeddings are fully faithful]\label{thm:yoneda-ff} $\quad$
\begin{enumerate}[label=(\roman*)]
  \item\label{itm:yon-ff} The Yoneda embedding 
    \begin{equation*}
    \yoneda\colon \Fun_{\eK}(1,B) \longrightarrow \Cart(\qK)_{/B} \subset \qK_{/B}
  \end{equation*}
  is a fully  faithful functor of quasi-categories. 
  \item\label{itm:cauchy-embedding} Every quasi-category is
  equivalent to the homotopy coherent nerve of some Kan complex enriched
  category.
  \end{enumerate} 
\end{thm}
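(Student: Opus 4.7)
The plan is to leverage Theorem \ref{thm:comprehension-on-homs}, which identifies the action of any comprehension functor on hom-spaces with a computable external action on fibres. For part \ref{itm:yon-ff}, I would first note that by Definition \ref{defn:yoneda-embedding} the Yoneda embedding factors through the comprehension functor associated to the cocartesian fibration $(p_1,p_0)\colon A^\cattwo\tfib A\times A$, interpreted in the sliced $\infty$-cosmos $\eK_{/A}$. Applying Theorem \ref{thm:comprehension-on-homs} in that sliced cosmos presents the induced map on hom-spaces $\Hom_{\qA}(a,b)\to\Hom_{\Cart(\qK)_{/A}}(A\comma a, A\comma b)$, up to the natural trivial cofibration of Corollary \ref{cor:fun-to-hom} (noting that all functors between groupoidal cartesian fibrations are automatically cartesian, as observed in Remark \ref{rmk:groupoidal-comprehension}), as the external action
\[
\tilde m_{a,b}\colon \Hom_\qA(a,b)\longrightarrow \Fun_{g_*(\eK_{/A})}(A\comma a, A\comma b).
\]
Unpacking the $p$-cocartesian lift $\chi$ from diagram \eqref{eq:internal-external} in this specific case, the map $m_{a,b}\colon (a\comma b)\times (A\comma a)\to A\comma b$ becomes the familiar ``post-composition with an arrow $a\to b$'' operation on commas, and its transpose $\tilde m_{a,b}$ is the expected composition pairing.

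The main obstacle will be verifying that this composition pairing is an equivalence of Kan complexes. This is essentially the Yoneda lemma for representable right-cartesian fibrations over $A$: the slice $A\comma a\tfib A$ represents ``arrows into $a$'', so any fibred functor $A\comma a\to A\comma b$ should be determined, up to fibred equivalence, by its value at the universal element of $A\comma a$ corresponding to $\id_a$. I plan to deduce this either by a direct computation from the weak universal property of commas or, more cleanly, from the representability characterisation of hom-modules and the corresponding module-theoretic Yoneda lemma, which when combined with Lemma \ref{lem:equiv-of-qcats} applied fibrewise reduces the question to a collection of one-dimensional representability statements. Once $\tilde m_{a,b}$ is known to be an equivalence, part \ref{itm:yon-ff} follows at once.

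Part \ref{itm:cauchy-embedding} is then a straightforward corollary of part \ref{itm:yon-ff} applied to an arbitrary quasi-category $\qB$ regarded as an object of the $\infty$-cosmos $\qCat$: the resulting fully faithful embedding $\yoneda\colon\qB\to\Cart(\qqCat)_{/\qB}$ lands by construction in the homotopy coherent nerve of a Kan-complex enriched category. By Lemma \ref{lem:equiv-of-qcats}, $\yoneda$ is an equivalence onto its essential image, and since a full sub-quasi-category of a homotopy coherent nerve $\hN(\eC)$ is itself the homotopy coherent nerve of the corresponding full Kan-complex enriched subcategory of $\eC$, $\qB$ will be equivalent to the homotopy coherent nerve of the full Kan-complex enriched subcategory of $\Cart(\qCat)_{/\qB}$ spanned by the representable cartesian fibrations $\qB\comma b\tfib \qB$ for $b\in\qB$.
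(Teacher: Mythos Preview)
Your proposal is correct and follows essentially the same route as the paper: apply Theorem~\ref{thm:comprehension-on-homs} in the sliced cosmos $\eK_{/A}$ to reduce fully-faithfulness to showing that the external action $\tilde m_{a,b}$ is an equivalence, then invoke the Yoneda lemma for representable cartesian fibrations to conclude. The paper makes the final step precise by citing the Yoneda lemma proven earlier in the series (Theorem~\refIV{thm:yoneda}), observing that restriction along $\id_a\colon 1\to A\comma a$ is an equivalence whose explicit inverse, as constructed in that proof, is exactly your $\tilde m_{a,b}$; your plan to appeal to the module-theoretic Yoneda lemma is the same idea, and part~\ref{itm:cauchy-embedding} is handled identically.
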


\begin{proof}
  Given two objects $a,b\colon 1\to B$ of the underlying
  quasi-category $\qB$, to prove \ref{itm:yon-ff} our task is to show that the action of the Yoneda
  functor
  \begin{equation*}
    \qB\cong\Fun_{\eK}(1,B)\xrightarrow{\mkern10mu -\times B\mkern10mu}
    \Fun_{\eK_{/B}}\left(
      \vcenter{\xymatrix@1@R=2.5ex{B\ar@{->>}[d]^-{\id_B}\\ B}},
      \vcenter{\xymatrix@1@R=2.5ex{B \times B \ar@{->>}[d]^-{\pi_0} \\ B}}
    \right) \xrightarrow{\mkern10mu c_{(p_1,p_0)}\mkern10mu} \qK_{/B}
  \end{equation*}
  on the hom-space from $a$ to $b$ is an equivalence of Kan-complexes. Of
  course, the identity $\id_B\colon B\to B$ is the terminal object in $\eK_{/B}$
  and the comma object associated with the maps $B\times a$ and $B\times b$ in
  there is given by the projection $\pi_0\colon B\times a\comma b \tfib B$. So
  we may apply Theorem~\ref{thm:comprehension-on-homs} to show that the action of the Yoneda
  functor on the hom-space from $a$ to $b$ is equivalent to the composite:
  \begin{equation}\label{eq:yoneda-equivalence-inverse}
    \Fun_{\eK}(1,a\comma b)\xrightarrow{\mkern10mu -\times B\comma a\mkern10mu}
    \Fun_{\eK_{/B}}\left( 
      \vcenter{\xymatrix@1@R=2.5ex{B\comma a \ar@{->>}[d]\\ B}},
      \vcenter{\xymatrix@1@R=2.5ex{a\comma b\times B\comma a \ar@{->>}[d] \\ B}}
    \right)\xrightarrow{\mkern10mu\tilde{m}_{a,b}\mkern10mu}
    \Fun_{\eK_{/B}}\left( 
      \vcenter{\xymatrix@1@R=2.5ex{B\comma a \ar@{->>}[d]\\ B}},
      \vcenter{\xymatrix@1@R=2.5ex{B\comma b \ar@{->>}[d] \\ B}}
    \right)
  \end{equation}
  Since the representable cartesian fibrations $B \comma b \tfib B$ are groupoidal objects in $\eK_{/B}$, the codomain of \eqref{eq:yoneda-equivalence-inverse} is already a Kan complex, and we can omit the notation for the groupoidal core.
  
  The Yoneda lemma, as proven in Theorem \refIV{thm:yoneda}, tells us that restriction along the natural map $\id_a \colon 1 \to B \comma a$ from $a \colon 1 \to B$ to $B \comma a \tfib B$ induces an equivalence of function complexes
  \begin{equation}\label{eq:yoneda-equivalence}     \Fun_{\eK_{/B}}\left( 
      \vcenter{\xymatrix@1@R=2.5ex{B\comma a \ar@{->>}[d]\\ B}},
      \vcenter{\xymatrix@1@R=2.5ex{B\comma b \ar@{->>}[d] \\ B}}
    \right) \xrightarrow{~\simeq~}    \Fun_{\eK_{/B}}\left( 
      \vcenter{\xymatrix@1@R=2.5ex{1 \ar@{->}[d]_a\\ B}},
      \vcenter{\xymatrix@1@R=2.5ex{B\comma b \ar@{->>}[d] \\ B}}
    \right)
    \end{equation}
    From the pullback that defines the function complexes in $\eK_{/B}$
    \[ 
    \xymatrix{ \Fun_{\eK_{/B}}(a \colon 1 \to B, B \comma b \tfib B) \pbexcursion \ar[r] \ar@{->>}[d] & \Fun_{\eK}(1,B \comma b) \ar@{->>}[d] \\ 1 \cong \Fun_{\eK}(1,B) \ar[r]_-{\Fun_{\eK}(1,a)} & \Fun_{\eK}(1,B)}
    \]
    we see that $\Fun_{\eK_{/B}}(a \colon 1 \to B, B \comma b \tfib B) \cong \Fun_{\eK}(1,a\comma b)$, which is the definition of $\Hom_{\qB}(a,b)$.  The proof of Theorem  \refIV{thm:yoneda} demonstrates that the map \eqref{eq:yoneda-equivalence} is an equivalence by constructing an explicit equivalence inverse: namely, the functor \eqref{eq:yoneda-equivalence-inverse}. This proves that the Yoneda embedding is fully faithful.
  
  Now to deduce \ref{itm:cauchy-embedding} from \ref{itm:yon-ff}, we apply Lemma~\ref{lem:equiv-of-qcats} to conclude that the Yoneda embedding
  restricts in it codomain to provide an equivalence between a quasi-category $\qB$ and the
  homotopy coherent nerve of the full simplicial subcategory of $\qCat_{/\qB}$
  spanned by the representable cartesian fibrations $\qB\comma b\tfib \qB$ indexed by its objects $b \colon 1 \to \qB$.
\end{proof}


  \bibliographystyle{special}
  \bibliography{../common/index}

\begin{thebibliography}{RV17b}

\bibitem[B73]{Brown:1973zl}
K.~S. Brown.
\newblock Abstract homotopy theory and generalized sheaf cohomology.
\newblock {\em Transactions of the American Mathematical Society},
  186:419--458, 1973.

\bibitem[B07]{Bergner:2007fk}
J.~E. Bergner.
\newblock A model category structure on the category of simplicial categories.
\newblock {\em Transactions of the American Mathematical Society},
  359:2043--2058, 2007.

\bibitem[DS11a]{Dugger:2011ro}
D.~Dugger and D.~I. Spivak.
\newblock Rigidification of quasi-categories.
\newblock {\em Algebr. Geom. Topol.}, 11(1):225--261, 2011.

\bibitem[DS11b]{DuggerSpivak:2011ms}
D.~Dugger and D.~I. Spivak.
\newblock Mapping spaces in quasi-categories.
\newblock {\em Algebr. Geom. Topol.}, 11(1):263--325, 2011.

\bibitem[J08]{Joyal:2008tq}
A.~Joyal.
\newblock {\em The theory of quasi-categories and its applications}.
\newblock Quadern 45 vol II. Centre de Recerca Matem\`{a}tica Barcelona,
  \url{http://mat.uab.cat/~kock/crm/hocat/advanced-course/Quadern45-2.pdf},
  2008.

\bibitem[L09]{Lurie:2009fk}
J.~Lurie.
\newblock {\em {Higher Topos Theory}}, volume 170 of {\em Annals of
  Mathematical Studies}.
\newblock Princeton University Press, Princeton, New Jersey, 2009.

\bibitem[L17]{Lurie:2012uq}
J.~Lurie.
\newblock {Higher Algebra}.
\newblock \url{http://www.math.harvard.edu/~lurie/papers/HA.pdf}, September
  2017.

\bibitem[R73]{reedy1973htm}
C.~Reedy.
\newblock Homotopy theory of model categories.
\newblock {\em preprint}, 1973.

\bibitem[R01]{Rezk:2001sf}
C.~Rezk.
\newblock A model for the homotopy theory of homotopy theory.
\newblock {\em Transactions of the American Mathematical Society},
  353(3):973--1007, 2001.

\bibitem[R10]{Rezk:2010fk}
C.~Rezk.
\newblock A cartesian presentation of weak $n$-categories.
\newblock {\em Geometry and Topology}, 2010.

\bibitem[R11]{Riehl:2011ot}
E.~Riehl.
\newblock On the structure of simplicial categories associated to
  quasi-categories.
\newblock {\em Math. Proc. Cambridge Philos. Soc.}, 150(3):489--504, 2011.

\bibitem[R14]{Riehl:2014kx}
E.~Riehl.
\newblock {\em Categorical homotopy theory}, volume~24 of {\em New Mathematical
  Monographs}.
\newblock Cambridge University Press, 2014.

\bibitem[RV-0]{RiehlVerity:2013kx}
E.~Riehl and D.~Verity.
\newblock The theory and practice of {Reedy} categories.
\newblock {\em Theory and Applications of Categories}, 29(9):256--301, 2014.

\bibitem[RV-I]{RiehlVerity:2012tt}
E.~Riehl and D.~Verity.
\newblock The 2-category theory of quasi-categories.
\newblock {\em Adv. Math.}, 280:549--642, 2015.

\bibitem[RV-III]{RiehlVerity:2013cp}
E.~Riehl and D.~Verity.
\newblock Completeness results for quasi-categories of algebras, homotopy
  limits, and related general constructions.
\newblock {\em Homol.~Homotopy Appl.}, 17(1):1--33, 2015.

\bibitem[RV-II]{RiehlVerity:2012hc}
E.~Riehl and D.~Verity.
\newblock Homotopy coherent adjunctions and the formal theory of monads.
\newblock {\em Adv. Math.}, 286:802--888, 2016.

\bibitem[RV-IV]{RiehlVerity:2015fy}
E.~Riehl and D.~Verity.
\newblock Fibrations and {Y}oneda's lemma in an $\infty$-cosmos.
\newblock {\em J.~Pure Appl.~Algebra}, 221(3):499--564, 2017.

\bibitem[RV-V]{RiehlVerity:2015ke}
E.~Riehl and D.~Verity.
\newblock Kan extensions and the calculus of modules for $\infty$-categories.
\newblock {\em Algebr.~Geom.~Topol.}, 17-1:189--271, 2017.

\bibitem[RV-VII]{RiehlVerity:2017ts}
E.~Riehl and D.~Verity.
\newblock The calculus of two-sided fibrations and modules.
\newblock {I}n preparation, 2018.

\bibitem[S74]{Street:1974:FibYoneda}
R.~Street.
\newblock Fibrations and {Y}oneda's lemma in a {$2$}-category.
\newblock In {\em Category {S}eminar ({P}roc. {S}em., {S}ydney, 1972/1973)},
  pages 104--133. Lecture Notes in Math., Vol. 420. Springer, Berlin, 1974.

\bibitem[W07]{Weber:2007ys}
M.~Weber.
\newblock Yoneda {S}tructures from 2-toposes.
\newblock {\em Applied Categorical Structures}, 15(3):259--323, 2007.

\end{thebibliography}


\end{document}